\numberwithin{equation}{section}
\renewcommand\k{\kappa}
\newtheorem{theorem}{Theorem}[section]
\newtheorem{lemma}[theorem]{Lemma}
\newtheorem{proposition}[theorem]{Proposition}
\newtheorem{corollary}[theorem]{Corollary}
\newtheorem{definition}[theorem]{Definition}
\newtheorem{remark}[theorem]{Remark}
\newcommand\C{\mathbb{C}}
\newcommand{\gsim}{\stackrel{>}{\sim}}
\newcommand{\lsim}{\stackrel{<}{\sim}}
\newcommand\diag{\mathop{\mathrm {diag}}}
\renewcommand\tilde{\widetilde}
\newcommand\calM{\mathcal{M}}
\newcommand\calI{\mathcal{I}}
\newcommand\nab{\nabla}
\newcommand\EGL{E_\mathrm{GL}}
\def\a{\alpha}
\def\b{\beta}
\def\phimin{\varphi_\mathrm{min}}
\def\phimax{\varphi_\mathrm{max}}
\def\hal{\frac{1}{2}}
\def\div{\mathrm{div} \,}
\def\R{\mathbb{R}}
\newcommand\bext{b_{\mathrm{ext}}}
\newcommand\Bext{B_{\mathrm{ext}}}
\def\pa{\partial}
\newcommand\calA{\mathcal{A}}
\newcommand\calD{\mathcal{D}}
\newcommand{\step}[1]{{\it Step #1:}}
\newcommand\m{\mu}
\def\Om{\Omega}
\def\Z{\mathbb{Z}}
\def\N{\mathbb{N}}
\def\M{\mathcal{M}}
\def\H{\mathcal{H}}
\def\B{\mathcal{B}}
\def\cP{\mathcal{P}}
\def\LM#1{\hbox{\vrule width.2pt \vbox to#1pt{\vfill \hrule width#1pt
height.2pt}}}
\def\LL{{\mathchoice {\>\LM7\>}{\>\LM7\>}{\,\LM5\,}{\,\LM{3.35}\,}}}
\def\restr{{\LL}}
\renewcommand{\phi}{\varphi}
\def\p{\phi}
\def\Div{\textup{div}\,}
\def\dist{\textup{dist}}
\def\1{\mathbf{1}}
\def\loc{\mathrm{loc}}
\def\Ext{\mathrm{ext}}
\def\XXint#1#2#3{{\setbox0=\hbox{$#1{#2#3}{\int}$ }
\vcenter{\hbox{$#2#3$ }}\kern-.57\wd0}}
\def\eps{\varepsilon}
\renewcommand{\subset}{\subseteq}
\def\lt{\left}
\def\rt{\right}
\def\les{\lesssim}
\def\ges{\gtrsim}
\def\lsim{\lesssim}
\def\gsim{\gtrsim}
\def\Divp{\Div\!'}
\def\wE{\widetilde{E}_T}
\def\weaklim{\rightharpoonup}
\def\what{\widehat}
\def\wilde{\widetilde}
\def\Mreg{\mathcal{M}_\mathrm{reg}}
\def\MNreg{\mathcal{M}_\mathrm{reg}^N}
\def\wG{\wilde{G}_\b}
\def\wF{\what{F}_{\a,\b}}
\def\Lz{L_0}
\def\hL{\what{L}}
\def\tchi{\wilde{\chi}}
\def\tB{\wilde{B}}
\newcommand\dotX{\dot{X}}
\begin{document}

\title{{A branched transport limit of the Ginzburg-Landau functional}}
\author{Sergio Conti\footnote{Institut f\"ur Angewandte Mathematik, Universit\"at Bonn, Germany, email: sergio.conti@uni-bonn.de}\and Michael Goldman\footnote{ LJLL, Universit\'e Paris Diderot,  CNRS, UMR 7598,  France email: goldman@math.univ-paris-diderot.fr, part of his research was funded by a Von Humboldt PostDoc fellowship} \and Felix Otto\footnote{Max Planck Institute for Mathematics in the Sciences, Leipzig, Germany, email: otto@mis.mpg.de} \and Sylvia Serfaty\footnote{Courant Institute, NYU \& Institut Universitaire de France \& UPMC, Paris, France, email: serfaty@cims.nyu.edu}}
\date{}
\maketitle
\begin{abstract}\noindent
We study the  Ginzburg-Landau model of type-I superconductors in the regime of small external magnetic fields.
We show that, in an appropriate asymptotic regime, flux patterns are  described by a simplified 
branched transportation functional. We derive the simplified functional from the  full Ginzburg-Landau model
rigorously via $\Gamma$-convergence. The detailed analysis of the limiting procedure and the study of the limiting
functional lead to a  precise understanding 
of the multiple scales contained in the model.
\end{abstract}

\section{Introduction}
In 1911, K. Onnes discovered the phenomenon of superconductivity, manifested in the complete loss of resistivity of certain metals and alloys at very low temperature.
W. Meissner discovered in 1933 that this was coupled with the expulsion of the magnetic field from the superconductor at the critical temperature. This is now called the Meissner effect. 
After some preliminary works of the brothers F. and H. London, V. Ginzburg and L. Landau proposed in  1950
a phenomenological model describing the state of a superconductor. In their model (see \eqref{GLorigin} below), which belongs to Landau's general theory of second-order phase transitions, the state of the material is represented by the order parameter $u: \Omega\to \C$, where $\Omega$ is the material sample. The density of superconducting electrons is then given by $\rho:= |u|^2$. 
A microscopic theory of superconductivity was first proposed by  Bardeen-Cooper-Schrieffer (BCS) in 1957,
and the Ginzburg-Landau model was derived from BCS  by Gorkov in 1959 (see also \cite{frankseiringer} for a rigorous derivation). 

One of the main achievements of the Ginzburg-Landau theory is the 
prediction and the understanding of the mixed (or intermediate) state below the critical temperature.
This is a state in which, for moderate external magnetic fields, normal and superconducting regions coexist.
The behavior of the material  in the Ginzburg-Landau theory is characterized by   two physical parameters. 
The first is the coherence length $\xi$ which measures the typical length on which $u$ varies,
the second is the penetration length $\lambda$ which gives the typical length on which the magnetic field penetrates the superconducting regions. The Ginzburg-Landau parameter is then defined  as $\kappa:= \frac{\lambda}{\xi}$. 
The Ginzburg-Landau functional is  given by
\begin{equation}\label{GLorigin}
 \int_{\Omega} |\nabla_A u|^2 +\frac{\kappa^2}{2} (1-|u|^2)^2 dx +\int_{\R^3} |\nabla\times A- \Bext|^2 dx
\end{equation}
where $A: \R^3\to \R^3$ is the magnetic potential (so that $B:=\nabla\times A$ is the magnetic field), $ \nabla_A u:= \nabla u -i Au$ is the covariant derivative of $u$ and $\Bext$ is the external magnetic field.
In these units, the penetration length $\lambda$ is normalized to $1$.  As first observed by A. Abrikosov this theory predicts two types of superconductors. On the one hand, when $\kappa < 1/\sqrt{2}$, there is a positive surface tension which leads to the formation of normal and superconducting regions corresponding to $\rho\simeq 0$ and $\rho\simeq 1$ respectively, separated by interfaces.
These are the so-called type-I superconductors. On the other hand, when $\kappa>1/\sqrt{2}$, this surface tension is negative and one expects to see the magnetic field penetrating the domain through lines of vortices. These are the so-called type-II superconductors.
In this paper we are interested in better understanding the former type but we refer the interested reader to \cite{tinkham,SanSerf,Serfrev} for more information about 
the latter type. In particular, in that regime, there has been an intensive work on understanding the formation of regular patterns of vortices known as Abrikosov lattices.

In type-I superconductors, it is observed  experimentally \cite{ProzHo,Proz,Proal} that complex patterns appear at the surface of the sample. It is believed that these patterns are a manifestation of branching patterns inside the sample. 
Although the observed states are highly history-dependent, it is argued in \cite{ChokKoOt,Proal} that
the hysteresis is governed by low-energy configurations at vanishing external magnetic field. 
The scaling law of the ground-state energy was determined
in \cite{ChokConKoOt,ChokKoOt} for a simplified  sharp 
interface version of the Ginzburg-Landau functional \eqref{GLorigin} and in \cite{CoOtSer} for the full energy, 
these results indicate the presence of a regime with branched patterns at low fields. 

This paper aims at a better understanding of these branched patterns by going beyond the scaling law. Starting from the full Ginzburg-Landau functional, we prove that in the regime of vanishing external magnetic field, 
low energy configurations are made of nearly one-dimensional superconducting threads branching towards the boundary of the sample.  In a more mathematical language, we prove $\Gamma-$convergence \cite{braides,dalmaso} of the Ginzburg-Landau
functional to a kind of branched transportation functional in this regime. We focus on the simplest geometric setting by considering the sample $\Om$ to be a box $Q_{\Lz,T}:=(-\frac{\Lz}{2}, \frac{\Lz}{2})^2\times(-T,T)$ for some $T,\Lz>0$ and consider periodic lateral boundary conditions. 
The external magnetic field is taken to be perpendicular to the sample, that is $\Bext:= \bext e_3$ for some $\bext>0$ and where $e_3$ is the third vector of the canonical basis of $\R^3$.
After making an isotropic rescaling, subtracting the bulk part of the energy and dropping lower order terms (see \eqref{eqEGLE} and the discussion after it),  minimizing \eqref{GLorigin} can be seen as  equivalent to minimizing

\begin{multline}\label{Etilde}
  E_T(u,A):=\frac{1}{L^2}
\int_{Q_{L,1}} |\nabla_{TA} u |^2  + \left(B_3 - \a (1-\rho)\right)^2 + |B'|^2 dx
+ \| B_3-\a \b\|_{H^{-1/2}(\{x_3=\pm 1\})}^2,\,
\end{multline}
where we have  let $B:=(B',B_3):=\nabla \times A$,
\[ \kappa T:=\sqrt{2}\alpha,  \qquad \bext:=\frac{\beta \kappa}{\sqrt{2}} \qquad \textrm{ and } \qquad L:=\Lz/T.\]
If $u=\rho^{1/2} \exp(i\theta)$, since $|\nabla_{TA} u |^2 =|\nabla \rho^{1/2}|^2 +\rho|\nabla \theta-T A|^2$, in the limit $T\to +\infty$ we obtain, at least formally, that 
$A$ is a gradient field in the region where $\rho>0$ 
and therefore
the Meissner condition $\rho B=0$ holds. 
Moreover, in the regime $\alpha\gg 1$, from \eqref{Etilde} we see that $B_3 \simeq \a (1-\rho)$ and $\rho$ takes almost only  values in $\{0,1\}$. Hence $\Div B=0$ can be rewritten as 
\begin{equation*}
  \partial_3 \chi + \frac1\alpha\Divp \chi B'=0,
\end{equation*}
where $\chi:= (1-\rho)$ and $\Divp$ denotes the divergence with respect to the first two variables. Therefore,  from the Benamou-Brenier formulation of optimal transportation \cite{AGS,villani} and since from the Meissner condition, $B'\simeq \frac{1}{\chi} B'$,
the term 
\[\int_{Q_{L,1}} |B'|^2 dx\simeq \int_{Q_{L,1}} \frac{1}{\chi}|B'|^2 dx\]
in the energy \eqref{Etilde} can  be seen as a transportation cost. We thus expect that inside the sample 
(this is, in $Q_{L,1}$), superconducting domains where $\rho\simeq1$ and $B\simeq 0$ alternate with normal ones where 
$\rho\simeq 0$ and $B_3\simeq \alpha$. 
Because of the last term $\| B_3-\a \b\|_{H^{-1/2}(\{x_3=\pm 1\})}^2$ in the energy \eqref{Etilde}, one expects
$B\simeq \a \b e_3$ outside the sample. This implies that close to the boundary the normal domains have to refine. The interaction between the surface energy, the
transportation cost and the penalization of an $H^{-1/2}$ norm leads to the formation of complex patterns (see Figure \ref{figscales}).

It has been proven in \cite{CoOtSer} that in the regime $T\gg 1$, $\a \gg 1$ and $\b \ll 1$, 
\begin{equation}\label{scallawintro}
\min E_T(u,A)\sim \min\{ \a^{4/3} \b^{2/3}, \a^{10/7} \b\}\,.
\end{equation}
The scaling $\min E_T(u,A)\sim \a^{4/3} \b^{2/3}$ (relevant for $\a^{-2/7}\ll \b$) corresponds to uniform branching patterns whereas the scaling   $\min E_T(u,A)\sim \a^{10/7} \b$
corresponds to non-uniform branching ones. We focus here for definiteness
on the  regime $\min E_T(u,A)\sim \a^{4/3} \b^{2/3}$, although we believe that our proof can be extended to the other one.
Based on the construction giving the upper bounds in \eqref{scallawintro}, we expect that in the first regime there are multiple scales appearing (see Figure \ref{figscales}):
\begin{equation}\label{scalesepar}\stackrel{\displaystyle\textrm{penetration}}{\textrm{ length}}\,\stackrel{\displaystyle\ll}{\, } \, \stackrel{\displaystyle\textrm{ coherence}}{\textrm{ length}}\, \stackrel{\displaystyle\ll}{\,} 
\, \stackrel{\displaystyle\textrm{ diameter of the}}{\textrm{ threads in the bulk}} \, \,  \stackrel{\displaystyle\ll}{\,} \, \,  \stackrel{\displaystyle\textrm{distance between the}}{\textrm{ threads in the bulk}},\end{equation}
which amounts in our parameters to 
\[T^{-1} \ll \alpha^{-1}\ll \alpha^{-1/3}\beta^{1/3}\ll \alpha^{-1/3}\beta^{-1/6}.\]
\begin{figure}\label{figscales}\centering\resizebox{16cm}{!}{
 \input{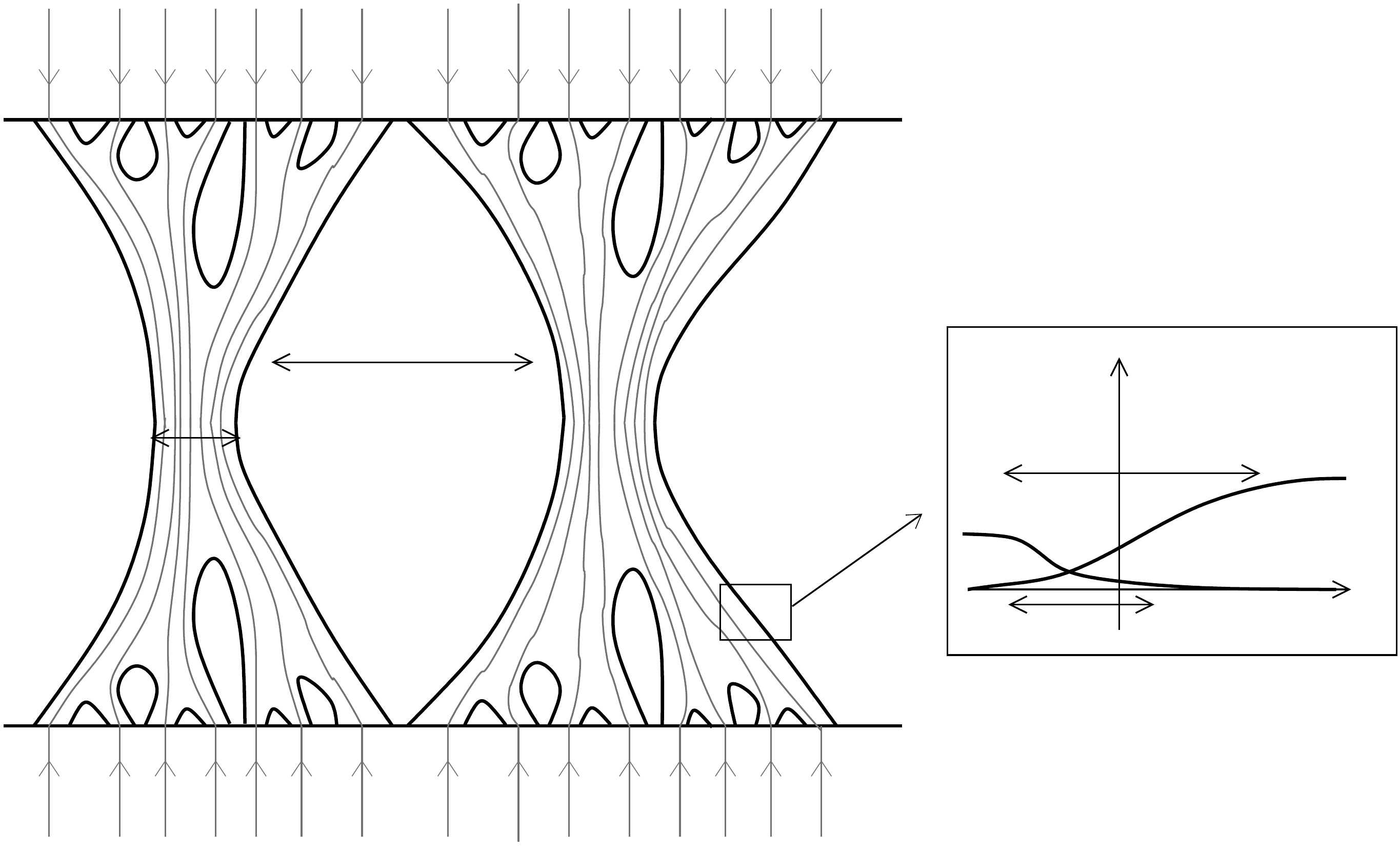_t}
}
\caption{The various lengthscales}
\end{figure}
In order to better describe the minimizers we focus on the extreme region of the phase diagram
$T,T \a^{-1}, \beta^{-1}, \a \b^{7/2}\to +\infty$, with $L=\tilde L  \alpha^{-1/3}\beta^{-1/6}$ for some fixed $\tilde L>0$. In this regime, we have in particular $\a^{-1}\ll \a^{-1/3}\b^{1/3}$ so 
that the separation of scales \eqref{scalesepar} holds. We introduce an anisotropic  rescaling (see Section \ref{Sec:GL}) which leads to the functional

\begin{align}
\label{eqdefwe}
\wE(u,A)&:=\frac{1}{\widetilde{L}^2}\Big[\int_{Q_{\tilde L,1}} \a^{-2/3}\b^{-1/3}\lt|\nabla_{\a^{1/3}\b^{-1/3}TA}'u\rt|^2+ 
  \a^{-4/3}\b^{-2/3}\lt|(\nabla_{\a^{1/3}\b^{-1/3}TA} u)_3\rt|^2\\
\nonumber
 +  & \lt.\a^{2/3}\b^{-2/3}\left({B_3} -(1-|{u}|^2)\right)^2
+  \b^{-1}|{B'}|^2 dx +\a^{1/3}\b^{7/6}\|\b^{-1}{B_3}-1\|_{H^{-1/2}(x_3=\pm1)}^2\rt].
\end{align}
Our main result is a  $\Gamma-$convergence  result of the functional $\wE$  towards a 
functional defined on measures $\mu$ living on one-dimensional trees. These trees correspond to the normal regions 
in which $\rho\simeq 0$ and where  the magnetic field $B$ penetrates the sample. Roughly speaking, if for a.e. $x_3\in (-1,1)$
the slice of $\mu=\mu_{x_3}\otimes dx_3$ has the form $\mu_{x_3}=\sum_i \phi_i \delta_{X_i(x_3)}$ where the sum is at most countable, then we let (see Section \ref{Sec:limiting} for a precise definition)
\begin{equation}\label{eqdefIintro}
I(\mu):=\frac1{\widetilde{L}^2}\int_{-1}^1 K_* \sum_i \sqrt{\phi_i}+ \phi_i |\dotX_i|^2 dx_3, 
\end{equation}
 where $K_*=8\sqrt\pi/3$ and $\dotX_i$ denotes the derivative (with respect to $x_3$) of $X_i(x_3)$.  The $X_i$'s represent the graphs of each branch of the tree (parametrized by height) and the $\phi_i$'s represent the flux carried by the branch. 
We can now state our main result
\begin{theorem}\label{maintheo}
 Let $T_n, \a_n, \b_n^{-1}\to+\infty$ with $T_n \a_n^{-1}, \a_n \b_n^{7/2}\to +\infty$, $\tilde L>0$, then: 
 \begin{enumerate}
  \item\label{maintheolb}
  For every sequence  $(u_n, A_n)$ with $\sup_n \wE(u_n,A_n)<+\infty$, up to subsequence , $\beta_n^{-1}(1-|u_n|^2)$ weakly converges to a measure $\mu$ of the form $\mu=\mu_{x_3}\otimes dx_3$ with 
  $\mu_{x_3}=\sum_i \phi_i \delta_{X_i}$ for a.e. $x_3\in (-1,1)$, $\mu_{x_3} \weaklim dx'$ (where $dx'$ denotes the two dimensional Lebesgue measure on $Q_{\tilde L}$) when $x_3\to \pm 1$ and such that
  \[\liminf_{n\to +\infty}  \wE(u_n,A_n)\ge I(\mu).\]
  \item\label{maintheoub} If in addition $L_n^{2} \a_n \b_n T_n\in 2\pi \Z$, where $L_n:= \tilde L \a_n^{-1/3}\b_n^{-1/6}$,  then for every measure $\mu$ such that $I(\mu)<+\infty$ and $\mu_{x_3} \weaklim dx'$  as $x_3\to \pm1$ , there exists $(u_n,A_n)$ such that $\b_n^{-1}(1-|u_n|^2)\weaklim \mu$ and  
\[\limsup_{n\to +\infty} \, \wE(u_n,A_n)\le I(\mu).\]
 \end{enumerate}

\end{theorem}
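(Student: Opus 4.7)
My strategy is to establish each half of the $\Gamma$-convergence via a slicing argument in the vertical direction $x_3$, treating the problem as a branched transport along the lines of the Benamou--Brenier heuristic already sketched in the introduction.

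\textbf{Compactness and liminf inequality.} The energy bound on $\a_n^{2/3}\b_n^{-2/3}(B_{3,n}-(1-|u_n|^2))^2$ together with that on $\b_n^{-1}|B_n'|^2$ forces $\b_n^{-1}(1-|u_n|^2)$ and $\b_n^{-1}B_{3,n}$ to share the same weak-$\ast$ limit $\mu$; combined with $\Div B_n=0$, this yields a continuity equation $\pa_3\mu+\Divp(\mu v)=0$ for a horizontal velocity $v$ obtained from $B_n'/B_{3,n}$, so that $\mu=\mu_{x_3}\otimes dx_3$ by disintegration. The core step is to prove that each slice is purely atomic and to produce the correct per-atom lower bound. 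I would localize around each putative atom $X_i(x_3)$ of mass $\phi_i$ on a cylinder of horizontal radius $\sim\sqrt{\phi_i}$ and split the energy into: (i) a cross-sectional surface contribution, estimated by a Modica--Mortola-type inequality applied to $\a_n^{-2/3}\b_n^{-1/3}|\nab_A'u|^2+\a_n^{2/3}\b_n^{-2/3}(1-|u|^2)^2$, optimized over radially symmetric profiles of fixed mass $\phi_i$ and yielding the prefactor $K_\ast=8\sqrt\pi/3$ through the two-dimensional isoperimetric inequality; (ii) a Benamou--Brenier lower bound on $\b_n^{-1}|B_n'|^2$ yielding the kinetic cost $\phi_i|\dotX_i|^2$. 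The strict subadditivity of $\phi\mapsto K_\ast\sqrt\phi$ prevents distinct atoms from being merged in the lower bound, which forces the atomic structure of $\mu_{x_3}$ simultaneously.

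\textbf{Upper bound construction.} By density of smooth branched configurations in the relaxation of $I$, I would first reduce to the case where $\mu$ is supported on a finite piecewise affine tree with constant fluxes on each branch and with the boundary condition $\mu_{x_3}\weaklim dx'$ realized by a self-similar refinement of branches as $x_3\to\pm 1$. For each branch $(X_i,\phi_i)$ I would build an explicit ansatz: a tube of radius $r_i\sim\sqrt{\phi_i}$ centered on the graph of $X_i$ inside which $|u_n|$ transitions from $1$ to $0$ on the coherence scale $\a_n^{-1}$, outside which $|u_n|=1$ and the phase $\theta$ carries a winding degree $d_i\in\Z$ proportional to the encircled flux. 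The quantization condition $L_n^2\a_n\b_n T_n\in 2\pi\Z$ is precisely the global integrability condition that allows such a phase to exist under the periodic lateral boundary conditions. The magnetic potential is chosen so that $B_{3,n}$ jumps from $0$ outside to $\b_n$ inside while $B_n'\simeq\b_n\chi\dotX_i$ inside the $i$-th tube, realizing the Benamou--Brenier optimal transport. Gluing at branching points is done by a local Biot--Savart correction with controlled cost, and close to $x_3=\pm 1$ the refinement mimics the upper-bound construction of \cite{CoOtSer} while absorbing the $H^{-1/2}$ boundary penalty through a harmonic extension. A direct computation then gives $\limsup \wE(u_n,A_n)\le I(\mu)$.

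\textbf{Main obstacle.} The delicate step is establishing the atomic structure of $\mu_{x_3}$ with the \emph{sharp} constant $K_\ast$: although the formal Meissner condition $\mu B'=0$ and the transport energy together suggest concentration on a one-dimensional set, turning this into a pointwise lower bound matching $K_\ast\sqrt\phi$ requires a careful two-scale analysis bridging the coherence scale $\a_n^{-1}$, on which the vortex-core profile is resolved, and the mesoscopic inter-thread scale $\a_n^{-1/3}\b_n^{1/3}$ on which the transport picture is valid. A secondary, but nontrivial, difficulty is calibrating the self-similar branching near $x_3=\pm 1$ in the recovery sequence so that the $H^{-1/2}$ boundary term is asymptotically dominated by the bulk branched-transport cost.
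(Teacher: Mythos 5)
The overall architecture you sketch — compactness/liminf by slicing, recovery sequence by tube construction with quantized phase winding and a boundary-layer refinement from \cite{CoOtSer} — matches the paper's skeleton. The upper-bound side of your proposal is essentially a plausible variant (the paper replaces your ``local Biot--Savart correction'' at triple points by an explicit bilipschitz deformation from a disk to a square and then to two rectangles, Lemmas \ref{lemmacurveendpoints}--\ref{Lembranch3}, which avoids having to estimate a non-local correction). Your identification of the quantization condition with existence of a single-valued phase is also the right point.

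The genuine gap is in the lower bound, and it concerns precisely the step you flag as delicate: proving atomicity of $\mu_{x_3}$ with the sharp constant $K_*$. Your claim that ``the strict subadditivity of $\phi\mapsto K_*\sqrt\phi$ prevents distinct atoms from being merged, which forces the atomic structure'' does not work and in fact points in the wrong direction: since $\sqrt{a+b}\le\sqrt{a}+\sqrt{b}$, merging atoms \emph{lowers} the surface contribution, so subadditivity favors concentration rather than forbidding it, and in any case it says nothing about why a diffuse (Lebesgue-absolutely-continuous) part of $\mu_{x_3}$ is excluded. What actually forces atomicity in the paper (Lemma \ref{lowerbound2d}) is the combination of the bounded rescaled horizontal perimeter $\int_{Q_1}\beta_n^{1/2}|D'\chi_n|$ with the vanishing area fraction $|\{\chi_n=\beta_n^{-1}\}|\sim\beta_n$: partitioning into cubes of side $\sim\beta_n^{1/2}$ and applying the \emph{relative} isoperimetric inequality yields $\sqrt{\phi^n_i}\lesssim\int_{Q_i^n}\beta_n^{1/2}|D'\chi_n|$, hence $\sum_i\sqrt{\phi^n_i}\lesssim 1$, and tightness (Lemma \ref{lemsqrt}) gives concentration on countably many points. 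Moreover, to even reach a Modica--Mortola structure you must first show that the degenerate potential $(B_3-(1-\rho))^2$ behaves like $\chi_{\rho>0}(1-\rho)^2$, which requires the quantitative approximate Meissner estimate of Lemma \ref{lemmameissner} (built from the Bogomol'nyi identities \eqref{magic}--\eqref{magic2}) and the regularized double-well $W_\eps$ of Lemma \ref{lemmafirstlowerbound}; your proposal does not account for this, and without it the surface term and the constant $K_*$ cannot be extracted from the energy. (A smaller issue: the tube radius in the rescaled variables is $\sim\sqrt{\beta_n\phi_i}$, not $\sim\sqrt{\phi_i}$.)
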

\begin{proof} By scaling, 
it suffices to consider the case $\tilde L=1$.
The first assertion follows from Proposition  \ref{gammaliminf}, the second one from Proposition  \ref{gammalimsup-v2}.
\end{proof}
Let us stress once again that our result could have  been equivalently stated for the full Ginzburg-Landau energy  \eqref{GLorigin} instead of $\wE$ (see Section \ref{Sec:GL}).\\
Within our periodic setting, the quantization condition $L_n^2 \a_n \b_n T_n\in 2\pi \Z$ for the flux is a consequence of the fact that the phase circulation of the complex-valued function in the original problem is naturally quantized. 
It is necessary in order to make our construction but we believe that it is also a necessary condition for having sequences of bounded energy (see the discussion in Section \ref{Sec:GL} and the construction in Section \ref{recoveryGL}).
We remark that scaling back to the original variables
this condition is the physically natural one
$L_0^2\bext\in 2\pi\Z$.
\\

Before going into the proof of Theorem \ref{maintheo} we address
the limiting functional $I(\mu)$, which has many similarities with irrigation  (or branched transportation) models 
that have recently attracted a lot of attention (see \cite{BeCaMo}  and  more detailed comments in Section \ref{Sec:reliri} or the recent papers \cite{BraWirth, BraWir16} where the connection is also made to some urban planning models).  In Section \ref{Sec:limiting}, we first prove that the variational problem for this limiting functional is well-posed (Proposition \ref{existmu}) and show a scaling law for it (Proposition \ref{branchingmu} and Proposition \ref{branchinglowerbound}).
In Proposition \ref{def:subsystem}, we define the notion of
subsystems which allows us to remove part of the mass carried by the branching measure. This notion is at the basis of Lemma \ref{noloop} and Proposition \ref{finitebranch} which show that minimizers contain no loops and that
far from the boundary, they are made of a finite number of branches. 
From the no-loop property, we easily deduce  Proposition \ref{reg} which is a regularity result for minimizers of $I$.  The main result of Section \ref{Sec:limiting}
is Theorem \ref{theodens} which proves the density 
of ``regular'' measures in the topology given by the energy $I(\mu)$. As in nearly every $\Gamma-$convergence result, such a property is crucial in order  to implement the construction for the upper bound \ref{maintheoub}.

We now comment on the proof of Theorem \ref{maintheo}. Let us first point out that if the Meissner condition $\rho B =0$ were to hold, and $A$ could be written 
as a gradient field in the set $\{\rho>0\}$, then  $|\nabla_{TA} u|^2= |\nabla \rho^{1/2}|^2$ and we would have
\begin{equation}\label{introlowerbound}\int_{Q_{L,1}} |\nabla_{TA} u |^2  + \left(B_3 - \a (1-\rho)\right)^2dx=\int_{Q_{L,1}} |\nabla \rho^{1/2} |^2  +  \a^2 \chi_{\rho>0} (1-\rho)^2 +\chi_{\rho=0} (B_3-\a)^2 dx.\end{equation}
This is a Modica-Mortola \cite{ModMort} type of functional with a degenerate double-well potential given by $W(\rho):= \chi_{\rho>0} (1-\rho)^2$. Thanks to Lemma \ref{lemmameissner}, one can control how far we are from satisfying the Meissner condition. From this, we deduce that \eqref{introlowerbound} almost holds (see  Lemma \ref{lemmafirstlowerbound}).
This implies that the Ginzburg-Landau energy gives a control over the perimeter of the superconducting region $\{\rho>0\}$. In addition, $\b\ll 1$ imposes a small cross-area fraction for $\{\rho>0\}$.
Using then isoperimetric effects to get convergence to one-dimensional objects (see Lemma \ref{lowerbound2d}), we may 
use Proposition \ref{gammaliminf} to conclude the proof of \ref{maintheolb}.

In order to prove \ref{maintheoub}, thanks to the density result in Theorem \ref{theodens}, it is enough to consider regular measures. Given such a measure $\mu$, we first 
approximate it with quantized measures (Lemma \ref{lemmaquantize}).
Far from branching points the construction is easy (see Lemma \ref{lemmacurve3}). 
At a branching point, we need to pass from one disk to two (or vice-versa); this is 
done passing through rectangles 
(see Lemma \ref{Lembranch3} and Figure \ref{fig1}). 
Close to the boundary 
we use instead the construction from  \cite{CoOtSer}, which explicitly generates a specific branching pattern with the optimal energy scaling; since the 
height over which this is done is small the prefactor is not relevant here (Proposition \ref{propboundraylayer}).
The last step is to define a phase and a magnetic potential to get back to the full Ginzburg-Landau functional. This is possible since we made the
construction with  the Meissner condition and quantized
fluxes enforced, see Proposition \ref{thirdupperbound}.

{}From \eqref{Etilde} and the discussion around \eqref{introlowerbound}, for type-I superconductors, the Ginzburg-Landau functional can be seen as a non-convex, non-local (in $u$) functional favoring oscillations, 
regularized by a surface term which selects the lengthscales of the microstructures. The appearance of branched
structures for this type of problem is shared by many other functionals appearing in material sciences such as shape memory alloys \cite{KohnMuller92,KohnMuller94,Conti00,KnuepferKohnOtto2013,BelGol,Zwicknagl2014,ChanConti2015,ContiZwicknagl}, uniaxial ferromagnets \cite{ChoKoOtmicro,ViehOtt,KnMu}
and blistered thin films \cite{BCDM00,JinSternberg,BCDM02}.
Most of the  previously cited results on branching patterns (including \cite{ChokConKoOt,ChokKoOt,CoOtSer} for type-I superconductors) focus on scaling laws. Here, as in \cite{ViehOtt,CDZ}, we go one step further and
prove that, after a suitable anisotropic rescaling, configurations of low energy converge to branched patterns.  The two main  difficulties in our model with respect to the one studied in \cite{ViehOtt} are the presence of an additional lengthscale (the penetration length) and its sharp limit counterpart, the Meissner condition $\rho B=0$
which gives a nonlinear coupling between $u$ and $B$.
Let us point out that for the Kohn-M\"uller model \cite{KohnMuller92,KohnMuller94},  a much stronger result is known, namely that minimizers are  asymptotically self-similar \cite{Conti00} (see also \cite{Viehmanndiss,AlChokOt} for related results). In \cite{Gol}, the optimal microstructures for a two-dimensional analogue of $I(\mu)$ are exactly computed.

The paper is organized as follows. In section \ref{sec:not}, we set some notation and recall some notions from optimal transport theory. In Section \ref{Sec:GL}, we recall the definition
of the Ginzburg-Landau functional together with various important quantities such as the superconducting current. We also introduce there the anisotropic rescaling leading to the functional $\widetilde{E}_T$. 
In Section \ref{Sec:intermediate}, we introduce for the sake of clarity intermediate functionals corresponding to the different scales of the problem. Let us stress that we will not use them in the rest of the paper but strongly believe that they help understanding the structure of the problem. In Section \ref{Sec:limiting}, we carefuly define the
limiting functional $I(\mu)$ and study its properties. In particular we recover a scaling law for the minimization problem and prove regularity of the minimizers. We then prove the density in energy of 'regular' measures. This is a crucial result for the main $\Gamma-$ convergence result which is proven in the last two sections. As customary, we first prove the lower bound in Section \ref{Sec:gammaliminf} and then make the upper bound construction in Section \ref{Sec:upperbound}.

\section{ Notation and preliminary results}\label{sec:not} 
In the paper we will use the following notation. The symbols $\sim$, $\ges$, $\les$ indicate estimates that hold up to a global constant. For instance, $f\les g$ denotes the existence of a constant $C>0$ such that $f\le Cg$,
$f\sim g$ means $f\les g$ and $g\les f$. 
In heuristic arguments we use $a\simeq b$ to indicate that $a$ is close (in a not precisely specified sense) to $b$.
We use a prime to indicate the first two components of a vector in $\R^3$, and
identify $\R^2$ with $\R^2\times\{0\}\subset\R^3$.
Precisely, for $a\in \R^3$ we write $a'=(a_1,a_2,0)\in\R^2\subset\R^3$;  given
two vectors $a,b\in\R^3$ we write $a'\times b'=(a\times b)_3 =
(a'\times b')_3$. We denote by $(e_1,e_2,e_3)$ the canonical basis of $\R^3$. For $L>0$ and $T>0$, 
 $Q_L:=(-\frac{L}{2},\frac{L}{2})^2$ and $Q_{L,T}:=Q_L\times(-T,T)$. For a function $f$ defined on $Q_{L,T}$, we denote $f_{x_3}$ the function $f_{x_3}(x'):=f(x',x_3)$ and we analogously define for $\Omega \subset Q_{L,T}$, 
 the set $\Omega_{x_3}$. For $x=(x',x_3)$ and $r>0$ we let $\B_r(x)=\B(x,r)$ be the ball of radius $r$ centered at $x$ (in $\R^3$) 
and $\B'_r(x')=\B'(x',r)$ be the analogue two-dimensional ball centered at $x'$. Unless specified otherwise, all the functions and
measures we will consider are periodic in the $x'$ variable, i.e., we identify $Q_L$ with the torus $\R^2/L\Z^2$. In particular, for $x',y'\in Q_L$, $|x'-y'|$ denotes the distance for the metric of the torus, i.e., $|x'-y'|:=\min_{k\in \Z^2} |x'-y'+Lk|$. We denote by $\H^k$ the $k-$dimensional Hausdorff measure.
We let  $\cP(Q_L)$ be  the space of probability 
measures on $Q_L$ and $\M(Q_{L,T})$ be the space of finite Radon measures on $Q_{L,T}$, and similarly $\M(Q_L)$. Analogously, we define $\M^+(Q_{L,T})$ and $\M^+(Q_L)$ as the spaces of finite Radon measures which are also positive. For a measure $\mu$ and a function $f$, we denote by $f\sharp \mu$ the push-forward of $\mu$ by $f$. 

We recall the definition of the (homogeneous) $H^{-1/2}$ norm of a
function 
$f\in L^2(Q_{L})$ with $\int_{Q_{L}} f dx'=0$,
\begin{equation}\label{defHdem}
 \|f\|_{H^{-1/2}}^2:=\inf\lt\{ \int_{Q_{L}\times [0,+\infty)} |B|^2 dx \ : \ \Div B=0, \ B_3(\cdot,0)=f \rt\},
\end{equation}
which  can be alternatively given in term of the two-dimensional Fourier series as
(see, e.g., \cite{ChokKoOt})
\begin{equation*}
 \|f\|_{H^{-1/2}}^2=\frac{1}{2\pi} \sum_{ k'\in \lt(\frac{2\pi}{L}\Z\rt)^2\backslash\{0\}} \frac{ |\hat f (k')|^2}{|k'|}.
\end{equation*}
We shall write $\|f\|^2_{H^{-1/2}(Q_L\times\{\pm T\})}$ for 
 $\|f\|^2_{H^{-1/2}(Q_L\times\{T\})}+
 \|f\|^2_{H^{-1/2}(Q_L\times\{-T\})}$.

The $2$-Wasserstein distance between 
 two measures $\mu$ and $\nu\in \calM^+(Q_L)$ with $\mu(Q_L)=\nu(Q_L)$ is given by
\[W_2^2(\mu,\nu):=\min\lt\{ \mu(Q_L)\int_{Q_L\times Q_L} |x-y|^2 \, d\Pi(x,y)  \, : \, \Pi_1=\mu, \ \Pi_2=\nu\rt\},\] 
where the minimum is taken over  measures on  $Q_L\times Q_L$ and   $\Pi_1$  and $\Pi_2$ are   the first and  second marginal of
$\Pi$\footnote{ for $p\ge 1$, we analogously define $W_p^p(\mu,\nu):=\min\lt\{ \mu(Q_L)\int_{Q_L\times Q_L} |x-y|^p \, d\Pi(x,y)  \, : \, \Pi_1=\mu, \ \Pi_2=\nu\rt\}$.}, respectively.
For measures $\mu,\nu\in\calM^+(Q_{L,T})$, the $2$-Wasserstein distance is correspondingly defined.
We now introduce  some notions from metric analysis, see \cite{AGS,villani}
for more detail.
A curve 
$\mu:(a,b)\to \cP(Q_L)$, $z\mapsto \mu_z$ belongs to $AC^2(a,b)$ (where $AC$ stands for absolutely continuous) if there exists
$m\in L^2(a,b)$ such that 
\begin{equation}\label{metricderiv}
W_2(\mu_{z},\mu_{\tilde{z}})\le \int_{z}^{\tilde{z}} m(t) dt \ \qquad \forall a<z\le \tilde{z} <b.\end{equation}
For any such curve, {the {\it speed}}
\[|\mu'|(z):=\lim_{\tilde{z}\to z}\frac{W_2(\mu_{\tilde{z}},\m_{z})}{|z-\tilde{z}|}\]
exists for $\H^1-$a.e. $z\in (a,b)$ and $|\mu'|(z)\le m(z)$ for $\H^1$-a.e. $z\in (a,b)$ for every admissible $m$ in \eqref{metricderiv}. 
Further, there exists a Borel vector field $B$ such that 
\begin{equation}\label{eqexistsB}
 B(\cdot,z)\in L^2(Q_L,\mu_{z}), \qquad \| B(\cdot,z)\|_{L^2(Q_L,\mu_{z})}\le |\mu'|(z) \quad \textrm{for } \H^1\text{-a.e. } \ z\in (a,b)
\end{equation}
and the continuity equation
\begin{equation}\label{conteqini}
 \partial_3 \mu_{z}+\Divp (B \mu_{z})=0
\end{equation}
holds in the sense of distributions \cite[Th. 8.3.1]{AGS}.
Conversely, if a weakly continuous curve $\mu_{z} : (a,b)\to \cP(Q_L)$ 
satisfies the continuity equation \eqref{conteqini} for some Borel vector field $B$ with $\| B(\cdot,z)\|_{L^2( Q_L,\mu_{z})}\in L^2(a,b)$ then
$\mu\in AC^2(a,b)$ and  $|\mu'|(z)\le \| B(\cdot,z)\|_{L^2(Q_L,\mu_{z})}$ for $\H^1$-a.e. $z\in (a,b)$.
In particular, we have
\begin{equation}\label{eqw22b}
W_2^2(\nu,\hat\nu)= \min_{\mu,B} \lt\{ 2T\int_{Q_{L,T}} |B|^2 d\mu_{z} dz \ :\ \mu_{-T}=\nu, \ \mu_T= \hat\nu \textrm{ and \eqref{conteqini} holds}\rt\}\,,
 \end{equation}
where by scaling the right-hand side does not depend on $T$.\\

For a (signed) measure $\mu\in \M(Q_L)$, we define the Bounded-Lipschitz norm of $\mu$ as 
\begin{equation}\label{BLdefin}
 \|\mu\|_{BL}:=\sup_{\|\psi\|_{Lip}\le 1} \int_{Q_L} \psi d\mu,
\end{equation}
where for a $Q_L-$periodic and Lipschitz continuous function $\psi$, $\|\psi\|_{Lip}:=\|\psi\|_{\infty}+\|\nabla \psi\|_{\infty}$. 
By the Kantorovich-Rubinstein Theorem \cite[Th. 1.14]{villani}, the $1-$Wasserstein and the Bounded-Lipschitz norm are equivalent. 

\section{The Ginzburg-Landau functional}\label{Sec:GL}
In this section we recall some background material concerning the Ginzburg-Landau functional and introduce the anisotropic rescaling leading to $\wE$.\\
For a (non necessarily periodic) function  $u:Q_{\Lz,T}\to\C$, called the order parameter, and a vector potential $A: Q_{\Lz}\times\R\to \R^3$ (also not necessarily periodic), we define the covariant derivative
\[\nabla_A u:= \nabla u-iAu,\]
the magnetic field
\[B:=\nabla\times A,\]
and the {superconducting} current 
\begin{equation}\label{defj}
  j_A:= \frac12\left( -i\bar u (\nabla_A u)+ i u (\overline{\nabla_A  u})\right)=\textrm{Im}(iu \overline{\nabla_A u})\,.
\end{equation}
Let us first notice that $|\nabla_A u|^2$ and
the observable quantities $\rho$, $B$ and $j_A$ are invariant under change of gauge. That is,  if we replace $u$ by $u e^{i\varphi}$ and $A$ by $A+\nabla \varphi$
for any function $\varphi$, they remain unchanged.
  We also point out that
  if $u$ is written in polar coordinates as
 $u=\rho^{1/2} e^{i\theta}$, then
\begin{equation*}
 |\nabla_A u|^2=|\nabla \rho^{1/2}|^2+\rho|\nabla \theta-A|^2.
\end{equation*}
For any admissible pair $(u,A)$, that is  such that $\rho$, $B$ and $j_A$ are $Q_{\Lz}$-periodic, we define the Ginzburg-Landau functional as
\begin{equation*}
  \EGL(u,A):=\int_{Q_{\Lz,T}} |\nabla_A u|^2
+ \frac{\kappa^2}{2} (1-|u|^2)^2 dx
+ \int_{Q_{\Lz}\times \R} |\nabla\times A-\Bext|^2 dx\,.
\end{equation*}
We remark that $u$ and $A$ need not be (and, if $\Bext\ne 0$, cannot be) periodic. See \cite{CoOtSer} for more details on the functional spaces we are using.
Here $\Bext:=\bext e_3$  is the external magnetic field and 
$\kappa\in(0,1/\sqrt2)$ is a {material constant, called the Ginzburg-Landau parameter}.
From periodicity and $\Div B=0$ it follows that 
$ \int_{Q_{\Lz}\times\{x_3\}} B_3 \, dx'$ does not depend on $x_3$ and therefore,
if the energy is finite, necessarily
\begin{equation}
\label{B2bex}
\int_{Q_{\Lz}\times\{x_3\}} B_3 \, dx'=\Lz^2 
\bext\hskip1cm\text{ for all }x_3\in\R\,.
\end{equation}
We first remove the  bulk part from the 
energy $\EGL$. In order to do so, we   introduce  the quantity
\begin{equation*}
  \calD_A^3u :=   (\partial_2 u - i A_2 u) - i (\partial_1 u
 - i A_1 u) 
 = (\nabla_A u)_2 - i (\nabla_A u)_1 
\end{equation*}
and, more  generally,
\begin{equation*}
  \calD_A^ku := (\partial_{k+2} u - i A_{k+2} u) - i (\partial_{k+1} u
 - i A_{k+1} u)  = (\nabla_A u)_{k+2} - i (\nabla_A u)_{k+1} \, ,
\end{equation*}
where components are understood cyclically (i.e., $a_k=a_{k+3}$). 
The operator $\calD_A$  (which corresponds to a creation operator for a magnetic Laplacian) was used by Bogomol'nyi in the proof of the self-duality of the Ginzburg-Landau functional at $\k=\frac{1}{\sqrt{2}} $ (cf. e.g. \cite{jaffetaubes}). 
His proof relied on  identities similar to the next ones, which will be crucial in  enabling us to separate the leading order part of the energy. 

Expanding the squares, one  sees (for details see \cite[Lem. 2.1]{CoOtSer})
that (recall that $\rho:=|u|^2$)
\begin{equation}\label{magic}
  |\nabla'_A u|^2 =  |\calD_A^3u|^2 + \rho B_3 + \nabla'\times 
   j'_A \end{equation}
and, for any $k=1,2,3$,
\begin{equation}\label{magic2}
   |(\nabla_A u)^{k+1}|^2 +    |(\nabla_A u)^{k+2}|^2= |\calD_A^ku|^2
   + \rho B_k + (\nabla\times j_A)_k  \,.
\end{equation}
This implies
\begin{equation*}
|\nabla_A' u |^2 =   (1-\k \sqrt2) |\nabla_A' u |^2 + \k\sqrt2 
|\calD_A^3u|^2 
+\k\sqrt2\rho B_3 + \k\sqrt2\nabla'\times j_A'\,.
\end{equation*}
The last term integrates to zero by the
periodicity of $j_A$. Therefore, for 
each fixed $x_3$, using \eqref{B2bex}, we have 
\begin{equation*}
  \int_{Q_{\Lz}} |\nabla_A' u |^2\,  dx' =    \int_{Q_{\Lz}} (1-\k \sqrt2) |\nabla_A' u |^2 + \k\sqrt2 
|\calD_A^3u|^2  +\k\sqrt2(\rho-1) B_3 \, dx'+ \Lz^2 \k\sqrt2 \bext\,.
\end{equation*}
We substitute and obtain, using $\int_{Q_{\Lz}} (B_3-\bext)^2 dx'=\int_{Q_{\Lz}} B_3^2-\bext^2 dx'$ {and completing squares}, 
\begin{equation} \label{eqEGLE}
\EGL(u,A)=2T\Lz^2\left(\k\sqrt2 \bext  -\bext^2\right) + E(u,A)
  + \k\sqrt2
  \int_{Q_{\Lz,T}}|\calD_A^3u|^2-|\nabla_A'u|^2 dx,
\end{equation}
where
\begin{eqnarray*}
  E(u,A)&:=&
\int_{Q_{\Lz,T}} |\nabla_A u |^2  + \left(B_3 - \frac{\k}{\sqrt2} (1-\rho)\right)^2 dx 
\\&&
+  \int_{Q_{\Lz}\times \R} |B'|^2  dx+
   \int_{Q_{\Lz}\times(\R\setminus(-T,T))} (B_3 -\bext)^2 dx\,.
\end{eqnarray*}
In particular, the bulk energy is $2{\Lz}^2T(\k\sqrt2 \bext-\bext^2) $.
Since we are interested in the regime $\kappa\ll 1$ and since $ |\calD_A^3u|^2\le 2 |\nabla_A'u|^2$, the contribution of the last term 
in (\ref{eqEGLE}) to the energy is (asymptotically) negligible with respect to the first term in $ E$, and therefore it 
can be ignored in the following.

Applying \eqref{defHdem} to $B-\bext e_3  $ and  minimizing outside $Q_{\Lz} \times [-T,T]$ if necessary, the last two terms in
$E(u,A)$
can be replaced by
$$\int_{Q_{{\Lz},T}} |B'|^2 dx+ \| B_3-\bext
\|_{H^{-1/2}(\{x_3= T\})}^2+\| B_3-\bext
\|_{H^{-1/2}(\{x_3=-T\})}^2\,,$$ 
so that $ E(u,A)$ becomes
\begin{equation}\label{E}
  E(u,A)=
\int_{Q_{\Lz,T}}  |\nabla_A u |^2 + \left(B_3 - \frac{\k}{\sqrt2} (1-\rho)\right)^2 +  |B'|^2 dx+\| B_3-\bext
\|_{H^{-1/2}(\{x_3=\pm T\})}^2\,.
\end{equation}

Let us notice that the normal solution $\rho=0$, $B=\bext e_3$ (for which we can take $A(x_1,x_2,x_3)=\bext x_1 e_2$) is always admissible but has energy equal to 
\[\EGL(u,A)= \Lz^2 T \kappa^2\gg 2\Lz^2T(\k\sqrt2 \bext-\bext^2), \]
in the regime $\kappa\gg \bext$ that we consider here.

The following scaling law is  established in \cite{CoOtSer}.
\begin{theorem}\label{theoCoOtSer}
For $\bext<\kappa/8$, $\kappa\le 1/2$, $\kappa
T \ge 1$, $\Lz$ sufficiently large, if  the quantization condition
\begin{equation}\label{quantificationini}
 \bext \Lz^2 \in 2\pi \Z,
\end{equation}
holds then
\begin{equation}
\label{minE}
\min E(u,A) \sim \min \left\{\bext^{2/3} \kappa^{2/3} T^{1/3} , \bext \kappa^{3/7} T^{3/7} \right\}\Lz^2.\end{equation}
\end{theorem}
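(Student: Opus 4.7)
The plan is to establish the claim as two matching bounds: the upper bound by constructing, for each of the two scalings in (\ref{minE}), an explicit branched test configuration; the lower bound by combining a Modica-Mortola-type perimeter estimate per horizontal slice with a Wasserstein transport estimate between slices and an interpolation inequality absorbing the $H^{-1/2}$ penalty at $x_3=\pm T$.

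For the upper bound, I would build a periodic array of normal flux tubes inside the superconductor which refines towards $x_3=\pm T$. In the regime $\bext^{2/3}\kappa^{2/3}T^{1/3}\Lz^2$ (uniform branching), I fix an integer $N$ and construct a self-similar $N$-generation dyadic tree: at generation $k$ the lateral period is $\ell_k=\Lz 2^{-k}$ and each tube has cross-sectional area $\ell_k^2\sqrt 2\,\bext/\kappa$, so that flux conservation (\ref{B2bex}) is automatic. Each refinement is realized as a Y-shaped transition across a short slab whose height $h_k$ is chosen to balance the surface energy $\sim\kappa\sqrt{\bext/\kappa}\,\ell_k^{-1}$ per unit volume against the transport cost $\int|B'|^2\sim \bext^2\ell_k^2/h_k$, and $N$ is chosen so that the finest scale $\ell_N$ matches the one forced by the boundary $H^{-1/2}$ term. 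The non-uniform regime $\bext\kappa^{3/7}T^{3/7}\Lz^2$ is handled by the same philosophy but refining only inside a thin boundary layer, keeping a single-scale pattern in the bulk. In both cases $|u|$ is a smoothed indicator with transition layer of width $\kappa^{-1}$ (the coherence length), the phase of $u$ is globally defined thanks to the quantization (\ref{quantificationini}), and $A$ is taken as a gradient outside the tubes to enforce the Meissner condition $\rho B=0$, with a localized correction across the transition layers.

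For the lower bound the aim is to prove that each level of refinement carries a definite cost and that the boundary term forces a minimum number of levels. First, use the identities (\ref{magic})--(\ref{magic2}) and the smallness of $\bext/\kappa$ to reduce to an approximate Meissner regime in which $\rho B\simeq 0$ and $|\nabla_A u|^2\simeq|\nabla|u||^2$ up to a controlled error; coupled with the double-well $(B_3-\tfrac{\kappa}{\sqrt 2}(1-\rho))^2$, this yields a Modica-Mortola-type perimeter bound for $\{\rho>0\}$ with prefactor $\sim\kappa$, slice by slice in $x_3$. Second, $\Div B=0$ together with $B_3\simeq\tfrac{\kappa}{\sqrt 2}(1-\rho)$ gives a continuity equation $\partial_{x_3}\chi+\Divp B'=0$ for $\chi\propto 1-\rho$; by Benamou-Brenier, $\int|B'|^2\,dx$ then bounds from below the Wasserstein distance between horizontal slices of $\chi$. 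Third, an interpolation inequality of the form $\|f-\overline f\|_{H^{-1/2}}^2\lesssim \|f\|_{L^1}\cdot\ell$, where $\ell$ is the typical lateral scale of $\{f\neq 0\}$ as measured by its perimeter, converts the boundary penalty into a lower bound on the finest lateral scale at $x_3=\pm T$. Combining these three ingredients and optimizing over the admissible refinement schedules reproduces the minimum in (\ref{minE}).

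The main obstacle is the Meissner reduction used in the lower bound: since $\kappa$ is only assumed bounded away from $1/\sqrt 2$ (not sent to infinity), the contribution $\rho|A|^2$ concealed in $|\nabla_A u|^2$ need not vanish and must be quantitatively controlled. I would do this by splitting the superconducting current $j_A$ of (\ref{defj}) into a gauge-exact part and a small defect, using the gauge freedom to make $A$ close to a gradient on $\{\rho>0\}$, and quantitatively exploiting the Bogomol'nyi-type cancellations already visible in the passage to (\ref{eqEGLE}). Once the Meissner reduction is in place, the remaining combinatorial optimization of refinement scales is essentially the same as in the sharp-interface analysis of \cite{ChokConKoOt}, so the technical heart of the argument is really this first, Meissner-type step.
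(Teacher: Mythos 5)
The theorem you are asked to prove is not actually proved in this paper: it is quoted verbatim from the reference \cite{CoOtSer}, and the text around it says only ``The following scaling law is established in \cite{CoOtSer}.'' There is therefore no internal proof to compare your proposal against. With that caveat, your outline is in the right spirit for how such a bound is established: a self-similar branched upper-bound construction, matched by a lower bound that combines a slice-wise Modica--Mortola estimate, a Benamou--Brenier transport bound between slices, and an interpolation inequality to absorb the $H^{-1/2}$ penalty at $x_3=\pm T$. Indeed, the present paper reuses exactly these three ingredients in its $\Gamma$-liminf (Lemma \ref{lemmafirstlowerbound}, Lemma \ref{lowerbound2d}, and the Wasserstein estimate \eqref{HolderW2} together with the interpolation \eqref{interpolBVW2} in Proposition \ref{branchinglowerbound}).

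The one place where your proposal diverges from the actual technique is the ``Meissner reduction'' that you flag as the main obstacle. You suggest fixing the gauge to make $A$ close to a gradient on $\{\rho>0\}$ and then tracking the defect. The route actually taken (see Lemma \ref{lemmameissner} and its use in Lemma \ref{lemmafirstlowerbound}) is gauge-free: one integrates the Bogomol'nyi identities \eqref{magic}--\eqref{magic2} against a test function, so that $\rho B_3\psi = (|\nabla'_{\lambda A} u|^2 - |\calD^3_{\lambda A} u|^2)\psi - \psi\,\nabla'\times j'_{\lambda A}$, and after an integration by parts every term is controlled by the energy; no gauge normalization on $\{\rho>0\}$ is needed, nor is one available in general since that set need not be simply connected. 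Your proposal does mention exploiting the Bogomol'nyi cancellations, so you have identified the right tool; but the gauge-fixing detour is a red herring and likely not implementable uniformly. A second, smaller inaccuracy: for the regime $\bext\kappa^{3/7}T^{3/7}\Lz^2$ the optimal test configuration is not ``a single-scale pattern in the bulk with refinement only near the boundary'' but rather an array of well-separated, individually branching flux trees, as noted after \eqref{minE}; your description would not achieve the right scaling in that regime.
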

 
We believe that \eqref{quantificationini} is also a necessary condition for \eqref{minE} to hold. Indeed, we expect that if $(u,A)$ is such that 
\[ E(u,A) \les  \left\{\bext^{2/3} \kappa^{2/3} T^{1/3} \Lz^2, \bext \kappa^{3/7} T^{3/7} \Lz^2 \right\},\]
then the normal phase $\rho\simeq 0$ is the minority phase (typically disconnected on every slice) and there exist $x_3\in(-T,T)$ and (periodic) curves $\gamma_1$ and $\gamma_2$ such that
 $\Gamma_1:=\{(\gamma_1(s), s, x_3) : s\in[ 0, 1]\} \subset \{\rho\simeq 1\}$ and $\Gamma_2:=\{(s,\gamma_2(s), x_3) : s\in[ 0, 1]\} \subset \{\rho\simeq 1\}$,
 with $\gamma_i(1)=\gamma_i(0)+L_0 e_i$. If this holds then using  
Stokes Theorem on large domains the boundary of which is made of concatenations of the curves $\Gamma_i$, it is possible to prove that \eqref{quantificationini} must  hold. 
As in \cite{CoOtSer}, we will need to assume \eqref{quantificationini} in order to build the recovery sequence in Section \ref{recoveryGL}.\\

The first regime in \eqref{minE} corresponds to uniform branching patterns while the second corresponds to well separated branching trees (see \cite{ChokConKoOt,ChokKoOt,CoOtSer}). 
We focus here on the first regime, that is $\k^{5/7}\ll \bext T^{2/7}$, and replace $\kappa$ and $\bext$ by the variables
$\alpha$, $\beta$, defined according to 
$$\kappa T = \sqrt{2}\a \qquad \bext= \frac{\beta \kappa }{\sqrt{2}}=\frac{\alpha\beta}{T}\,,$$
and then  rescale
 \begin{center}
\begin{tabular}{ll}
$\hat{x}:= x/T$,   &$  L:=\Lz/T$,\\[6pt]
 $\hat{u}(\hat{x}):= u(x)$,  &$\hat{A}(\hat{x}):= A(x)$\\[6pt]
 $E_T(\hat u,\hat A):=\frac{1}{T L^2} E(u,A)$,& \end{tabular}
\end{center}
so that in particular $ \hat{B}(\hat{x})= \widehat{\nab}\times \hat{A}(\hat{x})= TB(x)$ and $\nab_A u(x)= T^{-1} \widehat{\nab}
\hat{u}( x/T)-i \hat{A}(x/T)\hat u (x/T)$. Changing variables 
 and removing the hats yields
\begin{equation*}
E_T(u,A)=  \frac{1}{L^2}\lt(\int_{Q_{L,1}}
|\nab_{T{A}} {u}|^2+ \left({B_3} - \a(1-\rho)\right)^2 +  |{B'}|^2 dx+\|{B_3}- \a\b\|_{H^{-1/2}(\{x_3=\pm1\})}^2\rt)\,.
\end{equation*}
as was anticipated in (\ref{Etilde}).
In these new variables, the  scaling law \eqref{minE} becomes  $E_T\sim \min\{\alpha^{4/3}\beta^{2/3},\a^{10/7}\beta \}$ and the uniform branching regime corresponds to $E_T\sim \alpha^{4/3}\beta^{2/3}$ which amounts to $\a^{-2/7}\ll \b\ll 1$,
see also (\ref{scallawintro}).
 Constructions (leading to the upper bounds in \cite{CoOtSer,ChokKoOt,ChokConKoOt}), suggest that in this regime, typically, the penetration length of the magnetic field inside 
the superconducting regions is of the order of $T^{-1}$, the coherence length (or domain walls) is of the order of $\a^{-1}$, the width of the normal domains in the bulk is of the order of $\a^{-1/3} \b^{1/3}$
and their separation of order $\a^{-1/3}\b^{-1/6}$. These various lengthscales
motivate the anisotropic rescalings that we will introduce in Section \ref{Sec:intermediate}.
\def\no{
For later reference, let us point out that in these new parameters, the quantization condition \eqref{quantificationini} reads
\begin{equation}\label{quantificationini2}
 TL^2 \a \b \in 2\pi \Z.
\end{equation}
Notice that since $T$ is very large compared to the other parameters, this condition is less 
and less restrictive as $T\to +\infty$. We further remark that the $L$ in (\ref{quantificationini2}) is $\hat L=L_0/T$.}

In closing this section we present the anisotropic rescaling that will lead to the functional defined in (\ref{eqdefwe}), postponing to the 
next section a detailed explanation of its motivation. We set for $x \in Q_{L,1}$, 
\begin{center}
\begin{tabular}{ll}
 $\begin{pmatrix}\tilde{x}'\\ \tilde{x}_3\end{pmatrix}:= \begin{pmatrix}\a^{1/3} \b^{1/6} x'\\x_3\end{pmatrix}$, &\hspace{2cm} 
 $ \tilde L:= \a^{1/3}\b^{1/6}L$,\\[8pt]
 $\begin{pmatrix}\tilde{A}'\\ \tilde{A}_3\end{pmatrix}(\tilde{x}):=\begin{pmatrix}\a^{-2/3}\b^{1/6} A'\\ \a^{-1/3} \b^{1/3}A_3\end{pmatrix}(x)$, & \hspace{2cm} $ \tilde{u}(\tilde{x}):=u(x),$
\end{tabular}
\end{center}
to get $\tilde B_3(\tilde x)=\alpha^{-1}B_3(x)$, $\tilde B'(\tilde x)=\alpha^{-2/3}\beta^{1/6}B'(x)$ inside the sample. 
Outside the sample, i.e. for $|x_3|\ge 1$, we make the isotropic rescaling $\tilde{x}:= \pm e_3+\a^{1/3}\b^{1/6}( x \mp e_3)$
to get $\tilde B(\tilde x)=\a^{-1} B(x)$. A straightforward computation leads to
 $\wE(\tilde u, \tilde A)=\a^{-4/3}\b^{-2/3} E_T(u,A)$, where
\begin{multline*}
  \wE(u,A):=\frac{1}{\widetilde{L}^2}\Big[\int_{Q_{\tilde L,1}} \a^{-2/3}\b^{-1/3}\lt|\nabla_{\a^{1/3}\b^{-1/3}TA}'u\rt|^2+ 
  \a^{-4/3}\b^{-2/3}\lt|(\nabla_{\a^{1/3}\b^{-1/3}TA} u)_3\rt|^2\\
+  \a^{2/3}\b^{-2/3}\left({B_3} -(1-|{u}|^2)\right)^2
+  \b^{-1}|{B'}|^2 dx +\a^{1/3}\b^{7/6}\|\b^{-1}{B_3}-1\|_{H^{-1/2}(x_3=\pm1)}^2\Big],\end{multline*}
with $\nab\times A=B$ (and in particular $\Div B=0$).
We assume that $\tilde L$ is a fixed quantity of order 1. For simplicity of notation, the detailed analysis is done
only for the case $\tilde L=1$.\\
Let us point out that in these units, the penetration length is of order $T^{-1}\a^{1/3}\b^{1/6}$, the coherence length  of order $\a^{-2/3}\b^{1/6}$, 
the width of the normal domains in the bulk of order $\b^{1/2}$ and the distance between the threads  of order one. That is, the scale separation \eqref{scalesepar} reads now
\begin{equation}\label{scalesepar2}
 T^{-1}\a^{1/3}\b^{1/6}\ll \a^{-2/3}\b^{1/6}\ll \b^{1/2}\ll1.
\end{equation}


\section{The intermediate functionals}\label{Sec:intermediate}
In this section we explain the origin of the rescaling leading from $E_T$ to $\tilde E_T$, 
and the different functionals which appear at different scales. This material is not needed
for the proofs but we think it is important to illustrate the meaning of our results.
We carry out the scalings in detail but the relations between the functionals are here discussed only at a heuristic level.

We want to successively  send $T\to+\infty$, $\alpha\to +\infty$ and $\beta\to 0$. For this we are going to introduce a hierarchy of models starting from $E_T(u,A)$ and finishing at $I(\mu)$. 
 When sending first $T\to +\infty$ with fixed $\alpha$ and $\beta$, the functional $E_T$ approximates
\begin{equation*}
 F_{\alpha,\beta}(\rho,B):=\frac{1}{L^2} \lt(\int_{Q_{L,1}} |\nabla \rho^{1/2}|^2 +(B_3-\alpha (1-\rho))^2 +|B'|^2 dx +\|{B_3}- \a\b\|_{H^{-1/2}(x_3=\pm1)}^2\rt),
\end{equation*}
with the constraints 
\begin{equation}\label{meissner4}
 \Div B=0 \qquad\textrm{and } \qquad \rho B=0.
\end{equation} The main difference between $E_T$ and $F_{\alpha,\beta}$ is that for the latter, since the penetration length (which corresponds to $T^{-1}$) was sent to zero, the Meissner condition \eqref{meissner4} is enforced. 
We now want to send the coherence length (of order $\a^{-1}$) to zero at fixed $\beta$, while keeping superconducting domains of finite size. 
Since the typical domain diameter is of order $\a^{-1/3}\b^{1/3}$ and their distance is of order $\a^{-1/3}\b^{-1/6}$, we are led to the anisotropic rescaling:

\begin{center}
\begin{tabular}{ll}
 $\begin{pmatrix}\hat{x}'\\ \hat{x}_3\end{pmatrix}:= \begin{pmatrix}\a^{1/3} x'\\x_3\end{pmatrix}$,  & \hspace{2cm} $\hat{L}:= \a^{1/3}L$,\\[6pt]
 $\begin{pmatrix}\hat{B}'\\ \hat{B}_3\end{pmatrix}(\hat{x}):=\begin{pmatrix}\a^{-2/3} B'\\ \a^{-1}B_3\end{pmatrix}(x)$,  & \hspace{2cm}$\hat{\rho}(\hat{x}):=\rho(x)$,\\[6pt]
 \hspace{1cm}$\wF=\a^{-4/3} F_{\a,\b}$.  &
\end{tabular}\end{center}
In these variables, the coherence length is of order $\a^{-2/3}\ll 1$  (at least horizontally) while the diameter of the normal domains is of order $\beta^{1/3}$ and their separation of order $\beta^{-1/6}$. Dropping the hats (we just keep them on the functional  and on $L$ to avoid confusion) we obtain
\begin{multline*}
 \wF(\rho,B):=\frac{1}{\hL^2} \lt(\int_{Q_{\hL,1}} \a^{-2/3}\left|\begin{pmatrix} \nabla' \rho^{1/2}\\ \a^{-1/3} \partial_3 \rho^{1/2}  
                                                                           \end{pmatrix}\right|^2
 +\a^{2/3}\lt(B_3- (1-\rho)\rt)^2 +|B'|^2 dx\rt.\\
\lt. +\a^{1/3}\|{B_3}- \b\|_{H^{-1/2}(x_3=\pm1)}^2\rt),
\end{multline*}
with the constraints \eqref{meissner4}. The scaling of Theorem \ref{theoCoOtSer}
indicates that $\wF$ behaves as $ \min\{\beta^{2/3},\a^{2/21}\beta \}$ which is of  order  $\beta^{2/3}$ if $\a\gg 1$ and $\beta$ is fixed.
We remark that, letting $\eta:= \alpha^{-1/3}$ and
 $\delta:=\eta^2=\alpha^{-2/3}$, one has
\begin{multline*}
 \wF(\rho,B)= \frac{1}{\hL^2} \left(\int_{Q_{\hL,1}} \delta \lt|\begin{pmatrix}\nabla'\rho^{1/2}\\ \eta \partial_3 \rho^{1/2}\end{pmatrix}\rt|^2+\frac{1}{\delta} \lt(B_3-(1-\rho)\rt)^2 
+|B'|^2 dx \rt.\\
\lt.+ \eta^{-1} \|B_3-\beta\|_{H^{-1/2}(x_3=\pm1)}^2\rt).
\end{multline*}
In this form, $ \wF(\rho,B)$ is very reminiscent of the functional studied in \cite{ViehOtt}. Notice however that besides the Meissner condition which makes 
our functional more rigid, the scaling $\delta=\eta^2$ is borderline for the analysis in \cite{ViehOtt}.

Recalling that $\lt(B_3-(1-\rho)\rt)^2 = \chi_{\rho>0} (1-\rho)^2$, the corresponding term in $\wF$ has the form of a double well-potential, and so in the limit $\alpha\to+\infty$ the functional $\wF$ approximates
\begin{equation*}
 G_\beta(\chi,B'):=\frac{1}{\hL^2} \lt( \int_{Q_{\hL,1}} \frac{4}{3}|D'\chi| +|B'|^2 dx \rt),
\end{equation*}
with the constraints $\chi\in \{0,1\}$, $\chi(\cdot,x_3) \weaklim \beta dx'$ when $x_3\to \pm1$ and
\begin{equation*}
 \partial_3 \chi +\Divp B'=0 \qquad \textrm{and } \qquad \chi B'=B'.
\end{equation*}
This is similar to the simplified sharp-interface functional that was studied in \cite{ChokKoOt,ChokConKoOt}.
In the definition of $G_\beta$, we used the notation
\begin{equation*}
 \int_{Q_{\hL,1}} |D' u|:= \sup_{\stackrel{\xi \in C^{\infty}(Q_{\hL,1}),}{ |\xi|_\infty \le 1}}  \, \int_{Q_{\hL,1}} u \Divp \xi \, dx,
\end{equation*}
for the horizontal $BV$ norm of a function $u\in L^1(Q_{\hL,1})$. By definition it is lower semicontinuous for the $L^1$ convergence and it is not hard to check that if we let $u_{x_3}(x'):= u(x',x_3)$, then
\[\int_{Q_{\hL,1}} |D'u|=\int_{-1}^1 \left(\int_{Q_{\hL}} |D' u_{x_3}|\right) dx_3,\]
where $\int_{Q_{\hL}} |D' u_{x_3}|$ is the usual $BV$ norm of $u_{x_3}$ in $Q_{\hL}$ \cite{AFP}. From this and the usual co-area formula \cite[Th. 3.40]{AFP}, we infer that
\begin{equation}\label{coarea}
 \int_{Q_{\hL,1}} |D'u|=\int_{\R} \int_{-1}^1 \H^1( \partial \{u_{x_3}>s\}) dx_3 ds.
\end{equation}
In \eqref{coarea}, $\partial \{u_{x_3}>s\}$ represents the measure-theoretic boundary of $\{u_{x_3}>s\}$ in $Q_{\hL}$. 

We finally want to send the volume fraction of the normal phase to zero and introduce the last rescaling in $\beta$ for which we let 
\begin{center}
\begin{tabular}{ll}
 $\begin{pmatrix}\tilde{x}'\\ \tilde{x}_3\end{pmatrix}:= \begin{pmatrix}\b^{1/6} x'\\x_3\end{pmatrix}$, & \hspace{2cm}$ 
 \tilde L:= \beta^{1/6}\hL$, \\[8pt]
 $\tB'(\tilde{x}):=\beta^{1/6} B'(x)$ & \hspace{2cm}$\tchi(\tilde{x}):=\beta^{-1}\chi(x)$,\\ [6pt]
$\wG:=\b^{-2/3} G_{\b}$. &
\end{tabular}
\end{center}
After this last rescaling, the domain width is of order $\b^{1/2}\ll1$, the separation between domains of order $1$. We obtain (dropping the tildas again) the order-one functional
\begin{equation*}
 \wG(\chi,B'):= \frac{1}{\widetilde{L}^2}\int_{Q_{\tilde L,1}} \frac{4}{3} \beta^{1/2}|D'\chi| +\chi|B'|^2 dx 
\end{equation*}
under the constraints $\chi\in \{0,\beta^{-1}\}$, $\chi(\cdot,x_3) \weaklim  dx'$ when $x_3\to \pm1$, and
\begin{equation*}
 \partial_3 \chi +\Divp (\chi B')=0 \qquad \textrm{and } \qquad \chi B'=\beta^{-1}B'.
\end{equation*}
This functional  converges to $I(\mu)$ as $\beta\to 0$.

Let us point out that since we are actually passing directly from the functional $\wE$ to $I$ in Theorem \ref{maintheo}, we are covering the whole parameter regime of interest. In particular,
our result looks at first sight stronger than passing first from $ E_T$ to $F_{\a,\b}$, then from $\wF$ to $G_\beta$ and finally from $\wG$ to $I$. However,
because of the Meissner condition, we do not have a proof of density of smooth objects for $F_{\a,\b}$ and $G_\b$. Because of this, we do not obtain the $\Gamma-$convergence of the intermediate functionals (the upper bound is missing).  

\section{The limiting energy}\label{Sec:limiting}

Before proving the $\Gamma$-limit we study the limiting functional $I$ that was mentioned in (\ref{eqdefIintro}) and motivated in the previous section.
We  give here a self-contained treatment of the functional $I$, which is motivated by the analysis discussed above,
and will be crucial in the proofs that follow. However, in this discussion we do not make use of the
relation to the Ginzburg-Landau functional.

\begin{definition}
For $L,T>0$ we denote by $\calA_{L,T}$ the set of  pairs of measures 
$\m\in\calM^+(Q_{L,T})$, $m\in \calM(Q_{L,T};\R^2)$ with $m\ll\mu$, satisfying the continuity equation
\begin{equation}\label{conteq0}
 \pa_3 \m +\Divp m=0 \qquad \textrm{in } Q_{L,T},
\end{equation}
and such that $\m=\m_{x_3}\otimes d{x_3}$ where, for a.e. $x_3\in(-T,T)$,
$\m_{x_3}=\sum_i \phi_i \delta_{X_i}$ for some $\phi_i> 0$ and $X_i\in Q_L$. 
We denote by $\calA_{L,T}^*:=\{\mu: \exists m, (\mu,m)\in\calA_{L,T}\}$ the
set of admissible $\mu$.

We define the functional  $I:\calA_{L,T}\to[0,+\infty]$ by
\begin{equation}\label{Imum}
 I(\mu,m):=
   \frac{K_*}{L^2} \int_{-T}^T  \sum_{x'\in Q_L} \left(\mu_{x_3}(x')\right)^{1/2}  \, dx_3 + 
   \frac{1}{L^2}\int_{Q_{L,T}}
   \left(\frac{dm}{d\m} \right)^2 d\m, 
\end{equation}
where $K_*:= \frac{8 \sqrt{\pi}}{3}$
and (with abuse of notation) $I:\calA_{L,T}^*\to[0,\infty]$ by
\begin{equation}\label{Imu}
I(\m):=\min \{ I(\mu,m)\ : \ m\ll\m, \ \pa_3 \m + {\Divp} m=0\}.
\end{equation}
\end{definition}
Condition (\ref{conteq0}) is understood in a $Q_L$-periodic sense, i.e., for any $\psi\in C^1(\R^3)$ which is
$Q_L$-periodic and vanishes outside $\R^2\times (0,T)$ one has
$\int_{Q_{L,T}} \partial_3\psi d\mu + \nabla'\psi \cdot dm=0$.
If $\mu_{x_3}=\sum_i \phi_i \delta_{X_i}$ then $\sum_{x'\in Q_L} \left(\mu_{x_3}(x')\right)^{1/2}=\sum_i \phi_i^{1/2}$.   Because of  
\eqref{conteq0},  $\mu_{x_3}(Q_L)$ does not depend on $x_3$.

Let us point out that the minimum in (\ref{Imu}) is attained thanks to (\ref{eqexistsB}).
Moreover, the minimizer is unique by strict convexity of $m\to\int_{Q_{L,T}} \left(\frac{dm}{d\m} \right)^2 d\m$. As proven in Lemma \ref{lemmacurves} below, if $\mu$ is made of a finite union of curves then there is actually only one  admissible 
measure $m$ for \eqref{Imu}. More generally, Since every measure $\mu$ with finite energy is rectifiable (see Corollary \ref{coroXia}), we believe that it is actually always the case. For $\mu$ an admissible measure and $z,\tilde{z}\in[-T,T]$, we let 
\begin{equation}\label{Iztz}
 I^{(z,\tilde{z})}(\mu):= \frac{K_*}{L^2} \int_{z}^{\tilde{z}}  \sum_{x'\in Q_L} \left(\mu_{x_3}(x')\right)^{1/2}  \, dx_3 + 
   \frac{1}{L^2}\int_{Q_{L}\times[z,\tilde{z}]}
   \left(\frac{dm}{d\m} \right)^2 d\m, 
\end{equation}
where $m$ is the optimal measure for $\m$ on $[z,\tilde{z}]$ (which coincides with the restriction to $[z,\tilde{z}]$ of the optimal measure on $[-T,T]$).

{}From (\ref{eqw22b}) one immediately deduces 
 for every measure $\mu$, and every $x_3, \tilde x_3\in[-T,T]$, the following estimate on the Wasserstein distance
\begin{equation}\label{HolderW2}
  W_2^2(\mu_{x_3}, \m_{\tilde x_3})\le L^2 I(\mu) |x_3-\tilde x_3|.
\end{equation}
In particular for  every measure $\mu$ with $I(\mu)<+\infty$, the curve $x_3\mapsto \mu_{x_3}$ is H\"older continuous with exponent $1/2$ in $\calM^+( Q_L)$ (endowed with the metric $W_2$) and the traces $\mu_{\pm T}$
 are well defined.

\subsection{Existence of minimizers}
Given two measures $\bar \m_{\pm}$ in $\M^+(Q_L)$ with $\bar \mu_+(Q_L)=\bar \mu_-(Q_L)$, we are interested in the  variational problem
\begin{equation}\label{limitProb}
 \inf\lt\{ I(\mu) \ : \ \mu_{\pm T}= \bar \m_{\pm} \rt\}.
\end{equation}
We first prove that any pair of  measures with equal flux can be connected with finite cost and that there always exists a minimizer.
The construction is a branching construction which gives the expected scaling (see \cite{ChokConKoOt,CoOtSer})
if the boundary data is such that $\bar \m_+=\bar\m_-$.

\def\no{
\begin{remark}\label{remarkbranchuni}
 Later on, we will make use of the uniform branching measures $\mu^{uni}$ constructed in the proof of the previous  proposition for $\bar \mu=\rho dx'$, a probability density with $c\ge \rho\ge \frac{1}{c}$. Notice that in this case, $\Phi_{ij,n}=\int_{Q_{ij,n}} \rho \, dx \sim \frac{1}{4^n}$.
\end{remark}
\begin{remark}\label{remarkscaling}
 By a simple rescaling of the construction leading to $\mu^{uni}$, we see that if $\bar \mu_{\pm T}(Q_L)=\Phi$, we have
\[ \inf\lt\{ I(\mu) \ : \ \mu_{\pm T}= \bar \m_{\pm T} \rt\}\les T\sqrt{\Phi}+\frac{L^2\Phi}{T},\]
In particular, if $ \bar \m_{\pm T}=dx'$, letting $N:=\frac{L}{T^{2/3}}$ and  taking $\m_0$ equal to $N^2$ uniformly distributed  Dirac masses with weight $\frac{L^2}{N^2}$, we find (provided $L\gg T^{2/3}$)
\[\inf I(\m)\les LTN+\frac{L^4}{N^2 T}\les L^2 T^{1/3}\]
which is the expected scaling (see \cite{ChokConKoOt,CoOtSer}).
\end{remark}}

\begin{proposition}\label{branchingmu}
 For every pair of  measures $ \bar \m_{\pm}\in \calM^+(Q_L)$ with $\bar\mu_+(Q_L)=\bar\mu_-(Q_L)=\Phi$, 
 there is  $\mu\in \calA^*_{L,T}$ such that
 $\mu_{\pm T}=\bar\mu_\pm$ and
 \begin{equation*}
  I(\mu)\les  \frac{T\Phi^{1/2}}{L^2}+\frac{\Phi}{T}. 
 \end{equation*}
 If $\bar\mu_+=\bar\mu_-$, then there is a construction with 
 \begin{equation*}
  I(\mu)\les  \frac{T \Phi^{1/2}}{L^2} +\frac{  T^{1/3} \Phi^{2/3} }{L^{4/3}}, 
 \end{equation*}
and such that the slice at $x_3=0$ is  given by $\mu_0=\Phi N^{-2}\sum_j \delta_{X_j}$, with $X_j$ the $N^2$ points in $[-L/2,L/2)^2\cap ((L/N)\Z)^2$, and $N:=\lfloor1+ \Phi^{1/6}L^{2/3}/T^{2/3}\rfloor$.
The measure $\mu$ is supported on countably many segments, which only meet at triple points.
 \end{proposition}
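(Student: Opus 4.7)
I would split the proof into two constructions, one for each of the two claimed upper bounds.

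\emph{Single-atom construction (first bound).} The idea is to carry all mass along a single vertical segment $\{0\}\times(-T,T)$, on which $\mu_{x_3}=\Phi\delta_0$, and to ``spread'' this central atom over the full height to realize the two boundary traces. Concretely, on the upper half $x_3\in(0,T)$ I would transport $\Phi\delta_0$ to $\bar\mu_+$ via a Benamou-Brenier geodesic (and symmetrically on the lower half), so that the velocity field $v=dm/d\mu$ is that of an optimal displacement interpolation. By construction the length cost is, up to a constant, $T\sqrt\Phi/L^2$ (a single atom of mass $\Phi$ persisting over height $2T$), while the transport cost is controlled by $(1/L^2)\cdot W_2^2(\Phi\delta_0,\bar\mu_\pm)/T\lesssim \Phi/T$, using the trivial bound $W_2^2(\Phi\delta_0,\nu)\le \Phi\,\diam(Q_L)^2\lesssim \Phi L^2$ (and \eqref{eqw22b}). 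The only delicate point is that $\mathcal A_{L,T}$ requires $\mu_{x_3}$ to be atomic for a.e.\ $x_3$, which is automatic when $\bar\mu_\pm$ are atomic (each Dirac in $\bar\mu_\pm$ moves along a straight segment from the origin), so in general I would approximate $\bar\mu_\pm$ by a sequence of finitely supported measures $\bar\mu_\pm^n$ with $W_2(\bar\mu_\pm^n,\bar\mu_\pm)\to 0$, perform the construction exactly for $\bar\mu_\pm^n$, and pass to the limit using the $W_2$ H\"older continuity \eqref{HolderW2}.

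\emph{Branching construction (second bound).} For $\bar\mu_+=\bar\mu_-$ I would exploit symmetry across $x_3=0$ and build a self-similar binary branching tree between the $N^2$-atom grid $\mu_0=\Phi N^{-2}\sum_j\delta_{X_j}$ and a single atom of mass $\Phi$ at an intermediate height $x_3=T_0<T$. The tree is built by iterating a Y-junction elementary brick, in which two parent atoms of mass $\phi$ at horizontal distance $d$ merge at a triple point into a single atom of mass $2\phi$ over a vertical layer of height $h$, paying $h\sqrt{2\phi}/L^2$ in length and $\phi d^2/(L^2 h)$ in transport. With $K=2\log_2 N$ generations, and geometric heights $h_k\propto r^k$ chosen to equilibrate length and transport terms at each generation (the standard self-similar Aviles--Giga/branched-transport optimization), a direct computation gives a total contribution $\lesssim T^{1/3}\Phi^{2/3}/L^{4/3}$, the precise choice $N\sim \Phi^{1/6}L^{2/3}/T^{2/3}$ being the one that makes the Wasserstein diameter of the grid commensurate with the natural lengthscale of the branching. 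Above height $T_0$ I would then glue the single-atom construction of the first bound to recover the trace $\bar\mu_+$ at $x_3=T$, adding a further $T\sqrt\Phi/L^2+\Phi/T$ which is absorbed into the stated bound. The triple-point property is preserved throughout because every elementary brick is a Y.

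\emph{Main obstacle.} The delicate points are (i) respecting the atomicity requirement in $\mathcal A_{L,T}$ when $\bar\mu_\pm$ are diffuse, handled by the $W_2$-approximation argument together with \eqref{HolderW2} and the lower semicontinuity of $I$ (one has to check that the approximating constructions $\mu^n$ admit a limit $\mu$ still in $\mathcal A^*_{L,T}$, with the prescribed traces and the claimed energy); and (ii) the careful accounting in the self-similar branching, in particular choosing the generation heights $h_k$ so that each of the $K\sim\log N$ generations contributes the same (up to a geometric factor) and the telescoping sum over $k$ indeed matches the sharp exponents $1/3,2/3,-4/3$ in $T^{1/3}\Phi^{2/3}/L^{4/3}$. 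Most of the rest is bookkeeping, and builds directly on the branched-transport constructions in \cite{ChokConKoOt,CoOtSer}.
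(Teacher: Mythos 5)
Your first (``single-atom'') construction has a gap that the approximation argument cannot repair. The displacement interpolant from $\Phi\delta_0$ to a diffuse $\bar\mu_+$ has slices $\mu_{x_3}=\bigl(\tfrac{x_3}{T}\,\mathrm{id}\bigr)_{\sharp}\bar\mu_+$, which are diffuse for every $x_3\in(0,T)$; such a $\mu$ does not belong to $\calA^*_{L,T}$, and the length term $\sum_{x'\in Q_L}(\mu_{x_3}(x'))^{1/2}$ is identically zero on it. The proposed fix --- replace $\bar\mu_\pm$ by atomic $\bar\mu^n_\pm$, do straight-line transport, and pass to the limit via \eqref{HolderW2} --- does not go through either: if $\bar\mu^n_+$ consists of $k_n$ atoms of comparable mass $\sim\Phi/k_n$, then every interior slice of that construction still carries $k_n$ atoms and the length term is
\[
\frac{K_*}{L^2}\int_0^T\sum_i\bigl(\phi_i^n\bigr)^{1/2}\,dx_3
\;\sim\;\frac{K_*T}{L^2}\,(k_n\Phi)^{1/2}
\;\longrightarrow\;+\infty
\qquad\text{as }k_n\to\infty,
\]
so the approximants have unbounded energy and there is nothing to pass to the limit with. (This is not an artifact: by the compactness argument of Proposition \ref{existmu}, any sequence with bounded $I$ has atomic limiting traces, so $\sup_n\sum_i\sqrt{\phi^n_i}$ can never stay bounded while $\bar\mu^n_+$ tends to a diffuse measure.) The missing ingredient is a \emph{hierarchical refinement towards the boundary}. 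The paper places, at height $x_3=T(1-3^{-n})$, one atom on each square of a $(2^nN)\times(2^nN)$ grid carrying the corresponding $\bar\mu_+$-flux, joined by piecewise affine segments with only triple junctions; the layer with $4^nN^2$ atoms has thickness $T_n\sim T/3^n$, so the length integrand $\lesssim 2^nN\Phi^{1/2}/L^2$ contributes $\sim (2/3)^n\,TN\Phi^{1/2}/L^2$ and the geometric series converges. With $N=1$ the two halves meet in a single central atom, so the top and bottom data may differ, and one gets the first bound; optimizing over $N$ gives the second.

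Your second construction inherits exactly the same gap, because above $T_0$ you glue the flawed single-atom construction to recover the trace $\bar\mu_+$. It also takes an unnecessary detour: you first merge the $N^2$-grid into one atom at $T_0$ and then spread it back out, whereas the paper refines directly from the $N^2$-grid at $x_3=0$ towards the boundary without ever collapsing to a single point. The optimization-over-$N$ heuristic and the triple-junction bookkeeping are the right ideas, but they are applied to constructions that are not admissible in $\calA^*_{L,T}$.
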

\begin{proof}
By periodicity we can work on $[0,L)^2$ instead of $[-L/2,L/2)^2$.
We first perform the construction for $x_3\ge 0$.
 The idea is to approximate $\bar\mu_+$ by linear combinations of Dirac masses, which become finer and finer as $x_3$
 approaches $T$. Fix $N\in\N$, chosen below. For $n\in\N$, fix 
 $x_{3,n}:=T(1-3^{-n})$, and let
 $T_n:= x_{3,n}-x_{3,n-1}=\frac{2}{3^{n}}T$ be the distance between two consecutive planes. 
At level $x_{3,n}$ we partition  $Q_L$ into squares of side length $L_n:= \frac{L}{2^nN}$. 
More precisely, for $i,j=0,...,2^nN-1 $, we let  $x'_{ij,n}:=\lt(L_n\, i, L_n \, j\rt)$ be a corner of the square $Q_{ij,n}:= x'_{ij,n}+ [0,L_n)^2$, 
and we let $\Phi_{ij,n}:=\bar\mu_+(Q_{ij,n})$ be the flux associated to this square.
  
We define the measures $\mu^\mathrm{br}$ and $m^\mathrm{br}$ (here the suffix $\mathrm{br}$ stands for branching) by
 $$ \mu^\mathrm{br}_{x_3}:= \sum_{ij} \Phi_{ij,n} \delta_{X_{ij,n}(x_3)} \text{ and }
m^\mathrm{br}_{x_3}:= \sum_{ij} \frac{d X_{ij,n}}{dx_3}(x_3) \Phi_{ij,n} \delta_{X_{ij,n}(x_3)} \quad 
\text{for }  x_3\in [x_{3,n-1}, x_{3,n}),$$ 
where $X_{ij,n}:[x_{3,n-1}, x_{3,n}]\to Q_L$ is a piecewise affine function such that
 $X_{ij,n}(x_{3,n})=x'_{ij,n}$, 
 $X_{ij,n}(x_{3,n}-\frac12 T_n)=x'_{i_*j,n}$, 
 and $X_{ij,n}(x_{3,n-1})=x'_{i_*j_*,n}$, where
$i_*=2\lfloor i/2\rfloor$, $j_*=2\lfloor j/2\rfloor$. Four such curves end in every $i_*$, $j_*$ (which corresponds to the pair
$i_*/2$, $j_*/2$ at level $n-1$), but they are pairwise superimposed for $x_3\in [x_{3,n-1},x_{3,n}-\frac12 T_n]$,
therefore all junctions are triple points (one curve goes in, two go out).

Using that $\sum_{i j} \Phi_{ij,n}=\Phi$ and $\sum_{ij}\sqrt{\Phi_{ij,n}} \le
(\sum_{ij} \Phi_{ij,n})^{1/2}(\sum_{ij} 1)^{1/2} = \Phi^{1/2} 2^nN$, 
we get that the energy of $\mu^\mathrm{br}$ is given by
\begin{align*}
 I(\mu^\mathrm{br})&=\frac{1}{L^2} \sum_{n=1}^{+\infty} \sum_{ij} \left( K_* T_n   \sqrt{\Phi_{ij,n}}+ \Phi_{ij,n} T_n \frac{2 L_n^2}{ T_n^2}\rt)\\
& \les  L^{-2} TN\Phi^{1/2} \sum_{n=0}^{+\infty} \lt(\frac{2}{3}\rt)^n + \frac{\Phi}{TN^2}\sum_{n=0}^{+\infty} \lt(\frac{3}{4}\rt)^n\,.
\end{align*}
If we choose $N=1$, then there is only one point in the central plane, $\mu_0=\Phi \delta_0$. Therefore the top and bottom
constructions can be carried out independently, since by assumption the total flux is conserved, and we obtain the first assertion.

If $\bar\mu_+=\bar\mu_-$, we can choose the value of $N$ which makes the energy minimal. Up to constants this is the value given in the statement. Inserting in the estimate above gives the second assertion.
\end{proof}

If the boundary densities are maximally spread, in the sense that they are given by the Lebesgue measure,
the scaling is optimal, as the following lower bound shows.

\begin{proposition}\label{branchinglowerbound}
 For every measure  $\mu\in \calA^*_{L,T}$ such that
 $\mu_{\pm T}=\Phi L^{-2} dx'$ one has
 \begin{equation}\label{lower}
  I(\mu)\ges  \frac{T \Phi^{1/2}}{L^2}+ \frac{ T^{1/3}\Phi^{2/3}}{ L^{4/3}}\, .
 \end{equation}
 \end{proposition}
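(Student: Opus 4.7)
\emph{Plan.} The bound \eqref{lower} splits into two independent contributions, which I would prove separately.

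The easier one, $I(\mu)\gtrsim T\Phi^{1/2}/L^2$, is an immediate pointwise estimate on the surface part of $I$. For a.e.\ $z\in(-T,T)$ the atomic slice $\mu_z=\sum_i\phi_i\delta_{X_i}$ satisfies $\sum_i\phi_i=\Phi$ (by \eqref{conteq0} together with the boundary data), and subadditivity of the square root yields $\sum_i\sqrt{\phi_i}\ge\sqrt{\Phi}$ since $(\sum_i\sqrt{\phi_i})^2\ge\sum_i\phi_i$. Integrating in $z$ and inserting into \eqref{Imum} proves this term.

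For the nontrivial bound $I(\mu)\gtrsim T^{1/3}\Phi^{2/3}/L^{4/3}$ the strategy is a Wasserstein interpolation between the atomic slice $\mu_z$ and the spread boundary data $\mu_{\pm T}=\Phi L^{-2}dx'$. The upper bound from the preceding section, \eqref{HolderW2}, reads $W_2^2(\mu_z,\mu_{\pm T})\le L^2 I(\mu)|z\mp T|$. From the atomic side I would produce a matching lower bound by a classical moment-of-inertia argument: in the optimal transport from $\mu_z$ to the uniform $\Phi L^{-2}dx'$, the mass $\phi_i$ at $X_i$ is sent onto a region of prescribed area $\phi_iL^2/\Phi$, whose second moment around $X_i$ is bounded below (by the isoperimetric inequality for the second moment) by the value on a disk of the same area, yielding the per-atom cost $\ge L^2\phi_i^2/(2\pi\Phi)$ and hence
\[W_2^2(\mu_z,\Phi L^{-2}dx')\ge\frac{L^2}{2\pi\Phi}\sum_i\phi_i^2(z).\]
Combining this with \eqref{HolderW2} gives $\sum_i\phi_i^2(z)\lesssim \Phi I(\mu)\min(T-z,z+T)$.

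To convert this into a lower bound on $\sum_i\sqrt{\phi_i}$, I would use two successive applications of Cauchy--Schwarz,
\[\Phi^2=\Bigl(\sum_i\phi_i\Bigr)^2\le\Bigl(\sum_i\sqrt{\phi_i}\Bigr)\Bigl(\sum_i\phi_i^{3/2}\Bigr)\le\Bigl(\sum_i\sqrt{\phi_i}\Bigr)\sqrt{\Phi\sum_i\phi_i^2},\]
which gives $\sum_i\sqrt{\phi_i(z)}\ge\Phi^{3/2}/(\sum_i\phi_i^2(z))^{1/2}\gtrsim\Phi/\sqrt{I(\mu)\min(T-z,z+T)}$. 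Integrating in $z$ against the weight $1/\sqrt{\min(T-z,z+T)}$, whose integral over $(-T,T)$ is $\sim\sqrt T$, yields the bootstrap inequality $I(\mu)L^2\gtrsim\Phi\sqrt T/\sqrt{I(\mu)}$, which rearranges to $I(\mu)\gtrsim\Phi^{2/3}T^{1/3}/L^{4/3}$.

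The main technical obstacle is the moment-of-inertia lower bound on $W_2^2$ in the second paragraph: it has to hold on the periodic cube $Q_L$, but the argument is per-atom and uses only the isoperimetric inequality for the second moment of a set of prescribed area, so it does not require disjointness of the receiving regions and persists on the torus up to constants. Everything else is routine manipulation.
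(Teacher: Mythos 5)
Your argument is correct and rests on the same two key estimates as the paper's \emph{second} proof of this proposition: the moment-of-inertia lower bound $W_2^2(\mu_z,\Phi L^{-2}dx')\gsim (L^2/\Phi)\sum_i\phi_i^2(z)$ obtained by replacing each receiving region by a disk of the same area, and the Cauchy--Schwarz chain $\Phi^2\le\bigl(\sum_i\sqrt{\phi_i}\bigr)\bigl(\sum_i\phi_i^{3/2}\bigr)\le\bigl(\sum_i\sqrt{\phi_i}\bigr)\sqrt{\Phi\sum_i\phi_i^2}$, which together are exactly the content of the paper's interpolation inequality \eqref{interpolBVW2}. The only genuine difference is how the slice-wise information is assembled: the paper selects a single slice $x_3$ where $\sum_i\sqrt{\phi_i}$ is at most its average $\lesssim L^2 I/T$ and then combines the two terms of $I$ via Young's inequality $a+b\gsim a^{2/3}b^{1/3}$, whereas you convert the interpolation into a pointwise lower bound $\sum_i\sqrt{\phi_i(z)}\gsim\Phi/\sqrt{I\,\min(T-z,z+T)}$ valid for a.e.\ $z$, integrate against the surface-energy term, and close with the bootstrap $I^{3/2}\gsim\Phi\sqrt{T}/L^2$. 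Both closures are equally short; the integration-and-bootstrap version has the small advantage of not requiring a good-slice selection and of bypassing \eqref{interpolBVW2} in its displayed form (which, incidentally, should read $\Phi^{2/3}L^{2/3}$ on the right-hand side for dimensional consistency — this is a typo in the paper, with no effect on the conclusion). Note finally that the paper also offers a completely independent \emph{first} proof, which avoids Wasserstein distances altogether by testing the continuity equation with a mollified cone function $\max_i(\lambda-\dist(x',X_i))_+$ on a suitable index set $\calI$ and optimizing over $\lambda$; you do not need that route, but it is worth being aware that it exists and uses nothing beyond the continuity equation.
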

\begin{proof}
The bound $I(\mu)\ge L^{-2} T\Phi^{1/2}$ follows at once from the subadditivity of the square root. Hence we only need to prove the other one.
 We give two proofs of this bound. The first uses only elementary tools while the second is based on an interpolation inequality.
 \smallskip
 
 {\it First proof:}
Let $I:=I(\mu,v\mu)$, where $v:=dm/d\mu$.  
Fix $\lambda>0$, chosen below.
 Choose $x_3\in (-T,T)$ such that $\mu_{x_3}=\sum_i\varphi_i \delta_{X_i}$ obeys
 \begin{equation}\label{eqlbchoicex3}
  \sum_i \varphi_i^{1/2} \le \frac{L^2 I}{T}\,.
 \end{equation}
 For some set $\calI\subset\N$ to be chosen below, 
  let $\psi:Q_L\to \R$ be a mollification of  the function $\max\{ (\lambda-\dist(x',X_i))_+: i\in \calI\}$,
  where as usual the distance is interpreted periodically.
 By the divergence condition,
 \begin{equation*}
  \int_{Q_L} \psi d\mu_{x_3} = \frac{\Phi}{L^{2}} \int_{Q_L} \psi dx'+\int_{-T}^{x_3} \int_{Q_L} \nabla'\psi\cdot  v d\mu
 \end{equation*}
 (to prove this, pick $\xi_\eps\in C^1_c((-T,x_3))$ which converge pointwise to $1$ and use $\xi_\eps\psi$ as a
 test function in (\ref{conteq0}) and then pass to the limit).
  Since $|\nabla'\psi|\le 1$, 
 \begin{equation*}
    \sum_{i\in \calI} \lambda \varphi_i \le  \frac{\Phi}{L^{2}} \sum_{i\in \calI}\frac{ \pi}{3} \lambda^3+ \int_{-T}^{x_3} \int_{Q_L}  |v| d\mu
    \le \frac{\Phi}{L^2}\sum_{i\in \calI} \pi \lambda^3+  L (T\Phi)^{1/2} I^{1/2},
 \end{equation*}
  where in the second step we used H\"older's inequality and flux conservation. 
  We choose $\calI=\{i\in \N: \varphi_i\ge 4 \Phi \lambda^2/L^2\}$. 
   From the definition of $\calI$, we have 
   \[\frac{\Phi}{L^2}\sum_{i\in \calI} \pi \lambda^3\le \frac{\pi}{4} \sum_{i\in \calI} \lambda \phi_i.  
   \]
Therefore, since $\pi<4$, we obtain 
 \begin{equation}\label{E1}
    \sum_{i\in \calI} \lambda \varphi_i \lsim L(T\Phi I)^{1/2}\,.
 \end{equation}
 At the same time, again by the definition of $\calI$ and (\ref{eqlbchoicex3}), 
  \begin{equation}\label{E2}
  \sum_{i\not\in \calI} \varphi_i\le \frac{2\lambda\Phi^{1/2}}{L} \sum_{i\not\in \calI} \varphi_i^{1/2} \le 2\lambda L \Phi^{1/2}\, \frac{I}{T}\,.
 \end{equation}Adding \eqref{E1} and \eqref{E2}, we obtain
 \begin{equation*}
 \sum_{i\in \calI} \varphi_i \lsim \frac{1}{\lambda} L(T\Phi I)^{1/2}+ \lambda L \Phi^{1/2}\, \frac{I}{T}\, , 
 \end{equation*}
 hence \begin{equation*}
 \Phi^{1/2}\lsim \frac{1}{\lambda} L(T I)^{1/2}+ \lambda L \, \frac{I}{T}\, ,\end{equation*}
 and optimizing over $\lambda$ by choosing $\lambda = \frac{T^{3/4}}{I^{1/4}}$ yields  
  $I\ges \Phi^{2/3} T^{1/3}L^{-4/3}$.
  \smallskip
  
  {\it Second Proof:} As before, let $x_3\in (-T,T)$  be such that $\mu_{x_3}=\sum_i\varphi_i \delta_{X_i}$ obeys \eqref{eqlbchoicex3}.
 By Young's inequality and \eqref{HolderW2}, we have
 \[I\ges  \frac{T}{L^2} \left( \sum_i \varphi_i^{1/2} \right) +\frac{W_2^2(\mu_{x_3}, \Phi L^{-2} dx')}{L^2 T}\ges L^{-2} T^{1/3}\left( \sum_i \varphi_i^{1/2} \right)^{2/3} \left(W_2^2(\mu_{x_3}, \Phi L^{-2} dx')\right)^{1/3}.\]
The desired lower bound would then follow if we can show that for every measure $\mu\in\M^+(Q_L)$ with $\mu=\sum_i \phi_i \delta_{X_i}$ and $\sum_i \phi_i = \Phi$, 
\begin{equation}\label{interpolBVW2}
  \left( \sum_i \varphi_i^{1/2} \right)^{2/3} \left(W_2^2(\mu, \Phi L^{-2} dx')\right)^{1/3}\ges \Phi^{2/3} L^{-4/3}.
\end{equation}
By rescaling it is enough considering $\Phi=L=1$. 
The optimal transport map is necessarily of the form  $\psi(x')=X_i$ if $x'\in E_i$, where
 $E_i$ is a partition of $Q_1$ with $|E_i|=\phi_i$ (the corresponding transport plan is $(Id\times \psi)\sharp dx'$). By definition, it holds
\[W_2^2(\mu, dx')\ge \sum_{i} \int_{E_i} |x'-X_i|^2 dx'.\]
But since $|E_i|=\phi_i=|\B'(X_i,(\phi_i/\pi)^{1/2})|$,
\[\sum_{i} \int_{E_i} |x'-X_i|^2 dx'\ge \sum_{i} \int_{\B(X_i,(\phi_i/\pi)^{1/2})} |x'|^2 dx'\ges \sum_{i} \phi_i^2 .\]
By H\"older's inequality, we conclude that
\[1=\sum_i \phi_i \le \left(\sum_i \phi_i^{1/2}\right)^{2/3} \left(\sum_i \phi_i^2\right)^{1/3}\les \left( \sum_i \varphi_i^{1/2} \right)^{2/3} \left(W_2^2(\mu, dx')\right)^{1/3},\]
as desired.
\end{proof}

\begin{remark}
 The lower bound \eqref{lower}  can also be obtained as a consequence of the scaling law proven 
 in \cite{CoOtSer} 
 for  the  Ginzburg-Landau model combined with our lower bound in Section 
\ref{Sec:gammaliminf}   (which does not use this lower bound). 
 However, since the proof here is much simpler and contains some of the main ideas behind the proofs of \cite{ChokConKoOt,CoOtSer}, we decided to include it.
 Similarly, the interpolation inequality \eqref{interpolBVW2} can be obtained by approximation from a similar inequality proven in \cite{CintiOt} (where it is used in the same spirit as here to re-derive the lower bounds of \cite{ChokConKoOt}). 
\end{remark}
We end this section by proving existence of minimizers.
\begin{proposition}\label{existmu}
 For every pair of measures $ \bar \m_{\pm }$ with $\bar \m_+(Q_L)=\bar \m_-(Q_L)$, the infimum in \eqref{limitProb} is finite and attained.
\end{proposition}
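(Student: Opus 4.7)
The plan is to apply the direct method of the calculus of variations. Finiteness of the infimum follows from Proposition~\ref{branchingmu}. Let $(\mu_n, m_n) \in \calA_{L,T}$ be a minimizing sequence with $I(\mu_n, m_n) \le C$. The continuity equation \eqref{conteq0} together with the boundary data yields $\mu_n(Q_{L,T}) = 2T\Phi$, and Cauchy--Schwarz gives
\[
|m_n|(Q_{L,T}) \le \bigl(\mu_n(Q_{L,T})\bigr)^{1/2} \Bigl(\int (dm_n/d\mu_n)^2\, d\mu_n\Bigr)^{1/2} \le C'.
\]
Passing to a subsequence, $\mu_n \weaklim \mu$ in $\calM^+(Q_{L,T})$ and $m_n \weaklim m$ in $\calM(Q_{L,T}; \R^2)$; since \eqref{conteq0} is linear it passes to the limit.

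For the boundary traces, the H\"older estimate \eqref{HolderW2} shows that the curves $x_3 \mapsto \mu_n^{x_3}$ are equicontinuous as maps $[-T,T] \to (\calM^+(Q_L), W_2)$ taking values in the compact set of nonnegative measures of total mass $\Phi$. By Arzel\`a--Ascoli, along a further subsequence they converge uniformly in $W_2$ to a curve $x_3 \mapsto \mu_{x_3}$, which must coincide with the disintegration of $\mu$ along the $x_3$-axis. In particular $\mu_{\pm T} = \lim \mu_n^{\pm T} = \bar\mu_\pm$.

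The main obstacle is the joint lower semicontinuity of $I$ combined with admissibility of the limit (i.e.\ atomicity of the slices $\mu_{x_3}$). For the kinetic term I would use the Benamou--Brenier-style representation
\[
\int_{Q_{L,T}} (dm/d\mu)^2\, d\mu = \sup\Bigl\{\int 2\varphi\cdot dm - |\varphi|^2\, d\mu : \varphi \in C_c(Q_{L,T}; \R^2)\Bigr\},
\]
which exhibits it as a supremum of weak-$*$ continuous affine functionals of $(\mu, m)$ and is therefore LSC. For the branch term the crucial observation is that the functional $F(\nu) := \sup\{\sum_j \sqrt{\nu(A_j)} : \{A_j\} \text{ a finite Borel partition of } Q_L\}$ equals $\sum_i \sqrt{\phi_i}$ on purely atomic $\nu = \sum_i \phi_i\delta_{x_i}$ (by subadditivity of $\sqrt{\cdot}$, with equality realized on partitions separating the atoms) and $+\infty$ otherwise (using partitions into many small cells). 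Since any $\nu$ has at most countably many atoms, partitions into continuity sets of $\nu$ are dense, and for such partitions $\nu_n(A_j) \to \nu(A_j)$ by the Portmanteau theorem, making $F$ weak-$*$ LSC.

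Combining $F$ slicewise with Fatou's lemma in $x_3$ gives
\[
\frac{K_*}{L^2}\int_{-T}^T F(\mu_{x_3})\, dx_3 \le \liminf_n \frac{K_*}{L^2}\int_{-T}^T F(\mu_n^{x_3})\, dx_3 \le \liminf_n I(\mu_n,m_n) \le C,
\]
which forces $\mu_{x_3}$ to be atomic for a.e.\ $x_3$, so that $(\mu, m) \in \calA_{L,T}$; together with the kinetic LSC this yields $I(\mu, m) \le \liminf_n I(\mu_n, m_n)$, proving that $(\mu, m)$ is a minimizer. The delicate point is that weak-$*$ convergence of $\mu_n$ alone does not preserve atomic structure or even identify the traces, so upgrading to uniform $W_2$-convergence of slices via \eqref{HolderW2} is essential for both the boundary condition and the slicewise application of $F$.
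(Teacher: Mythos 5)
Your proof is correct and structurally parallel to the paper's: both extract a minimizing sequence, use the H\"older estimate \eqref{HolderW2} and Arzel\`a--Ascoli to obtain uniform $W_2$-convergence of the slices (which is what identifies the traces and gives slicewise weak convergence), bound $|m_n|$ by Cauchy--Schwarz, pass to the limit in the linear continuity equation, and invoke the standard convex-duality lower semicontinuity for the kinetic term (your Benamou--Brenier supremum is the same statement as the paper's citation of \cite[Th.\ 2.34 and Ex.\ 2.36]{AFP}). The genuine difference is the treatment of atomicity and lower semicontinuity of the branch term $\sum_i\sqrt{\phi_i}$. The paper does this ``by hand'': it introduces $f_n(x_3)=\sum_i\sqrt{\phi_i^n(x_3)}$, applies Fatou, and for a.e.\ $x_3$ extracts an $x_3$-dependent diagonal subsequence along which all the atoms converge individually, using Lemma~\ref{lemsqrt} to control the tail and conclude tightness towards an atomic limit. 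You instead introduce the set function $F(\nu)=\sup\{\sum_j\sqrt{\nu(A_j)}\}$ over finite Borel partitions, observe that $F=\sum_i\sqrt{\phi_i}$ on atomic measures and $+\infty$ otherwise, and claim weak-$*$ lower semicontinuity by restricting to $\nu$-continuity partitions and applying Portmanteau. This is a clean alternative that simultaneously forces atomicity of the limit slices and delivers the semicontinuity inequality. The one point you should fill in is the claim that the supremum over continuity-set partitions equals $F(\nu)$: it is straightforward for atomic $\nu$ (small disjoint balls with $\nu$-null boundaries around finitely many atoms), but when $\nu$ has a non-atomic part you must exhibit continuity-set partitions along which $\sum_j\sqrt{\nu(A_j)}\to\infty$, e.g.\ by refining into a dyadic-box mesh (choosing gridlines with $\nu$-null slices, which holds for all but countably many positions) and noting that nonatomicity forces $\max_j\nu_c(A_j)\to0$, whence $\sum_j\sqrt{\nu_c(A_j)}\ge\nu_c(Q_L)/\sqrt{\max_j\nu_c(A_j)}\to\infty$. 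With that detail spelled out, your argument is complete; the paper's Lemma~\ref{lemsqrt} route buys a more elementary proof that avoids this measure-theoretic technicality, while yours buys a reusable lower semicontinuity statement for $F$ that could be invoked elsewhere.
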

\begin{proof}
In this proof we assume $L=T=1$ and $\bar\mu_+(Q_{1})=1$.
By Proposition \ref{branchingmu} the infimum is finite.
Let now $\mu^n$ be a minimizing sequence for $I$.  Since $\sup_n  I(\mu^n)<+\infty$, thanks to \eqref{HolderW2}, the functions $x_3\mapsto \mu^n_{x_3}$ are equi-continuous in $\cP(Q_1)$ (recall that $W_2$ 
metrizes the weak convergence in $\mathcal{P}(Q_1)$)
 hence by the  Arzel\`a-Ascoli theorem there exists a subsequence, still denoted $\mu^n$, uniformly converging (in $x_3$) to some measure $\mu$ which also satisfies the given
 boundary conditions. Moreover, if $m^n$ is an optimal measure in 
\eqref{Imu} for $\mu^n$,  since by the Cauchy-Schwarz inequality we have
\[\int_{Q_{1,1}} d|m^n|\le \lt(\int_{Q_{1,1}} \lt(\frac{dm^n}{d\mu^n}\rt)^2 d\mu^n\rt)^\hal \lt(\int_{Q_{1,1}} d\mu^n\rt)^\hal\les 1,\]
there also exists  a subsequence $m^n$ converging to some measure $m$ satisfying \eqref{conteq0}. 
By \cite[Th. 2.34 and Ex. 2.36]{AFP} we deduce that  $m\ll \mu$ and
\[\liminf_{n\to +\infty} \int_{Q_{1,1}} \lt(\frac{dm^n}{d\mu^n}\rt)^2 d\mu^n\ge \int_{Q_{1,1}} \lt(\frac{dm}{d\mu}\rt)^2 d\mu.\]
It remains to prove that $\mu_{x_3}=\sum_i \phi_i(x_3) \delta_{X_i(x_3)}$ for a.e. $x_3$ and that 
\begin{equation}\label{semicontperi}
 \liminf_{n\to +\infty} \int_{-1}^1 \sum_{x'\in Q_{1}} \left(\mu^n_{x_3}(x')\right)^{1/2} \, dx_3 \ge \int_{-1}^1 \sum_{x'\in Q_{1}} \left(\mu_{x_3}(x')\right)^{1/2} \, dx_3. 
\end{equation}
 If $\mu^n_{x_3}=\sum \phi_i^n(x_3) \delta_{X^n_i}(x_3)$, with $\phi_i^n$ ordered in a decreasing order, we let $f_n(x_3):= \sum_i \sqrt{\phi_i^n(x_3)}$ and observe that $\int_{-1}^1 f_n dx_3 \le I(\mu^n)\les 1 $. Hence, by Fatou's lemma, 
\begin{equation}\label{fatouf}
1\ges \liminf_{n\to +\infty} \int_{-1}^1 f_n(x_3) dx_3\ge \int_{-1}^1 \liminf_{n\to +\infty} f_n(x_3) dx_3,
\end{equation}
from which we infer that $g(x_3):=\liminf_{n\to+\infty} f_n(x_3)$ is finite for a.e. $x_3$. 
Consider such an $x_3$ and let $\psi(n)$ be a subsequence (which depends on $x_3$) such that $g(x_3)=\lim_{n\to+\infty} f_{\psi(n)}(x_3)$. 
Up to another subsequence, still denoted $\psi(n)$, we may assume that 
for every $i\in \N$, $\phi_i^{\psi(n)}(x_3)$ converges to some $\phi_i(x_3)$ and $X_i^{\psi(n)}(x_3)$ converges to some $X_i(x_3)$.   By Lemma~\ref{lemsqrt} (see below), for every $N\in \N$,
\[
\sum_{i\le N} \phi_i^{\psi(n)}(x_3)\ge 1- \frac{f_{\psi(n)}(x_3)}{\sqrt{N}}.
\]
This implies, by tightness, $\mu_{x_3}^{\psi(n)}\weaklim \sum_i \phi_i(x_3) \delta_{X_i(x_3)}$
and $\sum_i (\phi_i(x_3))^{1/2}\le g(x_3)$.
But since $\mu_{x_3}^{\psi(n)}\weaklim \m_{x_3}$, we have $\m_{x_3}=\sum_i \phi_i(x_3) \delta_{X_i(x_3)}$. 
Finally, by the subadditivity of the square root, \eqref{fatouf} and the definition of $g$ we obtain \eqref{semicontperi}.
\end{proof}

\begin{lemma}\label{lemsqrt}
 If a nonincreasing sequence of  positive numbers
$\gamma_i$ is such that $$ \sum_i \gamma_i = c_0 \qquad \text{ and }\qquad \sum_i
\sqrt{\gamma_i} \le C_0,$$ then for all $N\in \N$ one has
$$\sum_{i\le N} \gamma_i \ge c_0- C_0 \sqrt{\frac{c_0}{N}}.$$
\end{lemma}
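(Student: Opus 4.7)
The plan is to bound the tail $\sum_{i>N}\gamma_i$ from above by $C_0\sqrt{c_0/N}$ and then subtract from $c_0$.

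First I would exploit the monotonicity to control $\gamma_N$. Since $\gamma_i$ is nonincreasing and the first $N$ terms already sum to at most $c_0$, we have $N\gamma_N \le \sum_{i\le N}\gamma_i \le c_0$, hence $\gamma_N \le c_0/N$. By monotonicity again, $\gamma_i \le c_0/N$ for every $i\ge N$, so $\sqrt{\gamma_i}\le \sqrt{c_0/N}$ for all $i>N$.

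The key trick is to write each tail term as $\gamma_i = \sqrt{\gamma_i}\cdot\sqrt{\gamma_i}$ and use the previous bound on one of the two factors. This gives
\begin{equation*}
\sum_{i>N}\gamma_i \;=\; \sum_{i>N}\sqrt{\gamma_i}\,\sqrt{\gamma_i}\;\le\;\sqrt{\frac{c_0}{N}}\sum_{i>N}\sqrt{\gamma_i}\;\le\; C_0\sqrt{\frac{c_0}{N}},
\end{equation*}
where in the last step we used the hypothesis $\sum_i\sqrt{\gamma_i}\le C_0$. Since $\sum_{i\le N}\gamma_i = c_0 - \sum_{i>N}\gamma_i$, the desired inequality follows immediately.

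There is no real obstacle here; the only thing to be careful about is that the argument genuinely uses monotonicity (without it, the tail could be concentrated on large-index terms with small square roots but still comparable total mass is ruled out by the $\sqrt{\gamma_i}$-summability, but the clean bound on $\gamma_N$ would no longer hold). The proof is essentially a two-line application of the ``$L^\infty \times L^1$'' split $\gamma_i = \sqrt{\gamma_i}\cdot\sqrt{\gamma_i}$, and gives exactly the claimed quantitative rate $C_0\sqrt{c_0/N}$.
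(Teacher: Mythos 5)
Your proof is correct and uses essentially the same argument as the paper: bound $\sqrt{\gamma_i}\le\sqrt{\gamma_N}\le\sqrt{c_0/N}$ for $i>N$ via monotonicity, split $\gamma_i=\sqrt{\gamma_i}\cdot\sqrt{\gamma_i}$ in the tail, and apply the $\sqrt{\gamma_i}$-summability. The only cosmetic difference is that you substitute $\sqrt{\gamma_N}\le\sqrt{c_0/N}$ before summing whereas the paper does it after, which changes nothing.
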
 \begin{proof} Indeed $\sum_{i >N} \gamma_i \le \sqrt{\gamma_N} \sum_{i >N}
\sqrt{\gamma_i} \le C_0 \sqrt{\gamma_N},$ while $c_0 \ge \sum_{i\le
N} \gamma_i \ge N \gamma_N$.\end{proof}

\subsection{Regularity of minimizers}
We now want to prove regularity of the minimizing measures $\mu$. In order to prove that we can restrict our attention to measures containing no loops,
we first define the notion of subsystem.
\begin{proposition} [Existence of subsystems]\label{def:subsystem}
Given a point $x:=(X, x_3) \in Q_{L,T}$ and $\mu\in\calA^*_{L,T}$ with $I(\mu)<+\infty$, there exists a subsystem $\tilde{\m}$ of $\m $ emanating from $x$,
meaning that there exists  $\tilde{\m} \in \calA^*_{L,T}$ such that
\begin{enumerate} 
\item\label{def:subsystempos} $ \tilde{\m}\le \m$  in
the sense that $\m-\tilde{\m}$ is a positive measure,
\item\label{def:subsystemdelta} $\tilde{\m}_{x_3}=a\delta_{X}$, where $a=\m_{x_3}(X)$,
\item\label{def:subsystemconti} \begin{equation*}
\pa_3 \tilde{\m}+ \div' \lt(\frac{dm}{d\m} \tilde{\m}\rt)=0.\end{equation*}
\end{enumerate}
\end{proposition}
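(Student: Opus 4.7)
The plan is to realise $\m$ as a superposition of Lagrangian trajectories for the velocity field $v:=dm/d\m$, and to define $\tilde\m$ by restricting this superposition to those trajectories that pass through $X$ at height $x_3$. Since $I(\m)<+\infty$, we have $v\in L^2(d\m\,dx_3)$ and, by \eqref{HolderW2}, the curve $s\mapsto\m_s$ belongs to $AC^2(-T,T;\cP(Q_L))$ after normalising the total mass $\m_s(Q_L)=\Phi$ (which is constant in $s$ by the continuity equation). The superposition principle \cite[Theorem~8.2.1]{AGS} then provides a positive finite Borel measure $\eta$ on $C([-T,T];Q_L)$, concentrated on absolutely continuous curves $\gamma$ satisfying $\dot\gamma(s)=v(\gamma(s),s)$ for a.e.\ $s\in(-T,T)$, such that $(e_s)_{\sharp}\eta=\m_s$ for every $s\in[-T,T]$, where $e_s(\gamma):=\gamma(s)$.

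Set $A_X:=\{\gamma\in C([-T,T];Q_L):\gamma(x_3)=X\}$, which is closed and hence Borel, let $\tilde\eta$ be the restriction of $\eta$ to $A_X$, and define $\tilde\m_s:=(e_s)_{\sharp}\tilde\eta$. Since $\tilde\eta\le\eta$ as measures, pushing forward yields $\tilde\m_s\le\m_s$ for every $s$, proving \ref{def:subsystempos}; in particular $\tilde\m_s$ inherits its atomic structure from $\m_s$, so $\tilde\m\in\calA^*_{L,T}$. Moreover, by the definition of $A_X$ and the push-forward,
\[
\tilde\m_{x_3}(\{X\})=\tilde\eta(A_X)=\eta(A_X)=\m_{x_3}(\{X\})=a,
\]
and $\tilde\m_{x_3}$ is supported on $\{X\}$, which is \ref{def:subsystemdelta}.

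For \ref{def:subsystemconti}, test against an arbitrary $\psi\in C^1_c((-T,T)\times Q_L)$ and apply Fubini together with $(e_s)_{\sharp}\tilde\eta=\tilde\m_s$ and $\dot\gamma(s)=v(\gamma(s),s)$ for $\tilde\eta$-a.e.\ $\gamma$:
\begin{align*}
\int_{-T}^T\!\!\int_{Q_L}\!\bigl(\partial_s\psi+v\cdot\nabla'\psi\bigr)\,d\tilde\m_s\,ds
&=\int\!\int_{-T}^T\!\bigl(\partial_s\psi(s,\gamma(s))+\dot\gamma(s)\cdot\nabla'\psi(s,\gamma(s))\bigr)\,ds\,d\tilde\eta(\gamma)\\
&=\int\bigl[\psi(s,\gamma(s))\bigr]_{s=-T}^{s=T}\,d\tilde\eta(\gamma)=0,
\end{align*}
which is the distributional form of $\partial_3\tilde\m+\Divp(v\tilde\m)=0$.

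The main delicate point is invoking the superposition principle in the present setting, where each $\m_s$ is purely atomic and $v$ is only defined on a $\m$-negligible subset of $Q_{L,T}$, so that ``integral curves of $v$'' has no classical meaning. Nevertheless, the abstract hypotheses of \cite[Theorem~8.2.1]{AGS} require only the $AC^2$ regularity of $s\mapsto\m_s$ in $W_2$ together with the $L^2(d\m\,dx_3)$ bound on $v$, both of which are encoded in $I(\m)<+\infty$; the resulting $\eta$ is nothing but the decomposition of $\m$ along its (at most countably many) branches, which makes the selection $\tilde\eta=\eta\restr A_X$ intuitively amount to keeping exactly the branches that visit $X$ at height $x_3$ and discarding the rest.
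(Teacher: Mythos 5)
Your proposal is correct and takes essentially the same route as the paper: both invoke the superposition principle of \cite[Th.~8.2.1]{AGS} and then single out the curves passing through $X$ at height $x_3$ (you by restricting the curve measure to $A_X$, the paper by disintegrating along $\mu_{x_3}$ and taking the fiber over $X$, which yields the same measure).
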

In particular, \ref{def:subsystemdelta} implies that $(\m_{x_3} - \tilde{\m}_{x_3}) \perp  \delta_{X}$ in the sense of  the Radon-Nikodym decomposition.

\begin{proof} Let us for notational simplicity assume that $x_3=0$, $L=T=1$, $\mu(Q_{1,1})=2$. Let us denote
$v= \frac{d m}{d\m}$. According to 
\cite[Th. 8.2.1 and (8.2.8)]{AGS},  since $v\in L^2(Q_{1,1},\mu)$, there exists a
positive measure $\sigma$ on $ C^0([-1, 1];Q_1)$
 (endowed with the sup norm), whose disintegration \cite[Th. 5.3.1]{AGS} with respect to $\mu_0$, i.e. $\sigma=\int_{Q_1} \sigma_{x'} d\mu_0(x')$, is made of probability 
 measures $\sigma_{x'}$ concentrated on the set of curves $ \gamma$ solving
$$\left\{ \begin{array}{l}
\dot{\gamma}(x_3)= v(\gamma(x_3))\\
\gamma(0)= x',\end{array}\right.$$
and  such that for every $x_3\in [-1,1]$, $\m_{x_3}= (e_{x_3})_\# \sigma$, where
$e_{x_3}$ denotes the evaluation at $x_3$, in the sense  that
\begin{equation*}
 \int_{Q_1}\varphi d\mu_{x_3} = \int_{  C^0([-1, 1];Q_1)} \varphi(\gamma(x_3)) d\sigma (\gamma)
 \hskip1cm \text{ for all }\varphi\in C^0(Q_1)\,.
\end{equation*}
Then, the  measure $\tilde{\m}=   \tilde{\m}_{x_3}\otimes dx_3$ with $\tilde{\mu}_{x_3}= (e_{x_3})_\# (a \sigma_{X})$, where $a=\mu_{0}(X)$,  satisfies all the required properties.

\end{proof}

\begin{lemma}[No loops] \label{noloop} Let $\m$ be a minimizer for the Dirichlet problem (\ref{limitProb}), $\bar x_3\in (-T,T)$.
Let $x_1=(X_1,\bar{x}_3)$, $x_2=(X_2,\bar{x}_3)$ be two points in the plane $\{x:x_3=\bar{x}_3\}$.  Let $\m_1$ and $\m_2$ be subsystems  of $\m$ emanating from
$x_1$ and $x_2$. Let $x_+:=(X_+,z_+)$ be a point with $ z_+  > \bar{x}_3$ and $x_-:=(X_-,z_-)$ a point with $ z_-<\bar{x}_3$, and such that $\m_{1,z_+}, \m_{2,z_+}$ both have Diracs at $X_+$, 
and $\m_{1,z_-}, \m_{2,z_-}$ both have Diracs at $X_-$ (with nonzero mass). Then $X_1=X_2$.
\end{lemma}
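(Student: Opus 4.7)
The plan is to argue by contradiction. Assuming $X_1\neq X_2$, I will construct a competitor $\tilde\mu\in\calA^*_{L,T}$ with the same boundary data at $x_3=\pm T$ and strictly smaller energy, contradicting the minimality of $\mu$. The heuristic is that two distinct flow strands joining $(X_-,z_-)$ to $(X_+,z_+)$ through $X_1$ and $X_2$ form a loop, which can be resolved at a strict energy gain by a symmetrization exploiting strict concavity of $\sqrt{\cdot}$ in the surface term.

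\emph{Extraction of single strands.} Using the disintegration of $\mu$ over flow lines $\gamma$ solving $\dot\gamma=v(\gamma)$ (with $v:=dm/d\mu$) that appears in the proof of Proposition~\ref{def:subsystem}, I select from each $\mu_i$ those curves satisfying $\gamma(z_-)=X_-$, $\gamma(\bar x_3)=X_i$ and $\gamma(z_+)=X_+$. The hypothesis that $\mu_{i,z_\pm}$ carry Diracs of positive mass at $X_\pm$, together with the subsystem property forcing $\mu_{i,\bar x_3}$ to concentrate at $\{X_i\}$, ensures this family of curves has positive $\sigma$-measure. Picking a representative $\gamma_i$ from each family and a common mass $\alpha>0$ bounded by the relevant $\sigma$-weights, I set $\nu_i:=\alpha\,\delta_{\gamma_i(x_3)}\otimes dx_3$ on $[z_-,z_+]$. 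Each $\nu_i\le\mu_i\le\mu$ solves the continuity equation \eqref{conteq0} on this slab with traces $\alpha\delta_{X_\pm}$ at $x_3=z_\pm$.

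\emph{Two symmetric competitors and their admissibility.} Inside $[z_-,z_+]$ define
\begin{equation*}
\tilde\mu^A:=\mu-\nu_2+\alpha\,\delta_{\gamma_1(x_3)}\otimes dx_3,\qquad
\tilde\mu^B:=\mu-\nu_1+\alpha\,\delta_{\gamma_2(x_3)}\otimes dx_3,
\end{equation*}
and extend them to coincide with $\mu$ outside this slab. Since the traces at $z_\pm$ of the removed and of the added pieces both equal $\alpha\delta_{X_\pm}$, the cut-and-paste preserves \eqref{conteq0} with the velocity field obtained by swapping $v$ on the affected strand for $\dot\gamma_1$ or $\dot\gamma_2$. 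Positivity follows from $\nu_i\le\mu$ and boundary data at $x_3=\pm T$ are untouched, so $\tilde\mu^A,\tilde\mu^B\in\calA^*_{L,T}$ and are valid competitors.

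\emph{Sum of energy changes.} Write $\phi_i(x_3):=\mu_{x_3}(\gamma_i(x_3))\ge\alpha$. The kinetic contributions of the two competitors differ from that of $\mu$ by $\alpha(|\dot\gamma_1|^2-|\dot\gamma_2|^2)$ and $\alpha(|\dot\gamma_2|^2-|\dot\gamma_1|^2)$ at each height, so across $A$ and $B$ they cancel identically. At heights with $\gamma_1(x_3)\neq\gamma_2(x_3)$, the sum of surface changes rearranges as
\begin{equation*}
\bigl[\sqrt{\phi_1+\alpha}+\sqrt{\phi_1-\alpha}-2\sqrt{\phi_1}\bigr]+\bigl[\sqrt{\phi_2+\alpha}+\sqrt{\phi_2-\alpha}-2\sqrt{\phi_2}\bigr]<0
\end{equation*}
by strict concavity of $\sqrt{\cdot}$, and vanishes at heights where $\gamma_1(x_3)=\gamma_2(x_3)$. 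Since $X_1\neq X_2$ and the $\gamma_i$ are continuous, the strict inequality holds in a whole neighborhood of $\bar x_3$, so after integration $[I(\tilde\mu^A)-I(\mu)]+[I(\tilde\mu^B)-I(\mu)]<0$; hence at least one of $\tilde\mu^A,\tilde\mu^B$ has energy strictly less than $I(\mu)$, giving the desired contradiction. The main obstacle is the first step, the measurable selection of a genuine single-strand sub-subsystem when $v\in L^2$ does not provide classical uniqueness for the flow ODE; this has to be carried out via the Lagrangian disintegration of $\mu$ on curves underlying Proposition~\ref{def:subsystem} rather than through classical characteristics.
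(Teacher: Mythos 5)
Your concavity argument and the cancellation of the transport term across the two symmetric competitors are exactly the right mechanism, and they are what the paper uses. However, the first step --- \emph{Extraction of single strands} --- is a genuine gap, as you flag yourself, and it cannot be fixed the way you suggest. The Lagrangian measure $\sigma$ on curves coming from \cite[Th.~8.2.1]{AGS} has no reason to be atomic, so a single curve $\gamma_i$ will generically carry zero $\sigma$-mass; and even leaving $\sigma$ aside, there is no reason that $\mu_{x_3}(\gamma_i(x_3))$ stays bounded below by a fixed $\alpha>0$ on all of $[z_-,z_+]$. The atom at $\gamma_i(x_3)$ can shrink to zero along the strand through a cascade of branchings, so $\nu_i:=\alpha\,\delta_{\gamma_i(x_3)}\otimes dx_3\le\mu$ will fail for every $\alpha>0$. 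Working with the Lagrangian disintegration ``rather than classical characteristics'' does not resolve this: it replaces the ODE by a measure on curves, but it does not hand you a single curve with a uniform mass bound.

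The paper's proof works because it never isolates a single strand. From $\mu_1$ and $\mu_2$ one forms the sub-subsystems $\mu_{i,\pm}$ coming from $x_\pm$, rescales them to a common flux $\phi:=\min\{\phi_{1,+},\phi_{1,-},\phi_{2,+},\phi_{2,-}\}$, and glues the $+$ and $-$ halves at level $\bar x_3$ to obtain two ``systems'' $\tilde\mu_1,\tilde\mu_2$ of equal mass $\phi$ joining $X_-$ to $X_+$ through $X_1$ and $X_2$ respectively. These are honest measures, possibly spread over many curves, but they satisfy $\tilde\mu_i\le c_i\,\mu$ by construction, so the perturbed measure $\hat\mu_\eta:=\mu+\eta(\tilde\mu_1-\tilde\mu_2)\restr[z_-,z_+]$ is admissible for all $|\eta|$ sufficiently small. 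The comparison velocity is still $dm/d\mu$, so the transport term is affine in $\eta$ and cancels over $\pm\eta$, while the perimeter term $\eta\mapsto\sum_{x'}\big(\mu_{x_3}(x')+\eta(\tilde\mu_1-\tilde\mu_2)_{x_3}(x')\big)^{1/2}$ is strictly concave wherever $\tilde\mu_1$ and $\tilde\mu_2$ sit at distinct atoms, giving $I(\hat\mu_\eta)+I(\hat\mu_{-\eta})<2I(\mu)$ unless $X_1=X_2$. If you replace ``a single curve $\gamma_i$ with mass $\alpha$'' by ``the rescaled sub-subsystem $\tilde\mu_i$ with a small parameter $\eta$,'' the rest of your computation goes through and becomes the paper's proof.
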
\begin{proof} 
 Let $\phi_1:=\mu_{\bar x_3}(X_1)$ be the mass of $\m_1$ and  $\phi_2$ be the mass of $\mu_2$.
Let $\phi_{1, +}:=\m_{1,\bar x_3}(X_+)$ be the mass of $\m_1$ at  $x_+$, $\phi_{2, +}$ the mass of $\m_2$  at $x_+$, $\phi_{1, - } $ the mass of $\m_1$ at $x_-$, $\phi_{2,-}$ the mass of $\m_2$ at $x_-$. 
Let $\phi:= \min \{\phi_{1, +},\phi_{1, -}, \phi_{2, +}, \phi_{2,-}\}$ which by assumption is positive.

We define $\m_{1, +}$ as the subsystem of $\m_1$ coming  from $x_+$, it is thus of mass $\phi_{1, +}$, and at level $\bar{x}_3$ all its mass is at $X_1$ (since it is a subsystem  of $\m_1$ for which this is the case). Similarly with $\m_{1, -}$, $\m_{2, +}, \m_{2,-}$.
 We can now define
  $\tilde{\m}_1:= \frac{\phi}{\phi_{1, +} } \m_{1, +}$ for $x_3\ge\bar{x}_3$  and $\tilde{\m}_1:=\frac{\phi}{\phi_{1, -} }\m_{1, -}$ for $x_3 <\bar{x}_3$, and the same with  $\tilde{\m}_2$.
  The measures $\tilde{\m}_1$ and $\tilde{\m}_2$ are ``systems'' of mass $\phi$ that join $X_-$ and $X_+$. By construction, we have 
  $$\partial_{x_3}  ( \tilde{\m}_1- \tilde{\m}_2 ) \restr[z_-,z_+]+\div' \lt( ( \tilde{\m}_1- \tilde{\m}_2 ) \restr[z_-,z_+]\frac{dm}{d\mu}\rt)=0$$
  and 
  $$-\frac{\varphi}{\min(\varphi_{2,+}, \varphi_{2,-})}\mu \le  ( \tilde{\m}_1- \tilde{\m}_2 ) \restr[z_-,z_+]\le \frac{\varphi}{\min(\varphi_{1,+}, \varphi_{1,-})}\mu.$$
    
 We now define 
$\hat{\m}_\eta:=   \m + \eta ( \tilde{\m}_1- \tilde{\m}_2 ) \restr[z_-,z_+]$, which 
  is admissible for $\eta$ small enough {(and different from $\mu$ unless $X_1=X_2$)},  and evaluate
\begin{multline*}
I(\hat{\m}_\eta)- I(\m) =
 K_*\int_{z_-}^{z_+}
 \sum_{x'\in Q_1}\left( \mu_{x_3} (x') +\eta (\tilde{\mu}_1)_{x_3}(x')-\eta (\tilde{\mu}_2)_{x_3}(x')  \right)^\hal
   -  \sum_{x'\in Q_1}\left( \mu_{x_3} (x') \right)^\hal \, dx_3 \\
+\eta  \int_{Q_1 \times [z_-, z_+] }
     \left(\frac{dm}{d\m} \right)^2
     (   d\tilde{\m}_1- \, d\tilde{\m}_2)\,.
     \end{multline*}
But the function $\eta \mapsto  \sqrt{a+\eta b}$ is strictly concave for $a>0$ and $b\ne0$, therefore
$I(\hat{\m}_\eta)+I(\hat{\m}_{-\eta})<2I(\m)$ for any $\eta\ne 0$, 
a contradiction with the minimality of $\m$.
\end{proof}

A consequence of this lemma is the following. Consider a minimizing measure $\mu$  of $I$. Let $z_-$ and $z_+$ be any two slices and
let $X_-$ be one of the Diracs at slice $z_-$. Let $\tilde{\m}$ be a  subsystem
emanating from $(X_-,z_-)$.  Let $X_+$ be any point in the slice $z_+$
where $\tilde{\m}$ carries mass. Then, there is a unique ``path" connecting
$X_-$ to $X_+$ (otherwise there would be a loop). Since this is true
for any couple of ``sources" in two different planes, this means
that there are at most a countable number of absolutely continuous
curves (absolutely continuous because of the transport term) on which $\m\restr[z_-, z_+]$ is concentrated.  So we  have a representation of the form
\begin{equation}\label{representationmu}
\m= \sum_i \frac{\varphi_i}{\sqrt{1+|\dotX_i|^2}} \, \mathcal{H}^1\restr\Gamma_i   \, ,\end{equation}
 where the sum is countable and 
 $\Gamma_i=\{ (X_i(x_3),x_3) : x_3\in [a_i,b_i]\} $ with $X_i$ absolutely continuous and almost everywhere 
 non overlapping.

Another consequence is that if there are two levels at which $\m$ is a finite sum
of Diracs, then it is the case for all the levels in between.
So, if there is a slice with an infinite number of points, then
either it is also the case for all the slices below or for all the
slices above.

For measures which are concentrated on finitely many curves we obtain a simple representation formula for $I(\mu)$.

\begin{lemma}\label{lemmacurves}
 Let $\mu=\sum_{i=1}^N \frac{\varphi_i}{\sqrt{1+|\dotX_i|^2}} \, \mathcal{H}^1\restr \Gamma_i \in\calA^*_{L,T}$ with $\Gamma_i=\{ (X_i(x_3),x_3) : x_3\in [a_i,b_i]\}$ for some  absolutely continuous  curves $X_i$, almost everywhere non overlapping. 
 Every $\phi_i$ is then constant on $[a_i,b_i]$ and we have conservation of mass. That is, 
 for $x:=(x',x_3)$, letting 
\begin{align*}\calI^+(x)&:=\{ i\in[1,N] \ : \ x_3=b_i, \ X_i(b_i)=x'\},  \\ \calI^-(x)&:=\{ i\in[1,N] \ : \ x_3=a_i, \ X_i(a_i)=x'\},  \end{align*} 
it holds
\begin{equation*}
\sum_{i\in\calI^-(x)} \phi_i=\sum_{i\in\calI^+(x)} \phi_i.
\end{equation*}
  Moreover, $m=\sum_i \frac{\varphi_i}{\sqrt{1+|\dotX_i|^2}}
\dotX_i \, \mathcal{H}^1\restr\Gamma_i$ and 
\begin{equation}\label{Iparticul}
 I(\mu)=\frac{1}{L^2}\sum_i \int_{a_i}^{b_i} K_* \sqrt{\phi_i}+ \phi_i |\dotX_i|^2 dx_3.
\end{equation}
  
\end{lemma}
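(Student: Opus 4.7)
The strategy is to use the continuity equation \eqref{conteq0} to pin down both the weights $\phi_i$ and the velocity $v := dm/d\mu$ along each curve, after which \eqref{Iparticul} follows by disintegration.

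Since the $\Gamma_i$ are pairwise non-overlapping in $\mathcal{H}^1$-measure, the summands $\mu_i := \phi_i(1+|\dot X_i|^2)^{-1/2}\,\mathcal{H}^1\restr\Gamma_i$ are mutually singular, and $m\ll\mu$ splits as $m = \sum_i m_i$ with $m_i \ll \mu_i$. Setting $v_i := dm_i/d\mu_i$, the parametrization $x_3\mapsto(X_i(x_3), x_3)$ yields $d\mu_i = \phi_i(x_3)\,dx_3$ and $dm_i = v_i(X_i(x_3), x_3)\phi_i(x_3)\,dx_3$. Fix $i$ and an interior parameter $\bar x_3 \in (a_i, b_i)$ at which $X_i(\bar x_3) \ne X_j(\bar x_3)$ for every $j\ne i$ (a set of full measure by the non-overlapping hypothesis, since the pointwise coincidences form a $\mathcal{H}^1$-null set). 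By continuity of the finitely many $X_j$ we may choose $\varepsilon, r > 0$ so that $|X_j(x_3) - X_i(x_3)| > r$ on $(\bar x_3 - \varepsilon, \bar x_3 + \varepsilon)$ for every $j\ne i$. Testing \eqref{conteq0} against
\[
\eta(x', x_3) := \psi(x_3)\,\chi(x' - X_i(x_3)),
\]
with $\psi \in C^1_c(\bar x_3 - \varepsilon, \bar x_3 + \varepsilon)$ and $\chi \in C^1_c(B'_r(0))$, the choice of $r$ guarantees that $\eta$ vanishes on every $\Gamma_j$ with $j\ne i$. Using $\partial_3\eta = \dot\psi\chi - \psi\dot X_i\cdot\nabla\chi$ and $\nabla'\eta = \psi\nabla\chi$ and evaluating on $\Gamma_i$ reduces the continuity equation to
\[
\chi(0)\int\dot\psi\,\phi_i\,dx_3 + \nabla\chi(0)\cdot\int\psi\,\bigl(v_i(X_i(\cdot),\cdot) - \dot X_i\bigr)\phi_i\,dx_3 = 0.
\]
Prescribing $\chi(0)=1$ and $\nabla\chi(0) = 0$ with arbitrary $\psi$ gives $\phi_i$ weakly constant near $\bar x_3$; varying $\nabla\chi(0)$ with $\psi\ge 0$ forces $v_i(X_i(x_3),x_3) = \dot X_i(x_3)$ wherever $\phi_i > 0$. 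Covering $(a_i,b_i)$ by such neighbourhoods yields $\phi_i \equiv$ const on $[a_i, b_i]$ and pins down $m_i = \phi_i(1+|\dot X_i|^2)^{-1/2}\dot X_i\,\mathcal{H}^1\restr\Gamma_i$.

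For the conservation law at an endpoint $x_0 = (X_0, z_0)$, choose $\delta,\varepsilon>0$ so small that only the curves indexed by $\calI^\pm(x_0)$ enter $B'_{2\delta}(X_0)$ on $(z_0-\varepsilon, z_0+\varepsilon)$, and that $X_i(x_3) \in B'_\delta(X_0)$ for every $i \in \calI^\pm(x_0)$ and $x_3 \in [a_i,b_i]\cap(z_0-\varepsilon, z_0+\varepsilon)$. Pick $\chi \in C^1_c(B'_{2\delta}(X_0))$ with $\chi \equiv 1$ on $B'_\delta(X_0)$ and test \eqref{conteq0} against $\eta(x',x_3) := \chi(x'-X_0)\psi(x_3)$ with $\psi \in C^1_c(z_0-\varepsilon, z_0+\varepsilon)$; since $\nabla\chi$ vanishes identically along each such curve, only the $\partial_3\eta$ term survives, and the equation collapses to $\psi(z_0)\bigl(\sum_{\calI^-(x_0)}\phi_i - \sum_{\calI^+(x_0)}\phi_i\bigr) = 0$, which is the claimed balance. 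Finally, for a.e.\ $x_3$ the points $\{X_i(x_3) : x_3 \in [a_i,b_i]\}$ are distinct, so $\sum_{x'\in Q_L}(\mu_{x_3}(x'))^{1/2} = \sum_i \phi_i^{1/2}\chi_{[a_i,b_i]}(x_3)$ a.e., while the parametrization gives $\int_{Q_{L,T}}|dm/d\mu|^2\,d\mu = \sum_i\int_{a_i}^{b_i}|\dot X_i|^2\phi_i\,dx_3$; plugging these into \eqref{Imum} produces \eqref{Iparticul}. The delicate step is the interior identification: the two pieces of information ($\phi_i$ constant and $v_i = \dot X_i$) can only be separated because the shifted test function $\chi(x'-X_i(x_3))$ allows $\chi(0)$ and $\nabla\chi(0)$ to be prescribed independently.
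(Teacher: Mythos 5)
You take a genuinely different route from the paper. The paper's proof fixes a point $\bar x = (\bar x',\bar x_3)$ with $\mu_{\bar x_3}(\bar x')\neq 0$, chooses $\delta,\eps>0$ so small that every curve meeting the cylinder $\B'_\eps(\bar x')\times(\bar x_3-\delta,\bar x_3+\delta)$ passes through $\bar x'$ at height $\bar x_3$ and so that $\mu$ (hence $m$, since $m\ll\mu$) vanishes on the annular shell $(\B'_{2\eps}(\bar x')\setminus\B'_\eps(\bar x'))\times(\bar x_3-\delta,\bar x_3+\delta)$, and tests against a simple product $\psi_1(x')\psi_2(x_3)$ with $\psi_1\equiv1$ on $\B'_\eps(\bar x')$. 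Because $\nabla'\psi_1$ is supported exactly where $m$ vanishes, the divergence term drops out and one reads off directly that $\sum_{X_i(x_3)\in\B'_\eps(\bar x')}\phi_i(x_3)$ is constant near $\bar x_3$, which yields both the constancy of the $\phi_i$ and junction conservation in one shot. The velocity is then identified not by testing, but by verifying that the candidate $\bar m:=\sum_i\phi_i(1+|\dotX_i|^2)^{-1/2}\dotX_i\,\H^1\restr\Gamma_i$ satisfies \eqref{conteq0} and then proving uniqueness from $\Divp\nu_{x_3}=0$. Your translated test function $\psi(x_3)\chi(x'-X_i(x_3))$ is a clean alternative: varying $\chi(0)$ and $\nabla\chi(0)$ independently extracts both $\phi_i$ constant and $v_i=\dotX_i$ simultaneously, dispensing with the separate identification-and-uniqueness step.

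Two points need more care, though. First, $X_i$ is only absolutely continuous, so $\eta(x',x_3)=\psi(x_3)\chi(x'-X_i(x_3))$ is merely $W^{1,1}$ in $x_3$, not $C^1$, and hence is not a priori an admissible test function for \eqref{conteq0} as the equation is stated for $C^1$ test functions; you would need to approximate $X_i$ by smooth curves and pass to the limit, which in turn asks for some control on $\phi_i\dotX_i$ (available e.g.\ when $I(\mu)<\infty$, since then $\int\phi_i|\dotX_i|\,dx_3\le(\int\phi_i\,dx_3)^{1/2}(\int\phi_i|\dotX_i|^2\,dx_3)^{1/2}<\infty$; the formula is trivial when $I(\mu)=\infty$). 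Second, in the endpoint argument, a curve $\Gamma_j$ may cross $x_0$ in its interior (with $a_j<z_0<b_j$), so your requirement that ``only the curves indexed by $\calI^\pm(x_0)$ enter'' the cylinder cannot always be met. Such a $\Gamma_j$ contributes nothing, since $\chi\equiv1$ and $\nabla\chi\equiv0$ along it on the time window and $\phi_j$ has already been shown constant, so $\int\dot\psi\,\phi_j\,dx_3=0$; but this case should be acknowledged rather than silently excluded.
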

\begin{proof}
 Let $\bar x=(\bar x',\bar x_3)$ with $\bar x_3 \in (-T,T)$  be such that $\mu_{\bar x_3}(\bar x')\neq 0$. Then, by continuity of the $X_i$'s,  there exist  $\delta>0, \eps>0$  such that every curve $\Gamma_i$ with 
$\Gamma_i\cap\lt( \B_\eps'(\bar x')\times [\bar x_3-\delta,\bar x_3+\delta]\rt) \neq \emptyset$ satisfies $X_i(\bar x_3)=\bar x',$ and such that $\mu\restr (\B'_{2\eps}(\bar x')\backslash \B_\eps'(\bar x'))\times [\bar x_3-\delta,\bar x_3+\delta]=0$
(and thus also $m\restr (\B'_{2\eps}(\bar x')\backslash \B_\eps'(\bar x'))\times [\bar x_3-\delta,\bar x_3+\delta]=0$ since $m\ll \m$). Consider then $\psi_1\in C^\infty_c(\B'_{2\eps}(\bar x'))$ with $\psi_1=1$ in $\B_\eps(\bar x')$ and 
$\psi_2\in C^\infty_c( \bar x_3-\delta,\bar x_3+\delta)$ and test \eqref{conteq0} with $\psi:=\psi_1 \psi_2$ to obtain
\begin{align*}
\int_{\bar x_3-\delta}^{\bar x_3+\delta} \frac{d\psi_2}{dx_3} (x_3)\lt(\sum_{X_i(x_3)\in \B_\eps'(\bar x')} \phi_i\rt) dx_3&=\int_{\bar x_3-\delta}^{\bar x_3+\delta}\int_{\B'_\eps(\bar x')} \frac{d\psi_2}{dx_3}(x_3)\psi_1(x') d\mu\\
&=\int_{Q_{1,1}} \frac{\partial \psi}{\partial x_3} d\mu=-\int_{Q_{1,1}} \nabla'\psi \cdot dm\\
 &=-\int_{\bar x_3-\delta}^{\bar x_3+\delta}\int_{\B_{2\eps}(\bar x')\backslash \B_\eps(x')} \psi_2 \nabla' \psi_1 \cdot dm =0,
\end{align*}
from which  the first two assertions follow. It can be easily checked that this implies that $\bar m:= \sum_i \frac{\varphi_i}{\sqrt{1+|\dotX_i|^2}}
\dotX_i \, \mathcal{H}^1\restr\Gamma_i$ satisfies \eqref{conteq0}. Let $m$ be any other measure satisfying \eqref{conteq0} and let us prove that $\nu:=\bar m -m=0$.
 Since $\Divp \nu_{x_3}=0$, we have for every $\psi\in C^{\infty}(Q_1)$  
\[\sum_i \nabla' \psi(X_i(x_3))\cdot \nu_i(x_3)=0,\]
where $\nu_{x_3}=\sum_i \nu_i (x_3) \delta_{X_i(x_3)}$, from which the claim follows. 
\end{proof}

  We remark that Corollary~\ref{coroXia} below will imply that representation \eqref{representationmu}  holds for every measure $\mu$ with $I(\mu)<+\infty$.

The previous results lead to the following.
\begin{proposition}\label{reg}
A minimizer of the Dirichlet problem (\ref{limitProb}) with boundary conditions
$\bar{\mu}_+= \sum_{i =1}^N \phi_i^+\delta_{X_i^+}$ and $\bar{\mu}_{-} =
\sum_{i=1}^N \phi_i^- \delta_{X_i^-} $ (some $\phi_i$ may be zero)
satisfies \begin{enumerate}
\item
$\mu= 
\sum_{i=1}^M \frac{\varphi_i}{\sqrt{1+|\dotX_i|^2}} \, \mathcal{H}^1\restr\Gamma_i  $  for some $M\in\N$, where  $\Gamma_i=\{ (X_i(x_3),x_3) : x_3\in [a_i,b_i]\}$ are
 disjoint up to the endpoints, and the $X_i$ are absolutely continuous.
\item Each $X_i$ is affine.
\item If $\bar{\mu}_{-}= \bar{\mu}_{+} $ then there exists a symmetric minimizer with respect to the $x_3=0$ plane.\end{enumerate}
    \end{proposition}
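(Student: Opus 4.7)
My plan combines the no-loop structure of Lemma \ref{noloop}, the energy formula of Lemma \ref{lemmacurves}, and two elementary variational arguments.

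I would prove (ii) first to avoid circularity. By Lemma \ref{lemmacurves}, $\phi_i$ is constant along each $\Gamma_i$ and
$$I(\mu)=\frac{1}{L^2}\sum_i \int_{a_i}^{b_i} \Bigl(K_* \sqrt{\phi_i} + \phi_i |\dotX_i|^2\Bigr)\, dx_3.$$
Fixing the endpoints and the flux of a single curve, and replacing $X_i$ by any competitor with the same boundary values, keeps the measure admissible (cf.\ Lemma \ref{lemmacurves}) and changes only the quadratic term. Strict convexity of $|\cdot|^2$ then forces $X_i$ to be affine on any subinterval lying between two consecutive branch points, so all curves in the support are piecewise affine.

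For (i), since $\bar\mu_\pm$ are finite atomic measures, the remark following Lemma \ref{noloop} gives that $\mu_{x_3}$ is a finite sum of Diracs for every $x_3\in[-T,T]$, and that $\mu$ is concentrated on a countable family of absolutely continuous curves. I would then form the combinatorial graph whose vertices are the atoms of $\bar\mu_\pm$ (the leaves) together with the interior branch points, and whose edges are the maximal affine subcurves between consecutive vertices. The no-loop property makes this graph a forest. The crucial step is to rule out interior vertices of degree $2$: at such a vertex two affine segments carry the same flux $\phi$ by the conservation part of Lemma \ref{lemmacurves}, and unless they are already collinear, straightening them into one segment with the same endpoints strictly decreases $\int \phi|\dotX|^2\,dx_3$, contradicting minimality. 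Hence every interior vertex has degree $\geq 3$, and the elementary handshake/forest count $2E \geq 3I + L$ together with $V-E\leq V$ bounds $E\leq 2(N_++ N_-)$, giving $M<+\infty$.

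For (iii), assume $\bar\mu_+=\bar\mu_-$ and pick a minimizer $\mu$. Split $I(\mu) = I^{(-T,0)}(\mu) + I^{(0,T)}(\mu)$ and assume without loss of generality that $I^{(0,T)}(\mu) \leq I^{(-T,0)}(\mu)$. I would set $\tilde\mu_{x_3} := \mu_{|x_3|}$ with transport $\tilde m_{x_3} := \sgn(x_3)\, m_{|x_3|}$, and verify the continuity equation: it holds on each half by construction, and across $\{x_3=0\}$ because both halves share the trace $\mu_0$ while the sign change in $\tilde m$ correctly encodes the reversed transport direction. Then $I(\tilde\mu) = 2\, I^{(0,T)}(\mu) \leq I(\mu)$, so $\tilde\mu$ is the desired symmetric minimizer. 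I expect the main obstacle to be the finiteness argument in (i), where one has to simultaneously guarantee that branch points form a discrete set and that minimality eliminates the degree-$2$ ones; proving (ii) first in its local form sidesteps a potential circularity.
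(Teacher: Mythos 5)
Your parts (ii) and (iii) track the paper's proof: (ii) is the same pointwise minimization of the representation formula \eqref{Iparticul} in $\dot X_i$, and (iii) is exactly the paper's reflection of the cheaper half across $\{x_3=0\}$ (your verification of the continuity equation across the gluing plane is a welcome addition the paper leaves implicit).

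The gap is in (i), and it is the crucial step. You deduce from the remark after Lemma \ref{noloop} that every slice $\mu_{x_3}$ is a finite sum of Diracs and that $\mu$ is supported on countably many absolutely continuous curves, and you then form the associated tree and apply the handshake/forest count $2E\ge 3I+L$ together with $E\le V-1$ to conclude $E<\infty$. But this count is only conclusive once you already know the tree is finite: if $I$ and $E$ are both infinite the inequalities are vacuous, and nothing you have proven rules out a tree with finitely many boundary leaves but infinitely many interior triple points accumulating towards $x_3=\pm T$ (an infinite ray of degree-$3$ vertices whose side branches all terminate at the same boundary atom is a forest, satisfies the no-loop condition, and is not excluded by the slice-finiteness argument nor, in general, by the energy bound, since $\sum_i\sqrt{\phi_i}$ can remain bounded under sufficiently skewed branching). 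You flag this yourself as ``the main obstacle'', but the proposal does not actually resolve it. There is also a mild circularity in proving (ii) ``first'': the representation formula \eqref{Iparticul} you vary is precisely the conclusion of Lemma \ref{lemmacurves}, which is stated for a finite family of curves, so its availability presupposes the finiteness you are trying to establish.

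What the paper does instead, and what your proof is missing, is a second application of the subsystem construction: for each pair $(i,j)$ of boundary atoms, let $\mu^{ij}$ be the subsystem emanating from $(X_i^-,-T)$ of the subsystem emanating from $(X_j^+,T)$, so that $\mu=\sum_{ij}\mu^{ij}$. Lemma \ref{noloop} then forces each $\mu^{ij}$ to be a single Dirac at every level, i.e.\ a single absolutely continuous curve (two atoms at an interior slice with common ancestors at $\pm T$ would be a loop), and Lemma \ref{lemmacurves} gives constant flux along it. This yields at most $N^2$ curves, hence finiteness directly, after which the piecewise-affine refinement, the degree-$\ge 3$ property, and the representation formula all follow without circularity. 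That decomposition is the idea your proposal needs to borrow.
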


\begin{proof}
Let $\mu^{ij}$ be the subsystem emanating from $(X_i^-,-T)$ of the subsystem emanating from $(X_j^+,T)$ of $\mu$, so that $\mu=\sum_{ij}\mu^{ij}$ by $\mu^{ij}_{x_3}\le \mu_{x_3}$ and conservation of mass. By Lemma \ref{noloop} we have
$\mu^{ij}_{x_3}=\phi_{ij}(x_3)\delta_{X^{ij}}(x_3)$ for all $x_3$, otherwise there would be loops. 
By Lemma \ref{lemmacurves}, $\phi_{ij}(x_3)$ does not depend on $x_3$. By 
(\ref{HolderW2}), if $\phi_{ij}>0$ then $X^{ij}$ is absolutely continuous. After a relabeling, (i) is proven.

Assertion (ii) follows from minimizing $I(\m)$ as given by \eqref{Iparticul} with respect to $\dotX_i$. 

Let now $\bar{\mu}_{-}= \bar{\mu}_{+}$. If $I(\mu,(-T,0))\le I(\mu,(0,T))$
we obtain a symmetric minimizer $\hat\mu$ by reflection of $\mu\LL(-T,0)$ across $\{x_3=0\}$,
and analogously in the other case. 
This proves (iii).
\end{proof}

We now show that for symmetric minimizers, at arbitrarily small distance from the
boundary we have a finite number of Diracs. We already know that at
arbitrarily small distance we have a countable number, and then that
we have a representation of $\m$ of the form 
\eqref{representationmu}. Let us point out that we will not use this proposition but rather include it for its own interest.

\begin{proposition}\label{finitebranch} 
Fix $\bar\mu\in \calM^+(Q_L)$. Let $\mu$ be a symmetric minimizer 
of $I$ subject to $\mu_{\pm T}=\bar\mu$. Then for  any
$\delta>0$ sufficiently small, the number of Diracs in each slice $x_3 \in [-T +
\delta T , T- \delta T]$ is $\lsim \delta^{-4}$.
\end{proposition}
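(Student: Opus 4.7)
The plan is as follows. By the no-loops property (Lemma~\ref{noloop}) together with Proposition~\ref{reg}(i), the support of a symmetric minimizer has a tree structure on $[0,T]$ rooted at $x_3=0$: starting from any Dirac at $x_3=0$ and moving upwards, the associated subsystem can only split, never merge (a merge would, combined with the subsystem emanating from the second root, produce a loop). Consequently $x_3\mapsto N(x_3):=\#\mathrm{supp}(\mu_{x_3})$ is nondecreasing on $[0,T]$ and, by symmetry, nonincreasing on $[-T,0]$. Hence the supremum of $N$ on $[-T+\delta T,T-\delta T]$ is attained at $\pm(T-\delta T)$ and it suffices to bound $N:=N(T-\delta T)$.

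Next, I would obtain a sharp upper bound for the energy of $\mu$ on the boundary layer $[T-\delta T,T]$. Since the restriction of $\mu$ to this layer is a minimizer with boundary traces $\mu_{T-\delta T}=\sum_{i=1}^{N}\phi_i\delta_{X_i}$ and $\mu_T=\bar\mu$, I compare it with the competitor produced by the first part of Proposition~\ref{branchingmu} applied with time $\delta T$ and total flux $\Phi=\bar\mu(Q_L)$, giving
\[I^{(T-\delta T,T)}(\mu)\;\les\;\frac{\delta T\,\Phi^{1/2}}{L^2}+\frac{\Phi}{\delta T}.\]
For $\delta$ small enough (depending on $\bar\mu$, $T$, $L$) the second term dominates.

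For the matching lower bound I would adapt the test-function strategy from the first proof of Proposition~\ref{branchinglowerbound}, now applied inside the layer $[T-\delta T,T]$. Averaging the surface density over this layer, pick a level $x_3^\ast\in(T-\delta T,T)$ with $\sum_j\sqrt{\mu_{x_3^\ast}(\{X_j^\ast\})}\les L^2 I^{(T-\delta T,T)}(\mu)/(K_\ast\delta T)$; at this slice there are $N^\ast\ge N$ Diracs (by monotonicity). Place bumps of radius $\lambda$ centered at the points of $\mathrm{supp}(\mu_{x_3^\ast})$, and test the continuity equation (\ref{conteq0}) between $x_3^\ast$ and $T$ against them. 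The resulting identity bounds $\lambda\sum_j\phi_j^\ast$ above by the $\bar\mu$-mass of the union of bumps plus a Cauchy--Schwarz transport term $\les L(\delta T\,\Phi\,I^{(T-\delta T,T)}(\mu))^{1/2}$. Restricting to the indices $j$ with $\phi_j^\ast\ge 4\Phi\lambda^2/L^2$ so that the $\bar\mu$-mass term can be absorbed (using that $\bar\mu$ has finite total mass and, for $\delta$ small, $\mu_{x_3^\ast}$ is close to $\bar\mu$ in $W_2$), one gets a quantitative bound of the type $\Phi\les L(\delta T\,\Phi\,I^{(T-\delta T,T)}(\mu))^{1/2}/\lambda+\lambda\Phi^{1/2}L\,I^{(T-\delta T,T)}(\mu)/(\delta T)$. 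Optimizing in $\lambda$ and inserting the upper bound for $I^{(T-\delta T,T)}(\mu)$ from the previous step, the number $N^\ast$ (hence $N$) is controlled by $\delta^{-4}$ up to constants depending on $L$, $T$, $\Phi$.

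The main obstacle is precisely this last, interpolation-style step: the test-function argument in Proposition~\ref{branchinglowerbound} is tailored to Lebesgue boundary data, whereas here the relevant comparison is with the arbitrary measure $\bar\mu$. The extra smallness of $\delta$ is needed to guarantee both that one may separate individual Diracs by bumps of a common radius $\lambda$ and that the $\bar\mu$-mass captured by those bumps is a small fraction of $\Phi$; the exponent $4$ then emerges from balancing the $\sqrt{\delta T\,\Phi\,I}$ transport term (controlled by $\Phi/(\delta T)$) against the cubic-in-$\lambda$ bump volume term and the choice $\lambda^2\sim L^2/N$.
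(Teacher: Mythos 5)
Your approach is genuinely different from the paper's, and it contains two gaps that I don't think can be repaired in the way you suggest. First, the monotonicity of $N(x_3)$ on $[0,T]$ is asserted but not justified: Lemma~\ref{noloop} excludes a merge only when the two subsystems also share a common point \emph{below}, and a merge at $z_*>z_1\ge0$ of two branches distinct at $z_1$ produces no loop unless they also share a point at some $z_-<z_1$. The paper does extract a disjointness statement from no-loops plus symmetry, but in a more delicate way: it shows that the subsystem $\tilde\mu$ emanating from a single Dirac at level $1-\delta$ is disjoint from $\mu-\tilde\mu$ on $\{x_3>1-\delta\}$, because a merge there would, upon reflection through $x_3=0$, yield a second merge for $x_3<-(1-\delta)$, and the two together form a loop. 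This argument is tied to taking the subsystem from the same level as the bottom of the disjointness interval and does not readily give global monotonicity of $N$.

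Second, and more seriously, the interpolation/test-function lower bound you rely on does not survive the passage from Lebesgue boundary data to arbitrary $\bar\mu$. In Proposition~\ref{branchinglowerbound} the absorption step crucially uses that the Lebesgue measure of $N$ bumps of radius $\lambda$ is $\lesssim N\lambda^3\Phi/L^2$, which for $\phi_i\gtrsim\Phi\lambda^2/L^2$ is a small fraction of $\lambda\sum\phi_i$; the inequality \eqref{interpolBVW2} likewise bounds $W_2^2(\mu_{x_3^*},\Phi L^{-2}dx')\gtrsim\sum\phi_i^2$ precisely because Lebesgue measure is spread out. For a concentrated $\bar\mu$ both steps degenerate: the bumps you place at $\mathrm{supp}(\mu_{x_3^*})$ capture essentially all of $\bar\mu$'s mass exactly because $\mu_{x_3^*}$ is $W_2$-close to $\bar\mu$ when $\delta$ is small (this is the wrong direction for your absorption), and $W_2^2(\mu_{x_3^*},\bar\mu)$ can be arbitrarily small, so your energy lower bound collapses and gives no control on $N$. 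The paper avoids the whole problem by bounding each Dirac mass $\phi$ from below rather than bounding the count via an energy balance: it compares $\mu$ with the competitor obtained by deleting the subsystem $\tilde\mu$, dilating $\mu-\tilde\mu$ by $1+\phi/(1-\phi)$, and re-attaching $\tilde\mu_1$ over a height $z=\phi^{1/4}$ via Proposition~\ref{branchingmu} at cost $\lesssim\phi^{3/4}$, while the disjointness on $[1-\delta,1]$ guarantees a saving of at least $\delta\sqrt\phi$. Minimality forces $\phi\gtrsim\delta^4$, hence at most $\lesssim\delta^{-4}$ Diracs since the total flux is fixed; this argument is uniform in $\bar\mu$.
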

\begin{proof}
We may assume $L=T=1$, $\mu(Q_{1,1})=2$.
By symmetry, we need only to consider the interval $[0,1-\delta]$. If $\mu_{1-\delta}=\sum_{i} \p_i \delta_{X_i}$, 
 it suffices to prove that $\phi_i\gsim \delta^{4}$ for every $i$. For the rest of the proof we fix a point  $X_i$ and  
in order to ease notation we write $\phi:=\phi_i$ and $X:= X_i$.   Let  $\tilde{\m}$ be the subsystem emanating from $(X,1-\delta)$. 
Thanks to the symmetry of $\m$ and to the no-loop condition,
$\mu$ and  $\mu-\tilde{\m}$ are disjoint for  $x_3> 1-\delta$.  Indeed, if this was not the case, 
by symmetry they would meet also for $x_3<- 1+\delta$, and there would be a loop, which is excluded by
Lemma \ref{noloop}. Therefore
\begin{equation}\label{secondestimI}
  I(\m)- I(\m- \tilde{\m})\ge \int_{1-\delta}^1 \sum_{x'\in Q_1} \left( \tilde{\m}_{x_3}(x')\right)^\hal  \, dx_3\ge \delta \sqrt{\p},
 \end{equation}
 where in the second step we used subadditivity of the square root.
 
 Let now $\tilde{\mu}_{1}$ be the trace of $\tilde{\m}$ on $x_3={1}$ and
 for $z:=\phi^{1/4}$, let $\hat\mu$ be the symmetric comparison measure constructed as follows: 
\begin{itemize}
\item in $[0, 1-z]$, $\hat\mu_{x_3}:= (1+\frac{\p}{1-\p})(\mu_{x_3}-\tilde{\mu}_{x_3})$ and
\item in   $[1-z,1]$,  $\hat\mu_{x_3}:=\mu_{x_3}-\tilde{\mu}_{x_3}+\nu_{x_3}$ where $\nu$ is a measure connecting $\frac{\p}{1-\p} (\mu-\tilde{\m})_{1-z}$ to $\tilde{\m}_{1}$ constructed in Proposition \ref{branchingmu}, so that (recall \eqref{Iztz})
\begin{equation*}
 I^{(1-z,1)}(\nu)\les z\sqrt{\p}+\frac{\p}{z}\sim \p^{3/4}.
\end{equation*}
\end{itemize}
  Since $\m$ is a minimizer it follows by subadditivity of the energy that, for some universal (but generic) constant $C$,
 \begin{align*}
  I(\mu)\le I(\hat\mu)&\le \lt(1+\frac{\phi}{1-\p}\rt) I(\mu-\tilde{\m})+ C\p^{3/4}\\
 &\le I(\mu-\tilde{\m})+C\p^{3/4}.
 \end{align*} Indeed, $I(\mu-\tilde{\m}) \le I(\mu) \lsim 1$ while $\varphi \ll 1$ without loss of generality.
Recalling \eqref{secondestimI}, we deduce  that $C\p^{3/4}\ge \delta \p^{1/2}$,
which yields the result.
\end{proof}

\begin{definition}\label{defpolygonal}
We say that a measure $\mu$ is polygonal if 
\begin{equation*}
\mu= \sum_i \frac{\varphi_i}{\sqrt{1+|\dotX_i|^2}} \, \mathcal{H}^1\restr\Gamma_i,   
\end{equation*}
 where the sum is countable, $\Gamma_i$ are segments of the form 
 $\Gamma_i=\{ (X_i(x_3),x_3) : x_3\in [a_i,b_i]\}$ disjoint up to the endpoints,
 and for any $z\in (0,T)$ only finitely many segments intersect  $Q_L\times (-z,z)$. We say it is  finite polygonal
 if the total number of segments is finite.
\end{definition}
 For any polygonal measure, the representation formula \eqref{Iparticul} holds.

\begin{lemma}
  Let $\bar \m \in \calM^+(Q_L)$. Then, every symmetric minimizer of $I$  with boundary data $\mu_{\pm T}=\bar \m$ is polygonal.
\end{lemma}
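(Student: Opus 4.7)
My plan is to reduce the claim to Proposition~\ref{reg} by a slicing/restriction argument, using Proposition~\ref{finitebranch} to produce finite-Dirac boundary data at interior levels. Fix any $z \in (0,T)$. By the symmetry hypothesis and Proposition~\ref{finitebranch} applied with $\delta = 1 - z/T$, both slices $\mu_{z}$ and $\mu_{-z}$ are \emph{finite} sums of Diracs (with at most $\lesssim (1-z/T)^{-4}$ atoms).

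The second step is to observe that the restriction $\mu^z := \mu\restr Q_L\times[-z,z]$ is a minimizer of the Dirichlet problem on the smaller cylinder with boundary data $\mu_{\pm z}$. This is the standard localization argument: any strictly better competitor $\nu$ on $Q_L\times[-z,z]$ with $\nu_{\pm z}=\mu_{\pm z}$ could be glued to $\mu\restr Q_L\times([-T,-z]\cup[z,T])$; matching traces at $x_3=\pm z$ guarantees that the continuity equation \eqref{conteq0} is satisfied by the glued measure, while the energy $I$ is clearly additive across slices, yielding an admissible competitor with $I(\tilde\mu)<I(\mu)$, contradicting minimality.

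Since $\mu_{\pm z}$ is a finite sum of Diracs, Proposition~\ref{reg} applies to $\mu^z$ and gives a finite polygonal representation with affine segments $\Gamma_i^z$. The third step is a coherence argument: take an increasing sequence $z_n\nearrow T$, apply the above to each $z_n$, and combine the resulting affine decompositions. Because two affine segments meeting at an interior point with matching slope can be merged into a single affine segment, we can choose, for each segment, its maximal affine extension inside $Q_L\times(-T,T)$. The resulting family $\{\Gamma_i\}$ is countable (a countable union of finite families), each $\Gamma_i$ is an affine segment, and for every $z<T$ only finitely many of them intersect $Q_L\times(-z,z)$ by Proposition~\ref{finitebranch}. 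By Lemma~\ref{lemmacurves} (applied on $[-z,z]$ for each $z<T$), the measure $\mu$ is supported on $\bigcup_i \Gamma_i$ with the representation of Definition~\ref{defpolygonal}, which is the polygonal property.

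The only mildly delicate point is the gluing/localization step, which requires checking that the distributional continuity equation is preserved across the interfaces $\{x_3=\pm z\}$; this is where the fact that $\nu_{\pm z}=\mu_{\pm z}$ as measures (not merely in some weaker sense) is essential, and relies on the $W_2$-continuity in $x_3$ established after~\eqref{HolderW2}. Everything else is straightforward bookkeeping on top of Propositions~\ref{reg} and~\ref{finitebranch}.
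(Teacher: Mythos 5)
Your proof is correct and follows essentially the same route as the paper's: apply Proposition~\ref{finitebranch} to get finite-Dirac traces at interior levels $\pm z$, observe that $\mu$ restricted to $Q_L\times(-z,z)$ minimizes the corresponding Dirichlet problem, and invoke Proposition~\ref{reg} to get a finite polygonal representation there; then let $z\nearrow T$. The paper states this in three lines and leaves the localization and gluing of representations implicit, whereas you spell out both steps explicitly — this is faithful to the intended argument rather than a departure from it.
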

\begin{proof}
 It suffices to show that for any $z\in (0,T)$ the measure $\mu$ is polygonal in $Q_L\times (-z,z)$. 
 By Proposition \ref{finitebranch} the measures $\mu_{z}$ and $\mu_{-z}$ are finite sums of Diracs. Since $\mu$
 is a minimizer, it minimizes $I$ restricted to $(-z,z)$ with boundary data $\mu_{\pm z}$. By Proposition~\ref{reg}
 we conclude.
\end{proof}

\subsection{Density of regular and quantized measures}
In this section we want to prove that when $\bar \mu_{\pm T }=  \Phi L^{-2} dx'$, the set of ``regular" measures is dense in energy.

 \begin{definition}\label{defregular}
  We denote by $\Mreg(Q_{L,T})\subset \calA^*_{L,T}$ the set of regular measures, i.e., of measures $\mu$ such that: 
\begin{itemize}
\item[(i)] The measure $\mu$ is finite polygonal, according to Def. \ref{defpolygonal}.
\item[(ii)] All branching points are triple points. This means that any $x\in Q_{L,T}$ belongs
to the closures of no more than three segments.
\end{itemize}
For $N\in \N$, we say that $\mu$ is $N$-regular,  $\mu\in \MNreg(Q_{L,T})\subset \Mreg(Q_{L,T})$, if in addition
\begin{itemize}
\item[(iii)] The traces obey
$\mu_{T}=\mu_{-T}=\Phi N^{-2}\sum_{j} \delta_{X_j}$,
where the $X_j$ are $N^2$ points on a square grid, spaced by $L/N$, and $\Phi\ge0$.
\end{itemize}
\end{definition} 

We  can now state the main  theorem of this section:
\begin{theorem}\label{theodens}
 For every measure $\mu\in \calA^*_{L,T}$ with $I(\mu)<+\infty$
  and $ \mu_{\pm T}= \Phi L^{-2} dx'$, there exists a sequence of measures $\mu_N
  \in\MNreg$, with $\mu_N \weaklim \mu$ and such that  $\limsup_{N \to +\infty} I(\mu_N)\le I(\mu)$,   $\mu_N(Q_{L,T})=\mu(Q_{L,T})$.
  \end{theorem}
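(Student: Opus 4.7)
The plan is to construct $\mu_N$ by three superimposed approximations: a truncation and polygonalization of $\mu$ in the interior, a resolution of higher-order branchings into triple points, and two branching layers connecting the resulting interior trace to the grid measure $\Phi N^{-2}\sum_j \delta_{X_j}$ at $x_3 = \pm T$.

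Fix small parameters $\delta_N, \eta_N \to 0$ and an integer $K_N \to \infty$, all to be coupled to $N$. By the H\"older estimate $W_2(\mu_{x_3},\mu_{\tilde x_3})^2 \les L^2 I(\mu) |x_3-\tilde x_3|$ of \eqref{HolderW2}, the traces $\mu_{\pm(T-\delta_N)}$ are at distance $\lesssim \delta_N^{1/2}$ from $\Phi L^{-2} dx'$. Using the rectifiable representation $\mu = \sum_i \phi_i (1+|\dotX_i|^2)^{-1/2}\, \H^1\!\restr\Gamma_i$ (which is available from Proposition~\ref{reg} applied slicewise on compactly contained slabs, together with Corollary~\ref{coroXia} promised in the text), I approximate $\mu\LL Q_L\times[-T+\delta_N,T-\delta_N]$ as follows: first retain only the $K_N$ branches with largest flux; second, replace each retained curve $X_i$ by a piecewise affine interpolant on a mesh of size $\eta_N$; third, split every node where more than three segments meet into a cascade of triple points separated by distance $\eta_N$. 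The first step costs the discarded tail $\sum_{i>K_N}\int (\sqrt{\phi_i}+\phi_i|\dotX_i|^2)\, dx_3$, which vanishes as $K_N \to \infty$ by $I(\mu)<+\infty$; the second step converges by \eqref{Iparticul} and the density of step functions in $L^2$; the third step has transport cost $\lesssim \phi \eta_N$ per new segment and negligible interfacial cost. The discarded mass is redistributed by the uniform branching construction of Proposition~\ref{branchingmu} at negligible cost.

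It remains to connect the finite, quantized traces at $x_3 = \pm(T-\delta_N)$ to $\Phi N^{-2}\sum_j \delta_{X_j}$ at $x_3 = \pm T$. This is done using Proposition~\ref{branchingmu} inside each layer $Q_L\times (\pm(T-\delta_N),\pm T)$: applied with $T$ replaced by $\delta_N$ and total flux $\Phi$, it yields a finite polygonal measure with only triple branchings whose energy is $\lesssim \delta_N^{1/3}\Phi^{2/3}/L^{4/3} + \delta_N \Phi^{1/2}/L^2$, which tends to $0$ as $\delta_N \to 0$. A last technical adjustment is required to glue the two finite polygonal measures at $x_3 = \pm(T-\delta_N)$: the Diracs of the interior trace need not sit at the atoms of the boundary-layer construction, but since both are finite sums at matching total mass, one inserts a short transport interval of height $O(\eta_N)$ in which Diracs are displaced horizontally; its energy is $\lesssim \eta_N^{-1} L^2 \Phi + \eta_N \Phi^{1/2}$ and is controlled if $\eta_N$ is chosen large compared to $L^2 N^{-1}$. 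Balancing $\delta_N, \eta_N, K_N$ (e.g.\ $\delta_N = N^{-1/2}$, $\eta_N = N^{-1/4}$, $K_N = N$) yields the required $\mu_N \in \MNreg$ with mass $\mu(Q_{L,T})$.

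Weak convergence $\mu_N \weaklim \mu$ follows from the explicit nature of the construction and the fact that all modifications affect vanishing volumes of $Q_{L,T}$; the $\limsup$ inequality is summed from the three interior errors and the two boundary-layer energies, each shown to vanish. The main obstacle is the simultaneous balancing of the four small parameters $\delta_N, \eta_N, K_N, N$: the truncation error in $K_N$ depends only on the given $\mu$, but the polygonalization step at scale $\eta_N$ must be fine enough to approximate the possibly wild absolutely continuous curves $X_i$ while coarse enough that the subsequent resolution of multi-junctions into triple points and the horizontal adjustment to the grid contribute $o(1)$ to the energy. The secondary subtlety is that the construction of Proposition~\ref{branchingmu} produces a specific discrete measure at the central plane, which must be reconciled with the target grid $\Phi N^{-2}\sum_j \delta_{X_j}$; this is achieved by applying the branching construction with its own grid at scale $L/N$ and absorbing the discrepancy in the short matching interval.
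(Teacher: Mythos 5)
Your plan for the boundary layer — using Proposition~\ref{branchingmu} in a thin slab of height $\delta_N$ to connect the interior trace to the grid $\Phi N^{-2}\sum_j\delta_{X_j}$ — is essentially the paper's, and the scaling you quote is right. The gap is in the interior approximation. You start from the rectifiable representation $\mu=\sum_i \frac{\phi_i}{\sqrt{1+|\dotX_i|^2}}\,\H^1\restr\Gamma_i$ for an \emph{arbitrary} $\mu$ with $I(\mu)<+\infty$, then truncate to the $K_N$ largest branches and polygonalize each curve. But you attribute this representation to ``Proposition~\ref{reg} applied slicewise on compactly contained slabs, together with Corollary~\ref{coroXia}.'' Proposition~\ref{reg} only applies to \emph{minimizers} of the Dirichlet problem; a generic finite-energy $\mu$ need not be a minimizer on any slab, so slicewise application does not give what you want. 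And Corollary~\ref{coroXia} (rectifiability of any finite-energy $\mu$) is itself deduced in the paper from Lemma~\ref{lemmamudensityinside}, which is precisely the core of Theorem~\ref{theodens}. So your argument is circular: you are invoking a consequence of the theorem to prove it.

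The paper sidesteps exactly this difficulty by never needing the curve representation of $\mu$. In Lemma~\ref{lemmamudensityinside} one picks finitely many levels $z_j$ where $\mu_{z_j}$ is a countable sum of Diracs (such levels are a.e.\ dense by $I(\mu)<\infty$), truncates to the $K_j$ largest atoms so the discarded flux is $\le\delta/N$, and then — and this is the key move you are missing — replaces $\mu$ \emph{on each stripe $(z_j,z_{j+1})$ by the minimizer with boundary data} $\mu^N_{z_j}$, $\mu^N_{z_{j+1}}$. That minimizer is finite polygonal \emph{because} Proposition~\ref{reg} applies to it (it is a minimizer of a finite-Dirac Dirichlet problem), and it has energy no larger than $\mu$ restricted to the stripe. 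This eliminates any need for a structure theorem about $\mu$ itself and breaks the circularity. If you want to retain your outline, replace the ``piecewise affine interpolant of the curves of $\mu$'' step with ``replace $\mu$ by the stripe-wise minimizer between consecutive truncated levels,'' and your argument becomes essentially the paper's. (Your remaining steps — cascading multi-junctions into triple points at a small cost, using Proposition~\ref{branchingmu} in the boundary slab, gluing over a thin interval, balancing $\delta_N$, $K_N$ — are all aligned with the paper and sound, given the fix.)
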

The proof will be based on the following intermediate result.
\begin{lemma}\label{lemmamudensityinside}
 For every measure $\mu\in \calA^*_{L,T}$ with $I(\mu)<+\infty$, and such that the traces 
 $\mu_T$ and $\mu_{-T}$ are finite sums of Diracs,
 there exists a sequence of regular measures $\mu^{(N)}\in \Mreg(Q_{L,T})$ 
 with $\mu^{(N)} \weaklim \mu$, $\mu^{(N)}_{\pm T}=\mu_{\pm T}$, 
 and such that  $\limsup_{N \to +\infty} I(\mu^{(N)})\le I(\mu)$.
\end{lemma}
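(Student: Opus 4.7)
The plan is to produce $\mu^{(N)}$ from $\mu$ by three successive approximations, each of which preserves the boundary traces $\mu_{\pm T}$ and adds only $o(1)$ to the energy. The starting point is the representation $\mu=\sum_i \phi_i (1+|\dot X_i|^2)^{-1/2} \H^1\restr\Gamma_i$, with $\Gamma_i=\{(X_i(x_3),x_3):x_3\in[a_i,b_i]\}$, flux conservation at every internal branching node, and endpoints either at $x_3=\pm T$ or at branching nodes. Finiteness of $I(\mu)$ is equivalent to $\sum_i \sqrt{\phi_i}(b_i-a_i)<\infty$ together with $\sum_i \phi_i\int_{a_i}^{b_i}|\dot X_i|^2 dx_3<\infty$.

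First, I would reduce to a finite network. Order the curves by decreasing $\phi_i$ and keep the first $K$, obtaining $\mu^K_\mathrm{big}$; by the tail estimates just recalled, the discarded $\mu^K_\mathrm{small}$ satisfies $I(\mu^K_\mathrm{small})\to 0$ as $K\to\infty$. However, removing the small curves creates two obstructions: (a) interior mass defects at branching nodes where small branches were severed; (b) the traces of $\mu^K_\mathrm{big}$ at $x_3=\pm T$ differ from $\mu_{\pm T}$ by the (finite-Dirac) traces of $\mu^K_\mathrm{small}$. I would repair both simultaneously by building a \emph{patch} measure $\pi^K$: at each interior defect node, route the missing flux through short almost-vertical legs to a nearby surviving curve of $\mu^K_\mathrm{big}$, and use the branching construction of Proposition~\ref{branchingmu} between the resulting modified trace and $\mu^K_\mathrm{small}\restr\{x_3=\pm T\}$ to restore the exact boundary Diracs. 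By Proposition~\ref{branchingmu}, $I(\pi^K)\lesssim T\Phi_K^{1/2}/L^2+\Phi_K/T$ with $\Phi_K=\mu^K_\mathrm{small}(Q_L\times\{\pm T\})\to 0$, so the patched $\mu^K:=\mu^K_\mathrm{big}+\pi^K$ lies in $\calA^*_{L,T}$, has the prescribed traces, is supported on finitely many curves, and satisfies $\limsup_K I(\mu^K)\le I(\mu)$.

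Second, I would approximate each of the finitely many surviving curves by its piecewise affine interpolant on a uniform mesh of step $\delta_M\to 0$, with the mesh refined to contain every branching height. Since $\phi_i>0$ and $\phi_i\int|\dot X_i|^2<\infty$ implies $X_i\in H^1((a_i,b_i);\R^2)$, piecewise affine interpolation converges in $H^1$, so the transport contribution $\int_{a_i}^{b_i}\phi_i|\dot X_i|^2 dx_3$ passes to the limit, while $K_*\sqrt{\phi_i}(b_i-a_i)$ is unchanged. Flux conservation is exactly preserved because nodes lie on the mesh.

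Third, I would make every junction triple. At a node where $k\ge 4$ segments meet, split the multi-junction into a cascade of $k-2$ triple junctions distributed over a vertical strip of height $\ell\to 0$, with the auxiliary segments taken essentially vertical so that their added transport cost is of order $\ell$ times a uniformly bounded quantity and their added perimeter cost is $K_*\ell\sum_j\sqrt{\phi_j}$. Choosing $\ell$ small enough gives a measure in $\Mreg(Q_{L,T})$ with energy arbitrarily close to that obtained after steps one and two. A diagonal selection over the three approximation parameters then yields the sequence $\mu^{(N)}$.

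The decisive step is the first one: the severed small branches leave localised mass defects in the bulk, and the repair must simultaneously (i) restore the exact Dirac traces at $x_3=\pm T$, (ii) satisfy the continuity equation, and (iii) cost only $o(1)$ in energy. The quantitative estimate of Proposition~\ref{branchingmu}, applied with total flux equal to the small-part flux $\Phi_K\to 0$, is what makes this possible; the superposition with $\mu^K_\mathrm{big}$ is harmless because the transport term is convex and the perimeter term subadditive.
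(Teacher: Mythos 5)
Your proof starts from the curve representation $\mu=\sum_i \phi_i(1+|\dot X_i|^2)^{-1/2}\H^1\restr\Gamma_i$ for an arbitrary finite-energy $\mu$. In the paper, that representation \eqref{representationmu} is only established directly for \emph{minimizers} (via the no-loop Lemma \ref{noloop}); for general $\mu$ with $I(\mu)<+\infty$ it is deduced from the rectifiability result Corollary \ref{coroXia}, whose proof explicitly invokes Lemma \ref{lemmamudensityinside} — the very lemma you are trying to prove. So the first line of your argument is circular relative to the paper's logical order, and you would need an independent justification of the curve structure (and of flux conservation at every interior node, and of discreteness of the node set) before the decomposition into ``big'' and ``small'' curves even makes sense.

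Independently of this, the interior-defect repair is the decisive step and it does not go through as written. Removing a small arc $\Gamma_j$ creates a flux imbalance $\phi_j$ at both of its endpoints. Routing $\phi_j$ ``through short almost-vertical legs to a nearby surviving curve'' means increasing the flux on that surviving curve, which creates a new imbalance at \emph{its} other endpoint; the defect propagates through the network and there is no reason the total cost of closing it off is $o(1)$. (Try the dyadic branching example: if all small arcs beyond generation $N$ are deleted, the missing flux at the $2^{2N}$ generation-$N$ nodes is of total size $O(1)$, and vertical legs from those nodes to the boundary cost $O(1)$ in perimeter.) Your appeal to Proposition \ref{branchingmu} only controls the boundary-trace mismatch, not the interior one. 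The paper sidesteps both problems by defining $\mu^{j+1}$ at each level $z_j$ as a \emph{sum of subsystems} of $\mu^j$ (Proposition \ref{def:subsystem}); a sum of subsystems automatically satisfies the continuity equation, so no interior patching is needed, and replacing $\mu^N$ by the stripe-wise minimizer $\hat\mu^N$ produces a finite polygonal measure via Proposition \ref{reg} without any piecewise-affine or junction-splitting step. Only the boundary trace is repaired, using Proposition \ref{branchingmu} as you do. Adopting the subsystem-truncation idea would fix your argument, but as stated the proposal has genuine gaps both at the starting point and at the interior repair.
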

\begin{proof}
We shall modify $\mu$ in two steps to make it polygonal: first on finitely many
layers, to have finitely many Diracs on each of them, and then in the rest of the volume, using local
minimization. 

Fix $N\in\N$ and $\delta>0$, both chosen later. 
We  choose levels $z_j\in (Tj/N, T(j+1)/N)$, for $j=-N+1,\dots, N-1$,
with the property that $\mu_{z_j}=\sum_{k\in\N} \varphi_{j,k}\delta_{x'_{j,k}}$, 
with $\sum_{k\in \N} \varphi_{j,k}^{1/2}<+\infty$ for every $j$. 
We shall iteratively truncate the measure at these levels so that it is supported on finitely many points; 
for notational simplicity we also define
$z_{\pm N}:=\pm T$. The measures $\mu^j$, $j=-N,\dots, N$ will all satisfy
$\mu^j\ll\mu$ and $I(\mu^j)\le I(\mu)$.

 We start with $\mu^{-N}:=\mu$. In 
order to construct $\mu^{j+1}$ from $\mu^j$, we first choose $K_j$ such that
$\sum_{k\ge K_j} \varphi_{j,k}^{1/2}\le \delta/N$.  Then we define 
$\mu^{j+1}$ as
the sum of the subsystems of $\mu^j$ originating from the points $(x'_{j,k},z_j)$ with $k<K_j$.
Clearly $I(\mu^{j+1})\le I(\mu^j)\le I(\mu)$. At the same time, since $\sum_{k=K_j}^{+\infty} \phi_{j,k} \le \Phi^{1/2} \sum_{k=K_j}^{+\infty} \phi_{j,k}^{1/2}$,
\[\mu^{j+1}_{z_j}(Q_L)
=  \mu^{j}_{z_j}(Q_L)- \sum_{k=K_j}^{+\infty} \phi_{j,k}\ge \mu^{j}_{z_j}(Q_L)-\frac{ \Phi^{1/2} \delta}{N},\]
so that  $|\mu^{j+1}-\mu^j|(Q_{L,T})\le 2\Phi^{1/2}\delta T/N$.
Therefore $|\mu^N-\mu|(Q_{L,T})\les \delta T \Phi^{1/2}$.

We define $\hat\mu^N$ as the minimizer with boundary data $\mu^N_{z_j}$ and
 $\mu^N_{z_{j+1}}$ in each stripe $Q_L\times (z_j, z_{j+1})$. Then
 $I(\hat\mu^N)\le I(\mu^N)\le I(\mu)$, and $\hat\mu^N_{\pm T}=\mu^N_{\pm T}$.

At this point we fix the boundary data. For this, we let $\tilde{\mu}$ be the minimizer on $Q_{L,T}$ with 
boundary data $\tilde{\mu}_\pm:=\mu_{\pm T}-\mu^N_{\pm T}$. These boundary data are finite sums of Diracs,
and their flux is $|\tilde{\mu}_\pm|(Q_L)=|\mu_T-\mu^N_T|(Q_L)\les \delta \Phi^{1/2}$. By Proposition \ref{reg} the minimizer is  finite polygonal,
by Proposition \ref{branchingmu} it has energy no larger than a constant times  $\delta \Phi^{1/2}T^{-1} + \delta^{1/2} T \Phi^{1/4} L^{-2}$.
Finally we set $\mu^{(N)}:=\hat\mu^N+\tilde{\mu}$. Then $\mu^{(N)}_{\pm T}=\mu_{\pm T}$, and
\begin{equation*}
 I(\mu^{(N)})\le I(\hat\mu^N)+I(\tilde{\mu}) \le I(\mu) +  C\left(\delta \Phi^{1/2}T^{-1} + \delta^{1/2} T \Phi^{1/4} L^{-2}\rt) .
\end{equation*}

Up to a small perturbation, we may further assume that all junctions are triple.
We can now choose for instance $\delta=1/N$. It only remains to show that $\mu^{(N)}\weaklim\mu$ as $N\to+\infty$.
Recalling that
$|\mu^N-\mu|(Q_{L,T})+|\tilde{\mu}|(Q_{L,T})\les \delta \Phi^{1/2}T$, we only need to show that $\hat\mu^N-\mu^N\weaklim 0$.

For 
 $x_3\in (z_j,z_{j+1})$ we have by definition of $\hat \mu^N$
 $$W_2(\mu_{x_3}^N, \hat \mu_{x_3}^N) \le W_2(\mu_{x_3}^N, \mu_{z_j}^N) +W_2(\hat \mu_{x_3}^N, \hat\mu_{z_j}^N),$$ and by \eqref{HolderW2}
 $$W_2^2(\mu_{x_3}^N, \mu_{z_j}^N)+W_2^2(\hat \mu_{x_3}^N, \hat \mu_{z_j}^N) \le L^2 (z_{j+1}-z_j) \lt( I(\mu^N)+I(\hat \mu^N)\rt),$$
 so that 
 $$\max_{x_3} W_2^2(\mu_{x_3}^N, \hat \mu_{x_3}^N) \lsim \frac{L^2T}{N}I(\mu).$$
For $x_3\in (-T,T)$, let $\Pi_{x_3}$ be an optimal transport plan from $\mu^N_{x_3}$ to $\hat{\mu}^N_{x_3}$. Considering then the transport plan $\Pi:=\Pi_{x_3}\otimes dx_3$ between
$\mu^N$ and $\hat{\mu}^N$, we get
\begin{equation*}
 W_2^2(\mu^N,\hat{\mu}^N)\le \int_{-T}^T W^2_2(\mu_{x_3}^N,\hat\mu_{x_3}^N) dx_3\les \frac{L^2T^2}{N}I(\mu)
\,,
  \end{equation*}
which yields that indeed $\hat\mu^N-\mu^N\weaklim 0$.
 \end{proof}

\begin{proof}[Proof of Theorem \ref{theodens}]
Let $\eps\in (0,1/4)$, chosen such that it tends to zero as $N\to\infty$.
We define $\hat\mu$ in $Q_{L,(1-2\eps)T}$ as a rescaling by $(1-2\eps)$ in the vertical direction, $\hat\mu_{(1-2\eps)x_3}=\mu_{x_3}$.
An easy computation shows that $I(\hat\mu, -(1-2\eps)T, (1-2\eps)T)\le \frac{1}{1-2\eps} I(\mu)$.
In particular, $\hat\mu_{(1-2\eps)T}=\Phi L^{-2} dx'$.
For $x_3\in ((1-2\eps)T, T)$ we define $\hat\mu$ as the result of Proposition \ref{branchingmu}.
Then we set
$\tilde{\mu}=\hat\mu$ on $(-(1-\eps)T,(1-\eps)T)$ and extend it constant outside, in the sense that
$\tilde{\mu}_{x_3}=\hat\mu_{(1-\eps)T}$ for $x_3\in ((1-\eps)T,T)$, and
the same on the other side. Since $\hat \mu_{(1-\eps)T}$ is the midplane configuration of the branching measure constructed in Proposition \ref{branchingmu},
$\tilde{\mu}_{x_3}$ is a finite sum of Diracs for $|x_3|\ge (1-\eps) T$.
We obtain
\begin{equation*}
 I(\tilde{\mu}) \le \frac{1}{1-2\eps} I(\mu) + C \eps^{1/3} \left(\Phi^{2/3}T^{1/3} L^{-4/3}+\eps^{2/3} \Phi^{1/2} T  L^{-2} \right)\,.
\end{equation*}
By Lemma \ref{lemmamudensityinside} applied to the inner domain
$(-(1-\eps)T,(1-\eps)T)$ there is a finite polygonal measure $\check\mu$ which is close to $\tilde{\mu}$
and has the same boundary data at $x_3=\pm T(1-\eps)$. The measure given by $\check{\mu}$ inside, 
and $\tilde{\mu}$ outside, has the required properties. 
\end{proof}

We now turn to the quantization of the measures.
\begin{definition}
 We say that a regular measure $\mu\in\Mreg(Q_{L,T})\subset\calA_{L,T}^*$ is $k$-quantized, for $k>0$, if for all $(x',z)\in Q_{L,T}$ one has $k\mu_z(\{x'\})\in 2\pi\N$.
\end{definition}
\begin{lemma}\label{lemmaquantize}
  Let $\mu\in \MNreg(Q_{L,T})$ and  $\Phi=\mu_T(Q_L)$. 
For any $k>0$ such that $k\Phi\in 2\pi\N$
  there is a $k$-quantized regular measure
  $\mu^{k}\in \Mreg$ such that 
  $\mu^{k}(Q_{L,T})=\mu(Q_{L,T})$ and
  \begin{equation*}
   \left(1-\frac{ C(\mu)}k\right) \mu \le \mu^k \le  \left(1+\frac{ C(\mu)}k\right)\mu\,.
  \end{equation*}
This implies in particular $\mu^k\ll\mu$, $\mu^k\to\mu$ strongly, $W_2^2(\mu,\mu_k)\les C(\mu)k^{-1}$ and $I(\mu^k)\to I(\mu)$ as $k\to\infty$.
  \end{lemma}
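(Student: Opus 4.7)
Since $\mu \in \MNreg$ is finite polygonal with only triple branching points (Definition \ref{defregular}), its support is a finite union of segments $\Gamma_i$ carrying constant fluxes $\phi_i>0$ (Lemma \ref{lemmacurves}). I propose to identify this data with a flow on a finite directed graph $G=(V,E)$, oriented by decreasing $x_3$: the edges are the segments, the internal vertices are the triple points at each of which $\phi_{\mathrm{in}} = \phi_{\mathrm{out},1} + \phi_{\mathrm{out},2}$, and the external vertices are the $2N^2$ boundary Diracs on $\{x_3=\pm T\}$. The fluxes compatible with conservation form an affine subspace of $\R^E$; the desired $\mu^k$ will be produced by replacing $\phi$ by a nearby $\phi^k$ in this subspace with each entry in $(2\pi/k)\N$, leaving the segments $\Gamma_i$ untouched.

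To construct $\phi^k$, the plan is to round and then correct. First define $\phi_e^0 := (2\pi/k)\lfloor k\phi_e/(2\pi) + \tfrac12\rfloor$, so that $|\phi_e^0 - \phi_e| \le \pi/k$. At every internal triple point $v$ the conservation discrepancy $\delta_v := \sum_{e\in\mathrm{in}(v)} \phi_e^0 - \sum_{e\in\mathrm{out}(v)} \phi_e^0$ lies in $(2\pi/k)\Z$ and has $|\delta_v|\le 3\pi/k$, forcing $\delta_v\in\{0,\pm 2\pi/k\}$. I annihilate these discrepancies iteratively: for each surplus vertex I pick a path in $G$ leading to a deficit vertex (or to the external boundary, which acts as a reservoir when parities require it) and add $\pm 2\pi/k$ to the $\phi_e^0$ along that path according to edge orientation. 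Each such push preserves quantization on every edge; since the numbers of internal vertices and edges are bounded in terms of $\mu$, the accumulated perturbation satisfies $|\phi_e^k - \phi_e| \le C(\mu)/k$ for every edge $e$.

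The main obstacle is global integrality: the correction procedure closes only if the algebraic sum of all discrepancies vanishes modulo $2\pi/k$ once the boundary reservoir is taken into account. Summing $\delta_v$ over all internal vertices telescopes to $\sum_{\mathrm{top\ edges}} \phi_e^0 - \sum_{\mathrm{bottom\ edges}} \phi_e^0$, and since the true top and bottom sums both equal $\Phi$ with $k\Phi\in 2\pi\N$ by hypothesis, this boundary balance lies in $(2\pi/k)\Z$ and is therefore corrigible. This is precisely where the quantization of the total flux $\Phi$ enters: it ensures that the procedure terminates with $\sum_{e\in\mathrm{top}} \phi_e^k = \Phi = \sum_{e\in\mathrm{bottom}} \phi_e^k$, so that conservation propagates through every slice and $\mu^k(Q_{L,T})=2T\Phi=\mu(Q_{L,T})$.

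Finally, the two-sided bound $(1-C(\mu)/k)\mu \le \mu^k \le (1+C(\mu)/k)\mu$ follows from $|\phi_e^k-\phi_e|\le C(\mu)/k$ by absorbing $(\min_i \phi_i)^{-1}$ into $C(\mu)$ (valid once $k$ exceeds a threshold depending on $\mu$, at which point every $\phi^k_e>0$, so that the underlying graph structure is preserved and $\mu^k\in\Mreg$). The remaining conclusions are routine: $\mu^k \ll \mu$ because they share the same support; $|\mu^k - \mu|(Q_{L,T}) \lesssim C(\mu)/k$ yields strong convergence, and combining this with a transport plan that moves mass locally along each segment yields $W_2^2(\mu,\mu^k) \lesssim \diam(Q_{L,T})^2\,C(\mu)/k$; and $I(\mu^k)\to I(\mu)$ follows from the explicit representation \eqref{Iparticul}, which is continuous in the $\phi_i$.
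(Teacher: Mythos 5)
Your approach---round every edge independently, then repair the broken divergence by pushing $\pm 2\pi/k$ along paths---is genuinely different from the paper's, which first rounds the $N^2$ Diracs on $\{x_3=T\}$ by a greedy scheme designed so that the top sum is \emph{exactly} $\Phi$ (this is precisely where $k\Phi\in2\pi\N$ enters), and then propagates the quantization downward in $x_3$, redistributing the rounding at each split. Your integral-flow repair is a clean and flexible alternative, but there is a gap in the assertion that the procedure terminates with $\sum_{e\in\mathrm{top}}\phi^k_e=\Phi$.

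After the path corrections you do obtain internal conservation, hence a well-defined common slice flux $\Phi^k:=\sum_{\mathrm{top}}\phi^k_e=\sum_{\mathrm{bot}}\phi^k_e\in(2\pi/k)\Z$ with $|\Phi^k-\Phi|\le C(\mu)/k$, but nothing forces $\Phi^k=\Phi$. Already the initial rounding can produce $\sum_{\mathrm{top}}\phi^0_e$ differing from $\Phi$ by as much as $N^2\pi/k$, i.e.\ by many multiples of $2\pi/k$, and the path corrections (which may terminate at the boundary ``reservoir'') are not designed to re-center the total. The remark that the boundary balance lies in $(2\pi/k)\Z$ is automatic from the rounding (each $\phi^0_e$ is already quantized) and is not where the hypothesis $k\Phi\in2\pi\N$ acts; it yields $\Phi^k\in(2\pi/k)\Z$, not $\Phi^k=\Phi$. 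Since $\mu^k(Q_{L,T})=2T\Phi^k$, the required equality $\mu^k(Q_{L,T})=\mu(Q_{L,T})$ is not established. The fix is short: either (a) round the boundary Diracs first, choosing up/down so that $\sum_{\mathrm{top}}\phi^0_e=\sum_{\mathrm{bot}}\phi^0_e=\Phi$ exactly (feasible because $k\Phi\in2\pi\N$; this is the paper's greedy step)---then the total internal discrepancy $\sum_v\delta_v$ vanishes and your internal path pushes close without touching the boundary; or (b) after the corrections, push $\pm2\pi/k$ along $|\Phi-\Phi^k|\,k/(2\pi)\le C(\mu)$ paths running from $\{x_3=T\}$ all the way to $\{x_3=-T\}$ (each such push preserves internal conservation and shifts $\Phi^k$ by $\pm2\pi/k$) until $\Phi^k=\Phi$. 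With either repair the bound $|\phi^k_e-\phi_e|\le C(\mu)/k$ survives and the remaining conclusions go through as you describe.
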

\begin{proof}
The measure $\mu$ consists of finitely many segments, each with a flux. To prove the assertion
it suffices to round up or down the fluxes to integer multiples of $2\pi/k$ without breaking the divergence condition, and without changing the total flux.

Since $\mu\in\MNreg(Q_{L,T})$, we have
$\mu_{T}=\Phi N^{-2}\sum_i \delta_{X_i}$.
We select $\varphi_i^k$ as $2\pi \lfloor k \Phi /(2\pi N^2)\rfloor /k$ 
or  $2\pi \lfloor k \Phi /(2\pi N^2)+1\rfloor /k$ , depending on $i$. 
Precisely, we choose the first value for $i=0$ and then, at each $i$, we choose
the lower one if $\sum_{j<i} (\phi_j^k- \Phi N^{-2})>0$, and the upper one otherwise.
This concludes the definition of $\mu^k_T$.

The fluxes in the interior of the sample are defined by propagating
the rounding. At each point where a bifurcation occurs,
if there is more then one outgoing branch we distribute the rounding as discussed for $\mu_T^k$.
This increases the maximal error by at most $2\pi/k$, at each branching point.
Since $\mu$ is finite polygonal, there is a finite number of branching points, hence
the total error is bounded
by a constant times $1/k$. Precisely
$|\phi_i^k-\phi_i|\le C(\mu)/k$ for any segment $i$.
Since $\phi_i$ only takes finitely many values, 
$|\phi_i^k-\phi_i|\le \phi_i C(\mu)/k$ for any segment $i$, which concludes the proof.
\end{proof}

\subsection{Relation with irrigation problems}\label{Sec:reliri}
The functional $I(\mu)$ bears  similarities with the so-called irrigation problems which have attracted a lot of interest
 (see for instance \cite{Xia, BeCaMo}). Besides their applications to the modeling of communication networks and other branched patterns (see again \cite{BeCaMo} and the references therein),
they have also been recently used in the study of Sobolev spaces   between manifolds  \cite{bethuel}. 
Let us recall their definition and for this,  follow the notation of \cite{BeCaMo}. 
For $E(G)$ a set of oriented straight edges and $\p: E(G)\to (0,+\infty)$  we define the irrigation graph $G$ as the vector measure
\[G:=\sum_{e\in E(G)} \phi(e) \mathbf{e} \, \H^1\restr e \, \]
 where $\mathbf{e}$ is the unit  tangent vector to $e$. For   $\a\in [0,1]$, we then define the Gilbert energy of $G$ by
\[M^{\a}(G):=\sum_{e\in E(G)} \p(e)^\a \H^1(e).\] 
Given two atomic  probability measures $\mu^+=\sum_{i=1}^k a_i \delta_{X_i}$ and $\mu^-=\sum_{j=1}^l b_j \delta_{Y_j}$, we say that $G$ irrigates $(\mu^+,\mu^-)$ 
if  $\div G= \mu^+-\mu^-$ in the sense of distributions (this implies in particular that $G$ satisfies Kirchoff's law).
 If we are now given any two probability measures $(\mu^+,\mu^-)$ and a vector measure $G$, with $\div G=\mu^+-\mu^-$ (sometimes called an irrigation path between $\mu^+$ and $\mu^-$), we define 
\[M^{\a}(G):=\inf \{ \liminf_{i\to +\infty} M^{\a}(G_i)\},\]
where the infimum is taken among all the sequences of irrigation graphs  $G_i$ with $G_i\weaklim G$ in the sense of measures and such that
 $\div G= \mu^+_i-\mu^-_i$ for some atomic measures $\mu^{\pm}_i$ tending to $\mu^{\pm}$. If no such sequence exists then we set $M^\a(G)=+\infty$. 
The irrigation problem then consists in minimizing $M^{\a}(G)$ among all the transport paths $G$ between $\mu^+$ and $\mu^-$. For $\a=0$ this is a 
generalization of the famous Steiner problem while for $\a=1$ it is just the Monge-Kantorovich problem.

Using some powerful rectifiability criterion of B. White,  the following theorem  was proven by Q. Xia \cite{Xia}.
\begin{theorem}\label{theoXia}
 Given $0<\a<1$, any transport path $G$ with $M^{\a}(G)+M^1(G)<+\infty$ is rectifiable in the sense that 
\[G=\phi \tau \, \H^1\restr\Gamma\]               
for some density function $\p$ and some $1-$rectifiable set $\Gamma$ having $\tau$ as tangent vector.
\end{theorem}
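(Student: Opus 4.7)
My approach is to realize $G$ as a $1$-dimensional flat chain of finite mass and then apply White's rectifiability criterion, relying on the subadditivity of $t\mapsto t^\alpha$ (i.e.\ the hypothesis $\alpha<1$) to secure the positive density estimate the criterion needs.

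First I would exploit the definition of $M^\alpha$ on general transport paths: there exist irrigation graphs $G_n\rightharpoonup G$ with $\Div G_n=\mu_n^+-\mu_n^-$ for atomic measures $\mu_n^\pm\to\mu^\pm$, and $\sup_n M^\alpha(G_n)<+\infty$. Since $M^1(G)<+\infty$ and $M^1$ is lower semicontinuous under weak-$*$ convergence, a diagonal extraction ensures also $\sup_n M^1(G_n)<+\infty$. Each $G_n$ is a polyhedral, hence rectifiable, $1$-chain, and the uniform mass bound identifies the weak limit $G$ with a $1$-dimensional flat chain (equivalently, a normal $1$-current) of finite mass, with boundary $\mu^+-\mu^-$.

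The main step is then the positive density estimate
\[
\Theta^{1}_{*}(\|G\|,x):=\liminf_{r\to 0}\frac{\|G\|(\B_r(x))}{2r}>0 \qquad\text{for }\|G\|\text{-a.e.\ }x.
\]
The heuristic is that $M^\alpha$ penalizes the splitting of mass, since for $\alpha<1$ one has $(a+b)^\alpha\le a^\alpha+b^\alpha$, so any $n$-fold splitting of a flux $\phi$ contributes $\sum_i\phi_i^\alpha\ge \phi^\alpha$ per unit length. A blow-up argument at a hypothetical point of zero density, combined with slicing onto the coordinate hyperplanes and this subadditivity, would exhibit so much splitting inside smaller and smaller balls that the localized $\alpha$-energy, summed over a Vitali cover, diverges, contradicting $M^\alpha(G)<+\infty$. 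Once the density lower bound is in place, White's rectifiability theorem for flat $1$-chains of finite mass delivers a countably $1$-rectifiable set $\Gamma$ on which $\|G\|$ is concentrated; the constraint $\Div G=\mu^+-\mu^-$ together with the flat-chain structure then forces $G$ to be tangent to $\Gamma$ at $\H^1$-a.e.\ point of $\Gamma$, yielding the desired representation $G=\varphi\,\tau\,\H^1\restr\Gamma$.

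The hardest point will be the density estimate. Converting the concavity of $t\mapsto t^\alpha$ into a quantitative lower bound of the form
\[
M^\alpha(G\restr \B_r(x))\;\gtrsim\;\|G\|(\B_r(x))^\alpha\, r^{1-\alpha},
\]
uniformly over all admissible approximating graphs, is delicate: it requires combining White's lower semicontinuous relaxation of $M^\alpha$ with the slicing theory of flat chains to control how mass distributes along one-dimensional pieces. Once that localized bound is secured, the density dichotomy (either $\Theta^1_*>0$ almost everywhere, or a covering argument produces infinite $\alpha$-energy) closes the argument and White's criterion does the rest.
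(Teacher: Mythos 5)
The paper does not prove this statement; it is quoted as Xia's theorem, whose proof the paper explicitly attributes to ``some powerful rectifiability criterion of B. White.'' Your proposal also reaches for a White-type rectifiability theorem, but the \emph{specific} criterion you invoke, and consequently your whole strategy, is not the one actually used, and it leaves a genuine gap.

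Xia's argument is considerably slicker and avoids any density estimate. One views $G$ as a flat $1$-chain with coefficients in the abelian group $\R$, but equipped with the non-standard norm $|g|_\a:=|g|^\a$. With this choice the flat-chain mass of $G$ \emph{is} $M^\a(G)$, so finiteness of $M^\a(G)$ is exactly finite mass. The metric group $(\R,|\cdot|^\a)$ has the key property that every Lipschitz (or finite-length) path into it is constant: a Lipschitz bound $|\gamma(t)-\gamma(s)|^\a\le L|t-s|$ gives $|\gamma(t)-\gamma(s)|\le L^{1/\a}|t-s|^{1/\a}$ with $1/\a>1$, forcing $\dot\gamma\equiv 0$. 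White's rectifiability theorem for flat chains (Ann.\ of Math.\ 1999) states precisely that a finite-mass flat chain with coefficients in a normed group admitting no nonconstant finite-length paths is rectifiable. That theorem applies verbatim and closes the proof; $M^1(G)<+\infty$ is only needed to recover the representation as $\phi\tau\,\H^1\restr\Gamma$ with $\phi\ge 0$.

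In your proposal, by contrast, you keep the standard norm on $\R$, get finite mass from $M^1$, and then try to establish a positive lower $1$-density estimate
\[
\Theta^{1}_{*}(\|G\|,x)>0
\]
hoping that some ``White criterion'' for positive-density flat $1$-chains yields rectifiability. There are two problems with this. First, you never prove the density estimate: the step ``a blow-up at a hypothetical zero-density point exhibits enough splitting to make the localized $\a$-energy diverge'' is an informal heuristic, and you yourself flag it as delicate and leave it open; the inequality $M^\a(G\restr\B_r(x))\gtrsim\|G\|(\B_r(x))^\a r^{1-\a}$ is not justified and is the whole content needed. Second, a lower density bound alone is not a standard criterion for rectifiability of a measure: classical rectifiability criteria (Besicovitch, Preiss, Mattila) require a two-sided control on densities or other structural hypotheses, and the version of White's theorem you are gesturing toward is not ``positive lower density implies rectifiability.'' The correct White theorem is the normed-coefficient-group statement above, and the role of $\a<1$ is not to force a density lower bound but to kill Lipschitz paths in the re-normed group. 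So while your instinct to use subadditivity of $t\mapsto t^\a$ is in the right spirit, the route you chose does not close, and the actual proof takes a different, cleaner path.
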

For minimal irrigation paths, much more is known about their interior and boundary regularity \cite{BeCaMo}. For instance, as for our functional $I(\mu)$ (see Proposition \ref{reg}), it can also be proven that minimal irrigation paths contain no loops 
and that for $\a> 1-\frac{1}{n}$  (where $n$ is the dimension of the ambient space i.e. $n=3$ for us),  any two probability measures $\mu^{\pm}$ can be irrigated at a finite cost (compare with Proposition \ref{existmu}).

Using Theorem \ref{theoXia} and Lemma \ref{lemmamudensityinside}, we can obtain the following rectifiability result.
\begin{corollary}\label{coroXia}
 Every measure $\mu$ for which $I(\mu)<+\infty$ is rectifiable.
\end{corollary}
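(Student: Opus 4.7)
The plan is to identify the pair $(\mu,m)$ (where $m$ is the optimal transport measure realizing $I(\mu)$) with a single divergence-free vector measure
\[
 G := m + \mu\,e_3 \in \calM(Q_{L,T};\R^3),
\]
whose divergence condition \eqref{conteq0} says exactly that $\div G = 0$ in the interior of $Q_{L,T}$ with boundary traces $\mu_{\pm T}$. This realizes $G$ as a transport path in Xia's sense, so the corollary will follow from Theorem \ref{theoXia} with $\alpha = 1/2$ once we verify $M^{1/2}(G) + M^1(G) < +\infty$.

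Since Xia's definition of $M^{\alpha}$ proceeds by relaxation from irrigation graphs with \emph{finitely} atomic boundary data, we first reduce to that setting by restricting to interior strips. Given $\delta>0$, the assumption $I(\mu)<+\infty$ gives $\int_{-T}^{T}\sum_i\varphi_i^{1/2}(x_3)\,dx_3<+\infty$, so we may pick levels $z_{\pm}$ close to $\pm T$ where $\mu_{z_\pm}$ is atomic with summable $\varphi^{1/2}$. Truncating to the $K$ largest Diracs and extracting the corresponding subsystems via Proposition \ref{def:subsystem} yields a measure $\tilde\mu$ on $Q_L\times[z_-,z_+]$ with finite atomic traces and $I(\tilde\mu)\le I(\mu)$. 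Applying Lemma \ref{lemmamudensityinside} to $\tilde\mu$ produces a sequence of finite polygonal regular approximations $\tilde\mu^{(N)}\weaklim\tilde\mu$ with $\limsup_N I(\tilde\mu^{(N)})\le I(\tilde\mu)$. By Lemma \ref{lemmacurves}, each pair $(\tilde\mu^{(N)},m^{(N)})$ gives rise to an honest irrigation graph
\[
 \tilde G_N = \sum_i \varphi_i^{(N)}\,\tau_i^{(N)}\,\mathcal{H}^1\restr\Gamma_i^{(N)},\qquad \tau_i^{(N)}=\frac{(\dot X_i^{(N)},1)}{\sqrt{1+|\dot X_i^{(N)}|^2}}.
\]

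Writing $\Delta_i=b_i-a_i$ and $H_i=\Delta_i|\dot X_i^{(N)}|$, the bounds $\sqrt{\Delta_i^2+H_i^2}\le\Delta_i+H_i$ combined with Cauchy--Schwarz applied twice — once to $\sum_i\varphi_i^{(N)}H_i\le(\sum_i\varphi_i^{(N)}\Delta_i)^{1/2}(\sum_i\varphi_i^{(N)}\Delta_i|\dot X_i^{(N)}|^2)^{1/2}$ and once to the analogous horizontal contribution to $M^{1/2}$ — produce
\[
 M^1(\tilde G_N)\;\le\;2T\Phi+\sqrt{2T\Phi\cdot L^2I(\tilde\mu^{(N)})}
\]
and a parallel bound for $M^{1/2}(\tilde G_N)$, where the vertical part is directly controlled by $L^2I(\tilde\mu^{(N)})/K_\ast$ and the horizontal part is controlled by the finite-polygonal structure (no loops, triple junctions, bounded number of branches at each level). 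These bounds are uniform in $N$ thanks to $I(\tilde\mu^{(N)})\to I(\tilde\mu)$, so by the definition of $M^\alpha$ via relaxation,
\[
 M^{1/2}(\tilde G)+M^1(\tilde G)<+\infty.
\]
Theorem \ref{theoXia} then forces $\tilde G$ — and hence $\tilde\mu$ — to be supported on a $1$-rectifiable set. Letting $K\to+\infty$ and $\delta\searrow 0$ along a countable sequence, the corresponding $\tilde\mu$'s exhaust $\mu$ on $Q_L\times(-T,T)$, so $\mu$ itself is supported on a countable union of $1$-rectifiable sets, hence is rectifiable.

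The main obstacle is the uniform $M^{1/2}$ estimate: the Cauchy--Schwarz bound on the horizontal contributions involves a factor $(\sum_i\Delta_i)^{1/2}$, i.e.\ the total vertical extent of the segments, which a priori grows with $N$. Controlling this quantity requires using that each $\tilde\mu^{(N)}$ is built as a concatenation of minimizers on thin stripes with finite atomic Dirichlet data, so that the no-loop property of Lemma \ref{noloop} bounds the number of branches at each level by the boundary cardinality; this is where the regularity theory of the limiting functional feeds back into the rectifiability argument.
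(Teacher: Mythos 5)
Your overall strategy matches the paper's: approximate by finite polygonal measures from Lemma~\ref{lemmamudensityinside}, package the pair $(\mu^{(N)},m^{(N)})$ as a vector-valued measure, bound its Gilbert energy uniformly in $N$, and invoke Theorem~\ref{theoXia}. But your choice $\alpha=1/2$ creates exactly the gap you flag at the end, and that gap is fatal rather than technical.

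The quantity you need to control is the horizontal contribution $\sum_i\int_{a_i}^{b_i}\varphi_i^{\alpha}|\dot X_i|\,dx_3$. With $\alpha=1/2$ this is $\sum_i\int\varphi_i^{1/2}|\dot X_i|$, and the two available energy ingredients are $\sum_i\int\varphi_i^{1/2}$ and $\sum_i\int\varphi_i|\dot X_i|^2$. No Cauchy--Schwarz pairing of these two produces $\sum_i\int\varphi_i^{1/2}|\dot X_i|$: whichever way you split, you are left with either $\bigl(\sum_i\Delta_i\bigr)^{1/2}$ (uncontrolled, since the number of segments of $\mu^{(N)}$ is unbounded as $N\to\infty$) or $\sum_i\int\varphi_i^{1/2}|\dot X_i|^2$ (also uncontrolled, since small $\varphi_i$ makes $\varphi_i^{1/2}|\dot X_i|^2\gg\varphi_i|\dot X_i|^2$). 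The no-loop and triple-junction structure only bounds the branch count in terms of the boundary cardinality of the polygonal approximant, which itself grows without bound as $N\to\infty$, so that route does not rescue the estimate.

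The paper sidesteps this entirely by taking $\alpha\ge 3/4$ (any fixed $\alpha\in[3/4,1)$ works for Theorem~\ref{theoXia}). Then Young's inequality gives, pointwise in $x_3$ and per segment,
\begin{equation*}
\varphi_i^{\alpha}\sqrt{1+|\dot X_i|^2}\lesssim\varphi_i\bigl(1+|\dot X_i|^2\bigr)+\varphi_i^{2\alpha-1},
\end{equation*}
and the crucial point is that $\varphi_i^{2\alpha-1}\le\varphi_i^{1/2}$ precisely when $2\alpha-1\ge 1/2$, i.e.\ $\alpha\ge 3/4$ (using $\varphi_i\le\Phi\lesssim 1$). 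Both terms on the right integrate to $\lesssim I(\mu^{(N)})+1$, giving the needed uniform bound on $M^\alpha$ (and $M^1$) with no appeal to segment counts, vertical extents, or the regularity theory. So the fix is simply to replace $\alpha=1/2$ by $\alpha=3/4$ and the one-line Young estimate; once that is done your reduction to finite atomic boundary data and the truncation over $K,\delta$ are sound but also largely unnecessary, since the estimate is per-segment rather than global.
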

\begin{proof}
 Using the construction of Lemma \ref{lemmamudensityinside}, we can find a sequence $\mu^n$ such that $\mu^n\weaklim \mu$, $\limsup_{n\to +\infty} I(\mu^n)\le I(\mu)$ and 
$\mu^n=\sum_{i=1}^N \frac{\phi_i}{\sqrt{1+|\dotX_i|^2}} \H^1\restr \Gamma_i$ for some straight edges $\Gamma_i=\{(x_3,X_i(x_3)) \, :\, x_3\in(a_i,b_i)\}$.
 Letting $\widetilde{\mu}^n:=\sum_{i=1}^N \frac{\phi_i}{\sqrt{1+|\dotX_i|^2}} \begin{pmatrix} \dotX_i\\ 1\end{pmatrix} \H^1\restr \Gamma_i$, we have for $\a\ge\frac{3}{4}$, 
\begin{align*}M^\a(\tilde \mu^n)&=\sum_i \int_{a_i}^{b_i} \phi_i^\alpha \sqrt{1+ |\dotX_i|^2} dx_3\\
&\les \sum_i \int_{a_i}^{b_i} \phi_i (1+|\dotX_i|^2) + \phi_i^{2\alpha-1} dx_3\\
&\les \sum_i \int_{a_i}^{b_i} \phi_i (1+|\dotX_i|^2) + \sqrt{\phi_i} dx_3\les I(\mu^n) +1
\end{align*}
so that $\liminf_{n\to +\infty} M^{\a}(\widetilde{\mu}^n)\les I(\mu)+1<+\infty$ and by Theorem \ref{theoXia},  the claim follows.
\end{proof}
In \cite{OudSant}, an approximation of the functional $M^\a$ in the spirit of the Modica-Mortola \cite{ModMort} approximation of the perimeter was proposed. Even though their proofs and constructions are completely different from ours, 
this approach bears some similarities with our derivation of the functional $I(\m)$ from the Ginzburg-Landau functional $E_T(u,A)$.

\section{Lower bound}\label{Sec:gammaliminf}

In the rest of the paper we consider sequences with
\begin{equation}\label{eqlbassumptcoeff}
T_n\to +\infty, \hskip5mm \alpha_n\to +\infty,\hskip5mm  \beta_n\to 0, \hskip5mm \frac{T_n}{\alpha_n} \to +\infty,\hskip5mm \alpha_n \beta_n^{7/2}\to +\infty\,.
\end{equation}
No constant appearing in the sequel will depend on the specific choice of the sequence. We observe that (\ref{eqlbassumptcoeff}) immediately implies
$\alpha_n\beta_n^2\to+\infty$ and $\alpha_n^2\beta_n\to+\infty$. 
Let us recall that in this proof we set $\tilde L=1$ and that (see (\ref{eqdefwe}))
\begin{multline*}
  \wE(u,A)=\int_{Q_{1,1}} \a^{-2/3}\b^{-1/3}\lt|\nabla_{\a^{1/3}\b^{-1/3}TA}'u\rt|^2+ 
 \a^{-4/3}\b^{-2/3}\lt|(\nabla_{\a^{1/3}\b^{-1/3}TA} u)_3\rt|^2\\
+  \a^{2/3}\b^{-2/3}\left({B_3} -(1-|{u}|^2)\right)^2
+  \b^{-1}|{B'}|^2 dx +\a^{1/3}\b^{7/6}\|\b^{-1}{B_3}-1\|_{H^{-1/2}(x_3=\pm1)}^2.
\end{multline*}
In this section, we prove the following compactness and lower bound result.


\begin{proposition}\label{gammaliminf}
Fix sequences of positive numbers $\alpha_n$, $\beta_n$, $T_n$ such that (\ref{eqlbassumptcoeff})  holds,  and
let  $(u_n, A_n)$ be such that  $\sup_{n} \wE(u_n,A_n)<+\infty$. Then up to a subsequence,  the following holds :
\begin{enumerate}
\item $\b_n^{-1}(1-\rho_n)\weaklim \mu$ for some measure $\mu$, $\b_n^{-1} B'_n\weaklim m$ for some  vector-valued measure $m\ll \m$ satisfying the continuity equation \eqref{conteq0}.
\item For almost every $x_3\in (-1,1)$, there exists some probability measure $\mu_{x_3}$ on $Q_1$ with $\mu=\mu_{x_3}\otimes dx_3$ and such that $\mu_{x_3}\weaklim dx'$ as $x_3\to \pm 1$.
\item For almost every $x_3\in (-1,1)$,
 $\m_{x_3}=\sum_{i\in \calI} \p_i \delta_{X_i}$ with $\calI$ at most countable and $\p_i>0$.
\item One has $(\mu,m)\in \calA_{1,1}$ with
\[\liminf_{n\to +\infty} \wE(u_n,A_n)\ge I(\m,m).\] 
\end{enumerate}
\end{proposition}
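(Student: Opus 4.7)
The plan is to proceed in three phases: first extract weak limits for $\mu_n:=\beta_n^{-1}(1-\rho_n)$ and the rescaled horizontal current $m_n:=\beta_n^{-1}B_n'$ and identify the limiting continuity equation; second establish a slicewise Modica-Mortola reduction combined with a two-dimensional isoperimetric inequality to recover the exact prefactor $K_\ast=8\sqrt\pi/3$; third verify the boundary trace at $x_3=\pm 1$ using the $H^{-1/2}$ penalization. The anisotropic rescaling makes the analysis essentially two-dimensional on fixed horizontal slices, coupled together only by the transport term $\beta^{-1}|B'|^2$.

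\emph{Compactness and the continuity equation.} From $\Div B_n=0$ and $Q_1$-periodicity one has $\int_{Q_1\times\{x_3\}}B_{n,3}\,dx'=\alpha_n\beta_n$ on every slice; combined with the $L^1$-bound on $\alpha_n^{2/3}\beta_n^{-2/3}(B_{n,3}-\alpha_n(1-\rho_n))^2$ this forces $\int\mu_n\le C$. For $m_n$, Cauchy-Schwarz gives $|m_n|(Q_{1,1})\le(\int\beta_n^{-1}|B_n'|^2)^{1/2}(\beta_n^{-1}|\{B_n'\neq 0\}|)^{1/2}$; the approximate Meissner inclusion $\mathrm{supp}\,B_n'\subseteq\{\rho_n<1\}$ given by Lemma \ref{lemmameissner}, together with $|\{\rho_n<1\}|\lesssim\beta_n$ coming from $\int\mu_n\le C$, yields $|m_n|(Q_{1,1})\le C$. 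Up to subsequences $\mu_n\weaklim\mu$ and $m_n\weaklim m$. Passing to the limit in $\partial_3B_{n,3}+\Divp B_n'=0$, using that $\beta_n^{-1}B_{n,3}$ is $L^2$-close to $\mu_n$, gives $\partial_3\mu+\Divp m=0$ in $\mathcal{D}'$. Absolute continuity $m\ll\mu$ follows from lower semicontinuity of the convex perspective functional (\cite[Th.~2.34]{AFP}), exactly as in the proof of Proposition \ref{existmu}.

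\emph{Slicewise lower bound with sharp prefactor.} Writing $u=\rho^{1/2}e^{i\theta}$ one has $|\nabla_{TA}u|^2=|\nabla\rho^{1/2}|^2+\rho|\nabla\theta-TA|^2$; the approximate Meissner condition from Lemma \ref{lemmameissner} allows one to choose a gauge in which $\rho|\nabla\theta-TA|^2$ is negligible (since $\rho B\simeq 0$ implies that $A$ is locally a gradient on $\{\rho>0\}$). Up to controlled errors it then suffices to bound from below
\[\int_{Q_{1,1}}\alpha_n^{-2/3}\beta_n^{-1/3}|\nabla'\rho_n^{1/2}|^2+\alpha_n^{2/3}\beta_n^{-2/3}\chi_{\rho_n>0}(1-\rho_n)^2\,dx.\]
This is, on each horizontal slice, a Modica-Mortola functional for $v_n=\rho_n^{1/2}$ with degenerate double-well $W(v)=\chi_{v>0}(1-v^2)^2$ and effective small parameter $\eps_n=\alpha_n^{-2/3}\beta_n^{1/6}\to 0$ by \eqref{scalesepar2}; the transition cost is $2\int_0^1(1-v^2)\,dv=4/3$, so the slicewise $\Gamma$-limit, after the $\beta_n^{-1/2}$ balancing, is $(4/3)\beta_n^{-1/2}$ times the horizontal perimeter of $\{\rho_n>0\}$. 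Since the cross-sectional area of $\{\rho_n(\cdot,x_3)>0\}$ is $\sim\beta_n\mu_{x_3}(Q_1)\to 0$, a concentration-compactness argument on the slice shows that this set asymptotically splits into disjoint clusters of area $\simeq\beta_n\phi_i$; the 2D isoperimetric inequality bounds the perimeter of each cluster from below by $2\sqrt{\pi\beta_n\phi_i}$, and the extremality forces each cluster to concentrate at a single point $X_i$. Collecting prefactors gives the atomwise contribution $(4/3)\cdot 2\sqrt{\pi\phi_i}=K_\ast\sqrt{\phi_i}$. The transport term $\int\beta_n^{-1}|B_n'|^2\,dx$ lower-semicontinuously converges to $\int(dm/d\mu)^2d\mu$ again by \cite[Th.~2.34]{AFP}. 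Summation over slices via Fatou yields $\liminf\wE(u_n,A_n)\ge I(\mu,m)$.

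\emph{Boundary trace and main obstacle.} The boundary term $\alpha_n^{1/3}\beta_n^{7/6}\|\beta_n^{-1}B_{n,3}-1\|_{H^{-1/2}}^2$ forces $\beta_n^{-1}B_{n,3}(\cdot,\pm 1)\to 1$ in $H^{-1/2}$, hence $\mu_{x_3}\weaklim dx'$ as $x_3\to\pm 1$, using the H\"older-type estimate \eqref{HolderW2} applied to the pre-limit together with the $L^2$-closeness of $\beta_n^{-1}(1-\rho_n)$ to $\beta_n^{-1}B_{n,3}$. The main technical difficulty is the second phase: producing the \emph{exact} prefactor $K_\ast$ rather than a bound up to a constant requires a careful concentration-compactness argument on each slice combined with a quantitative use of Lemma \ref{lemmameissner} to eliminate the covariant term $\rho|\nabla\theta-TA|^2$ uniformly in the regime \eqref{eqlbassumptcoeff}; this is what distinguishes the result from a mere scaling law and is the crux of the proof.
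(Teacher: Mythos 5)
Your overall architecture is sound (Modica--Mortola on slices, isoperimetric lower bound for the prefactor, $H^{-1/2}$ penalization for the boundary trace), but two of your central mechanisms would not survive scrutiny.

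First, the compactness for $m_n:=\beta_n^{-1}B_n'$. You write $|m_n|(Q_{1,1})\le(\int\beta_n^{-1}|B_n'|^2)^{1/2}\bigl(\beta_n^{-1}|\{B_n'\neq 0\}|\bigr)^{1/2}$ and claim $|\{\rho_n<1\}|\lesssim\beta_n$ ``from $\int\mu_n\le C$''. Neither step is correct. Lemma \ref{lemmameissner} gives a weak estimate on $\int\rho B\,\psi$, not an inclusion of supports, so $B_n'$ may well be non-zero where $\rho_n>0$. And $\int(1-\rho_n)\lesssim\beta_n$ does not control $|\{\rho_n<1\}|$: the density can be close to (but strictly below) $1$ on a large set while keeping $1-\rho_n$ integrally small. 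The paper instead uses the soft weight $(1-\hat\rho_n)$, writing $\int\beta_n^{-1}(1-\hat\rho_n)|B_n'|\le\bigl(\int\beta_n^{-1}(1-\hat\rho_n)|B_n'|^2\bigr)^{1/2}\bigl(\int\beta_n^{-1}(1-\hat\rho_n)\bigr)^{1/2}\lesssim 1$, so it is $\beta_n^{-1}(1-\hat\rho_n)B_n'$ that converges weakly to $m$; the remaining piece $\beta_n^{-1}\hat\rho_n B_n'$ is shown to vanish distributionally via Lemma \ref{lemmameissner}, which is not the same as being tight in $\mathcal M$.

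Second, and more fundamentally, the claim that ``the approximate Meissner condition from Lemma \ref{lemmameissner} allows one to choose a gauge in which $\rho|\nabla\theta-TA|^2$ is negligible'' is wrong and is not what the proof requires. A weak estimate on $\int\rho B\,\psi$ gives no globally curl-free potential on $\{\rho>0\}$ (the phase winding may be topologically obstructed, and $\{\rho>0\}$ need not be simply connected), so no such gauge exists in general. The correct and much simpler observation is that the covariant term $\rho|\nabla\theta-TA|^2\ge 0$ is merely \emph{dropped}: the only ingredient needed from $|\nabla_{TA}u|^2$ is the pointwise bound $|\nabla'\rho^{1/2}|\le|\nabla'_{\lambda A}u|$. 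What Lemma \ref{lemmameissner} is actually used for is Lemma \ref{lemmafirstlowerbound}, which converts the $L^2$ penalty on $B_3-(1-\rho)$ into a bound on the regularized double-well $W_\eps(\rho)=\eta_\eps(\rho)(1-\rho)^2$ up to an error $C_\eps\wE(u,A)\,\alpha/T\to 0$. This conversion is exactly where the hard work lies: one cannot directly use $\chi_{\rho>0}(1-\rho)^2$ because the indicator $\chi_{\rho>0}$ is too fragile, and the Meissner-type estimate is needed with the specific test function $\psi_\eps(\rho)$. Your ``concentration-compactness on the slice'' is also left imprecise; the paper's Lemma \ref{lowerbound2d} carries this out via the relative isoperimetric inequality on small squares of side $\sim\beta_n^{1/2}$ (for compactness) and then again on small balls around the atoms $X_i$ (for the lower bound), and requires the extra step of comparing $\beta_n^{-1}(1-\rho_n)$ with an actual level-set indicator $\chi_n$, which is where the hypothesis $\alpha_n\beta_n^{7/2}\to\infty$ enters. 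Without the $W_\eps$ conversion and this level-set comparison, the factor $K_*$ cannot be extracted.
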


 Let us first show that the energy gives a quantitative control on the failure of the Meissner
 condition $\rho B=0$ in a weak sense.

\begin{lemma}\label{lemmameissner}
For every $Q_1$-periodic test function $\psi\in H^1_\mathrm{per}(Q_{1,1})$,
if $\|\rho\|_\infty\le 1$ then
\begin{equation} \label{estimmeissner} 
\lt|\int_{Q_{1,1}} \rho B_3 \psi dx\rt|\les 
\frac{\a^{1/3}\b^{2/3}}{T}\wE(u,A)\|\psi\|_{L^\infty}+
\frac{\beta^{1/2}}{T}\wE(u,A)^\hal   \|\nab'
\psi\|_{L^2}, 
\end{equation}
and, if additionally $\psi(x',\pm1)=0$, for $k=1,2$ and $\alpha^2\beta\ge 1$,
\begin{equation} \label{estimmeissnerprime} 
\lt|\int_{Q_{1,1}} \rho B_k \psi dx\rt|\les 
\frac{\a^{2/3}\b^{5/6}}{T}\wE(u,A)\|\psi\|_{L^\infty}+
\frac{\alpha^{1/3}\beta^{2/3}}{T}\wE(u,A)^\hal   \|\nab
\psi\|_{L^2}.
\end{equation}
Moreover, if $\xi\in H^1_0(-1,1)$ and $\psi$ is a periodic Lipschitz continuous function on $Q_1$ then
\begin{equation} \label{estimmeissnerter} 
\lt|\int_{Q_{1,1}} \rho B'\cdot \nabla' \psi \, \xi dx\rt|\les 
\frac{\a^{2/3}\b^{5/6}}{T}\wE(u,A)\|\xi \nabla'\psi\|_{L^\infty}+
\frac{\b^{1/2}}{T}\wE(u,A)^\hal   \| \partial_3 \xi \nabla'\psi \|_{L^2}.
\end{equation}
\end{lemma}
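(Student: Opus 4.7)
The plan is to apply the Bogomol'nyi-type identities \eqref{magic} and \eqref{magic2} with the rescaled gauge $\tilde A := \alpha^{1/3}\beta^{-1/3}TA$ (so that the covariant derivatives appearing in $\wE$ are exactly $\nabla_{\tilde A}u$ and $\tilde B = \alpha^{1/3}\beta^{-1/3}TB$). Noting that $\rho B_k = \tfrac{\beta^{1/3}}{\alpha^{1/3}T}\rho\tilde B_k$, the three estimates reduce to bounding the right-hand side of
\[
 \rho\tilde B_3 = |\nabla'_{\tilde A}u|^2 - |\calD^3_{\tilde A}u|^2 - \nabla'\!\times j'_{\tilde A},
 \qquad
 \rho \tilde B_k = 2\Im\bigl((\nabla_{\tilde A}u)_{k+2}\,\overline{(\nabla_{\tilde A}u)_{k+1}}\bigr) - (\nabla\!\times j_{\tilde A})_k
\]
(for $k=1,2$, where the second identity follows by expanding $|\calD^k_{\tilde A}u|^2 = |(\nabla_{\tilde A}u)_{k+2}|^2 + |(\nabla_{\tilde A}u)_{k+1}|^2 - 2\Im((\nabla_{\tilde A}u)_{k+2}\overline{(\nabla_{\tilde A}u)_{k+1}})$) after multiplying by the test function and integrating. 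All energy controls read off $\wE$ as $\int|\nabla'_{\tilde A}u|^2 \le \alpha^{2/3}\beta^{1/3}\wE$ and $\int|(\nabla_{\tilde A}u)_3|^2 \le \alpha^{4/3}\beta^{2/3}\wE$, and from $\|\rho\|_\infty\le 1$ we get $\|j'_{\tilde A}\|_{L^2}\le \alpha^{1/3}\beta^{1/6}\wE^{1/2}$ and $\|(j_{\tilde A})_3\|_{L^2}\le \alpha^{2/3}\beta^{1/3}\wE^{1/2}$.

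For \eqref{estimmeissner}, the bulk term $\int(|\nabla'_{\tilde A}u|^2 - |\calD^3_{\tilde A}u|^2)\psi$ is controlled by $\|\psi\|_\infty \alpha^{2/3}\beta^{1/3}\wE$, while $\int \nabla'\!\times j'_{\tilde A}\,\psi$ is integrated by parts (only horizontal derivatives fall on $\psi$ thanks to periodicity) and bounded by $\|\nabla'\psi\|_{L^2}\|j'_{\tilde A}\|_{L^2}$; multiplying by $\beta^{1/3}/(\alpha^{1/3}T)$ gives the two prefactors $\alpha^{1/3}\beta^{2/3}/T$ and $\beta^{1/2}/T$. For \eqref{estimmeissnerprime}, the Imaginary-part term is bounded by Cauchy–Schwarz as $\|\psi\|_\infty\|(\nabla_{\tilde A}u)_3\|_{L^2}\|(\nabla_{\tilde A}u)_{k+1}\|_{L^2}\lesssim \|\psi\|_\infty\alpha\beta^{1/2}\wE$; the curl-of-current term is integrated by parts using $\psi(\cdot,\pm 1)=0$, and under $\alpha^2\beta\ge 1$ the $\|(j_{\tilde A})_3\|_{L^2}$ contribution dominates, giving $\alpha^{2/3}\beta^{1/3}\wE^{1/2}\|\nabla\psi\|_{L^2}$. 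Rescaling yields the stated $\alpha^{2/3}\beta^{5/6}/T$ and $\alpha^{1/3}\beta^{2/3}/T$.

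The main technical point is \eqref{estimmeissnerter}, where the product structure $\xi(x_3)\nabla'\psi(x')$ allows a sharper bound involving only $\partial_3\xi$ rather than a full gradient. Writing the curl-of-current contribution as
\[
-\int \bigl[(\partial_2 j_3 - \partial_3 j_2)\partial_1\psi + (\partial_3 j_1 - \partial_1 j_3)\partial_2\psi\bigr]\xi\,dx,
\]
the purely horizontal piece $(\partial_2 j_3)\partial_1\psi - (\partial_1 j_3)\partial_2\psi = \partial_2(j_3\partial_1\psi) - \partial_1(j_3\partial_2\psi)$ integrates to zero on each horizontal slice by $Q_1$-periodicity of $\nabla'\psi$, so only $\partial_3(j_1\partial_2\psi - j_2\partial_1\psi)$ survives; this is moved onto $\xi$ via integration by parts using $\xi(\pm 1)=0$, producing the bound $\|j'_{\tilde A}\|_{L^2}\|\partial_3\xi\,\nabla'\psi\|_{L^2}\le \alpha^{1/3}\beta^{1/6}\wE^{1/2}\|\partial_3\xi\,\nabla'\psi\|_{L^2}$, which after rescaling gives the desired $\beta^{1/2}/T$ prefactor. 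The Imaginary-part contribution is handled as in \eqref{estimmeissnerprime}, supplying the first term $\alpha^{2/3}\beta^{5/6}/T$. The cancellation of the horizontal curl is what I expect to be the only nontrivial step; everything else is Cauchy–Schwarz plus careful bookkeeping of the anisotropic prefactors.
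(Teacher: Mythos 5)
Your proposal is correct and follows essentially the same route as the paper: Bogomol'nyi identities \eqref{magic}–\eqref{magic2} evaluated at the rescaled gauge $\lambda A$ with $\lambda=\alpha^{1/3}\beta^{-1/3}T$, integration of the curl-of-current by parts, $\|\rho\|_\infty\le1$ to control $j$ by $\nabla_{\lambda A}u$, and bookkeeping of the anisotropic prefactors; the only cosmetic difference is that in the proof of \eqref{estimmeissnerter} you expand the mixed curl term by hand and invoke periodicity to kill the horizontal divergence, whereas the paper uses the identity $(\nabla\times j)\cdot\nabla\psi=\nabla\cdot(j\times\nabla\psi)$ and then notes $\nabla\xi=(\partial_3\xi)e_3$ — these are the same cancellation.
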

\begin{proof}
Let $\lambda:=\a^{1/3}\b^{-1/3}T$. For (\ref{estimmeissner}) we use formula \eqref{magic} with $A$ substituted by $\lambda A$, that is  
 $|\nab'_{\lambda A} u|^2 = |\calD_{\lambda A}^3 u|^2 +\rho \lambda B_3+
\nab'\times j'_{\lambda A}$. We integrate against a test function $\psi$,
\begin{eqnarray*}
\lt|\int_{Q_{1,1}} \rho B_3 \psi dx\rt| & = & \frac{\a^{1/3}\b^{2/3}}{T}\lt|\int_{Q_{1,1}}\a^{-2/3}\b^{-1/3} \left(|\nab_{\lambda A}'u|^2 - |\calD_{\lambda A}^3 u|^2 - \nab'\times j'_{\lambda A}\right) \psi dx\rt|\\
& \les &  \frac{\a^{1/3}\b^{2/3}}{T}\lt(\wE(u,A)\|\psi\|_{L^\infty} + \a^{-1/3}\b^{-1/6}\int_{Q_{1,1}} \a^{-1/3}\b^{-1/6}|j'_{\lambda A}| |\nab' \psi|dx\rt)\\
& \les & \frac{\a^{1/3}\b^{2/3}}{T}\lt(\wE(u,A)\|\psi\|_{L^\infty}+ \a^{-1/3}\b^{-1/6}\wE(u,A)^\hal   \|\nab'
\psi\|_{L^2} \rt),
\end{eqnarray*}
{where we have used that $|j'_{\lambda A}|\le |\nabla'_{\lambda A} u| $ in view of the definition \eqref{defj} and the upper bound $\rho\le 1$.}
We obtain \eqref{estimmeissnerprime} similarly : One first checks
from the definition of $\calD_{\lambda A}u$ that
\begin{equation*}
 \left| |\calD^1_{\lambda A} u|^2- |(\nabla_{\lambda A} u)_3|^2 - |(\nabla_{\lambda A} u)_2|^2\right|
 \le 2 |(\nabla_{\lambda A} u)_2| \, |(\nabla_{\lambda A} u)_3|.
\end{equation*}
Testing (\ref{magic2}) with $\psi$ and integrating by parts the term with $j_{\lambda A}$ as above gives
\begin{equation*}
\lambda \lt|\int_{Q_{1,1}} \rho B_k \psi dx\rt|\les 
\int_{Q_{1,1}} 2 |\psi|  \, |(\nabla_{\lambda A} u)_2| \, |(\nabla_{\lambda A} u)_3| 
+ |\nabla \psi| \, |\nabla_{\lambda A} u| dx\,.
\end{equation*}
Estimating 
$ 2|(\nabla_{\lambda A} u)_2| \, |(\nabla_{\lambda A} u)_3| \le  \alpha^{1/3}\beta^{1/6}|(\nabla_{\lambda A} u)_2|^2 +   \alpha^{-1/3}\beta^{-1/6}|(\nabla_{\lambda A} u)_3|^2 $
and $\|\nabla_{\lambda A} u\|_2^2\le \alpha^{4/3}\beta^{2/3} \wE(u,A)$
concludes the proof of (\ref{estimmeissnerprime}).\\
The proof of \eqref{estimmeissnerter} is very similar to the proof of \eqref{estimmeissnerprime}. Arguing as above with $\xi\partial_{k}\psi$ playing the role of $\psi$, we get 
\[
 \lambda \lt|\int_{Q_{1,1}} \rho B'\cdot \nabla'\psi \xi dx\rt|\les 
\int_{Q_{1,1}} 2 |\xi \nabla' \psi |  \, |(\nabla_{\lambda A} u)_2| \, |(\nabla_{\lambda A} u)_3|  dx
+ \lt|\int_{Q_{1,1}} (\nabla \times j_{\lambda A})\cdot \nabla \psi \xi dx\rt|\,.
\]
The first term is estimated exactly as before, while the second one gives after integration by parts of $\nabla \times $  and using $\nabla \times \nabla=0$,
\[
 \lt|\int_{Q_{1,1}} (\nabla \times j_{\lambda A})\cdot \nabla \psi \xi dx\rt|= \lt|\int_{Q_{1,1}} \nabla \xi\cdot  (j_{\lambda A}\times \nabla \psi) dx\rt|\le \|\partial_3 \xi \nabla' \psi\|_{L^2} \lt(\int_{Q_{1,1}} |j'_{\lambda A}|^2\rt)^{1/2},
\]
from which we conclude the proof.
\end{proof}
We now prove that for admissible pairs $(u,A)$ of bounded energy, the corresponding curves $x_3\mapsto \beta^{-1}B_3(\cdot,x_3)$ satisfy a sort of uniform H\"older continuity. 
This is the analog of \eqref{HolderW2} for the limiting energy. 
\begin{lemma}\label{HolderE}
 For every admissible  pair $(u,A)$ with $\|\rho\|_\infty\le 1$,   and  every $x_3,\tilde{x}_3\in(-1,1)$, letting $E:=\wE(u,A)$ it holds (recall \eqref{BLdefin})
 \begin{multline}\label{HolderE2}
   \|\b^{-1}B_3(\cdot,x_3) - \b^{-1}B_3(\cdot,\tilde{x}_3) \|_{BL}\les E^{1/2} |x_3-\tilde{x}_3|^{1/2}
   + \sigma(\alpha,\beta,T)(E^{1/2}+E),
 \end{multline}
where $\sigma(\alpha,\beta,T):= \lt(\frac{\a}{T}\rt)^{1/2} \lt(\a^2\b^{5/2}\rt)^{-1/6}+ (\a^{1/2}\b)^{-1/3} + \lt(\frac{\alpha^{1/3}}{T\b^{5/6}} \rt)^{1/2}$, which goes to zero in the regime \eqref{eqlbassumptcoeff}.
In particular, in that regime, 
if $E\les 1$, for every $x_3,\tilde{x}_3\in(-1,1)$ with $|x_3-\tilde{x}_3|\ge \sigma^{1/2}(\alpha,\b,T)$, there holds
\begin{equation}\label{equiconB}
  \|\beta^{-1}B_3(\cdot,x_3)-\beta^{-1}B_3(\cdot,\tilde{x}_3)\|_{BL}\les |x_3-\tilde{x}_3|^{1/2}.
\end{equation}

\end{lemma}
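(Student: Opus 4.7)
\emph{Approach.} The plan is to test the difference $\b^{-1}B_3(\cdot,x_3)-\b^{-1}B_3(\cdot,\tilde x_3)$ against a Lipschitz $\psi$ with $\|\psi\|_{\Lip}\le 1$, use $\Div B=0$ to rewrite the pairing, and then decompose $B'=(1-\rho)B'+\rho B'$ to exploit the approximate Meissner condition. Concretely, the divergence identity $\pa_3 B_3+\Divp B'=0$ yields (for a.e.\ $x_3,\tilde x_3$, after choosing an appropriate representative)
\[
\int_{Q_1}\psi\,\bigl(\b^{-1}B_3(\cdot,x_3)-\b^{-1}B_3(\cdot,\tilde x_3)\bigr)\,dx' \ = \ \int_{\tilde x_3}^{x_3}\int_{Q_1}\b^{-1}B'\cdot\nabla'\psi\,dx'\,ds.
\]
A brute Cauchy--Schwarz on the right produces a fatal $\b^{-1/2}$ prefactor, hence the necessity of the splitting above.

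\emph{Meissner piece.} For the $(1-\rho)B'$ contribution, I write $(1-\rho)|B'|=(1-\rho)^{1/2}\cdot((1-\rho)|B'|^2)^{1/2}$ and apply Cauchy--Schwarz. The factor $\int(1-\rho)|B'|^2\le \b E$ uses $\rho\le 1$ (WLOG) and the energy bound $\int|B'|^2\le \b E$. For the first factor, the flux conservation $\int_{Q_1}B_3(\cdot,s)dx'=\b$ combined with the energy bound $\int(B_3-(1-\rho))^2\le C\a^{-2/3}\b^{2/3}E$ gives
\[
\int_{\tilde x_3}^{x_3}\int_{Q_1}(1-\rho)\,dx'\,ds \ \le\ \b\,|x_3-\tilde x_3| + C\a^{-1/3}\b^{1/3}E^{1/2}.
\]
Plugging back, using $\sqrt{a+b}\le\sqrt a+\sqrt b$ and $E^{3/4}\le E^{1/2}+E$, the Meissner piece is bounded by $E^{1/2}|x_3-\tilde x_3|^{1/2}+C(\a^{1/2}\b)^{-1/3}(E^{1/2}+E)$, matching the second term of $\sigma$.

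\emph{Defect piece.} For the $\rho B'$ contribution, I apply \eqref{estimmeissnerter} to the test function $\psi(x')\xi_\eps(x_3)$, where $\xi_\eps\in H^1_0(-1,1)$ equals $1$ on $[\tilde x_3,x_3]$ and is affine on two adjoining strips of length $\eps$ (so $\|\pa_3\xi_\eps\|_{L^2}\les \eps^{-1/2}$). This gives, after dividing by $\b$,
\[
\b^{-1}\Bigl|\int\rho B'\cdot\nabla'\psi\,\xi_\eps\,dx\Bigr| \ \les\ \frac{\a^{2/3}\b^{-1/6}}{T}E + \frac{\b^{-1/2}}{T\sqrt\eps}E^{1/2}.
\]
The error in replacing $\xi_\eps$ by $\mathbf{1}_{[\tilde x_3,x_3]}$ is supported on the two strips of total measure $O(\eps)$ and is controlled by Cauchy--Schwarz on $B'$ by $\les (\eps/\b)^{1/2}E^{1/2}$. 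Choosing $\eps\sim 1/T$ balances this against the second term and produces $\frac{\a^{2/3}\b^{-1/6}}{T}E + (T\b)^{-1/2}E^{1/2}$. Using $T\gg\a$ (so $\a\b^{1/2}\ll T$) and $\a^2\b\gg 1$ (a consequence of $\a\b^{7/2}\to\infty$ and $\b\le 1$), a direct check shows that these are respectively absorbed into the first and third components of $\sigma(E^{1/2}+E)$.

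\emph{Conclusion and equicontinuity.} Summing the two contributions and taking the supremum over $\psi$ with $\|\psi\|_{\Lip}\le 1$ proves \eqref{HolderE2}. For \eqref{equiconB}, when $E\les 1$, $\sigma\le 1$ and $|x_3-\tilde x_3|\ge\sigma^{1/2}$, one has $\sigma(E^{1/2}+E)\les\sigma\le\sigma^{1/2}\cdot|x_3-\tilde x_3|^{1/2}\les|x_3-\tilde x_3|^{1/2}$. The main difficulty is precisely the decomposition of $B'$: neither pure Cauchy--Schwarz nor Lemma \ref{lemmameissner} alone suffices, and the flux conservation in $x_3$ (which produces the crucial $|x_3-\tilde x_3|$ factor) is essential in the $(1-\rho)B'$ estimate.
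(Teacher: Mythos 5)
Your proof is correct and follows essentially the same strategy as the paper: integrate $\mathrm{div}\,B=0$ against $\psi$, split $B'$ into $(1-\rho)B'$ (estimated by Cauchy--Schwarz plus flux conservation and the $(B_3-(1-\rho))^2$ energy term) and $\rho B'$ (estimated by \eqref{estimmeissnerter} with a vertical cutoff), then optimize the cutoff scale. The only cosmetic difference is that the paper first proves a short-range Cauchy--Schwarz bound \eqref{firstestimHolder} and uses it both to dispatch the case of nearby slices and to control the boundary terms of an inner trapezoidal cutoff, whereas you use an outer cutoff $\xi_\eps$ and bound the strip error directly; your balance $\eps\sim 1/T$ and the paper's $\delta\sim T^{-1}\a^{1/3}\b^{1/6}$ lead to equivalent estimates once one observes, as you do, that the first and third terms of $\sigma$ coincide and that $\a^2\b\to\infty$ in the regime \eqref{eqlbassumptcoeff}.
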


\begin{proof}
 The proof resembles that of \cite[Lem. 3.13]{CoOtSer}.  First, we show that for every $Q_1-$periodic and Lipschitz continuous function $\psi$ with $\|\psi\|_{Lip}\le 1$,
 \begin{equation}\label{firstestimHolder}
  \lt|\int_{Q_1\times\{x_3\} } \b^{-1}B_3 \psi dx' -\int_{Q_1\times\{\tilde{x}_3\} } \b^{-1}B_3 \psi dx'\rt|\le|x_3-\tilde{x}_3|^{1/2} \b^{-1/2} E^{1/2}.
 \end{equation}
 This follows from $\Div B=0$ and integration by parts, which yields
\begin{align*}
 \lt|\int_{Q_1\times\{x_3\} } \b^{-1}B_3 \psi dx' -\int_{Q_1\times\{\tilde{x}_3\} } \b^{-1}B_3 \psi dx'\rt|&=\lt|\int_{Q_1\times(x_3,\tilde{x}_3)} \b^{-1}\partial_3 B_3 \psi  dx\rt|\\
 &=\lt|\int_{Q_1\times(x_3,\tilde{x}_3)}\beta^{-1} B' \cdot \nabla' \psi  dx\rt|\\
 &\le |x_3-\tilde{x}_3|^{1/2} \beta^{-1/2} \lt(\int_{Q_1\times(x_3,\tilde{x}_3)} \beta^{-1}|B'|^2 dx\rt)^{1/2}\\
 &\le |x_3-\tilde{x}_3|^{1/2} \beta^{-1/2}E^{1/2}.
\end{align*}
For $|x_3-\tilde{x}_3|\le T^{-1} \a^{1/3}\b^{1/6}$, this implies that 
\[\|\b^{-1}B_3(\cdot,x_3) - \b^{-1}B_3(\cdot,\tilde{x}_3) \|_{BL}\le \lt(\a^{1/3} T^{-1} \b^{-5/6}\rt)^{1/2} E^{1/2}\le \sigma(\a,\b,T) E^{1/2},\]
and \eqref{HolderE2} is proven. Letting $\hat{\sigma}(\a,\b,T):= \lt(\frac{\a}{T}\rt)^{1/2} \lt(\a^2\b^{5/2}\rt)^{-1/6}+ (\a^{1/2}\b)^{-1/3}$, we are left to prove that 
 for   $Q_1-$periodic and  Lipschitz continuous $\psi$ with $\|\psi\|_{Lip}\le 1$ and $|x_3-\tilde{x}_3|\ge T^{-1}\a^{1/3}\b^{1/6}$, 
 \begin{equation}\label{toproveHolder}
  \lt|\int_{Q_{1}\times\{x_3\}} \beta^{-1}B_3 \psi dx' -\int_{Q_{1}\times\{\tilde{x}_3\}} \beta^{-1}B_3 \psi dx' \rt|\les |x_3-\tilde{x}_3|^{1/2}E^{1/2}+   \hat{\sigma}(\alpha,\beta,T)(E^{1/2}+E).
 \end{equation}
 Up to translation we may assume $\tilde{x}_3=0$ and $x_3>0$. Let $\delta\le x_3/2$ and define $\xi: \R\to \R$ by
\[
 \xi(z):=\begin{cases}
             \frac{z}{\delta} & \textrm{if } 0<z<\delta\\
             1 & \textrm{if } \delta\le z \le x_3-\delta\\
             \frac{x_3-z}{\delta} & \textrm{if } x_3-\delta\le z\le x_3\\
             0 & \textrm{otherwise}.
            \end{cases}
\]
We then have using again $\Div B=0$ and integration by parts
\begin{equation}\label{integpart}
 \int_{Q_{1,1}} \beta^{-1} B_3 \psi \partial_3\xi dx=-\int_{Q_{1,1}} \b^{-1}\rho  B'\cdot \nabla'\psi \, \xi dx -\int_{Q_{1,1}} \b^{-1}(1-\rho) B'\cdot\nabla'\psi\,  \xi dx.
\end{equation}
The first term on the right-hand side of \eqref{integpart} is estimated by \eqref{estimmeissnerter}. For the second term, we  now estimate
\begin{equation}\label{firsttermright}
 \int_{Q_{1,1}} \b^{-1}(1-\rho)|B'|\xi dx\le \lt(\int_{Q_1\times(0,x_3)} \b^{-1}(1-\rho) dx\rt)^{1/2} \lt(\int_{Q_{1,1}}\b^{-1}(1-\rho)|B'|^2 dx \rt)^{1/2}.
\end{equation}
We rewrite the first factor as 
 \[\int_{Q_1\times(0,x_3)} \b^{-1}(1-\rho) dx=\int_{Q_1\times(0,x_3)} \b^{-1}B_3 dx  +\int_{Q_1\times(0,x_3)} \b^{-1}(B_3-(1-\rho)) dx,\]
from which we obtain
 \[\int_{Q_1\times(0,x_3)} \b^{-1}(1-\rho) dx\le \lt|\int_{Q_1\times(0,x_3)} \b^{-1}B_3 dx \rt| +|x_3|^{1/2} \lt(\int_{Q_{1,1}} \b^{-2} (B_3-(1-\rho))^2 dx\rt)^{1/2}.\]
This allows to make use of
 \[\int_{Q_{1,1}} \b^{-2} (B_3-(1-\rho))^2 dx\le \a^{-2/3}\b^{-4/3} E,\] 
and $\int_{Q_1\times\{z\}} \beta^{-1} B_3 dx' =1$, yielding
 \begin{equation}\label{firstestimrightbis}
  \int_{Q_1\times(0,x_3)} \b^{-1}(1-\rho) dx\le |x_3|+|x_3|^{1/2}\a^{-1/3}\b^{-2/3} E^{1/2}.
 \end{equation}
The second factor in \eqref{firsttermright} is directly estimated by
\[
 \int_{Q_{1,1}}\b^{-1}(1-\rho)|B'|^2 dx\le E,
\]
so that inserting this and \eqref{firstestimrightbis} into \eqref{firsttermright} gives
\begin{equation}\label{firstestimrightter}
\lt| \int_{Q_{1,1}} \b^{-1}(1-\rho) B'\cdot\nabla'\psi \xi dx\rt|\les |x_3|^{1/2} E^{1/2} +|x_3|^{1/4}  \a^{-1/6}\b^{-1/3}E^{3/4}.
\end{equation}

Letting $f(z):=\int_{Q_1\times\{z\}}\beta^{-1} B_3 \psi dx'$, we  thus obtain from \eqref{integpart}, \eqref{firstestimrightter} and \eqref{estimmeissnerter},
\begin{equation}\label{firstestimf}
\lt|\int_0^{x_3} f \partial_3 \xi dz\rt|\les \frac{\a^{2/3}\b^{-1/6}}{T} E +\frac{\a^{1/3}\b^{-1/3}}{T} E^{1/2} \|\partial_3\xi\|_{L^2}+ |x_3|^{1/2} E^{1/2} +|x_3|^{1/4} E^{3/4} \a^{-1/6}\b^{-1/3}.
\end{equation}
Since by definition of $\xi$,  $\int_0^{x_3} f \partial_3\xi dz=\frac{1}{\delta} \int_0^\delta f dz-\frac{1}{\delta}\int_{x_3-\delta}^{x_3} f dz$, we have 
\[
 f(x_3)-f(0)=\int_0^{x_3} f \partial_3 \xi dz  +\frac{1}{\delta}\int_{0}^\delta (f-f(0)) dz+\frac{1}{\delta}\int_{x_3-\delta}^{x_3} (f(x_3)-f) dz,
\]
so that 
\[|f(x_3)-f(0)|\le \lt|\int_0^{x_3} f \partial_3 \xi dz\rt|+\sup_{(0,\delta)}|f-f(0)| +\sup_{(x_3-\delta,x_3)}|f-f(x_3)|.\]
In view of this elementary inequality, the estimates  \eqref{firstestimHolder} and \eqref{firstestimf} combine to 
\begin{multline*}
|f(x_3)-f(0)|\les |x_3|^{1/2} E^{1/2} +\frac{\a^{1/3}\b^{-1/3}}{T} E^{1/2} \delta^{-1/2} + \delta^{1/2}\b^{-1/2}E^{1/2} +\frac{\a^{2/3}\b^{-1/6}}{T} E\\ + |x_3|^{1/4} E^{3/4} \a^{-1/6}\b^{-1/3}.  
\end{multline*}
We now optimize in $\delta $ by  choosing $\delta=T^{-1}\a^{1/3}\b^{1/6}$, which combined with  $\frac{\a^{2/3}\b^{-1/6}}{T}\ll \a^{-1/6}\b^{-1/3}$, yields \eqref{toproveHolder} in the form of 
\[
 |f(x_3)-f(0)|\les |x_3|^{1/2} E^{1/2} + (E^{1/2}+E)\lt( \lt(\frac{\a}{T}\rt)^{1/2} \lt(\a^2\b^{5/2}\rt)^{-1/6}+ \a^{-1/6}\b^{-1/3}\rt). 
\]

\end{proof}
\begin{remark}\label{remarkequi}
 We notice that thanks to the Kantorovich-Rubinstein Theorem \cite[Th. 1.14]{villani}, if $B_3$ is non-negative then we can substitute the Bounded-Lipschitz norm in  \eqref{HolderE2} by a $1-$Wasserstein distance.
 In particular, it would imply that if $(\alpha_n,\beta_n,T_n)$ satisfy \eqref{eqlbassumptcoeff} and if $(u_n,A_n)$ are admissible with $\|\rho_n\|_{\infty}\le 1$, $\wE(u_n,A_n)\les 1$, and $B^n_3\ge 0$,
 then the corresponding curves $x_3\mapsto \beta_n^{-1} B^n_3(\cdot,x_3)$ would be in some sense equi-continuous in the space of probability measures endowed with the Wasserstein metric.
\end{remark}

For $\eps>0$ fixed, we define the following regularization of the  singular double well potential $\chi_{\rho>0}(1-\rho)^2$ :
 \begin{equation}\label{doublewell}
 W_\eps(\rho):=\eta_\eps(\rho)(1-\rho)^2  \ \text{with } \ \eta_\eps(\rho):=\min\{\rho/\eps,1\} 
  \,, 
 \end{equation}
see \eqref{introlowerbound} and  Figure \ref{figweps}.
We next show that the energy controls $W_\eps(\rho)$. Similar ideas have  been used in the context of Bose-Einstein condensates \cite{GolMer}.
\begin{figure}
\centerline{\includegraphics[height=4cm]{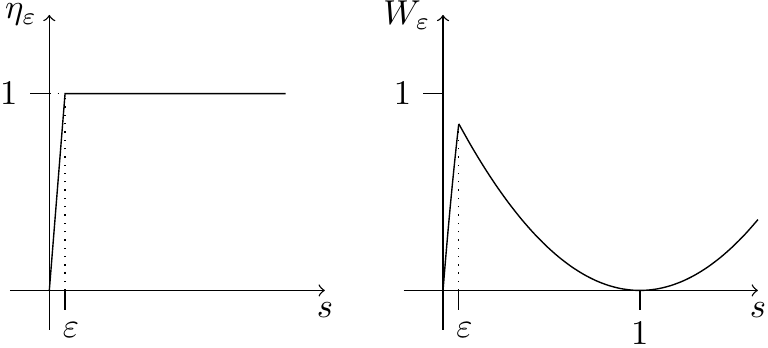}}
\caption{The cutoff function $\eta_\eps$ and the two-well potential $W_\eps$ used in Lemma \ref{lemmafirstlowerbound}.}
\label{figweps}
\end{figure}
\begin{lemma}\label{lemmafirstlowerbound}
 For every $\eps>0$ there exists $C_\eps>0$ such that for every  $(u,A)$ with $\|\rho\|_\infty\le 1$ it holds, 
\begin{equation}\label{estimlower}
 \int_{Q_{1,1}} \a^{2/3}\b^{-2/3}W_\eps(\rho) dx
 \le \int_{Q_{1,1}} \a^{2/3}\b^{-2/3}\left(B_3 -(1-\rho)\right)^2 dx
 +C_\eps \wE(u,A) \frac{\a}{T}.
\end{equation}
\end{lemma}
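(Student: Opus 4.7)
The starting point is the elementary identity $(1-\rho)^2=(B_3-(1-\rho))^2+2(1-\rho)B_3-B_3^2$. Multiplying by $\eta_\eps(\rho)\in[0,1]$ and discarding the nonpositive term $-\eta_\eps(\rho)B_3^2$, together with $\eta_\eps\le 1$ applied to the square, I obtain the pointwise inequality
\[
W_\eps(\rho)=\eta_\eps(\rho)(1-\rho)^2\le (B_3-(1-\rho))^2+2\eta_\eps(\rho)(1-\rho)B_3.
\]
After multiplying by $\alpha^{2/3}\beta^{-2/3}$ and integrating over $Q_{1,1}$, the first term on the right is exactly the corresponding contribution to $\wE(u,A)$, so the proof reduces to bounding the cross term $\int_{Q_{1,1}}\alpha^{2/3}\beta^{-2/3}\eta_\eps(\rho)(1-\rho)B_3\,dx$ by a multiple of $(\alpha/T)\wE(u,A)$.

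To estimate this cross term I would factor $\eta_\eps(\rho)(1-\rho)=\rho\,g_\eps(\rho)$ where $g_\eps(\rho):=\min\{1/\eps,1/\rho\}(1-\rho)$, so that the cross integral becomes $\alpha^{2/3}\beta^{-2/3}\int_{Q_{1,1}}\rho B_3\,\psi\,dx$ with $\psi:=g_\eps(\rho)$. A direct case check on $\{\rho<\eps\}$ and $\{\rho>\eps\}$ shows that $g_\eps$ is Lipschitz with $\|g_\eps\|_\infty\le 1/\eps$ and $\|g_\eps'\|_\infty\le 1/\eps^2$; composing with the periodic $H^1$-function $\rho=|u|^2$ gives a legitimate test function for Lemma~\ref{lemmameissner}. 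Inequality \eqref{estimmeissner} then produces
\[
\Big|\int_{Q_{1,1}}\rho B_3\,\psi\,dx\Big|\lesssim \frac{\alpha^{1/3}\beta^{2/3}}{T\,\eps}\wE(u,A)+\frac{\beta^{1/2}}{T\,\eps^2}\wE(u,A)^{1/2}\,\|\nabla'\rho\|_{L^2}.
\]

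The factor $\|\nabla'\rho\|_{L^2}$ is controlled gauge-invariantly: writing $\tilde A:=\alpha^{1/3}\beta^{-1/3}TA$ and using $\rho=|u|^2$ one has $\nabla'\rho=2\Re(\bar u\,\nabla'_{\tilde A}u)$ (the purely imaginary correction $-i\tilde A|u|^2$ drops out under $\Re$), so that $|\nabla'\rho|\le 2|\nabla'_{\tilde A}u|$ by $\rho\le 1$, and therefore
\[
\|\nabla'\rho\|_{L^2}^2\le 4\int_{Q_{1,1}}|\nabla'_{\tilde A}u|^2\,dx\le 4\alpha^{2/3}\beta^{1/3}\wE(u,A).
\]
Plugging this back and multiplying by the external factor $\alpha^{2/3}\beta^{-2/3}$ the two prefactors combine to $(1/\eps+2/\eps^2)(\alpha/T)\wE(u,A)$, yielding \eqref{estimlower} with $C_\eps=\mathcal{O}(1/\eps^{2})$. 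The only delicate point is the choice of the test function $\psi=g_\eps(\rho)$: this is what converts the approximate Meissner bound of Lemma~\ref{lemmameissner} — an estimate on the $\rho B_3$ coupling — into the desired coercivity on the degenerate double-well potential $W_\eps$, at the price of the blow-up $C_\eps\sim 1/\eps^2$ as $\eps\to 0$.
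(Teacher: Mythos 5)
Your proof is correct and follows essentially the same route as the paper's: both establish the pointwise bound $W_\eps(\rho)\le (B_3-(1-\rho))^2+2\eta_\eps(\rho)(1-\rho)B_3$, then write the cross term as $\rho B_3$ times a $\rho$-dependent test function and invoke \eqref{estimmeissner}. The only cosmetic difference is that you track Lipschitz continuity of $g_\eps$ in $\rho$ (giving $C_\eps\sim\eps^{-2}$), whereas the paper tracks Lipschitz continuity of $\psi_\eps$ in $\rho^{1/2}$ and uses $\|\nabla'\rho^{1/2}\|_{L^2}\le |\nabla'_{\tilde A}u|_{L^2}$ directly (giving $C_\eps\sim\eps^{-3/2}$); both are equally good for the statement.
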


\begin{proof}
As above, to lighten notation, we let $E:= \wE(u,A)$.  Writing $(1-\rho)= B_3 -(B_3-(1-\rho))$, we obtain by Young's inequality 
\begin{align*}
 (1-\rho)^2&= B_3 (1-\rho)-(B_3-(1-\rho))  (1-\rho)\\
&\le  B_3 (1-\rho) +\frac{1}{2}  (1-\rho)^2+\frac{1 }{2} (B_3-(1-\rho))^2.
\end{align*}
Multiplying by 
 $2\eta_\eps(\rho)$ and  using that $0\le\eta_\eps\le 1$ we obtain for $W_\eps(\rho)$ the estimate
\begin{equation}\label{wer}
  W_\eps(\rho) =\eta_\eps{(\rho)}   (1-\rho)^2\le (B_3-(1-\rho))^2+2\eta_\eps{(\rho)}  (1-\rho)B_3.
\end{equation}
Let $\psi_\eps(s):=2\frac{\eta_\eps(s)}{s}(1-s)= 2\min \{ \frac{1}{\eps}, \frac{1}{s}\} (1-s) $ then $\psi_\eps$  is bounded by $1/\eps$ and is Lipschitz continuous in $s^{1/2} $
with constant of  order  $\eps^{-3/2}$ i.e. $\sup_t |(\psi_\eps(t^2))'|\les \eps^{-3/2}$.
Since 
 $2\eta_\eps {(\rho)} (1-\rho)B_3= \rho B_3 \psi_\eps(\rho)$, using \eqref{estimmeissner} with $\psi=\psi_\eps{(\rho)} $, we get
\begin{align*}
\left| \int_{Q_{1,1}} 2\eta_\eps {(\rho)} (1-\rho)B_3 dx\right|&\les \frac{\a^{1/3}\b^{2/3}}{T}\lt(\eps^{-1} E+  \a^{-1/3}\b^{-1/6}  E^{1/2}  \|\nab' (\psi_\eps(\rho))\|_{L^2} \rt)  \\
&\les \frac{\a^{1/3}\b^{2/3}}{T}\lt(\eps^{-1}E+ \eps^{-3/2} \a^{-1/3}\b^{-1/6}  E^{1/2}  \|\nab'\rho^{1/2}\|_{L^2} \rt) \\
&\les   C_\eps \frac{\a^{1/3}\b^{2/3}}{T}\lt(E+ \int_{Q_{1,1}} \a^{-2/3}\b^{-1/3} |\nabla' \rho^{1/2}|^2 dx\rt) \\
&\les C_\eps \frac{\a^{1/3}\b^{2/3}}{T} E,
\end{align*}
where we used that $|\nabla' \rho^{1/2}|\le |\nabla'_{\a^{1/3}\b^{1/3}TA} u|$ and thus $\int_{Q_{1,1}} \a^{-2/3}\b^{-1/3} |\nabla' \rho^{1/2}|^2 dx\le E$. Estimate  \eqref{estimlower} follows from inserting this estimate into \eqref{wer}.
\end{proof}

 To prove the lower bound, we will need the following two dimensional result.

\begin{lemma}\label{lowerbound2d}
Let $\chi_n\in BV(Q_1, \{0,\b_n^{-1}\})$ be such that $\lim_{n\to +\infty}\int_{Q_1}\chi_n dx'=1$ and 
\[\sup_{n} \int_{Q_1} \b_n^{1/2} |D' \chi_n| <+\infty.\]
 Then, up to a subsequence, $\chi_n\weaklim \sum_i \phi_i \delta_{X_i}$ for some at most countable family of $\phi_i> 0$ and $X_i \in Q_1$, and 
\begin{equation}\label{liminfperibeta}
\liminf_{n\to +\infty}  \int_{Q_1} \b_n^{1/2} |D' \chi_n|\ge 2\sqrt{\pi}\sum_i \sqrt{\phi_i}.
\end{equation}
\end{lemma}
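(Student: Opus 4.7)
The plan is to work with the level set $E_n := \{\chi_n = \beta_n^{-1}\}$, which has $|E_n| = \beta_n \int_{Q_1} \chi_n dx' \to 0$, and to rewrite the hypothesis as a bound on the rescaled perimeter. Since $\chi_n$ jumps by $\beta_n^{-1}$ across $\partial^* E_n$, we have $\int_{Q_1} \beta_n^{1/2}|D'\chi_n| = \beta_n^{-1/2}\mathrm{Per}(E_n)$. Decomposing $E_n = \bigsqcup_i E_n^i$ into connected components (well defined modulo negligible modifications in $BV$) and setting $\phi_n^i := |E_n^i|/\beta_n$, we have $\sum_i \phi_n^i = \int_{Q_1}\chi_n\,dx' \to 1$. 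The planar isoperimetric inequality yields $\mathrm{Per}(E_n^i) \ge 2\sqrt{\pi |E_n^i|}$, hence
\begin{equation*}
\int_{Q_1}\beta_n^{1/2}|D'\chi_n| = \sum_i \beta_n^{-1/2}\mathrm{Per}(E_n^i) \ge 2\sqrt{\pi}\sum_i \sqrt{\phi_n^i}.
\end{equation*}
In particular $\sum_i \sqrt{\phi_n^i}$ is bounded uniformly in $n$, and the isoperimetric inequality also gives $\mathrm{diam}(E_n^i) \lesssim \mathrm{Per}(E_n^i) \lesssim \beta_n^{1/2}\to 0$, so each component shrinks to a point.

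Next I extract the limit. Reordering so $\phi_n^i$ is nonincreasing in $i$, Lemma \ref{lemsqrt} (applied with $c_0 = \sum_i\phi_n^i$ and $C_0 = \sum_i \sqrt{\phi_n^i}$) shows $\sum_{i>N}\phi_n^i \lesssim 1/\sqrt N$, uniformly in $n$. Picking any $X_n^i \in E_n^i$, a diagonal extraction produces a subsequence along which, for every $i$, $\phi_n^i \to \phi_*^i$ and $X_n^i \to X^i \in Q_1$ (the torus being compact). The uniform tail estimate ensures $\sum_i \phi_*^i = 1$, and since $\mathrm{diam}(E_n^i)\to 0$, for every $\psi\in C(Q_1)$ one has $\beta_n^{-1}\int_{E_n^i}\psi\,dx' \to \phi_*^i\psi(X^i)$, and again the tail bound lets us exchange sum and limit. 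Thus $\chi_n dx' \weaklim \sum_i \phi_*^i \delta_{X^i}$. Regrouping terms according to which $X^i$ coincide, this is exactly the claimed form $\sum_j \phi_j \delta_{X_j}$ with distinct $X_j$.

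For the lower bound, Fatou's lemma on $\N$ gives $\liminf_n \sum_i \sqrt{\phi_n^i} \ge \sum_i \sqrt{\phi_*^i}$, and the subadditivity of the square root together with the regrouping $\phi_j = \sum_{i : X^i = X_j}\phi_*^i$ yields $\sum_i \sqrt{\phi_*^i} \ge \sum_j \sqrt{\phi_j}$. Combined with the isoperimetric estimate this proves \eqref{liminfperibeta}.

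The one genuinely subtle point is the possibility of infinitely many components together with mass being lost at infinitely many distinct limit points or several components collapsing to one: the first is excluded by the tail estimate (so $\sum \phi_*^i = 1$), and the second only improves the isoperimetric lower bound, since $\sqrt{a}+\sqrt{b}\ge \sqrt{a+b}$. The rest is bookkeeping; no extra compactness beyond the bound $\sum_i \sqrt{\phi_n^i} \lesssim 1$ and a diagonal extraction is needed.
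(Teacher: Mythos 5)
Your proof is correct, but it takes a genuinely different route from the paper's. The paper decomposes the \emph{domain} $Q_1$ into a grid of cubes of side $3\beta_n^{1/2}$, applies the relative (Poincar\'e-type) isoperimetric inequality in each cube to get $\varphi_i^n\lsim \bigl(\int_{Q_i^n}\beta_n^{1/2}|D'\chi_n|\bigr)^2$ and hence compactness via Lemma~\ref{lemsqrt}, and then proves the lower bound by localizing the usual isoperimetric inequality with bump functions around the $N$ heaviest limit points. You instead decompose the \emph{set} $E_n=\{\chi_n=\beta_n^{-1}\}$ into indecomposable components and apply the planar isoperimetric inequality componentwise, using the fact that perimeter controls diameter to identify the weak limit directly. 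This is conceptually cleaner and gives the perimeter-to-mass bound in one line, but it leans on two nontrivial facts about sets of finite perimeter in the plane that are \emph{not} the isoperimetric inequality (as you suggest): the decomposition into at most countably many indecomposable components with additivity of perimeter, and the bound $\diam(E)\le\frac12\mathrm{Per}(E)$ for an indecomposable $E\subset\R^2$ -- both are consequences of the Jordan-curve structure theorem of Ambrosio--Caselles--Masnou--Morel. You should also note explicitly that once $\mathrm{Per}(E_n^i)\le\mathrm{Per}(E_n)\lsim\beta_n^{1/2}<2$, no component can be noncontractible on the torus, so the planar statements apply after lifting to a fundamental domain; for the same reason the planar isoperimetric inequality is valid for $|E_n^i|\to 0$ even though you work on $\T^2$. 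The paper's route avoids all of this machinery at the cost of a slightly less intrinsic decomposition; both approaches share the remaining bookkeeping (Lemma~\ref{lemsqrt}, diagonal extraction, uniform tail control, Fatou on $\N$, and subadditivity of $\sqrt{\cdot}$ when components collapse to the same limit point).
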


\begin{proof}
\step{1 (Compactness)} 
For each $n$ we split the cube $Q_1$ into
 small cubes of side length
$3\b_n^{1/2}$. 
Let $Q_i^{n}$ be an enumeration of theses cubes such
that
$$\varphi_i^{n} :=\int_{Q_i^{n}} \chi_n dx'$$ 
is nonincreasing in $i$. Since $|\{\chi_n=\b_n^{-1}\}|=\b_n \int_{Q_1} \chi_n dx'=\b_n+o({\beta_n})$, we have $|{Q_i^{n}}\cap\{\chi_n=\b_n^{-1}\}|\le\b_n+o(\b_n)\le \frac{1}{2}|{Q_i^{n}}|$ 
and thus by the relative isoperimetric inequality \cite[Th. 3.46]{AFP}, we have on each ${Q_i^{n}}$
$$\varphi_i^{n}=\int_{{Q_i^{n}}}  \chi_n  dx'\les\left( \int_{{Q_i^{n}}} \b_n^{1/2} |D'
\chi_n|\right)^2\,. $$   It follows
that
$$\sum_i \sqrt{\varphi_i^{n}} \les \int_{Q_1} \b_n^{1/2}|D' \chi_n|  \le C,$$
by the energy bound.
Arguing as in the proof of Proposition~\ref{existmu}, we deduce from Lemma \ref{lemsqrt} that up to extracting a subsequence, $\chi_n \weaklim \sum_{i\in \calI }\phi_i \delta_{X_i}$ for some $\phi_i> 0$ and $X_i\in Q_1$, pairwise distinct.

\medskip

\step{2 (Lower bound)} Assume now that the $\phi_i$ are labeled in a decreasing order and fix $N\in \N$. 
Choose $r\in(0,1/4)$ sufficiently small so that 
\begin{equation}\label{bxr}
\B'(X_i,r)\cap \B'(X_j,r)=\emptyset \qquad \qquad \forall i,j \le N \text{ with } i\neq j, 
\end{equation}
and let  $\psi\in C^\infty_c(\B'(X_i, r);[0,1])$ be a smooth function such that
$\psi=1$ in $\B'(X_i, r/2)$ and $|\nab \psi|\le C/r$.
Let $C_\mathrm{iso}=(2\sqrt{\pi})^{-1}$ be the isoperimetric constant in dimension 2, then  we may write
\begin{eqnarray*}\nonumber
\left(\int_{\B'(X_i, r/2)
} \b_n^{-1} \chi_n dx'\right)^\hal &
 \le &\left( \int_{Q_1} ( \psi \chi_n)^2 dx'\right)^{1/2} 
 \le  C_\mathrm{iso} \int_{Q_1} |D' (\psi \chi_n)|\\
 \nonumber & \le & C_\mathrm{iso} \left( \int_{\B'(X_i, r) }
 |D' \chi_n| + \frac{C}{r} \int_{\B'(X_i,r)\backslash \B'(X_i, r/2)}
  \chi_n dx'\right)\\
   \nonumber & \le & C_\mathrm{iso} \int_{\B'(X_i, r) }
 |D' \chi_n| + \frac{C}{r} 
 \end{eqnarray*}
since $\int_{Q_1} \chi_n dx'\to 1$.
 Multiplying by $\beta_n^{1/2}$ and summing over $N$, we get
 \begin{equation}\label{estimisop}
 \sum_{i=1}^N \left(\int_{\B'(X_i, r/2)
}  \chi_n dx'\right)^{1/2} \le C_\mathrm{iso} \int_{Q_1} \beta_n^{1/2}|D' \chi_n| + \beta_n^{1/2}\frac{CN}{r}.\end{equation}
 
 Next observe that since $\chi_n\rightharpoonup 
 \sum_{i} \phi_i \delta_{X_i}$, we have for every $i=1,\dots,N$,
 \[\liminf_{n\to +\infty}\lt(\int_{\B'(X_i, r/2)}  \chi_n dx'\right)^{1/2}=   \phi_i^{1/2}
\,.\]
Therefore, passing to the limit in \eqref{estimisop}, we obtain
\[\sum_{i=1}^N   \phi^{1/2}_i\le  C_\mathrm{iso} \liminf_{n \to +\infty} \int_{Q_1} \beta_n^{1/2}|D' \chi_n|.\]
Since $N$ was arbitrary this implies \eqref{liminfperibeta}.

\end{proof}

With this lemma at hand, we can prove the compactness and lower bound result.
\begin{proof}[Proof of Proposition \ref{gammaliminf}]
We fix for the proof a sequence $(u_n,A_n)$ with $\wE(u_n,A_n)\les 1$. We then let $B_n:=(B'_n,B_3^n):=\nabla \times A_n$ and $\rho_n:=|u_n|^2$.\\

\step{1 (Compactness)} Notice first that 
\begin{equation}\label{bounddoublewell}
 \int_{Q_{1,1}} \lt( \frac{B^n_3-(1-\rho_n)}
 {\beta_n} 
 \rt)^2 dx\le \a_n^{-2/3} \b_n^{-4/3} \wE(u_n,A_n)
 \to0\,.
\end{equation}
By   \cite[Lem. 3.7]{CoOtSer} there is $\hat u_n$ with $\hat\rho_n:=|\hat u_n|^2=\min\{\rho_n,1\}$ such that $\wE(\hat u_n,A_n)\le (1+2\alpha_n/T_n)\wE(u_n,A_n)$ (the error comes from the last two terms
in (\ref{eqEGLE})).
In particular, also $ (B^n_3-(1-\hat\rho_n))/\beta_n\to0$ in $L^2(Q_{1,1})$.
   Using this and  $|B^n_3|\le |B^n_3-(1-\hat\rho_n)|$ on $\{ B^n_3\le 0\}$ we obtain
\begin{equation}\label{convneg}\lt|\int_{\{B^n_3<0\}} \b_n^{-1} B^n_3 dx\rt|\les \lt( \b_n^{-2} \int_{Q_{1,1}} (B^n_3-(1-\hat\rho_n))^2 dx\rt)^\hal   \le \frac{1}{(\alpha_n\beta_n^2)^{2/3} } \wE(u_n,A_n)   \to 0,\end{equation}
 and since $\int_{Q_{1,1}} \b_n^{-1}B^n_3 dx=2$ the sequence $\b_n^{-1}B^n_3$ is bounded in $L^1$
 and, after extracting  a subsequence, $\b_n^{-1} B^n_3 \weaklim \mu$ for some measure $\mu$.  From \eqref{bounddoublewell}  we also get $\b_n^{-1}(1-\rho_n)\weaklim \mu$, and the same for $\hat\rho_n$. 
  It also follows from \eqref{convneg} that 
$$\int_{Q_{1,1}} \b_n^{-1} (1- \hat\rho_n) dx= 2+\left|\beta_n^{-1} \int_{Q_{1,1}} \left(B_3^n-(1-\hat \rho_n) \right) dx\right|\to 2.$$
Moreover, since $\int_{Q_{1,1}} \b_n^{-1}(1-\hat\rho_n) |B_n'|^2 dx\le\int_{Q_{1,1}} \b_n^{-1} |B'_n|^2 dx\le \wE(u_n,A_n)$ it holds
\[\int_{Q_{1,1}} \b_n^{-1}(1-\hat\rho_n) | B_n'| dx\le \lt(\int_{Q_{1,1}} \b_n^{-1}(1-\hat\rho_n) |B_n'|^2 dx\rt)^\hal \lt(\int_{Q_{1,1}} \b_n^{-1}(1-\hat\rho_n) dx\rt)^{\hal}\les 1,\] 
thus (up to a subsequence) $\b_n^{-1}(1-\hat\rho_n) B_n'\weaklim m$ for some vector-valued measure $m$. By 
\cite[Th. 2.34]{AFP}, 
\begin{equation}\label{liminftrans}
 \liminf_{n\to +\infty} \int_{Q_{1,1}} \b_n^{-1}|B_n'|^2 dx\ge\liminf_{n\to +\infty} \int_{Q_{1,1}} \b_n^{-1}(1-\hat\rho_n)|B_n'|^2 dx\ge \int_{Q_{1,1}} \lt(\frac{dm}{d\m}\rt)^2 d\mu,
\end{equation}
and $m\ll \mu$.
Moreover, from Lemma \ref{lemmameissner} we have that $\b_n^{-1}\hat\rho_n B'_n\weaklim 0$ in a distributional sense and therefore $\b^{-1}_n B_n'$  itself converges to $m$. Letting $n\to +\infty$ in 
$$ \b_n^{-1} \Div B_n=\partial_3 \left[\b_n^{-1} B^n_3\right]+\Divp \lt[\b_n^{-1}B_n'\rt]=0,$$ 
 we obtain $\partial_3 \mu +\Divp m=0$. This proves (i). \\
 
 We now prove that $\mu=\mu_{x_3}\otimes dx_3$, that $\beta^{-1}_nB^n_3(\cdot,x_3)\weaklim \mu_{x_3}$ for a.e. $x_3\in(-1,1)$ and that $\mu_{x_3}\weaklim dx'$ as $x_3\to \pm 1$. 
By \eqref{convneg}  we have that, up to a subsequence in $n$, for a.e. $x_3\in (-1,1)$, 
\[
 \lim_{n\to +\infty} \int_{Q_1\cap \{B^n_3<0\}} \beta^{-1}_n B^n_3 dx=0.
\]
Let $\mathcal{G}\subset (-1,1)$ be the set of $x_3$ for which this hold. For every $x_3\in \mathcal{G}$, the $L^1$ norm of $\beta^{-1}_nB^n_3(\cdot,x_3)$ is bounded thus if we fix a countable dense set $\mathcal{G}_d\subset \mathcal{G}$, we can assume up to extraction that for every $x_3\in \mathcal{G}_d$, 
$\beta^{-1}_n B^n_3(\cdot,x_3)\weaklim \nu_{x_3}$ for some probability measure $\nu_{x_3}$. For $x_3,\tilde{x}_3\in \mathcal{G}_d$, thanks to the weak lower semi-continuity of $\|\cdot \|_{BL}$ and \eqref{HolderE2},
\begin{align*}
 \|\nu_{x_3}-\nu_{\tilde{x}_3}\|_{BL}&\le \limsup_{n\to +\infty} \|\beta_n^{-1} B_3^n (\cdot,   x_3) - \beta_n^{-1} B_3^n(\cdot, \tilde{x}_3)\|_{BL} \\
 &\les \lim_{n\to +\infty}\lt( |x_3-\tilde{x}_3|^{1/2} +\sigma(\alpha_n,\beta_n,T_n)\rt) \\
 &= |x_3-\tilde{x}_3|^{1/2}. 
\end{align*}
Therefore, there exists a unique H\"older-continuous extension of $\nu_{x_3}$ to $(-1,1) \ni x_3 \mapsto \nu_{x_3} \in \mathcal{P}(Q_1)$. We claim that $\mu=\nu_{x_3}\otimes dx_3$.  For $K\to +\infty$,  
let $\{z^K_j\}_{j=1}^{K+1} \in \mathcal{G}_d$ be an increasing sequence such that $|z^K_j-z^K_{j+1}|\les K^{-1}$, $|z_1^K+1|\les K^{-1}$ and $|z_{K+1}^K-1|\les K^{-1}$. Notice that for $n$ large enough,
we have for every $j$ that $|z^K_j-z^K_{j+1}|\ge \sigma^{1/2}(\a_n,\b_n,T_n)$ (where $\sigma$ is defined in Lemma \ref{HolderE}) so that \eqref{equiconB} applies.  Let $\psi$ be a $Q_1-$periodic and Lipschitz continuous function on $Q_{1,1}$ with $\|\psi\|_{Lip}\le 1$. By the continuity of $x_3 \mapsto  \nu_{x_3}$,
we have 
\begin{align*}
 \int_{Q_{1,1}} \psi (d\mu- d\nu_{x_3}\otimes dx_3)&=\lim_{K\to +\infty}  \int_{Q_{1,1}} \psi d\mu-\sum_{j=1}^K  (z^K_{j+1}-z^K_j) \int_{Q_1} \psi(\cdot ,z_j^K)  d\nu_{z_j^K}\\
 &=\lim_{K\to +\infty} \lim_{n\to +\infty} \sum_{j=1}^K \int_{z_j^K}^{z_{j+1}^K} \int_{Q_1} \psi(x',x_3) \beta_n^{-1} B^n_3(x',x_3)\\
 &\qquad  \qquad \qquad \qquad \qquad  -\psi(x',z_j^K)\beta_n^{-1} B^n_3(x',z_j^K) dx' dx_3.
\end{align*}
Using the finite difference version of Leibniz' rule, 
\begin{multline*}\psi(x',x_3)  B^n_3(x',x_3)-\psi(x',z_j^K) B^n_3(x',z_j^K)\\
=   B^n_3(x',x_3)(\psi(x',x_3)-\psi(x', z_j^K))+  \psi(x',z_j^K)( B^n_3(x',x_3) - B^n_3(x',z_j^K))\end{multline*}
and using that $\|\psi\|_{Lip}\le 1$, we can estimate for fixed $K,j$ and $n$ large enough, 
\begin{align*}
\lt| \int_{z_j^K}^{z_{j+1}^K} \rt.& \lt.\int_{Q_1} \psi(x',x_3) \beta_n^{-1} B^n_3(x',x_3)-\psi(x',z_j^K)\beta_n^{-1} B^n_3(x',z_j^K) dx' dx_3\rt|\\
&\le  \int_{z_j^K}^{z_{j+1}^K} \int_{Q_1} \beta_n^{-1} |B^n_3| |x_3-z^K_j| dx + \int_{z_j^K}^{z_{j+1}^K} \|\beta^{-1}_n( B^n_3(x',x_3) - B^n_3(x',z_j^K))\|_{BL}\\
&\les K^{-1} \int_{z_j^K}^{z_{j+1}^K} \int_{Q_1} \beta_n^{-1} |B^n_3|dx +K^{-1} K^{-1/2},
\end{align*}
where in the last line we have used that $|x_3-z^K_j|\les K^{-1}$ and \eqref{equiconB}. Summing this estimate over $j$, we obtain
 \[\left|\int_{Q_{1,1}} \psi (d\mu- d\nu_{x_3}\otimes dx_3)\right| \les \lim_{K\to +\infty} K^{-1}\lt[\lim_{n\to +\infty} \int_{Q_{1,1}}\beta_n^{-1} |B^n_3| dx + K^{-1/2}\rt]=0.\]
This establishes that $\mu= \nu_{x_3}\otimes dx_3$. Moreover, this proves that for every $x_3\in \mathcal{G}_d$,  the whole sequence $\beta_n^{-1} B^n_3(\cdot,x_3)$ weakly converges to $\mu_{x_3}$.
Since the set $\mathcal{G}_d$ was arbitrary, this proves the above convergence for all $x_3\in \mathcal{G}$. \\

We finally show that the boundary conditions hold. For this we focus on $x_3=1$.  For $x_3\in \mathcal{G}$, it holds by the weak lower semi-continuity of $\|\cdot \|_{BL}$,
\begin{align*}
 \|1-\mu_{x_3}\|_{BL}&\le \liminf_{n\to +\infty} \|1-\beta_n^{-1}B_3^n(\cdot, x_3)\|_{BL}\\
 & \le \liminf_{n\to +\infty} \|1-\beta_n^{-1}B_3^n(\cdot, 1)\|_{BL}+\limsup_{n\to +\infty} \|\beta_n^{-1}B_3^n(\cdot, 1)-\beta_n^{-1}B_3^n(\cdot, x_3)\|_{BL}.
\end{align*}
By \eqref{HolderE2}, the second right-hand side term is controlled by $|1-x_3|^{1/2}$. For the first right-hand side term we note that because of $\|\psi\|_{H^{1/2}}\lsim \|\psi\|_{Lip}$, we have 
$$\|\beta_n^{-1}B_3^n (\cdot, 1)- 1\|_{BL}^2  \lsim \|\beta_n^{-1} B_3^n (\cdot, 1)-1\|_{H^{1/2}}^2 \le \alpha_n^{-1/3}\beta_n^{-7/6} \wE(\hat u_n, A_n).$$
Hence, it is the last assumption in \eqref{eqlbassumptcoeff} that ensures that this term vanishes in the limit $n\to +\infty$. We thus obtain the desired estimate
$$\| 1- \mu_{x_3}\|_{BL}\lsim |1-x_3|^{1/2}.$$

 \medskip
\step{2 (Lower bound and structure of $\mu$)} 
The starting point is an application of 
 the usual Modica-Mortola trick.
 In this step we only deal with $\hat u_n$ and $\hat\rho_n$, and drop the hats for brevity.
By \eqref{estimlower} and $|\nabla' \rho^{1/2}|\le |\nabla'_{\lambda A} u|$ we obtain from the Cauchy-Schwarz inequality :
\begin{equation}\label{estimMM}
 \left(1 + C_\eps \frac{\alpha_n}{T_n}\right)\wE(u_n,A_n)\ge \int_{Q_{1,1}} \b_n^{-1/2}2\sqrt{W_\eps(\rho_n)} |\nabla' \rho_n^{1/2}| +\b_n^{-1}|B_n'|^2 dx,
\end{equation}
for any $\eps>0$. We momentarily fix a small  $\delta>0$ and estimate by the co-area formula \eqref{coarea},
\begin{align*}
\int_{Q_{1,1}} 2\sqrt{W_\eps(\rho_n)}|\nabla' \rho_n^{1/2}| dx&\ge \int_{\delta}^{1-\delta} \int_{-1}^1  2\sqrt{W_\eps(s^2)} \H^1(\partial \{\rho_n(\cdot,x_3)>s^2\}) dx_3 ds.
\end{align*}
In particular there exists $s_n\in[\delta,1-\delta]$ depending on $n$  such that
\begin{multline*}\int_{\delta}^{1-\delta} 2\sqrt{W_\eps(s^2)} \int_{-1}^1  \H^1(\partial \{\rho_n(\cdot,x_3)>s^2\}) dx_3 ds\\
 \ge  \lt(\int_{\delta}^{1-\delta} 2\sqrt{W_\eps(s^2)} ds\rt)\int_{-1}^1  \H^1(\partial \{\rho_n(\cdot,x_3)>s^2_n\}) dx_3.
\end{multline*}
Letting $\chi_n(x',x_3):=\b_n^{-1}(1-\chi_{\{\rho_n(\cdot,x_3)>s_n^2\}}(x'))$ this reads
\begin{equation}\label{boundBV}
 \int_{Q_{1,1}} \b_n^{-1/2}2\sqrt{W_\eps(\rho_n)}|\nabla' \rho_n^{1/2}| dx\ge   C_{\delta,\eps}\int_{Q_{1,1}} \b_n^{1/2} |D' \chi_n| dx_3,
\end{equation}
where $C_{\delta,\eps}:=\int_{\delta}^{1-\delta} 2\sqrt{W_\eps(s^2)} ds$.  

Let $\gamma_n\to 0$ to be chosen later. For $n$ large enough, if $\rho_n\le \gamma_n$, then $\chi_n=\beta_n^{-1}$ while if $\rho_n\ge 1-\gamma_n$, $\chi_n=0$ so that
\begin{align*}\int_{Q_{1,1}} |\chi_n-\beta_n^{-1}(1-\rho_n)| dx&\le\beta_n^{-1} \int_{\{\rho_n\le \gamma_n\}} \rho_n dx  +\beta_n^{-1} \int_{\{\rho_n\ge 1-\gamma_n\}} (1-\rho_n) dx \\
& \qquad +\beta_n^{-1}|\{\gamma_n<\rho_n<1-\gamma_n\}|
\\ &\le 2\beta_n^{-1}\gamma_n +\beta_n^{-1}|\{\gamma_n<\rho_n<1-\gamma_n\}|.\end{align*}
By definition of $W_\eps$ (recall \eqref{doublewell}, $\min_{[\gamma_n,1-\gamma_n]} W_\eps=\min(\frac{\gamma_n}{\eps}, \gamma_n^2)=\gamma_n^2$ so that using that $\int_{Q_{1,1}} \alpha_n^{2/3}\beta_n^{-2/3} W_\eps(\rho_n) dx\les 1$,
\[
\beta_n^{-1}|\{\gamma_n<\rho_n<1-\gamma_n\}|\le \beta_n^{-1} \gamma_n^{-2} \int_{Q_{1,1}} W_\eps(\rho_n) dx \les \beta^{-1/3}_n\gamma_n^{-2}\alpha_n^{-2/3}.
\]
 Therefore, if we choose $\gamma_n$ such that $\beta_n\gg \gamma_n\gg \alpha_n^{-1/3}\beta_n^{-1/6}$, which is possible since by hypothesis $\alpha_n \beta_n^{7/2}\to +\infty$, we obtain that
\[\lim_{n\to +\infty}\int_{Q_{1,1}} |\chi_n-\beta_n^{-1}(1-\rho_n)| dx=0.\]
Combining this with \eqref{bounddoublewell}, we obtain that 
\[
 \lim_{n\to +\infty}  \int_{Q_{1,1}}|\chi_n-\b_n^{-1}B_3^n| dx=0.
\]
By Fubini, this implies that, after passing to a subsequence in $n$, for a.e. $x_3\in (-1,1)$, if $ \b_n^{-1}B_3^n(\cdot,x_3)\weaklim \mu_{x_3}$ then also $\chi_n(\cdot,x_3)\weaklim  \mu_{x_3}$.
  Moreover, from $\int_{Q_1} \b_n^{-1} B^n_3(x',x_3) dx'=1$ for a.e. $x_3\in(-1,1)$,
 we obtain that $\lim_{n\to +\infty}\int_{Q_1} \chi_n(x',x_3) dx'=1$ for a.e. $x_3\in(-1,1)$. We thus can   use Lemma \ref{lowerbound2d}  to prove that $\mu_{x_3}=\sum_i \p_i \delta_{X_i}$ for some $\p_i>0$ and 
\begin{equation}\label{lowerboundperim}
 \liminf_{n\to +\infty} \int_{Q_{1,1}} \b_n^{1/2} |D'\chi_n| dx_3 \ge 2\sqrt{\pi} \int_{-1}^1 \sum_i \sqrt{\p_i} dx_3\, ,
\end{equation} where we used Fatou lemma.
 This shows (iii). Putting \eqref{estimMM}, \eqref{boundBV}, \eqref{lowerboundperim} and \eqref{liminftrans}  together we find
\[\liminf_{n\to +\infty} \wE(u_n,A_n)\ge \int_{-1}^1 2\sqrt{\pi} C_{\delta,\eps} \sum_i \sqrt{\phi_i} dx_3 +\int_{Q_{1,1}} \lt(\frac{dm}{d\mu}\rt)^2 d\mu,\]
for any $\eps$ and $\delta$. Since 
\[\lim_{\eps\to0}\lim_{\delta\to0}C_{\delta,\eps}= 2\int_0^1 (1-t^2) dt=\frac{4}{3},\]
and $K_*=\frac{8\sqrt{\pi}}{3}$, this concludes the proof of (iv).
\end{proof}

\section{Upper bound}\label{Sec:upperbound}
In this section we construct a recovery sequence for any  sequences $T_n, \alpha_n, \beta_n$
which obey (\ref{eqlbassumptcoeff}) and additionally the condition of quantization  of the total flux
\begin{equation}\label{defKtotal} 
 L_n^2 T_n\a_n \b_n\in 2\pi \N\,,
\end{equation}
where $L_n:=\tilde L \a^{-1/3}_n \b_n^{-1/6}$. Recalling  the 
form of the gradient term in the functional $\wE$ defined in (\ref{eqdefwe}),
to discuss quantization of the flux of individual domains it is convenient to introduce
\begin{equation}\label{defK} 
k_n:=\alpha_n^{1/3}\beta_n^{2/3}T_n\,.
\end{equation}
The global flux quantization (\ref{defKtotal})  then reads $k_n \tilde L^ 2\in 2\pi\N$ and, in the $\tilde L=1$ case we are considering here,
simplifies to $k_n\in 2\pi \N$.
Condition (\ref{eqlbassumptcoeff}) implies $k_n\to +\infty$ and in particular $k_n/\beta_n\to+\infty$, so that the quantization condition becomes
less and less stringent with increasing $n$. Aim of this section is to prove the following:
\begin{proposition}\label{gammalimsup-v2}
Assume $\tilde L=1$, (\ref{eqlbassumptcoeff})  and (\ref{defKtotal}). Then,
 for every $\mu$ with $I(\mu)<+\infty$ and $\mu_1=\mu_{-1}=dx'$ there exist sequences $u_n: Q_{1,1}\to \C$ and $A_n: Q_{1,1}\to \R^3$ such that  
\begin{equation*}
\limsup_{n\to +\infty} \,  \wE(u_n,A_n)\le I(\mu).
\end{equation*}
The fields $\rho_n:=|u_n|^2$ and $B_n:= \nabla \times A_n$ are $Q_1$-periodic and it holds
\[ \b_n^{-1}(1-\rho_n)\weaklim \mu \qquad \textrm{and } \qquad \b_n^{-1} B_n'\weaklim m,\]
where $m$ is the measure such that $I(\mu)=I(\m,m)$. 
\end{proposition}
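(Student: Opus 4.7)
The plan is to reduce by density to a tame class of measures, build the recovery sequence piece by piece on this class, and finally recover the phase and magnetic potential via the quantization condition.

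\textbf{Reductions.} By Theorem \ref{theodens} combined with a diagonal argument, it suffices to treat $\mu\in\MNreg(Q_{1,1})$, whose support is a finite union of segments $\Gamma_i=\{(X_i(x_3),x_3):x_3\in(a_i,b_i)\}$ carrying constant fluxes $\phi_i>0$, with traces at $x_3=\pm 1$ given by a uniform array of $N^2$ Diracs summing to $dx'$. Next, with $k_n:=\a_n^{1/3}\b_n^{2/3}T_n$, Lemma \ref{lemmaquantize} yields $k_n$-quantized regular approximations $\mu^{k_n}$ with $I(\mu^{k_n})\to I(\mu)$ and $\mu^{k_n}\weaklim\mu$. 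The value $k_n$ is forced by the covariant derivative $\nabla'_{\a_n^{1/3}\b_n^{-1/3}T_n A_n}u_n$ in $\wE$: the phase of $u_n$ will wind $k_n\phi_i^{k_n}/(2\pi)\in\N$ times around each normal tube, and the global condition \eqref{defKtotal} ensures compatibility on the periodic cell. Fix therefore $\mu=\mu^{k_n}$ and construct $(u_n,A_n)$ for this measure.

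\textbf{Building blocks.} Three elementary constructions are assembled. On each straight segment $\Gamma_i$, far from branching points and from $x_3=\pm 1$ (Lemma \ref{lemmacurve3}): take $\rho_n=0$ on a disk of area $\b_n\phi_i$ centered on $X_i(x_3)$, surrounded by a one-dimensional Modica--Mortola transition of thickness $\sim\a_n^{-2/3}\b_n^{1/6}$; set $B_3\equiv 1$ in the disk and $B'=B_3\dotX_i$ inside, which automatically enforces $\Div B=0$ and the Meissner condition $\rho B=0$; outside the disk set $\rho_n=1$, $B\equiv 0$. At a triple junction (Lemma \ref{Lembranch3}): pass from one disk of area $\b_n(\phi_1+\phi_2)$ to two disks of areas $\b_n\phi_1,\b_n\phi_2$ via a rectangular region that pinches in the middle (Figure \ref{fig1}), of vertical length small enough that the added surface and transport costs are $o(1)$. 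In a boundary layer of vanishing height near $x_3=\pm 1$ (Proposition \ref{propboundraylayer}): use the self-similar branching construction of \cite{CoOtSer} to reconnect the finitely many normal tubes with the uniform field $B\equiv\b_n e_3$; since the layer height tends to zero, its sub-optimal prefactor contributes only $o(1)$, and the $H^{-1/2}$ boundary penalty is simultaneously taken care of.

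\textbf{Energy evaluation.} On a straight tube, running the Modica--Mortola identity from the proof of Proposition \ref{gammaliminf} in reverse on a horizontal cross-section of radius $r_i=\sqrt{\b_n\phi_i/\pi}$ yields $\tfrac43\cdot 2\sqrt{\pi\phi_i}=K_*\sqrt{\phi_i}$ per unit $x_3$-length from the horizontal gradient and double-well terms, while $\int\b_n^{-1}|B'|^2dx$ evaluates to $\phi_i|\dotX_i|^2$ per unit $x_3$-length because $|B'|=|\dotX_i|$ inside the tube where $B_3=1$ and cross-sectional area is $\b_n\phi_i$. Summing over $i$ and integrating recovers exactly the formula \eqref{Iparticul}. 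The vertical gradient term $\a_n^{-4/3}\b_n^{-2/3}|(\nabla_{\a_n^{1/3}\b_n^{-1/3}T_n A_n}u_n)_3|^2$, evaluated on the chosen profiles, is sub-leading by the assumption $T_n/\a_n\to+\infty$, and the branching cores and boundary layers contribute $o(1)$ by design.

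\textbf{Phase, potential, and the main obstacle.} The final step (Proposition \ref{thirdupperbound}) is to produce $\theta_n$ and $A_n$ with $u_n=\rho_n^{1/2}e^{i\theta_n}$ and $\nabla\times A_n=B_n$. The $k_n$-quantization makes $\theta_n$ single-valued around each tube, while \eqref{defKtotal} ensures global consistency across the periodic faces; together these yield a smooth $A_n$ outside the normal set. The main technical obstacle is the branching region, where the Meissner condition $\rho_n B_n=0$, the horizontal-slice flux quantization, and $\Div B_n=0$ must all be preserved simultaneously as the topology of the normal set changes, without inflating the surface or transport cost. The density result of Theorem \ref{theodens} is exactly tailored to deliver only triple junctions between finitely many straight segments, so that the rectangular interpolation of Lemma \ref{Lembranch3} suffices to handle this junction in a single standardized form.
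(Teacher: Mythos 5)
Your proposal matches the paper's proof: density reduction via Theorem \ref{theodens} with a diagonal argument, $k_n$-quantization via Lemma \ref{lemmaquantize}, assembly from the tube construction (Lemma \ref{lemmacurve3}), the triple-point construction (Lemma \ref{Lembranch3}), the boundary-layer construction (Proposition \ref{propboundraylayer}), and reconstruction of $(u_n,A_n)$ via Proposition \ref{thirdupperbound}. The only detail you elide is that the tube profile uses a cutoff $v_R$ with finite $R$, so the paper in fact runs a double diagonal (first in $N$, then in $R$), but this is a minor bookkeeping point and your argument is otherwise the same as the paper's.
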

The idea of the construction is to use the density result Theorem \ref{theodens} to separate the construction in two regions. In the bulk, the measure will be approximated by a finite
polygonal measure for which the construction is made in Section \ref{secconstrbulk}.
In the boundary layer, we plug in the construction of \cite{CoOtSer}, see Section \ref{secconstrboundary}, 
which is optimal up to a factor. Since the energy in the boundary
layer is small, its suboptimal effect disappears in  the limit.

We shall first construct 
the density $\rho$ and the magnetic field $B$. The appropriate energy is 
$\tilde F_{\alpha,\beta}:=\tilde F_{\alpha,\beta}^{(-T,T)}+\tilde F_{\alpha,\beta}^{\Ext}$, where
\begin{multline}\label{eqdeftildaF}
  \tilde F_{\alpha,\beta}^{(a,b)}(\rho,B):=\int_{Q_1\times(a,b)} \Bigl( \a^{-2/3}\b^{-1/3}\lt|\nabla'\rho^{1/2}\rt|^2+ 
 \a^{-4/3}\b^{-2/3}\lt|\partial_3\rho^{1/2}\rt|^2\\
+  \a^{2/3}\b^{-2/3}\left({B_3} -(1-\rho)\right)^2
+  \b^{-1}|{B'}|^2 \Bigr)dx\,,
\end{multline}
and
\begin{equation}\label{eqdeftildaFext} 
  \tilde F_{\alpha,\beta}^{\Ext}(B):=
  \alpha^{1/3}\beta^{7/6}
  \| \beta^{-1}B_3-1\|^2_{H^{-1/2}(Q_1\times \{\pm1\})}\,.
\end{equation}
Their sum  corresponds to the energy $\wE$, up to a reconstruction of $u$ and $A$
that will be discussed in Section \ref{recoveryGL}. A pair $(\rho,B)$ is admissible for $\tilde F_{\a,\b}^{(a,b)}$ if $\rho$ and $B$ are $Q_1$-periodic, $\div B=0$ and $\int_{Q_1} B_3(x',x_3)dx'=\beta$ for all $x_3$.

We say that a pair $(\rho,B)$ is $k$-quantized if there is a closed set
$\omega\subset Q_{1,1}$ 
such that $B=0$ outside $\omega$, $\rho=0$ in $\omega$, 
and the flux of $ \b^{-1}B_3$ over every connected component
of $\omega\cap\{x_3=z\}$ is an integer multiple of $2\pi /k$, for all $z\in(-1,1)$. 
\subsection{Construction in the bulk}
\label{secconstrbulk}
This section is concerned with the local construction of flux tubes. 
For notational simplicity we present this construction in $\R^2\times(a,b)$, without the periodicity assumption; since $\rho=1$ and $B=0$  outside
a small region, its periodic extension is immediate (see proof of Proposition 
\ref{propubregular} below).
We start from the optimal profile at the boundary of the individual tubes, with the lengths measured in units of the coherence length (recall \eqref{scalesepar2}). For the purpose of the upcoming constructions, we cut the profile at a lengthscale $R\gg1$ towards the normal region. 
\begin{lemma}\label{lemmaVR}
 Consider the functional $\displaystyle G(v):=2\sqrt\pi\int_0^{+\infty} |\dot{v}|^2+(1-v^2)^2 dt$. Then,
 \begin{equation*}
  \inf\left\{ G(v): v(0)=0, \lim_{t\to+\infty} v(t)=1\right\}=K_*=\frac83\sqrt\pi\,.
 \end{equation*}
Furthermore, for  all $R\ge3$ there is  $v_R\in C^\infty(\R;[0,1])$ with $v_R(t)=0$ for $t\le 0$, 
$v_R(t)=1$ for $t\ge R$, $|\dot{v}_R|\le 2$, and, setting
$K_R:=G(v_R)$, one has
\begin{equation*}
 \lim_{R\to\infty} K_R=\frac83\sqrt\pi,
\end{equation*}
and
$\int_0^{+\infty} t \left(|\dot{v}_R|^2+(1-v_R^2)^2\right) dt\lsim 1$.
\end{lemma}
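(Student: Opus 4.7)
The plan is to recognize this as a standard Modica--Mortola / Bogomol'nyi computation and then add a routine cutoff argument at scale $R$.

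For the first assertion I would apply the Bogomol'nyi trick to the integrand: using $|\dot v|^2+(1-v^2)^2\ge 2|\dot v|(1-v^2)$, and noting that along any admissible $v$ one has $\int_0^\infty \dot v\,(1-v^2)\,dt \ge \int_0^1 (1-s^2)\,ds=\tfrac{2}{3}$ by the change of variables $s=v(t)$ (using $v(0)=0$, $v(\infty)=1$; if $v$ is not monotone the inequality becomes strict, so this also shows that any minimizer must be monotone). Multiplying by $2\sqrt\pi$ gives $G(v)\ge \tfrac{8\sqrt\pi}{3}=K_*$. Sharpness is proved by exhibiting the Bogomol'nyi solution $v_*(t):=\tanh t$, which satisfies $\dot v_*=1-v_*^2$, turns both the pointwise and integrated inequalities into equalities, and therefore realizes $G(v_*)=K_*$. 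This handles the infimum.

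For the cutoff $v_R$ I would glue $\tanh$ to the constant $1$ on a scale of order one near $t=R$. Fix a once-and-for-all smooth nondecreasing cutoff $\eta\in C^\infty(\R;[0,1])$ with $\eta=0$ on $(-\infty,0]$ and $\eta=1$ on $[1,\infty)$, and define
\begin{equation*}
v_R(t):=\bigl(1-\eta(t-R+1)\bigr)\tanh t+\eta(t-R+1)\cdot 1 \qquad\text{for } t\ge 0,
\end{equation*}
extended by $0$ for $t\le 0$. A small technical point is that to secure $v_R\equiv 0$ on $(-\infty,0]$ with $C^\infty$ matching and $v_R(0)=0$, one should precompose the right-hand side with a smoothing near $0$ (e.g.\ replace $\tanh t$ by $\tanh t\cdot \eta(t/\delta)$ for a tiny fixed $\delta$, whose contribution to $G$ is $O(\delta)$ and which can be absorbed in the $R\to\infty$ limit). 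The bound $|\dot v_R|\le 2$ then follows from $|\sech^2 t|\le 1$, $|1-\tanh t|\lesssim e^{-2t}$ for $t\ge R-1\gg 1$, and the fixed size of $\|\eta'\|_\infty$; one can even enforce $R\ge 3$ to keep the transition region far from $0$.

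To control the energy I would use the exponential decay $1-\tanh^2 t\lesssim e^{-2t}$ and $\sech^2 t\lesssim e^{-2t}$. On $[0,R-1]$ the integrand of $G(v_R)$ coincides with that of $G(\tanh)$, whose contribution converges to $K_*$ as $R\to\infty$; on $[R-1,R]$ both $|\dot v_R|^2$ and $(1-v_R^2)^2$ are $O(e^{-2R})$, contributing a term that vanishes as $R\to\infty$. Hence $K_R\to K_*$. For the weighted estimate, the same exponential decay yields
\begin{equation*}
\int_0^\infty t\bigl(|\dot v_R|^2+(1-v_R^2)^2\bigr)\,dt\lesssim \int_0^\infty t\,\sech^4 t\,dt+R\,e^{-2R}\lesssim 1,
\end{equation*}
uniformly in $R\ge 3$.

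The only real subtlety is making the gluing and the left-endpoint smoothing simultaneously compatible with $v_R(0)=0$, $v_R\in C^\infty$, and $|\dot v_R|\le 2$; once the cutoff template $\eta$ is chosen with a universal $\|\eta'\|_\infty$, everything else is a direct estimate using exponential decay of $\sech$. I do not anticipate a substantive obstacle beyond this bookkeeping.
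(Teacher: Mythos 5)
Your lower bound via the Bogomol'nyi/Modica--Mortola inequality and the identification of $\tanh$ as the optimal profile coincide exactly with the paper's argument. For the cutoff, however, the paper proceeds differently: it sets $\hat v_R(t):=\tanh(t-1/R)/\tanh(R-2/R)$ on $[1/R,\,R-1/R]$, extended by $0$ and $1$, and then mollifies at scale $1/R$. This shift-and-rescale construction gives $|\dot{\hat v}_R|\le 1/\tanh(R-2/R)<2$ for $R\ge 3$, and mollification cannot increase the Lipschitz constant, so the bound $|\dot v_R|\le 2$ is automatic. Your multiplicative smoothing $\tanh t\cdot\eta(t/\delta)$ near $t=0$ has a derivative $\sech^2 t\cdot\eta(t/\delta)+\tanh t\cdot\eta'(t/\delta)/\delta$, whose sup is of order $1+\sup_{s}\, s\,\eta'(s)$; for any $C^\infty$ cutoff on $[0,1]$ this is strictly above $1$ and, without further care in the choice of $\eta$, need not be $\le 2$. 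The claim that the bound ``follows from the fixed size of $\|\eta'\|_\infty$'' is therefore not quite right as written, though it is fixable (e.g., by choosing $\eta$ so that $\sup_s(\eta(s)+s\eta'(s))$ is close to $1$). A second small issue: you speak of a ``tiny \emph{fixed} $\delta$'' whose $O(\delta)$ contribution is ``absorbed in the $R\to\infty$ limit''; if $\delta$ is genuinely fixed this error does not vanish, so one must take $\delta=\delta(R)\to 0$ (the paper's choice of mollification scale $1/R$ does precisely this). The remaining estimates — $K_R\to K_*$ from exponential decay of $\sech$, and $\int_0^\infty t(|\dot v_R|^2+(1-v_R^2)^2)\,dt\lesssim 1$ — are correct and match the paper.
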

\begin{proof}
The lower bound follows from the usual Modica-Mortola type computation
\begin{equation*}
\int_0^{+\infty} |\dot{v}|^2+(1-v^2)^2dt\ge 2\int_0^{+\infty} (1-v^2)\dot{v}dt=2\int_0^1 (1-s^2) ds = \frac43\,.
\end{equation*}
To prove the upper bound, we recall that $v(t):= \tanh t$ is the minimizer of $G$ under the constraint $v(0)=0$. A direct computation shows that $G(v)=K_*$. We then define for $R>0$,
\begin{equation*}
  \hat v_R(t):=
 \begin{cases}
 0 & \text{ if } t<1/R,\\
  \displaystyle\frac{\tanh (t-1/R)}{\tanh (R-2/R)}&\text{ if } t\in[1/R,R-1/R],\\
  1 & \text{ if } t>R-1/R,
 \end{cases}
\end{equation*}
and $v_R:=\psi_{1/R}\ast \hat v_R$, with $\psi_{1/R}\in C^\infty_c(-1/R,1/R)$ a mollifier.
By construction, this has the desired properties and verifies  $G(v_R)\to G(v)$ as $R\to\infty$.
\end{proof}

We start with the simple case in which the limiting measure comes from a Lipschitz curve; in practice this will be used only for affine or piecewise affine curves.
\begin{lemma}\label{lemmacurve3}
  Let $X: (a,b)\to \R^2$ be a Lipschitz curve, $\phi>0$, 
  $R>0$. We define
\begin{equation*}
 \rho(x):=  v_R^2\left(\frac{|x'-X(x_3)|-\sqrt{\beta\phi/\pi}}{\eta}\right),
\end{equation*}
where  $\eta:=\alpha^{-2/3}\beta^{1/6}\ll 1$ is the coherence length (see \eqref{scalesepar2}) and $v_R$ was introduced in Lemma \ref{lemmaVR}, and define $B$ through
\begin{equation*}
B_3(x):=\chi_{\B'(X(x_3),\sqrt{\beta \phi/\pi} )}(x')\,, \hskip1cm
B'(x):=B_3(x) \dot{X}(x_3) \,.
\end{equation*}
Then, $\rho B=0$ almost everywhere, $\div B=0$, and 
\begin{equation*}
\tilde F^{(a,b)}_{\alpha,\beta}(\rho,B)\le  \left(1+ \frac{C}{(\phi \alpha^{4/3}\beta^{2/3})^{1/2}}
+ \frac{C}{\phi \alpha^{4/3}\beta^{2/3}}\right)
\int_a^b \bigl( K_R\sqrt{\phi}+\phi |\dot{X}|^2\bigr) dx_3.
\end{equation*}
 The constant $C$  is universal. Moreover, if $\mu:=\phi \delta_{X(x_3)}\otimes dx_3$,
\begin{equation}\label{distestim}
 W_2^2(\b^{-1} B_3, \mu)\le |b-a|\frac{\beta \phi^2}{2\pi}.
\end{equation}

\end{lemma}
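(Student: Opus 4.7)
The construction is a cylindrical flux tube of radius $r:=\sqrt{\beta\phi/\pi}$ centered on the curve $X(x_3)$, with a transition layer of width $R\eta$ in which $\rho$ rises from $0$ to $1$ via the optimal Modica-Mortola profile $v_R$ of Lemma \ref{lemmaVR}. The plan is to verify the three pointwise properties directly and then reduce the energy integral to the one-dimensional Modica-Mortola computation on $v_R$ by passing to polar coordinates around $X(x_3)$.

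First I would check that $\rho B = 0$ and $\div B = 0$. For the first, observe that on the support of $B_3$ (hence of $B'$) one has $|x'-X(x_3)|<r$, so $v_R$ is evaluated at a negative argument and thus $\rho=0$. For the second, write $B_3(x',x_3) = \mathbf{1}_{\{|x'-X(x_3)|<r\}}$ and notice that $\partial_3 B_3 = -\dot X(x_3)\cdot \nabla' B_3$ in the distributional sense (the indicator moves rigidly with the curve), so $\partial_3 B_3 + \Divp(B_3\dot X) = -\dot X\cdot\nabla' B_3 + \dot X\cdot\nabla' B_3 = 0$ since $\dot X$ does not depend on $x'$.

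Next I would compute each of the four terms in $\tilde F^{(a,b)}_{\alpha,\beta}$. For the transport term, since $B_3\in\{0,1\}$,
\begin{equation*}
 \int \beta^{-1}|B'|^2\,dx = \int_a^b \beta^{-1}|\dot X|^2 \pi r^2\, dx_3 = \int_a^b \phi|\dot X|^2\, dx_3,
\end{equation*}
which already matches the transport part of the target energy with no error. For the Modica-Mortola-type contribution $\alpha^{-2/3}\beta^{-1/3}|\nabla'\rho^{1/2}|^2 + \alpha^{2/3}\beta^{-2/3}(B_3-(1-\rho))^2$, I would parametrize the transition annulus by $|x'-X(x_3)| = r+\eta s$ with $s\in[0,R]$, integrate in polar coordinates, and use the key scaling identities $\alpha^{-2/3}\beta^{-1/3}\eta^{-1}= \alpha^{2/3}\beta^{-2/3}\eta = \beta^{-1/2}$. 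The leading $2\pi r$ factor combined with $\beta^{-1/2}r = \sqrt{\phi/\pi}$ and the definition $K_R = 2\sqrt\pi\int_0^R(|\dot v_R|^2+(1-v_R^2)^2)\,ds$ produces exactly $K_R\sqrt\phi$ per unit $x_3$. The curvature correction comes from the extra $\eta s$ in $2\pi(r+\eta s)$, and is controlled by $\int_0^R s(|\dot v_R|^2+(1-v_R^2)^2)\,ds \lesssim 1$ from Lemma \ref{lemmaVR}; rescaling gives a relative error of order $(\phi\alpha^{4/3}\beta^{2/3})^{-1/2}$. For the vertical derivative term, I would use $|\partial_3\rho^{1/2}|\le \eta^{-1}|\dot v_R||\dot X|$ and the identity $\alpha^{-4/3}\beta^{-2/3}\eta^{-2}=\beta^{-1}$; the leading $2\pi r$ contribution yields $\phi|\dot X|^2$ multiplied by a factor $\lesssim (\phi\alpha^{4/3}\beta^{2/3})^{-1/2}$ smaller than the transport term, and the curvature correction gives the second error factor.

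Finally, for the Wasserstein estimate I would exhibit the explicit transport plan that, on each slice $x_3$, sends the uniform measure of density $\beta^{-1}$ on $B'(X(x_3),r)$ to the Dirac mass $\phi\,\delta_{X(x_3)}$. Its cost equals $\beta^{-1}\int_0^r 2\pi t\cdot t^2\,dt = \beta^{-1}\pi r^4/2 = \beta\phi^2/(2\pi)$ per slice, and integrating in $x_3$ gives \eqref{distestim}. The main obstacle is purely bookkeeping: keeping track of the two independent error factors while correctly matching the leading constants $K_R$ and $\pi r^2$; all steps are otherwise elementary once the scaling identities for $\eta$ are in hand.
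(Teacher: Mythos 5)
Your proposal is correct and follows essentially the same route as the paper's proof: verify $\rho B=0$ and $\div B=0$ pointwise/distributionally, split the energy into the transport term (computed exactly), the interfacial term (polar coordinates around $X(x_3)$ with $r=\sqrt{\beta\phi/\pi}+\eta s$, leading term $K_R\sqrt{\phi}$, curvature correction of relative size $(\phi\alpha^{4/3}\beta^{2/3})^{-1/2}$), and the vertical-gradient term (leading part of relative size $(\phi\alpha^{4/3}\beta^{2/3})^{-1/2}$, curvature part of relative size $(\phi\alpha^{4/3}\beta^{2/3})^{-1}$), and use the push-forward to the curve for the Wasserstein bound. The only cosmetic difference is that you phrase $\div B=0$ via a distributional identity while the paper tests against a compactly supported $\psi$; both are the same observation.
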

\begin{proof} The condition  $\rho B=0$ follows from $v_R(t)=0$ for $t\le 0$.
To check the divergence condition, pick $\psi\in C^1_c(\R^2\times(a,b))$ and compute
\begin{align*}
 \int_{\R^2\times(a,b)} (B_3\partial_3 \psi  + B'\cdot \nabla'\psi) dx
 &=\int_a^b  \int_{\B'(X(x_3), \sqrt{\beta\phi/\pi})} 
( \partial_3 \psi + \nabla'\psi \cdot \dot{X}(x_3)) dx'dx_3\\
& =\int_a^b  \int_{\B'(0, \sqrt{\beta\phi/\pi})} 
 \frac{d}{dx_3} \psi(X(x_3)+y', x_3) dy' dx_3=0\,.
\end{align*}
We now estimate the energy.  We start from the interfacial energy  at fixed $x_3$ (see \eqref{eqdeftildaF}),
\begin{align*}
E_1(x_3):=\int_{\R^2} \alpha^{-2/3}\beta^{-1/3} |\nabla'\rho^{1/2}|^2 + \alpha^{2/3}\beta^{-2/3}
 (B_3-(1-\rho))^2  dx'\,.
\end{align*}
If $|x'-X(x_3)|<  \sqrt{\beta\phi/\pi}$ then $\rho=0$  and $B_3=1$, whereas
if $|x'-X(x_3)|>\eta R +\sqrt{\beta\phi/\pi}$ then  $\rho=1$, $B_3=0$. 
In the intermediate region, we use $|\nabla'\rho^{1/2}|=|\dot{v}_R|/\eta$.
Passing to polar coordinates
and using $r=|x'-X(x_3)|$ as an integration variable,
\begin{align*}
E_1(x_3)=
 \int_{\sqrt{\beta\phi/\pi}}^{\sqrt{\beta\phi/\pi}+\eta R} \Bigl[
 \frac{\alpha^{-2/3}\beta^{-1/3}}{\eta^2} |\dot{v}_R|^2
 + \alpha^{2/3}\beta^{-2/3} (1-v_R^2)^2\Bigr]
 \lt(\frac{r-\sqrt{\beta\phi/\pi}}\eta\rt)2\pi r dr \,.
\end{align*}
We  change variables according to  $r=\sqrt{\beta\phi/\pi}+s\eta$, insert the definition of $\eta$, and by Lemma \ref{lemmaVR} obtain
\begin{align*}
E_1(x_3)&=
 \int_0^R \beta^{-1/2}
 \left[|\dot{v}_R|^2+ (1-v_R^2)^2\right]2\pi (\sqrt{\beta\phi/\pi}+s\eta) ds \\
& \le  K_R \sqrt\phi + C  \alpha^{-2/3}\beta^{-1/3}
 \le  \sqrt\phi \left(K_R+\frac{C}{\phi^{1/2}\alpha^{2/3}\beta^{1/3}}\right)\,.
\end{align*}

The other contributions to the energy are the cost of transport and the vertical part of the gradient,
\begin{align*}
E_2(x_3):=\int_{\R^2} \alpha^{-4/3}\beta^{-2/3} |\partial_3\rho^{1/2}|^2 + \beta^{-1}|B'|^2 dx'\,.
\end{align*}
By definition of $B'$, we have for the second term, 
\begin{align*}
\int_{\R^2}  \beta^{-1}|B'|^2 dx'= |\dot{X}(x_3)|^2 \phi\,.
\end{align*}
For the first one we use $|\partial_3\rho^{1/2}|\le |\dot{v}_R||\dot{X}|/\eta$ and change variables as above to obtain
\begin{align*}
\int_{\R^2} \alpha^{-4/3}\beta^{-2/3} |\partial_3\rho^{1/2}|^2  dx'
&\lsim \frac{ \alpha^{-4/3}\beta^{-2/3}}{\eta} |\dot{X}(x_3)|^2
\int_0^R (  \sqrt{\beta\phi/\pi}+s\eta)|v_R'|^2 ds\\
&\lsim  \alpha^{-4/3}\beta^{-2/3} |\dot{X}(x_3)|^2 (1 + \sqrt{\beta\phi}/\eta)\\
&=  |\dot{X}(x_3)|^2 \phi \left(
\frac{1}{\phi\alpha^{4/3}\beta^{2/3}}+
\frac{1}{\phi^{1/2}\alpha^{2/3}\beta^{1/3}}
\right)
\,.
\end{align*}
To prove \eqref{distestim} we consider the transport map $T(x',x_3)=(X(x_3),x_3)$ which gives
\[W_2^2(\b^{-1}B_3,\mu)\le \int_a^b \b^{-1} \int_{\B'(X(x_3),\sqrt{\b \phi/\pi})} |x'-X(x_3)|^2 dx'dx_3=|b-a| \frac{\b \phi^2}{2\pi}.\]
\end{proof}

We now turn to the construction around branching points. Since the total length around branching points is small, the construction here does not need to achieve the optimal constant but only the optimal scaling. The idea of the construction is the following.
We first transform disks into squares, then split the square into two rectangles and then retransform each rectangle into a disk. The construction is sketched in Figure \ref{fig1}.
We start with the transformation from a rectangle to a disk.
\begin{figure}
\centerline{\includegraphics[width=6cm]{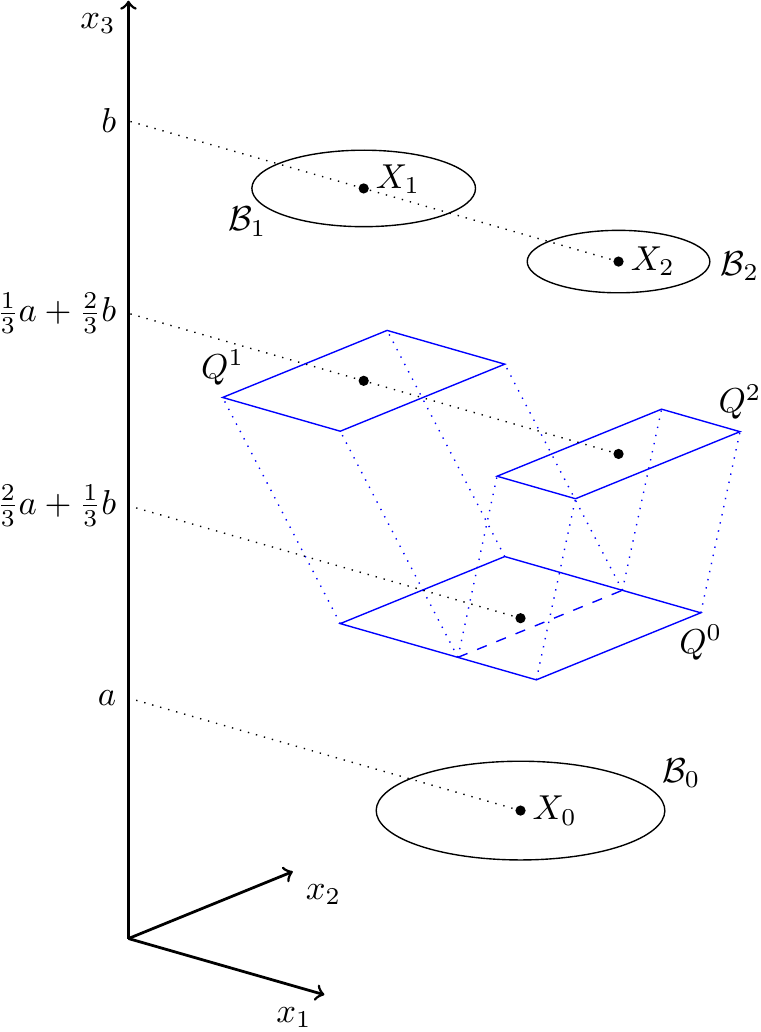}}
\caption{Construction around a branching point}
\label{fig1}
\end{figure}

\begin{lemma}\label{lemmacurveendpoints}
  Let $a\le b\in \R$, $\gamma>0$, $X\in \R^2$, $\phi>0$, 
  and $R\ge 3$. Let then $Q \subset \R^2$ be a rectangle with side lengths $w$ and $h$ centered in $X$, such that $wh=\beta\phi$,
 $w/h+h/w\le \gamma$  and $\a^{-4/3}\b^{-2/3}\le \phi$.  Let as before $\eta:=\alpha^{-2/3}\beta^{1/6}$ be the coherence length.
 
  Then there are $\rho\in L^\infty(\R^2\times [a,b];[0,1])$ and $B\in L^2(\R^2\times [a,b];\R^3)$ such that $\div B=0$, $\rho B=0$,
  with $B=0$ and $\rho=1$ on $(\R^2\setminus \B'(X,r))\times[a,b]$ for some $r\sim \eta R + \sqrt{\beta\phi}$, and
\begin{equation*}
\tilde F^{(a,b)}_{\alpha,\beta}(\rho, B) \lsim 
\sqrt{\varphi} |a-b| R^2+ \frac{r^2}{|a-b|}\varphi R^2,
\end{equation*}
where  the implicit  constants  only depend  on $\gamma$. 
Further, $\rho$ and $B$ satisfy the boundary conditions
\begin{equation*}
 \rho(x',a)=  v_R^2\left(\frac{|x'-X|-\sqrt{\beta\phi/\pi}}{\eta}\right),
\hskip1cm
 \rho(x',b)=  \min\left\{1,\frac{\dist^2(x', Q)}{\eta^2}\right\}\,,
\end{equation*}
and
\begin{equation*}
B_3(x',a)= \chi_{\B'(X, \sqrt{\beta\phi/\pi})}(x')\,,
\hskip3mm\text{ }\hskip3mm
B_3(x',b)= \chi_{Q}(x')\,.
 \end{equation*}
\end{lemma}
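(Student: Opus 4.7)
The plan is to construct $(\rho,B)$ by interpolating a moving planar domain $\Om_t$ from the disk $\B'(X,\sqrt{\b\phi/\pi})$ at $t=0$ to the rectangle $Q$ at $t=1$, taking $B_3$ to be the indicator of $\Om_t$ and $\rho$ to be a smoothed cutoff of its complement, just as in Lemma \ref{lemmacurve3}. Using the hypothesis $w/h+h/w\le\gamma$, I would first build an area-preserving Lipschitz flow $\Psi_t$ with $\Psi_0=\mathrm{Id}$ and $\Psi_1(\B'(X,\sqrt{\b\phi/\pi}))=Q$, such that each $\Om_t:=\Psi_t(\B'(X,\sqrt{\b\phi/\pi}))$ has area $\b\phi$, perimeter $\lesssim\sqrt{\b\phi}$ and is contained in $\B'(X,r)$ for some $r\sim\sqrt{\b\phi}+\eta R$, and whose Eulerian velocity $v_t:=\dot\Psi_t\circ\Psi_t^{-1}$ satisfies $\|v_t\|_\infty\lesssim\sqrt{\b\phi}$, all constants depending only on $\gamma$. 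A concrete realization is to translate the center of $Q$ to $X$, interpolate linearly between the two boundaries in a radial parametrization, and then apply Moser's lemma to restore exact area preservation.

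With $t(x_3):=(x_3-a)/(b-a)$, I then set
\[B_3(x',x_3):=\car{\Om_{t(x_3)}}(x'),\qquad B'(x',x_3):=\frac{1}{b-a}\,v_{t(x_3)}(x')\,B_3(x',x_3),\]
\[\rho(x',x_3):=v_R^2\!\left(\frac{\dist(x',\Om_{t(x_3)})}{\eta}\right),\]
with $v_R$ the cutoff profile of Lemma \ref{lemmaVR}. The continuity equation for the area-preserving flow gives $\div B=0$, and $\rho B=0$ holds because $v_R$ vanishes on $(-\infty,0]$, so $\rho$ is zero on $\Om_{t(x_3)}=\{B_3=1\}$. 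Moreover $B=0$ and $\rho=1$ outside the tube $\B'(X,r)\times[a,b]$, which is the stated support condition.

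The energy decomposes as in Lemma \ref{lemmacurve3}. A tubular change of variables around $\pa\Om_{t(x_3)}$ at fixed $x_3$ reduces the interfacial and double-well contributions to a one-dimensional Modica--Mortola integral against the perimeter of $\Om_t$, producing a per-slice cost $\lesssim K_R\sqrt{\phi}$ and thus a total $\lesssim\sqrt{\phi}\,|a-b|R^2$ (the $R^2$ is a generous overhead, since $K_R$ itself is $O(1)$). The transport term is bounded directly by the velocity estimate,
\[\int_a^b\!\int\b^{-1}|B'|^2\,dx'dx_3\lesssim\frac{|\Om_t|\,\|v\|_\infty^2}{\b(b-a)}\lesssim\frac{\b\phi^2}{b-a}\lesssim\frac{r^2\phi}{|a-b|},\]
and the vertical gradient term is controlled by the pointwise bound $|\pa_3\rho^{1/2}|\le|\dot v_R|\,\|v\|_\infty/(\eta(b-a))$: the same tubular integration then dominates it by the transport term under the hypothesis $\a^{-4/3}\b^{-2/3}\le\phi$.

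The main obstacle I foresee is producing the prescribed trace at $x_3=b$, since the target profile $\min\{1,\dist^2(\cdot,Q)/\eta^2\}$ differs from $v_R^2(\dist/\eta)$. I would resolve this by inserting a thin transition sublayer of height $\sim\eta$ just below $x_3=b$ in which $\Om_t$ is held fixed equal to $Q$ (so $B'=0$ there) and $\rho^{1/2}$ is linearly interpolated between $v_R(\dist/\eta)$ and $\min\{1,\dist/\eta\}$. Since both profiles are bounded with gradient $\lesssim 1/\eta$ on a tubular neighborhood of $\pa Q$ of area $\lesssim\eta\sqrt{\b\phi}$, the extra interfacial cost is $O(\eta\sqrt{\phi})$, which is absorbed in the $R^2$ overhead of the stated estimate. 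The analogous matching at $x_3=a$ is already built into the definition, since $\dist(x',\Om_0)=(|x'-X|-\sqrt{\b\phi/\pi})_+$.
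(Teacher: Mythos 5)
Your construction is structurally the same as the paper's: build an area-preserving deformation from the disk to the rectangle, use its Eulerian velocity to define $B'$, and transport a Modica--Mortola profile along it. The paper makes the deformation explicit in polar coordinates (Lemma \ref{lemmasquarecircle}), which directly yields quantitative bilipschitz bounds $|\nabla'u|,|\nabla'U|\lesssim1$ and $|\partial_3u|,|\partial_3U|\lesssim h/|z_+-z_-|$; your appeal to Moser's lemma to restore area preservation is plausible, but you would still need to verify those quantitative velocity and Lipschitz bounds with constants depending only on $\gamma$, which Moser's theorem does not give for free. That is a point of care rather than a gap.

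The genuine gap is in your trace-matching sublayer at $x_3=b$. You only estimate its \emph{interfacial} cost, $O(\eta\sqrt\phi)$ per unit height, and overlook the $\partial_3\rho^{1/2}$ contribution. If the sublayer has height $\tau$, linear interpolation of $\rho^{1/2}$ gives $|\partial_3\rho^{1/2}|\lesssim1/\tau$ on a tubular set of area $\lesssim\eta\sqrt{\beta\phi}R^2$, so the vertical-gradient cost is
\[
\alpha^{-4/3}\beta^{-2/3}\,\tau^{-2}\cdot\tau\cdot\eta\sqrt{\beta\phi}\,R^2
\;=\;\frac{\alpha^{-2}\sqrt\phi}{\tau}\,R^2.
\]
With your choice $\tau\sim\eta=\alpha^{-2/3}\beta^{1/6}$ this becomes $\alpha^{-4/3}\beta^{-1/6}\sqrt\phi\,R^2$, which is \emph{not} dominated by $\sqrt\phi\,|a-b|R^2+\frac{r^2}{|a-b|}\phi R^2$ under the stated hypotheses: using only $\alpha^{-4/3}\beta^{-2/3}\le\phi$ the comparison reduces to $\phi\lesssim1$ (or equivalently $\alpha^2\beta\gtrsim1$), neither of which is assumed in this lemma, even though both hold in the eventual applications. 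The paper avoids this by taking the interpolation region to have \emph{macroscopic} height $|b-a|/2$: then $|\partial_3\rho^{1/2}|\lesssim1/|a-b|$ and the cost is $\frac{\alpha^{-2}\sqrt\phi}{|a-b|}R^2$, which is controlled by the transport contribution $\frac{\beta\phi^2}{|a-b|}R^2\le\frac{r^2\phi}{|a-b|}R^2$ precisely when $\alpha^{-2}\lesssim\beta\phi^{3/2}$, i.e.\ the hypothesis $\alpha^{-4/3}\beta^{-2/3}\le\phi$. Replacing your $\eta$-thin layer by one of height $\sim|a-b|$ (with the flow run over the remaining portion) closes the gap and matches the paper's argument.
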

We note that our assumptions on the parameters $w, h, \phi,$ just mean $w\sim h\sim \sqrt{\beta \phi}\ge \eta$. That is, the thread diameter is large compared to the coherence length. 
Note that $r\sim \eta R+\sqrt{\beta \phi}$ behaves like the maximum  between the thread diameter $w\sim h$ and the cut-off scale $\eta R$. 
The proof of Lemma \ref{lemmacurveendpoints} is based on an explicit construction for a bilipschitz bijection with unit determinant  
that  transforms  a rectangle into a circle, which we first present. 
\begin{figure}
\centerline{\includegraphics[width=5cm]{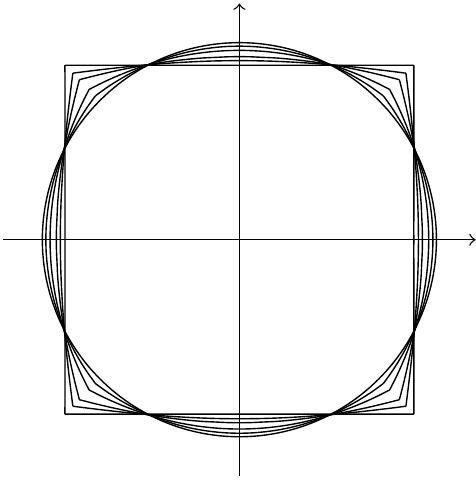}}
\caption{The construction in Lemma \ref{lemmasquarecircle}
transforms a rectangle into a circle, keeping $\det \nabla'u=1$.
Each curve corresponds to a different value of $x_3$,
and plots the solution of $\hat r(r,\theta)=t$, which
corresponds to $r(\theta)=t/[\lambda(x_3)\cos(\theta x_3)]$ for $\theta\in (-\pi/4,\pi/4)$.}
\label{fig-square}
\end{figure}
\begin{lemma}\label{lemmasquarecircle}
Assume that  $z_-,z_+,h,w>0$, $X\in\R^2$ are given, $z_-< z_+$. Then there is $u:\R^2\times[z_-,z_+]\to\R^2$ such that
 \begin{equation*}
  u(x',z_-)=x'\,,\hskip5mm
  u(X+(-\frac12w,\frac12w)\times(-\frac12h,\frac12h),z_+)=\B'(X,\sqrt{hw/\pi})\,,\hskip5mm
  \det \nabla' u =1 \text{ a.e.}\,.
 \end{equation*}
 The function $x\mapsto (u(x),x_3)$ is bilipschitz, its inverse is of 
 the form $y\mapsto (U(y),y_3)$, and $u(X,x_3)=X$ for all $x_3$. 
 If additionally $h/w+w/h\le \gamma$, then the bounds
 $|\nabla'u|+|\nabla'U|\lsim 1$, $|\partial_3 u|+|\partial_3 U|\lsim h/|z_+-z_-|$
 hold, with constants which only depend on $\gamma$. 
\end{lemma}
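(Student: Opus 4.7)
The idea is to build $u$ explicitly in polar coordinates via a four-sector decomposition of the rectangle that matches a four-quadrant decomposition of the target disk $\mathbb{B}'(X,\sqrt{wh/\pi})$, both of area $wh$. Taking $X=0$ by translation and setting $\theta_0:=\arctan(h/w)\in(0,\pi/2)$, the two diagonals of the rectangle partition it into four triangular sectors; the right-hand sector ($\theta\in(-\theta_0,\theta_0)$) has area $wh/4$, precisely matching the area of the disk quadrant $\phi\in(-\pi/4,\pi/4)$, and the other three pairs match analogously via $\tan\theta_0\,\tan(\pi/2-\theta_0)=1$. I will construct the map sector by sector, and then glue.

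On the right sector, I would introduce the ``fan'' coordinates $(x_1,u):=(r\cos\theta,\tan\theta/\tan\theta_0)\in(0,w/2)\times(-1,1)$, under which $dx_1\,dx_2=x_1\tan\theta_0\,dx_1\,du$. Then, with the interpolation parameter $s(x_3):=(x_3-z_-)/(z_+-z_-)\in[0,1]$, I would set
\begin{equation*}
\phi_s(u):=(1-s)\arctan(u\tan\theta_0)+s\tfrac{\pi}{4}u,\qquad r_s(x_1,u):=x_1\sqrt{\tan\theta_0/\phi_s'(u)},
\end{equation*}
and define $u(x',x_3):=(r_s\cos\phi_s,r_s\sin\phi_s)$ with $s=s(x_3)$. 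The identity $r_s\,\partial_{x_1}r_s\cdot\phi_s'(u)=x_1\tan\theta_0$ is built into the formula, so a standard change-of-variables computation gives $\det\nabla'u\equiv 1$. At $s=0$ one recovers the identity ($r_0=r$, $\phi_0=\theta$); at $s=1$, $r_1=2x_1\sqrt{\tan\theta_0/\pi}$ and $\phi_1=(\pi/4)u$, sending the right sector bijectively to the right disk quadrant. The other three sectors are handled by $\pi/2$-rotated copies of the same formula using the supplementary angle $\theta_0':=\pi/2-\theta_0=\arctan(w/h)$.

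The hardest step, in my view, will be verifying continuous gluing across the four diagonals of the rectangle. On the right/top diagonal this reduces to $\theta_0+\theta_0'=\pi/2$ for the angular match, and to $\sin\theta_0\cos\theta_0=\sin\theta_0'\cos\theta_0'$ for the radial match (the latter holds because $2\theta_0'=\pi-2\theta_0$); the other three diagonals are symmetric. Once continuity is established, the quantitative bounds become routine: the hypothesis $h/w+w/h\le\gamma$ forces $\tan\theta_0,\cot\theta_0\sim_\gamma 1$, so $\phi_s'(u)$ is bounded above and below by constants depending only on $\gamma$, uniformly in $s\in[0,1]$ and $u\in(-1,1)$. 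Differentiating the explicit formulas and using the chain rule back to Cartesian coordinates yields $|\nabla'u|\lsim_\gamma 1$; since $\det\nabla'u=1$ and the space is two-dimensional, the cofactor formula gives $|\nabla'U|\lsim_\gamma 1$ for the inverse as well. For the vertical derivatives, $\partial_3 s=(z_+-z_-)^{-1}$ together with $|\partial_s r_s|\lsim x_1\lsim h$, $|\partial_s\phi_s|\lsim 1$, and $r_s\lsim\sqrt{wh}\lsim h$ gives $|\partial_3 u|+|\partial_3 U|\lsim h/(z_+-z_-)$, as required.
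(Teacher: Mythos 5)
Your construction is correct and takes a genuinely different route from the paper's. The paper first reduces to the case of a square ($h=w=\sqrt\pi$) by composing at each $x_3$ with the time-dependent shear $\diag(g(x_3),1/g(x_3))$, then works in the natural $\tfrac{1}{8}$-sector of the square with the ansatz $\hat r = r\lambda(x_3)\cos(x_3\theta)$, $\hat\theta = \tan(\theta x_3)/(x_3\lambda^2(x_3))$, and extends by reflection. You instead handle the rectangle directly, cutting it by its two diagonals into four triangular sectors (a $4$-fold, not $8$-fold, decomposition — the natural one for a non-square rectangle), and in the "fan" coordinates $(x_1,u)$ you encode area preservation through the identity $r_s\,\partial_{x_1}r_s\,\phi_s'(u)=x_1\tan\theta_0$, which is built in by the choice $r_s=x_1\sqrt{\tan\theta_0/\phi_s'(u)}$. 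The angular interpolant $\phi_s(u)=(1-s)\arctan(u\tan\theta_0)+s\tfrac\pi4 u$ differs from the paper's $\hat\theta$ even in the square case $\theta_0=\pi/4$, so the intermediate maps are not the same. Your approach buys a one-step construction with no separate square-reduction; the paper's ansatz is perhaps simpler to write down once the reduction to the square is made. The gluing check you flag as "hardest" is indeed where the $4$-sector argument has to earn its keep: I verified that the two identities you cite, $\theta_0+\theta_0'=\pi/2$ (angular match) and $\sin\theta_0\cos\theta_0=\sin\theta_0'\cos\theta_0'$ (radial match, since also $\phi_s'(1)=(1-s)\sin\theta_0\cos\theta_0+s\pi/4$ is the same seen from either sector), do give continuity across each diagonal, and the quantitative bounds you sketch ($\phi_s'$ bounded above and below by $\gamma$-dependent constants, cofactor argument for $\nabla'U$, chain rule in $s$ for the vertical derivative) hold as stated.
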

\begin{proof}
By scaling and translation we may assume that $hw=\pi$, $z_+=1$, $z_-=0$, $X=0$.
We can further assume $h=w=\sqrt\pi$, as the general case is obtained by taking the composition at each $x_3$ 
with the linear map $\diag(g(x_3), 1/g(x_3))$, where $g(x_3):=\sqrt{h/w}(1-x_3)+x_3$.
 We work in polar coordinates, and construct functions $\hat r$, $\hat\theta$ of $r$, $\theta$ and $x_3$ such that 
 \begin{equation*}
  u(r\cos\theta,r\sin\theta,x_3)=(\hat r\cos\hat\theta, \hat r\sin\hat\theta),
 \end{equation*}
 with $r\ge 0$ and $0\le\theta\le \pi/4$, and then extend by symmetry. 
 We set
 \begin{equation*}
  \hat \theta = f(\theta, x_3) \,,\hskip5mm \hat r = r \lambda(x_3) \cos(x_3\theta),
 \end{equation*}
 where $\lambda$ and $f$ are two functions still to be determined (see Figure \ref{fig-square}).
 The extension of the $\frac{1}{8}-$sectors by reflection is feasible provided that
 $f(0,x_3)=0$ and $f(\pi/4,x_3)=\pi/4$ for all $x_3$, the boundary data 
 are attained provided that 
 $f(\theta,0)=\theta$, $\lambda(0)=1$, $\lambda(1)=2/\sqrt\pi$. The latter ensures that indeed the straight segment $r \cos \theta=\frac{w}{2}=\frac{\sqrt{\pi}}{2}$ is mapped into the unit circle $\hat{r}=1$.
The determinant condition is equivalent to
\begin{equation*}
 1=\frac{\hat r}{r}\partial_r \hat{r}  \partial_\theta f= \lambda^2(x_3)\cos^2(x_3\theta) \partial_\theta f(\theta, x_3),
\end{equation*}
which can be solved (using $f(0,x_3)=0$) to give
\begin{equation*}
 f(\theta,x_3):=\frac{1}{x_3\lambda^2(x_3)}\tan (\theta x_3)\,,
\end{equation*}
smoothly extended  to $f(\theta,0)=\theta/\lambda^2(0)$.
The condition $f(\pi/4,x_3)=\pi/4$ determines $\lambda$,
\begin{equation*}
 \lambda(x_3):=\left(\frac{\tan (\pi x_3/4)}{\pi x_3/4}\right)^{1/2},
\end{equation*}
which obeys $\lambda(1)=2/\sqrt\pi$ and smoothly extends to $\lambda(0)=1$. Clearly $\lambda\sim 1$ so that $\partial_\theta f\sim 1$. This implies that the change of variables
defines a smooth deformation of the $\frac{1}{8}-$sector which smoothly depends on $x_3\in[0,1]$.
\end{proof}
\begin{proof}[Proof of Lemma \ref{lemmacurveendpoints}]
After a rotation and a translation, we may assume that $X=0$, $a=0$ and $b>0$; so that $Q=(-w/2,w/2)\times (-h/2,h/2)$.
 We treat two regions separately.

In the lower region $\R^2\times [0,b/2]$, we interpolate between 
$\B'_0:=\B'(0, \sqrt{\beta\phi/\pi})$ and $Q$.
To do this, let  $u$ and $U$ be the functions from Lemma \ref{lemmasquarecircle}, using it for the rectangle $Q$ and $z_-=0$, $z_+=b/2$.
The magnetic field is defined by
 \begin{equation*}
B_3(x):=\chi_{\B'_0}(u(x))\,,\hskip1cm
  B'(x):=\chi_{\B'_0}(u(x))\partial_3 U(u(x),x_3)\hskip3mm \text{ for } x_3\in[0,b/2]\,.
 \end{equation*}
The density is defined by
 \begin{equation*}
  \rho(x):=v_R^2\left(\frac{|u(x)|-\sqrt{\beta\phi/\pi}}{\eta}\right) \hskip3mm \text{ for } x_3\in[0,b/2]\,.
 \end{equation*}
  The condition $\rho B=0$ follows immediately, as well as the boundary data at $x_3=0$. 
  Since $B=0$ and $\rho=1$ whenever $|u(x)|\ge \eta R + \sqrt{\beta\phi/\pi}$
and since $|u(x)|\ge |x'|/\|\nabla'U\|_\infty$, 
we have $B=0$ and $\rho=1$ whenever $|x'| \ge \|\nabla'U\|_\infty  (\eta R + \sqrt{\beta\phi})$.

  In order to check $\div B=0$, we fix $\psi\in C^1_c(\B'_r\times(0,b/2))$. Performing then
 a change of variables at each $x_3$ gives, since $\det \nabla'U=1$,
\begin{alignat*}1
 \int_{\R^2\times(0,b/2)} \left[\partial_3 \psi B_3 + \nabla'\psi\cdot B' \right]dx &=
 \int_{\R^2\times(0,b/2)} (\partial_3\psi B_3 + \nabla' \psi \cdot  B')(U(y),y_3) dy
 \\
 &= \int_{\R^2\times(0,b/2)} \chi_{\B'_0} \left[\partial_3 \psi( U(y),y_3)  + \partial_3 U(y)\cdot \nabla'\psi( U(y),y_3))   \right] dy \\
 &= \int_{\B'_0} \int_{(0,b/2)} \frac{d}{dx_3} (\psi(U(y),y_3)) dy_3 dy'=0\,.
\end{alignat*}
By the properties of $u$ we obtain $B_3(x',b/2)=\chi_Q(x')$.

In the upper region $\R^2\times[b/2,b]$, we keep $B(x',x_3):=e_3 \chi_Q(x')$ concentrated on $Q$ and  linearly interpolate the profile between $\rho^{1/2}(x',b/2)$ and the profile $\rho^{1/2}(x',b)$
given in the statement:
\begin{equation*}
\rho^{1/2}(x):=\frac{2}{b}\lt[
\rho^{1/2}(x',b/2) (b-x_3)+ \rho^{1/2}(x',b)(x_3-b/2)\rt]
 \hskip3mm \text{ for } x_3\in(b/2,b)\,.
\end{equation*}
It is immediate to check that  $B_3$ and $\rho$ match continuously at all interfaces, $\rho B=0$ and $\div B=0$. This concludes the construction.

We estimate the energy similarly to the  proof of Lemma \ref{lemmacurve3}. Lemma \ref{lemmasquarecircle} gives $|\nabla'u|\lsim 1$, 
$|\partial_3 u|\lsim h/b$, $|\partial_3 U|\lsim h/b$,
with constants depending only on $\gamma$. This yields $|B'|\les h/b$. Furthermore,
 by Lemma \ref{lemmaVR} 
$|v_R|\le1$ and $|\dot{v}_R|\le 2$, so that
$\rho\le 1$, $|\nabla'\rho^{1/2}|\lsim 1/\eta $ and $|\partial_3\rho^{1/2}|\lsim h/(\eta b)$.

We start with the region $x_3\in [0,b/2]$. 
All integrals in $x'$ can be restricted to the set 
\begin{equation*}
 \Omega_{x_3}:=\{x': \sqrt{
 \beta\phi/\pi}\le |u(x',x_3)|\le \eta R+\sqrt{\beta\phi/\pi}\}.
 \end{equation*}
Since $\det \nabla u'=1$, 
we have 
\begin{equation*}|\Omega_{x_3}|=|\B'(0,\eta R+\sqrt{\beta\phi/\pi})\setminus \B'_0|
= \pi \eta^2R^2+2\sqrt{\beta\phi\pi}\eta R\lsim \eta \sqrt{\beta\phi} R^2,
\end{equation*} (in the last step we used the assumption on $\phi$ and $R\ge 3$). 
Therefore
\begin{align*}
   \tilde F_{\alpha,\beta}^{(0,b/2)}(\rho,B)
   &\lsim \int_{0}^{b/2}\int_{\Omega_{x_3}} \left( \frac{\alpha^{-2/3}\beta^{-1/3}}{\eta^2}
   + \frac{h^2\alpha^{-4/3}\beta^{-2/3}}{\eta^2 b^2} + \alpha^{2/3}\beta^{-2/3} + \frac{h^2}{\beta b^2}\right) dx' dx_3\\
   &\lsim \int_{0}^{b/2}|\Omega_{x_3}| dx_3 \left( \alpha^{2/3}\beta^{-2/3} +  \frac{h^2}{\beta b^2}\right) \\
   &\lsim \left( b + \frac{h^2}b \alpha^{-2/3}\beta^{-1/3}\right) R^2 \sqrt{\phi}\,\\
   &\lsim \sqrt{\phi} b R^2 +\frac{r^2}{b} \phi R^2,
   \end{align*}
where in the last line we have used that $h^2\les r^2$ and $\a^{-2/3}\b^{-1/3}\les \sqrt{\phi}$. The region $(b/2,b)$ is simpler, as the $|B'|^2$ term does not appear, the others
are the same with the exception of $|\partial_3 \rho^{1/2}|\les 1/b$, which is smaller by the factor $\eta/h\les 1$. 
\end{proof}

Using this building block we can finally produce the construction that will be used at branching points.
\begin{lemma}\label{Lembranch3}
  Let  $a<b\in \R$, $\gamma>0$, $\ell>0$, $R\ge 3$, $X_i\in\R^2$ and $\varphi_i>0$ for $i=0,1,2$ with $\phi_0=\phi_1+\phi_2$,
 $\phi_0/\phi_1+\phi_0/\phi_2\le \gamma$ and $\sqrt{\beta \phi_0}\ge \eta$,
where as above $\eta:=\alpha^{-2/3}\beta^{1/6}$.
 
 Then,
 if  $|X_0-X_1|,|X_0-X_2|\le \ell/4$,
 $\sqrt{\beta \varphi_0}\ll |X_1-X_2|$ and $ \eta R\ll \ell$,
  then there are $\rho\in L^\infty(\R^2\times [a,b];[0,1])$ and $B\in L^2(\R^2\times [a,b];\R^3)$ such that $\div B=0$, $\rho B=0$,
 $B=0$ and $\rho=1$ on $(\R^2\setminus \B'(X_0,3\ell/4))\times[a,b]$, 
\begin{equation*}
\tilde F^{(a,b)}_{\alpha,\beta}(\rho, B) \lsim 
\sqrt{\varphi_0} |a-b| R^2+ \frac{\ell^2}{|a-b|}\varphi_0R^2,
\end{equation*}
where the implicit constants only depend on $\gamma$, and which satisfy the boundary conditions
\begin{equation*}
\rho(x',a)=v_R^2\left(\frac{|x'-X_0|-\sqrt{\beta\phi_0/\pi}}{\eta}\right)
\hskip1mm\text{  }\hskip1mm
\rho(x',b)=\min_{i=1,2}v_R^2\left(\frac{|x'-X_i|-\sqrt{\beta\phi_i/\pi}}{\eta}\right)\,,
 \end{equation*}
and
\begin{equation*}
B_3(x',a)= \chi_{\B'(X_0, \sqrt{\beta\phi_0/\pi})}(x')\,,
\hskip3mm\text{ }\hskip3mm
B_3(x',b)=\sum_{i=1,2} \chi_{\B'(X_i, \sqrt{\beta\phi_i/\pi})}(x')\,.
 \end{equation*}
 \end{lemma}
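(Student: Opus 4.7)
The plan is to realize the sketch of Figure~\ref{fig1} in three consecutive stages covering subintervals $[a,a_1]$, $[a_1,a_2]$, $[a_2,b]$ of comparable length $\sim|b-a|/3$: first turn the incoming disk of flux $\phi_0$ into a square tube, then split the square into two adjacent rectangles and rigidly translate them along affine paths to the positions $X_1$ and $X_2$, and finally turn each rectangle back into a disk. Set $s_0:=\sqrt{\beta\phi_0}$, let $Q_0:=X_0+(-s_0/2,s_0/2)^2$ be the square of area $\beta\phi_0$ centered at $X_0$, and partition $Q_0$ into two rectangles $Q_0^{(i)}$ of sizes $w_i\times s_0$ with $w_i:=s_0\phi_i/\phi_0$; these satisfy $|Q_0^{(i)}|=\beta\phi_i$ and their aspect ratio is bounded by $1+\gamma$, so that Lemma~\ref{lemmacurveendpoints} applies both to $Q_0$ and to each $Q_0^{(i)}$.

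On $[a,a_1]$ I apply Lemma~\ref{lemmacurveendpoints} (reflected in $x_3$ so that the disk sits at $x_3=a$) to interpolate from $\B'(X_0,\sqrt{\beta\phi_0/\pi})$ with the $v_R^2$ profile at $x_3=a$ to the square tube $Q_0$ with the $\min\{1,\dist^2(x',Q_0)/\eta^2\}$ profile at $x_3=a_1$. On the middle interval, letting $Y_i^0$ be the center of $Q_0^{(i)}$ and $Y_i:[a_1,a_2]\to\R^2$ the affine path with $Y_i(a_1)=Y_i^0$ and $Y_i(a_2)=X_i$, I translate each rectangle rigidly, $Q_i(x_3):=Y_i(x_3)+(Q_0^{(i)}-Y_i^0)$, and set
\[
B_3:=\sum_i\chi_{Q_i(x_3)},\quad B':=\sum_i\chi_{Q_i(x_3)}\dot Y_i(x_3),\quad \rho:=\min_i\min\{1,\dist^2(x',Q_i(x_3))/\eta^2\},
\]
mollified on the $\eta$-scale near $\partial Q_i$. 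On $[a_2,b]$ I apply Lemma~\ref{lemmacurveendpoints} once per branch ($x_3$-reflected) to each rectangle $Q_i(a_2)$ centered at $X_i$ and glue the two single-branch profiles by pointwise minimum of $\rho$ and sum of $B$; this is admissible because the hypotheses $\sqrt{\beta\phi_0}\ll|X_1-X_2|$ and $\eta R\ll\ell$ keep the $\eta R$-tubular neighborhoods of the two branches disjoint.

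Seam matching at $x_3\in\{a_1,a_2\}$ is automatic: $Q_1(a_1)\cup Q_2(a_1)=Q_0$ yields both $\chi_{Q_0}=\sum_i\chi_{Q_i(a_1)}$ and $\dist(x',Q_0)=\min_i\dist(x',Q_i(a_1))$ outside $Q_0$, while at $a_2$ the two branches are already separated. Disjointness of the moving rectangles in the middle slab gives $\rho B=0$, and $\div B=0$ follows there from the change of variables $x'\mapsto x'-Y_i(x_3)$ exactly as in Lemma~\ref{lemmacurve3}, and on the end intervals from Lemma~\ref{lemmacurveendpoints}; the support restriction is clear since $|X_0-X_i|+s_0/2+\eta R<3\ell/4$. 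For the energy, stages one and three each contribute $\lesssim \sqrt{\phi_0}|b-a|R^2 + r_0^2\phi_0 R^2/|b-a|$ with $r_0\sim\eta R+s_0\ll\ell$ by Lemma~\ref{lemmacurveendpoints}, while the middle stage contributes $\lesssim \sqrt{\phi_0}|b-a|R^2 + \phi_0\ell^2/|b-a|$ by a rectangular analog of Lemma~\ref{lemmacurve3}: the perimeter $\sim s_0$ of each rectangle gives an interfacial cost $\sim\sqrt{\phi_0}$ per slice (with an extra $R^2$ from the $\eta R$-scale layer as in the proof of Lemma~\ref{lemmacurveendpoints}), while $|\dot Y_i|\sim\ell/|b-a|$ produces a transport cost $\sum_i\phi_i|\dot Y_i|^2\sim\phi_0\ell^2/|b-a|^2$ per slice. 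Using $r_0\ll\ell$ absorbs the $r_0^2\phi_0R^2/|b-a|$ term into the announced $\ell^2\phi_0R^2/|b-a|$ term. The main obstacle I anticipate is precisely the rectangular analog of Lemma~\ref{lemmacurve3} used in the middle stage: one must verify that the superposition of two rigidly translating rectangular tubes yields an admissible pair $(\rho,B)$ with the correct divergence, that the pointwise-minimum recipe for $\rho$ does not introduce spurious interfaces between the two tubes thanks to the separation hypothesis, and that the slice-by-slice interfacial cost indeed picks up an $R^2$ prefactor from the $\eta R$-scale boundary layer.
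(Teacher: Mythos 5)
Your construction is essentially identical to the paper's proof: the same three-stage pipeline (disk $\to$ square via Lemma~\ref{lemmacurveendpoints}, split the square into two adjacent sub-rectangles and translate them rigidly along affine paths, then rectangles $\to$ disks via Lemma~\ref{lemmacurveendpoints} again), the same choice of rectangle dimensions $w_i=s_0\phi_i/\phi_0$, $h=s_0$, and the same explicit middle-stage profiles $\rho^{1/2}=\min\{1,\eta^{-1}\dist(\cdot,\cup_i\tilde Q^i(x_3))\}$, $B=\sum_i(\dot Y_i,1)\chi_{\tilde Q^i(x_3)}$. The "main obstacle" you flag is exactly what the paper checks (the sub-rectangles remain disjoint during translation because, with $X_2-X_1$ aligned with $e_1$, one has $(y_2-y_1)\cdot e_1>0$ by the hypothesis $\sqrt{\beta\phi_0}\ll|X_1-X_2|$); note also that the middle stage does not in fact incur the $R^2$ factor you anticipate, since its profile has an $\eta$-scale, not $\eta R$-scale, layer — the $R^2$ in the final bound comes entirely from the two Lemma~\ref{lemmacurveendpoints} stages, so your bound is a harmless overestimate.
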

Note that our assumptions on the parameters $\phi_0, \phi_1, \phi_2$ and $h$ just mean that $\eta\les \sqrt{\beta \phi_i}\ll |X_1-X_2|\les \ell$ and $\eta R\ll \ell$. That is, the thread diameter is at least as large as
 the coherence
length $\eta$ but small compared to the distance between the threads. Likewise, $\ell$ is large compared to the cut-off scale $\eta R$. 
\begin{proof}
After a translation and a rotation we may assume that $a=0$, $b>0$, $X_0=0$, $X_2-X_1=\zeta e_1$, with $\zeta>0$.
Let 
\[w_0:=h:=(\b \phi_0)^{1/2}, \qquad w_1:= \frac{\phi_1}{\phi_0} w_0 \qquad \textrm{ and } \qquad w_2:= \frac{\phi_2}{\phi_0}w_0.\]
Let then $Q^i:=X_i+[-w_i/2,w_i/2]\times[-h/2,h/2]$, for $i=0,1,2$. Notice that since $|X_i|\le \ell/4$ and $w_i\le w_0=h\ll \ell$, $Q^i\subset \B'(3\ell/8)$.\\ 
We  divide the interval $(0,b)$ in three parts (see again Figure \ref{fig1}). In $(0,b/3)$, 
we apply Lemma \ref{lemmacurveendpoints} to transform $\B'(\sqrt{\beta\phi_0/\pi})$ into $Q^0$ (in particular we have $\rho=1$ on $\lt(\R^{2}\backslash \B'(\frac{3\ell}{4})\rt)\times (0,b/3)$). In $(b/3,2b/3)$, we connect $Q^0$ to $Q^1$ and $Q^2$ by an explicit construction (see below). 
Finally, in $(2b/3,b)$ we apply again 
Lemma \ref{lemmacurveendpoints} to transform $Q^1$ and $Q^2$ back into  $\B'(X_1, \sqrt{\beta \phi_1/\pi})$ and $\B'(X_2, \sqrt{\beta \phi_2/\pi})$. Notice that in $(2b/3,b)$,
 if $\rho(x',x_3)\ne 1$ then by Lemma \ref{lemmacurveendpoints} and our hypothesis on the parameters, necessarily
\[
|x'|\le \max_i |X_i|+O(\eta R+\sqrt{\beta\phi_0})\le \frac{\ell}{4} +o(\ell).
\]
Thus,  $\rho=1$ on $\lt(\R^{2}\backslash \B'(\frac{3\ell}{4})\rt)\times (2b/3,b)$.\\
\begin{figure}\centering\resizebox{7cm}{!}{
 \input{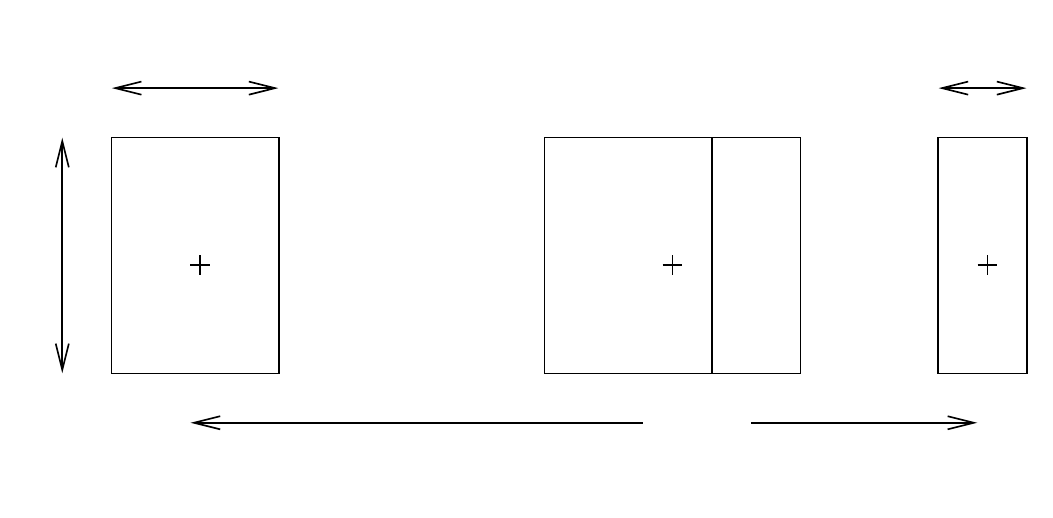_t}
}
\caption{The construction in $(b/3,2b/3)$}\label{figseparsquares}
\end{figure}
It only remains to discuss the construction in the central region.
 Let $y_1:=X_1+w_2e_1/2$, $y_2:=X_2-w_1e_1/2$  and, for $i=1,2$,  $\tilde Q^i:=Q^i-y_i$, 
 so that up to a null set, $Q^0$ is the disjoint union of $\tilde Q^1$ and $\tilde Q^2$ (see Figure \ref{figseparsquares}). 
 Since $y_2-y_1=X_2-X_1-w_0e_1/2$,
 and by assumption $w_0=(\beta\phi_0)^{1/2} \ll|X_1-X_2|$, 
 we have  $(y_2-y_1)\cdot e_1>0$.\\
For $i=1,2$ and $x_3\in(b/3,2b/3)$ we set $\tilde{Q}^i(x_3):= \tilde{Q}_i+\frac{x_3-b/3}{b/3} y_i$. Since $(y_2-y_1)\cdot e_1>0$,  we have  $\tilde{Q}^1(x_3)\cap\tilde{Q}^2(x_3)=\emptyset$ for all $x_3$.  Furthermore, since $Q^i\subset \B'(3\ell/8)$, 
also $\tilde{Q}^i(x_3)\subset \B'(3\ell/8)$ for $x_3\in(b/3,2b/3)$. We finally let 
\begin{equation*}
\rho^{1/2}(x):=\min \lt\{ 1,\eta^{-1}\dist(x',\tilde{Q}^1(x_3)\cup \tilde{Q}^2(x_3))\rt\}\hskip3mm \text{ for } x_3\in[b/3,2b/3],
\end{equation*}
and correspondingly
 \begin{equation*}
(B',B_3)(x):=
  \sum_{i=1,2}
   \lt( \frac{y_i}{b/3},1\rt)\chi_{\tilde{Q}^i(x_3)}(x')\,.
    \end{equation*}
 All admissibility conditions are easily checked. In particular, $\rho=1$ if $\dist(x',\tilde{Q}^1\cup\tilde{Q}^2(x_3))\ge  \eta $ which holds if  $|x'|\ge 3\ell/4$. The energy estimate is immediate.
\end{proof}

\subsection{Boundary layer}
\label{secconstrboundary}

\begin{proposition}\label{propboundraylayer}
 Let $N\in\N$, $\alpha,\beta,T,t>0$ be given. Let $k:=\a^{1/3}\b^{2/3}T$
 and let $\phi_1, \dots, \phi_{N^2}$ be positive numbers such that
 \begin{equation*}
  k \phi_i \in 2\pi \N \, \qquad \textrm{and } \qquad\sum_{i=1}^{N^2} \phi_i=1.
 \end{equation*}
 Assume that in the regime $\beta\le \alpha$, $\alpha \sqrt{2}/T\le 1$, we have $t\gg \alpha^{-1}$, $1/N\ges \alpha^{-2/3}\beta^{-1/3}$ and $\phi_i\sim 1/N^2$ for every $i$.
  Then there are $\rho$ and $B$, admissible for $\tilde F_{\alpha,\beta}^{(0,t)}$ and $k$-quantized, such that
  \begin{equation}\label{eqboundarylint}
\tilde   F_{\alpha,\beta}^{(0,t)}(\rho,B)
\les tN + \frac{1}{N^2t},
\end{equation}
  \begin{equation}\label{eqboundarylext}
\alpha^{1/3}\beta^{7/6}
  \|\beta^{-1}{B_3}- 1\|_{H^{-1/2}(x_3=0)}^2\les \frac{\beta^{1/3}}{\alpha^{1/3}} + \frac{\beta^{1/2}}{N^2t}
  +\alpha^{1/3}\beta^{7/6},
  \end{equation}
and, denoting by $Q^i$ the squares centered in the $N^2$ points of the square grid of spacing $N^{-1}$ with $|Q^i|=\beta \phi_i$,
\begin{equation}\label{eqrandwb3bdly}
 B_3(x',t)=\sum_i  \chi_{Q^i}(x') \text{ and }
 \rho^{1/2}(x',t)=\min\{ 1,\eta^{-1} \dist(x',\cup_i Q^i) \}\,.
\end{equation}
\end{proposition}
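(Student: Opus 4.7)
\smallskip

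\noindent\textbf{Proof plan.} The construction is a hierarchical branching pattern in $[0,t]$ built from the local building blocks of Lemmas \ref{lemmacurve3} and \ref{Lembranch3}, essentially transcribing the pattern of \cite{CoOtSer} into our framework. I would partition $[0,t]$ into dyadic sub-intervals $[t_{k+1},t_k]$ with $t_0:=t$, $t_k-t_{k+1}\sim t/2^k$, for $k=0,\dots,K$, where $K$ is chosen below. On each slice $\{x_3=t_k\}$ the tubes are placed on a $2^kN\times 2^kN$ square grid, each carrying (up to a rounding) the same flux $\beta/(2^kN)^2$; at $x_3=t$ the grid reduces to the prescribed squares $Q^i$ with fluxes $\beta\phi_i$. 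Starting from $x_3=t$ and moving downward, I would use Lemma \ref{lemmacurve3} on short vertical straight segments, and then, once per layer, apply Lemma \ref{Lembranch3} to split each tube of flux $\varphi$ into four children of flux $\varphi/4$ on the next finer grid. The hypotheses of Lemma \ref{Lembranch3} are met as long as $\sqrt{\beta\varphi}\ll$ horizontal spacing and $\eta R\ll$ horizontal spacing, which determines the maximal refinement depth $K$.

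\smallskip

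\noindent\textbf{Energy bookkeeping.} For \eqref{eqboundarylint} I would sum contributions layer by layer. Layer $k$ has $4^kN^2$ tubes of flux $4^{-k}/N^2$ and thickness $\sim t/2^k$, so by Lemma \ref{Lembranch3} it contributes interfacial energy $\sim (t/2^k)\cdot 4^kN^2\cdot\sqrt{4^{-k}/N^2}=tN$ and transport energy $\sim (1/(2^kN))^2\cdot (1/(t/2^k))\cdot 4^{-k}/N^2\cdot 4^kN^2=2^{-k}/(N^2t)$; summing over finitely many $k$ yields $tN+1/(N^2t)$. The estimate \eqref{eqboundarylext} I would obtain by invoking the harmonic-extension formulation \eqref{defHdem} of the $H^{-1/2}$ norm: a trivial piecewise-constant vertical extension from $x_3=0$ gives the third term $\alpha^{1/3}\beta^{7/6}$; refining the pattern to grid spacing $\sim 1/(2^KN)$ and exploiting the Fourier-series representation of $H^{-1/2}$ on the torus $Q_1$ produces the decay term $\beta^{1/3}/\alpha^{1/3}$ once $K$ saturates the coherence-length cutoff $1/(2^KN)\sim \eta=\alpha^{-2/3}\beta^{1/6}$; finally the middle term $\beta^{1/2}/(N^2t)$ follows from comparing the $H^{-1/2}$ extension with the transport-energy construction inside $[0,t]$ via \eqref{eqw22b} and the Benamou--Brenier-type identification.

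\smallskip

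\noindent\textbf{Quantization and main obstacle.} To keep the construction $k$-quantized along the whole pattern, I would adapt the rounding strategy of Lemma \ref{lemmaquantize}: at each splitting point the parent flux $2\pi m/k$ is divided into four integer multiples of $2\pi/k$ that sum to $2\pi m/k$, rounding up or down in a balanced way so that the cumulative error at each slice stays bounded by a constant multiple of $2\pi/k$ per tube. Since in our regime $k=\alpha^{1/3}\beta^{2/3}T\gg 1$ and the nominal flux per tube $\sim 1/N^2$ is much larger than $1/k$ (this is the content of the hypothesis $1/N\gtrsim \alpha^{-2/3}\beta^{-1/3}$ combined with $T/\alpha\to\infty$), the rounding is a multiplicative perturbation of order $1+O(1/(k\varphi))=1+o(1)$ and does not affect the scaling of either \eqref{eqboundarylint} or \eqref{eqboundarylext}. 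I expect the main technical obstacle to be precisely this simultaneous bookkeeping of quantization errors across the $K\sim\log$-many refinement levels while preserving the boundary structure \eqref{eqrandwb3bdly} and the Meissner condition $\rho B=0$; the remaining steps are quantitative adaptations of the building blocks already established.
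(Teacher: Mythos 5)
Your proposal takes a genuinely different route from the paper: the paper's proof simply invokes the boundary-layer construction from \cite[Lem. 4.7]{CoOtSer}, performs a careful change of variables to the starred parameters used there, and translates the resulting energy estimates back into the present scaling; it does not rebuild the branching pattern from the building blocks of this paper. You instead propose to build the branching pattern from scratch using Lemmas \ref{lemmacurve3} and \ref{Lembranch3}. That is a legitimate alternative in spirit, but as written it has a concrete gap: the dyadic ansatz (layer thickness $\sim t/2^k$ and grid spacing $\sim 1/(2^kN)$, so $4^kN^2$ tubes of flux $4^{-k}/N^2$) gives an interfacial energy per layer that is \emph{constant} in $k$, namely $(t/2^k)\cdot 4^kN^2\cdot 2^{-k}/N = tN$, so summing over $K$ refinement levels yields $KtN$, not $tN$. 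Since you yourself acknowledge that $K\sim\log$-many levels are needed to achieve the $H^{-1/2}$ smallness at $x_3=0$, this is not a harmless constant: the construction as described would overshoot \eqref{eqboundarylint} by a logarithm. The remedy is the standard one, already used in Proposition \ref{branchingmu}: the layer thicknesses and the grid spacings must shrink at \emph{different} geometric rates (e.g. thicknesses $\sim 3^{-n}$, spacings $\sim 2^{-n}$), giving per-layer interfacial $\sim (2/3)^n\,tN$ and per-layer transport $\sim (3/4)^n/(N^2t)$, both summable.

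Beyond that, the $H^{-1/2}$ estimate \eqref{eqboundarylext} is asserted rather than derived, and the attribution of the three terms does not match what actually happens in the paper. In particular, the third term $\alpha^{1/3}\beta^{7/6}$ does not arise from a ``trivial piecewise-constant vertical extension''; in the paper it comes from the mismatch $\|b_i-\beta\|_\infty$ between the prescribed fluxes $\beta\phi_i N^2$ and $\beta$ (i.e., from the hypothesis $\phi_i\sim 1/N^2$ rather than $\phi_i=1/N^2$), which contributes $\frac{\alpha^{1/3}}{\beta^{5/6}}\|b_i-\beta\|_\infty^2\les\alpha^{1/3}\beta^{7/6}$. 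Deriving the decay term $\beta^{1/3}/\alpha^{1/3}$ requires identifying the final grid scale and estimating the $H^{-1/2}$ norm of a periodic square pattern, which you gesture at via Fourier series but do not compute. Finally, Lemma \ref{Lembranch3} splits one tube into two, not four, so the quadtree refinement requires a two-stage application (or a quaternary generalization) at each branching slice; this is a fixable but real omission. In short, the plan is reasonable and would become a self-contained alternative to the paper's proof (at the price of much longer bookkeeping), but the dyadic geometry and the unjustified $H^{-1/2}$ estimate are genuine gaps as the argument stands.
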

Note that the assumptions on the parameters $N$ and $\phi_i$ mean that the sidelength $\sqrt{\beta \phi_i}$ of the squares is larger than the coherence length $\a^{-2/3}\b^{1/6}$ (see \eqref{scalesepar2}) and that both of them are small
compared to the distance (equal to $1/N$) between the squares. The assumption on the parameter $t$ means that the thickness $t$ of the boundary layer is large with respect to the coherence length, where we recall that vertical and horizontal lengths have different units.
\begin{proof}
The key construction is described in \cite[Lem. 4.7]{CoOtSer}. However, the notation and the scalings are different in that paper. Indeed, since  
there was no need to rescale $x_3$ by the thickness and $x'$ by the distance between the threads $T \alpha^{-1/3} \beta^{-1/6}$, in that paper length was measured in terms of the penetration length. 
We first let $r_i$ be a square of side $1/N$ centered on the square grid of spacing $N^{-1}$, 
 $Q^i\subset r_i$ be  a square with the same center and area given by
$ |Q^i|=\beta\phi_i$, and  $b_i:= \beta\phi_iN^2$.
Denoting with a star the quantities
 from \cite{CoOtSer}, we set $T_*:=Tt/2$, $\kappa_*:=\alpha \sqrt2/T$, $L_*:=T\alpha^{-1/3}\beta^{-1/6}$, 
 $r_i^*:=L_*r_i$, $\hat r_i^*:=L_*Q^i$, $d_0^*:=L_*/N$, $\rho_0^*:=\beta^{1/2}L_*/N$, $N_*=N$, and  $ b_i^*:=(|\hat r_i^*|/|r_i^*|)\kappa_*/\sqrt2  =b_i \kappa_*/\sqrt2$.

 \cite[Lem. 4.7]{CoOtSer} gives $Q_{L_*}$-periodic fields $\chi_*\in BV_\loc(\R^2\times (0,T_*);\{0,1\})$ and $B_*=(B'_*,B_3^*)\in L^2_\loc(\R^3;\R^3)$ 
 such that $\div B_*=0$, $B_*(1-\chi_*)=0$, 
and, 
 \begin{equation*}
 \chi_*(x',T_*)=\sum_i \chi_{\hat r_i^*}(x')  \qquad \textrm{and} \qquad B^*_3(x',T_*)=\frac{\kappa_*}{\sqrt2} \sum_i \chi_{\hat r_i^*}(x') \hskip5mm\text{ for } x'\in Q_{L_*}\,,
 \end{equation*}
with energy 
 \begin{equation}\label{estimstar}
 \frac{1}{L_*^2} \int_{Q_{L_*}\times(0,T_*)} \kappa_* |D\chi_*|+|B_*'|^2+\chi_* \lt(B_3^*-\frac{\kappa_*}{\sqrt2}\rt)^2 dx \les
E_*^{int}:=  \lt( \frac{\kappa_* \rho_0^* T_*}{\sqrt2 } +  \frac{\kappa_*^2(\rho_0^*d_0^*)^2}{2T_*}\rt)\lt(\frac{N}{L_*}\rt)^2
 \end{equation}
and
\begin{equation*}
 \frac{1}{L_*^2}  \|B_3^*-\sum_i b_i^*\chi_{q_j^*}\|^2_{H^{-1/2}(Q_{L_*})}\les
 E_*^{ext}:=\lt(
 \kappa_*(\rho_0^*)^2+
      \frac{\kappa_*^2(\rho_0^*)^3 d_0}{T_*}\rt)\lt(\frac{N}{L_*}\rt)^2\,.
\end{equation*}
We extend $\chi_*$ and $B_*$ to $Q_{L_*}\times(T_*,2T_*)$ by 
\[\chi_*(x):=\chi_*(x',T_*) \qquad \textrm{and} \qquad B_*=(0,B_3^*(x',T_*)) \hskip5mm\text{ for } x'\in Q_{L_*}\times(T_*,2T_*),\]
so that \eqref{estimstar} holds in $Q_{L_*}\times(0,2T_*)$.
We set 
\begin{equation*}
 \rho_*(x):=\min\{1,\kappa_*^2d(x,\omega_*)^2\},
\end{equation*}
where $\omega_*=\{x: \chi_*(x)=1\}$ (here $d(\cdot,\omega_*)$ is the 3D distance, periodic in the tangential directions). Since $\omega_*$ is invariant in the $x_3-$direction inside $(T_*,2T_*)$
and since $t\gg \a^{-1}$, giving $T_*\gg \kappa_*^{-1}$, for $x_3=2T_*$, 
\[\min\{1,\kappa_*^2d(x,\omega_*)^2\}=\min\{1,\kappa_*^2\dist(x',\omega_*\cap\{x_3=2T_*\})^2\} \qquad \textrm{for } x=(x',2T_*).\]
Hence,
\[\rho_*(x',2T_*)=\min_{i} \lt\{\min(1,\kappa_*^2\dist(x',\hat{r}^i_*)\rt\}.\]
The same computation as in the proof of \cite[Th. 4.9]{CoOtSer} leads to
 \begin{alignat*}1
 \frac{1}{L_*^2} \int_{Q_{L_*}\times(0,2T_*)}  |\nabla\rho_*^{1/2}|^2+|B_*'|^2+ (B_3^*-\frac{\kappa_*}{\sqrt2}(1-\rho_*))^2 dx \les
E_*^{int}\,.
 \end{alignat*}
 We scale back in the tangential direction to obtain
 \begin{equation*}
\rho(x',x_3):=\rho^*(L_*x',x_3)\quad \text{ and } \quad 
 (B',B_3)(x',x_3):=\frac{\sqrt2}{\kappa_*}\lt(\frac{B'_*}{L_*},B_3^*\rt)(L_*x',x_3)\,.
 \end{equation*}
We have $\div B=0$ and (\ref{eqrandwb3bdly}) holds.
Changing variables gives $\nabla'\rho^{1/2}(x)=L_*\nabla'\rho_*^{1/2}(L_*x',x_3)$
and $\partial_3\rho^{1/2}(x)=\partial_3 \rho^{1/2}_*(L_*x',x_3)$, so that
\begin{multline*}
\tilde F^{(0,t)}_{\alpha,\beta}(\rho,B)\le 
\alpha^{-4/3}\beta^{-2/3} \frac{1}{L_*^2} \int_{Q_{L_*}\times(0,2T_*)} |\nabla \rho_*^{1/2}|^2 + |B'_*|^2 +
\lt(B_3^*-\frac{\kappa_*}{\sqrt2}(1-\rho_*)\rt)^2 dx\\
\les\alpha^{-4/3}\beta^{-2/3}  E_*^{int}.
\end{multline*}
Since $E_*^{int}=\alpha^{4/3}\beta^{2/3} (Nt/2+2/(N^2t))$, this concludes the estimate for $\tilde F^{(0,t)}_{\alpha,\beta}$.

We now estimate the boundary term. By the embedding of $L^\infty$ into $H^{-1/2}$ (recall that since $\sum_i \phi_i =1$, $\int_{Q_1} \sum_i(b_i-\beta) \chi_{r_i} dx'=0$)
we have
\begin{equation*}
 \|\sum_i (b_i-\beta)\chi_{r_i}\|^2_{H^{-1/2}(Q_{1})}\les
 \|b_i-\beta\|_\infty^2  \,.
\end{equation*}
Rescaling the boundary estimate for $B_3^*$ leads to
\begin{equation*}
  \|B_3-\sum_i b_i\chi_{r_i}\|^2_{H^{-1/2}(Q_1)}\les
 \frac{1}{L_*^3}  \frac2{\kappa_*^2}\|B_3^*-\sum_i b_i^*\chi_{r_j^*}\|^2_{H^{-1/2}(Q_{L_*})}\les
\frac{\beta^{7/6}}{\alpha^{2/3}} + \frac{\beta^{4/3}}{\alpha^{1/3} N^2t}
 \end{equation*}
Adding terms and using that $\sum_i \chi_{r_i}\equiv 1$, we conclude that 
 \begin{equation*}
\alpha^{1/3}\beta^{7/6}
  \|\beta^{-1}{B_3}- 1\|_{H^{-1/2}(Q_1)}^2\les \frac{\beta^{1/3}}{\alpha^{1/3}} + \frac{\beta^{1/2}}{N^2t}
  +\frac{\alpha^{1/3}}{\beta^{5/6}} \|b_i-\beta\|_\infty^2  \,.
  \end{equation*}

\end{proof}

\subsection{Back to the full GL functional}\label{recoveryGL}
In this section, we relate the functional 
$\tilde F_{\alpha,\beta}$, as defined in (\ref{eqdeftildaF}--\ref{eqdeftildaFext}),
with the functional $\tilde E_T$, as defined in (\ref{eqdefwe}).

\begin{proposition}\label{thirdupperbound}
Let $k=\alpha^{1/3}\beta^{2/3}T$ and let $\rho,B$ be $k$-quantized admissible functions for $\tilde F_{\alpha,\beta}$ with $\rho B=0$.
Let $\omega:=\{\rho=0\}$ and then for $z\in(-1,1)$, $\omega_{z}:=\omega\cap \{x_3=z\}$. Assume that $\omega$ is closed, that $\omega^c$ is connected  and that for every $x_3\in(-1,1)$, $\omega_{x_3}^c$ is 
 also connected. 
Then, there is a pair $(u,A)$, admissible for $\tilde E_T$, such that
$\rho= |u|^2$, $B=\nabla\times A$, and 
\begin{equation}\label{limsupT}
 \tilde E_T(u,A)=\tilde F_{\a,\b}(\rho,B).
\end{equation}
\end{proposition}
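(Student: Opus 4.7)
The strategy is to recover $(u,A)$ from $(\rho,B)$ via the polar decomposition $u=\rho^{1/2}e^{i\theta}$, choosing the phase $\theta$ so that the Meissner identity $\nabla\theta=\lambda A$ holds on $\omega^c=\{\rho>0\}$, where $\lambda:=\alpha^{1/3}\beta^{-1/3}T$. Under this identity, a direct computation yields
\[
|\nabla'_{\lambda A}u|^2=|\nabla'\rho^{1/2}|^2+\rho|\nabla'\theta-\lambda A'|^2=|\nabla'\rho^{1/2}|^2
\]
on $\omega^c$, and analogously $|(\nabla_{\lambda A}u)_3|^2=|\partial_3\rho^{1/2}|^2$; both sides vanish on $\omega$. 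Since the Modica--Mortola term $(B_3-(1-\rho))^2$, the transport term $|B'|^2$, and the $H^{-1/2}$ boundary term depend only on $(\rho,B)$, the identity \eqref{limsupT} will then be immediate once the phase $\theta$ and potential $A$ are constructed.

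For the vector potential, I would lift everything to the universal cover $\R^2\times(-1,1)$ of $Q_{1,1}$ in the horizontal variables and decompose $B=\beta e_3+\widetilde{B}$, where $\widetilde{B}$ is $Q_1$-periodic and divergence-free with $\int_{Q_1}\widetilde{B}_3\,dx'=0$. Setting $A:=\beta(-x_2/2,x_1/2,0)+\widetilde{A}$ with a $Q_1$-periodic $\widetilde{A}$ such that $\nabla\times\widetilde{A}=\widetilde{B}$ (obtained by Hodge theory for periodic divergence-free fields, after subtracting any nonzero horizontal mean-flux components by further constant-coefficient linear terms) produces $\nabla\times A=B$. Since $\rho B=0$ and $B$ is concentrated in $\omega$, the 1-form $A$ is curl-free on the lifted complement $\widetilde{\omega}^c$.

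The crucial step is the construction of $\theta:\widetilde{\omega}^c\to\R/2\pi\Z$ solving $\nabla\theta=\lambda A$, which amounts to checking $\oint_\gamma\lambda A\cdot d\ell\in 2\pi\Z$ for every closed loop $\gamma\subset\widetilde{\omega}^c$. The connectedness of $\omega^c$ and of each slice $\omega_{x_3}^c$, combined with the lift, leaves as only nontrivial homotopy classes those loops linking individual connected components of $\widetilde{\omega}\cap\{x_3=z\}$: the lift kills horizontal periodicity classes, and the slicewise connectedness rules out further local topology. For any such tube-linking loop $\gamma$, Stokes' theorem and $\div B=0$ give
\[
\oint_\gamma A\cdot d\ell=\int_{S_\gamma}B\cdot n\,d\H^2
\]
independently of the spanning surface $S_\gamma\subset\R^2\times(-1,1)$, and this reduces to a sum of cross-sectional fluxes of $B_3$ through the enclosed tubes. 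By the $k$-quantization hypothesis and the identity $\lambda=k/\beta$, each such flux lies in $(2\pi/\lambda)\Z$, so $\lambda\oint_\gamma A\cdot d\ell\in 2\pi\Z$. The phase $\theta$ is then defined unambiguously modulo $2\pi$ by path integration on $\widetilde{\omega}^c$ from a fixed basepoint.

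Finally, set $u:=\rho^{1/2}e^{i\theta}$ on $\widetilde{\omega}^c$ and $u:=0$ on $\widetilde{\omega}$. The continuity of $u$ at $\partial\omega$, together with $\nabla\rho^{1/2}\in L^2$ (which follows from $\tilde F_{\alpha,\beta}(\rho,B)<+\infty$), places $u$ in $H^1$. The observables $|u|^2=\rho$, $\nabla\times A=B$, and $j_{\lambda A}=\rho(\nabla\theta-\lambda A)\equiv 0$ are all $Q_1$-periodic, so $(u,A)$ is admissible for $\tilde E_T$. Substituting the componentwise gradient identities into $\tilde E_T(u,A)$ and comparing term-by-term with $\tilde F_{\alpha,\beta}(\rho,B)$ yields \eqref{limsupT}. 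The main technical point of the whole argument is the period verification in the previous paragraph: this is precisely where the $k$-quantization hypothesis and the topological assumptions on $\omega$ and $\omega^c$ enter, and it is the one step that would fail for a generic (non-quantized) flux configuration.
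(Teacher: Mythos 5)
Your proposal is correct and follows essentially the same route as the paper: choose $A$ by a periodic Hodge potential for $B-\beta e_3$ plus a linear correction, observe that $A$ is curl-free on the lifted set $\{\rho>0\}$, verify that the relevant periods of $\lambda A$ (with $\lambda=\alpha^{1/3}\beta^{-1/3}T=k/\beta$) lie in $2\pi\Z$ by Stokes and $k$-quantization, define $\theta$ by path integration, set $u=\rho^{1/2}e^{i\theta}$, and compare $\tilde E_T$ with $\tilde F_{\alpha,\beta}$ term by term. The only difference is that you argue the period check reduces to horizontal tube-linking loops via an informal homotopy classification (``the lift kills horizontal periodicity classes, and the slicewise connectedness rules out further local topology''), whereas the paper sidesteps any classification by fixing a vertical spine $\Gamma_0\subset\omega^c$ and defining $\theta$ by integrating along $\Gamma_0$ and then horizontally within each slice; well-definedness then reduces by construction to exactly the horizontal loop periods, which is where quantization and slicewise connectedness enter. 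The paper's explicit construction is cleaner to make rigorous than a direct homotopy classification of $\pi_1(\widetilde{\omega}^c)$, but the underlying idea is the same in both.
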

\begin{proof}
We shall construct a function $A$ on $\R^2\times (-1,1)$ so that 
 $B=\nabla\times A$ everywhere, and then 
 a multivalued function  $\theta$ on 
  the set $\Omega^c:=\Z^2\times\{0\}+\omega^c$ such that $\nabla\theta=\b^{-1}kA$. Here $B$ and $\rho$ are $Q_1$-periodic, but 
 $A$ and $\theta$ not necessarily. 
 Setting
 $u:=\rho^{1/2} e^{i\theta}$ we then have 
 $\nabla_{\b^{-1}kA}u= e^{i\theta}\nabla\rho^{1/2}$, which directly implies (\ref{limsupT}). Notice that thanks to the hypothesis on $\omega$, the set $\Omega^c$ is a connected open set.

 We start from the construction of $A$.
By  \cite[Lem. 4.8]{CoOtSer} applied to $B-\beta e_3$
there is a $Q_1$-periodic potential $A_\mathrm{per}$ such that 
$\nabla \times A_\mathrm{per}=  B- \b e_3$ and $\Div A=0$. We define $ A(x):= A_\mathrm{per}(x)+\b x_1 e_2$ so that
$\nabla \times  A=B$. We remark that on any open set where $B=0$ the vector field $A$ is curl-free and divergence-free, therefore harmonic, 
and in particular smooth. In particular, since $\Omega^c$ is open and since by the Meissner condition, $B=0$ in $\Omega^c$, $A$ is smooth in $\Omega^c$. 

We now turn to the existence of $\theta$.
For a fixed level $x_3$, let $h+\omega^i_{x_3}$, $h\in\Z^2$, $i=1,..., I(x_3)$ be the connected components of $(\R^2\times\{x_3\}) \cap (\Z^2+\omega_{x_3})$. Denote the flux going 
through $\omega^i_{x_3}$ by
\begin{equation*}
\Phi^i(x_3):=\int_{\omega^i_{x_3}} B_3\, dx'.
\end{equation*}
By assumption we have
\begin{equation}\label{condfluxi}
\b^{-1}k \Phi^i(x_3)\in 2\pi \Z \qquad \textrm{ for every $i$. }
\end{equation}

Fix a smooth curve $\Gamma_0:=\{(\gamma_0(x_3),x_3) \ : \ x_3\in(-1,1)\}\subset \Omega^c$. For $x_3,y_3\in(-1,1)$, let $\Gamma_0^{x_3,y_3}:=\{(\gamma_0(t),t) \ : \ t\in(x_3,y_3)\}\subset \Gamma_0$.
For $x=(\gamma_0(x_3),x_3)$, let $\theta(x):= \b^{-1}k \int_{\Gamma_0^{0,x_3}}  A\cdot \tau d\H^1$. 
Now, for a generic $x=(x',x_3)\in \Omega^c$, let 
\[\theta(x):= \theta(\gamma_0(x_3),x_3)+\b^{-1}k\int_{\Gamma^x}  A\cdot \tau d\H^1,\]
where $\Gamma^x$ is any (horizontal) curve in $\Omega^c\cap(\R^2\times\{x_3\})$ connecting $x$ to $(\gamma_0(x_3),x_3)$. This gives a well defined $\theta:\Omega^c\to\R/2\pi\Z$  since
for every closed (horizontal) curve $\Gamma$ in $\Omega^c\cap(\R^2\times\{x_3\})$,
\begin{equation}\label{condfluxcurvebis}
  \b^{-1}k\int_{\Gamma}  A\cdot \tau d\H^1\in 2\pi \Z.
\end{equation}
Indeed, this follows  by  Stokes' Theorem and \eqref{condfluxi}. Let us show that $\nabla \theta=\b^{-1}k A $ in $\Omega^c$. Let $x=(x',x_3)$ be fixed and let $\Gamma^x$ 
be a fixed  simple smooth curve joining $x$ to $(\gamma_0(x_3),x_3)$ inside $\Omega^c\cap(\R^2\times\{x_3\})$. Since $\Omega^c$ is open, there is a simply connected neighborhood $V$ of
$\Gamma^x$  such that $V\subset \Omega^c$. Let then $y=(y',y_3)\in V$. Upon shrinking $V$, we may assume that $(\gamma_0(y_3),y_3)\in V$. Let $\Gamma^y\subset V$ be a 
smooth curve joining $y$ to $ (\gamma_0(y_3),y_3)$ and let $\Gamma^{x,y}\subset V$ be a smooth curve joining $x$ to $y$. By definition, we have
\[\theta(x)=\theta(\gamma_0(x_3),x_3)+ \b^{-1}k \int_{\Gamma^x}A\cdot \tau d\H^1, \quad \theta(y)=\theta(\gamma_0(y_3),y_3)+ \b^{-1}k\int_{\Gamma^y}  A\cdot \tau d\H^1\]
and
\[\theta(\gamma_0(y_3),y_3)=\theta(\gamma_0(x_3),x_3)+ \b^{-1}k\int_{\Gamma_0^{x_3,y_3}}  A\cdot \tau d\H^1,\]
so that 
\[
 \theta(y)-\theta(x)= \b^{-1}k\int_{\Gamma^y} A\cdot \tau d\H^1+ \b^{-1}k\int_{\Gamma_0^{x_3,y_3}} A\cdot \tau d\H^1-\b^{-1}k\int_{\Gamma^x}  A\cdot \tau d\H^1.
\]
However, by Stokes Theorem (and $B=0$ in $V$),
\[\int_{\Gamma^y}A\cdot \tau d\H^1+\int_{\Gamma_0^{x_3,y_3}} A\cdot \tau d\H^1-\int_{\Gamma^x}  A\cdot \tau d\H^1=\int_{\Gamma^{x,y}} A\cdot \tau d\H^1,\]
so that 
\[
 \theta(y)-\theta(x)=\b^{-1}k\int_{\Gamma^{x,y}}  A\cdot \tau d\H^1,
\]
proving that indeed, $\nabla \theta=\b^{-1}k A$ in $\Omega^c$.
\end{proof}

\subsection{Proof of the upper bound}
We start from a construction for an $N$-regular measure and finite $R$.
\begin{proposition}\label{propubregular}
 Let $\mu\in \MNreg(Q_{1,1})$ for some $N\in\N$, $R\ge 3$, and assume (\ref{eqlbassumptcoeff})
  and (\ref{defKtotal}) hold. Then, 
 there exist sequences $u_n: Q_{1,1}\to \C$ and $A_n: Q_{1,1}\to \R^3$ such that  
\begin{equation}\label{estimenerregular}
\limsup_{n\to +\infty} \,  \wE(u_n,A_n)\le 
\frac{K_R}{K_*}
I(\mu)
+ \frac{C}{N^{1/2}}(1+I(\mu)),
\end{equation}
with $|u_n|$ and $\nabla\times A_n$ $Q_1-$periodic. Here $K_R$ and $K_*$ are as in Lemma \ref{lemmaVR}, $C$ is universal. Moreover,
\begin{equation}\label{estimW2regular}
 \limsup_{n\to +\infty} W_2^2(\b^{-1}_n B^n_3,\mu)\les  N^{-3/2}.
\end{equation}
 \end{proposition}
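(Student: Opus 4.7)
The plan is to assemble, for each $n$, a pair $(\rho_n, B_n)$ that is admissible for $\tilde F_{\alpha_n,\beta_n}$, satisfies the Meissner condition $\rho_n B_n=0$, is $k_n$-quantized with $k_n=\alpha_n^{1/3}\beta_n^{2/3}T_n$, and meets the topological hypotheses of Proposition~\ref{thirdupperbound}, and then lift it to $(u_n,A_n)$ via that proposition. I would split $Q_{1,1}$ into a bulk region $Q_1\times(-1+t_n,1-t_n)$ and two boundary layers $Q_1\times(\pm(1-t_n),\pm1)$, with $t_n=N^{-3/2}$ chosen below.

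In the bulk, since $k_n\to\infty$ and the quantization condition (\ref{defKtotal}) holds, Lemma~\ref{lemmaquantize} produces a $k_n$-quantized $\mu^{(n)}\in\Mreg$ with fluxes within $C(\mu)/k_n$ of those of $\mu$ and $I(\mu^{(n)})\to I(\mu)$. Since $\mu^{(n)}$ consists of finitely many affine segments of constant flux, I would apply Lemma~\ref{lemmacurve3} along each segment $(X_i(x_3),x_3)$ of flux $\phi_i^{(n)}\sim N^{-2}$, using the cutoff profile $v_R$, except in a vertical window of height $\varepsilon_n\to 0$ around each triple point. By Lemma~\ref{lemmacurve3} this contributes per unit height $(K_R\sqrt{\phi_i^{(n)}}+\phi_i^{(n)}|\dot X_i|^2)(1+o(1))$, where the correction $(\phi_i^{(n)}\alpha_n^{4/3}\beta_n^{2/3})^{-1/2}$ vanishes thanks to $\alpha_n\beta_n^{7/2}\to\infty$ (which in particular implies $\alpha_n^{4/3}\beta_n^{2/3}/N^2\to\infty$). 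Summing and passing to the limit gives the main term $(K_R/K_*)I(\mu)$. In each branching window, I would apply Lemma~\ref{Lembranch3} with $\ell$ equal to a fixed fraction of the minimum horizontal spacing at the triple point (a quantity depending only on $\mu$), costing $O(\sqrt\phi\,\varepsilon_n R^2+\ell^2\phi R^2/\varepsilon_n)$; picking e.g.\ $\varepsilon_n=\ell$ and using that there are finitely many triple points shows this is $o(1)$ as $n\to\infty$.

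In the boundary layers of thickness $t_n$, the inner trace of the bulk construction is a union of $N^2$ disks of area $\beta_n N^{-2}$; by inserting one more copy of Lemma~\ref{lemmacurveendpoints} at $x_3=\pm(1-t_n)$ I would convert these into squares on the grid of spacing $1/N$, matching the lower boundary data of Proposition~\ref{propboundraylayer}. Applying that proposition with fluxes $\phi_i=N^{-2}$ yields an interior cost $\lesssim t_nN + 1/(N^2t_n)$ and an $H^{-1/2}$ defect $\lesssim \beta_n^{1/3}/\alpha_n^{1/3}+\beta_n^{1/2}/(N^2t_n)+\alpha_n^{1/3}\beta_n^{7/6}$. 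With $t_n=N^{-3/2}$ both interior terms are of order $N^{-1/2}$, and under (\ref{eqlbassumptcoeff}) the exterior terms vanish in the limit, contributing only to the $CN^{-1/2}(1+I(\mu))$ error. Since $\rho_n=0$ exactly on the union of closed tubes, squares, and branching transition sets $\omega_n$, and the limiting object is a tree with triple junctions, the complement $\omega_n^c$ is path-connected and so is each horizontal slice; Proposition~\ref{thirdupperbound} then gives $(u_n,A_n)$ with $\tilde E_T(u_n,A_n)=\tilde F_{\alpha_n,\beta_n}(\rho_n,B_n)$, proving (\ref{estimenerregular}). The Wasserstein bound (\ref{estimW2regular}) follows by combining the slicewise estimate in Lemma~\ref{lemmacurve3}, which gives a bulk contribution $\lesssim \beta_n/N^4\to 0$, with the boundary-layer contribution, where transporting mass $O(t_n)$ over distance $O(1)$ yields $W_2^2\lesssim t_n = N^{-3/2}$.

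The main obstacle is the careful gluing between the three types of local constructions (segment tubes, branching transitions, boundary layer rectangles), ensuring continuity of $\rho_n$ and $B_3^n$, the strict Meissner condition, and $k_n$-quantization across each interface, and verifying the slicewise and global connectedness of $\omega_n^c$ needed by Proposition~\ref{thirdupperbound}. A secondary but purely bookkeeping difficulty is keeping the $R$-dependent prefactor $K_R/K_*$ separate from the $N^{-1/2}$ error, so $R$ must be held fixed through the $n\to\infty$ limit and only optimized after the final density argument in the proof of Proposition~\ref{gammalimsup-v2}.
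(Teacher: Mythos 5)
You follow the same route as the paper---quantize via Lemma~\ref{lemmaquantize}, build tubes with Lemma~\ref{lemmacurve3}, handle triple points with Lemma~\ref{Lembranch3}, use Lemma~\ref{lemmacurveendpoints} and Proposition~\ref{propboundraylayer} for the boundary layer of thickness $N^{-3/2}$, and lift to $(u_n,A_n)$ via Proposition~\ref{thirdupperbound}. The structure, the choice of lemmas, and the error budget are all the same as in the paper.

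There is, however, a genuine gap exactly where you flag ``the main obstacle'': you assert that ``the inner trace of the bulk construction is a union of $N^2$ disks of area $\beta_n N^{-2}$'' at $x_3=\pm(1-t_n)$, but for a generic $N$-regular $\mu$ the curves $X_i(x_3)$ are only at the grid points $X_l$ at $x_3=\pm 1$, not at $x_3=\pm(1-t_n)$; they are displaced by $O(Mt_n)$ and may in principle not all carry flux $N^{-2}$ there. Without an extra device, the boundary data you hand to Lemma~\ref{lemmacurveendpoints} (and hence to Proposition~\ref{propboundraylayer}, which expects squares centered on the regular grid) do not match the bulk trace, and the interface between the two constructions is not well defined. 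The paper resolves this by first replacing $\mu$ with the vertically rescaled measure $\hat\mu$ with $\hat\mu_{x_3}:=\mu_{x_3/(1-\varepsilon_N)}$ for $|x_3|\le 1-\varepsilon_N$ and $\hat\mu_{x_3}:=\mu_{\pm1}=N^{-2}\sum_l\delta_{X_l}$ for $|x_3|\ge 1-\varepsilon_N$, $\varepsilon_N=N^{-3/2}$; this makes the tubes exactly vertical and equidistributed on the grid in the boundary layer, at an additional energy cost $\lesssim \varepsilon_N I(\mu)+K_*\varepsilon_N N=O(N^{-1/2})$ that fits the error budget. You should insert this flat-top modification (or an equivalent short transport layer bridging $X_i(1-t_n)$ to $X_l$, together with the corresponding energy and $W_2$ accounting) before quantizing and running the bulk construction; once this is done, the rest of your argument, including the branching-window cost ($O(\ell)$ for fixed $\ell$ depending only on $\mu$, absorbed in the $N^{-1/2}$ error) and the Wasserstein estimate, matches the paper.
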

\begin{proof}
We first  modify slightly $\mu$ in order to be able to 
 use Proposition \ref{propboundraylayer} for the boundary layer.
Set $\varepsilon_N:=N^{-3/2}$.
For $x_3\in [-1+\eps_N,1-\varepsilon_N]$
let $\hat{\mu}_{x_3}:=\mu_{x_3/(1-\varepsilon_N)}$ and for $x_3\in [-1,-1+\eps_N]\cup[1-\varepsilon_N,1]$, let $\hat{\mu}_{x_3}:=\mu_1=\mu_{-1}=N^{-2}\sum_l \delta_{X_l}$, where the $\{X_l\}_{l=1}^{N^2}$ form a regular grid with spacing $1/N$ (recall  Def. \ref{defregular}). 
We then have 
\[I(\hat{\mu})\le \frac{1}{1-\varepsilon_N} I(\mu)+2K_*\varepsilon_N N.\]
Moreover, since $|\hat \mu|(Q_1\times (1-\varepsilon_N,1))= \varepsilon_N$, the same on the other side, and
\[
 W_2^2((1-\varepsilon_N)\mu,\hat{\mu}\LL(Q_1\times (-1+\varepsilon_N,1-\varepsilon_N))\les \eps_N^2,
\]
so that 
\[
 W_2^2(\mu, \hat{\mu})\les \eps_N+\eps^2_N\les N^{-3/2},
\]
it is enough to prove the estimates for $\hat{\mu}$ instead of $\mu$. 

We start by characterizing the geometry of the construction, which will not depend on $n$.
The measure $\hat{\mu}$ is supported on finitely many polygonal curves, parametrized by
$X_i:[a_i,b_i]\to Q_1$, which are disjoint up to the endpoints and carry a flux $\phi_i>0$.
The endpoints 
are either on the boundary of $Q_{1,1}$, or they are triple points.
Those on the boundary constitute a regular grid. We let  
 $\phimin:=\min_i\phi_i$, $\phimax:=\max_i \phi_i$, and $\gamma:=8\phimax/\phimin$. 
Since there are finitely many curves, these quantities are finite 
and positive. We define as before $\eta_n:=\alpha_n^{-2/3}\beta_n^{1/6}$ to be the coherence length; by (\ref{eqlbassumptcoeff}) we have $\eta_n\to0$.

Let $y_j:=(Y_j,z_j)\in Q_{1,1}$ denote the internal endpoints of the curves.
 For $\ell>0$ sufficiently small one has that, for any $j$,
 the only curves which intersect $\B'_\ell(Y_j)\times(z_j-\ell,z_j+\ell)$ are
 those with an endpoint in $y_j$. Since $y_j$ is a triple point, there are three such curves $\Gamma_0$, $\Gamma_1$ and $\Gamma_2$, intersecting only at $y_j$.
 If we let  $M$ be the maximal slope of all curves and $t_\ell:=\ell/(8M)$, then no curve intersects
 $\partial\B'_{\ell/8}(Y_j)\times (z_j-t_\ell,z_j+t_\ell)$ (this means, they all ``exit'' from the 
 top and bottom faces).  Without loss of generality, we can assume that $(X_0,z_j-t_\ell)=\Gamma_0\cap\lt( \B'(Y_j,\ell)\times\{z_j-t_\ell\}\rt)$, $(X_1,z_j+t_\ell)=\Gamma_1\cap \lt(\B'(Y_j,\ell)\times\{z_j+t_\ell\}\rt)$
 and $(X_2,z_j+t_\ell)=\Gamma_2\cap \lt(\B'(Y_j,\ell)\times\{z_j+t_\ell\}\rt)$ (see Figure \ref{figtriplepoint}).
By definition of $t_\ell$, it holds $|X_0-Y_j|\le \frac{\ell}{8}$ and $|X_i-X_0|\le \frac{\ell}{4}$ for $i=1,2$.
 We set $\omega_j:=\B'_{\ell}(Y_j)\times (z_j-t_\ell,z_j+t_\ell)$ and let $\delta_\ell$ be the minimum distance
between any two curves outside  $\cup_j\omega_j$.
 \begin{figure}\centering\resizebox{8cm}{!}{
 \input{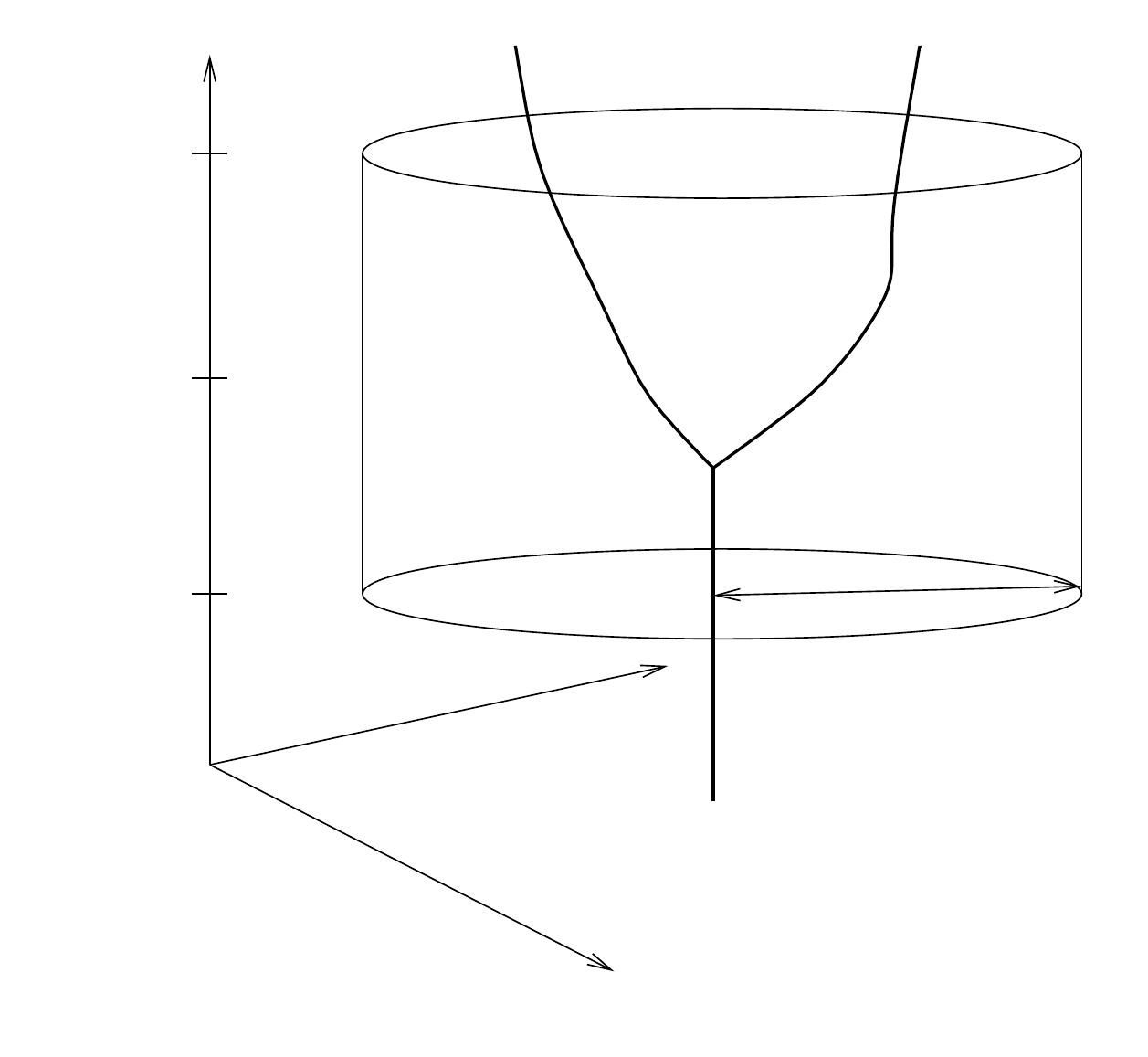_t}
}
\caption{The construction in $(b/3,2b/3)$}\label{figtriplepoint}
\end{figure}

To explain the strategy, we first carry out a construction which ignores quantization. 
 For sufficiently large $n$ we have $ \eta_nR+\sqrt{\phimax \beta_n/\pi}<\delta_\ell/2$  and can thus use the construction
 of Lemma \ref{lemmacurve3} in a $\delta_\ell/2$-neighborhood of each curve (outside of  $\cup_j \omega_j$), extending by $\rho_n=1$ and $B_n=0$ to the complement.
  In the cylinders $\omega_j$, since the geometry is fixed, for $n$ sufficiently large the  conditions $\sqrt{\beta_n \phimin}\ge \eta_n$,
$\sqrt{\beta_n\phimax}\ll |X_1-X_2|$ and $ \eta_n R\ll \ell$ are satisfied and we can use  Lemma \ref{Lembranch3}.  We then have that $\rho_n=1$ outside $\B'(X_0, 3\ell/4)\times(z_j-t_\ell,z_j+t_\ell)\subset \omega_j$.

In order to obtain a quantized field, we define $k_n$  as in (\ref{defK}), which obeys
 $ k_n\in 2\pi\N$ and $k_n\to+\infty$.
Let $\hat{\mu}^n$ be the $k_n$-quantized approximation of $\hat{\mu}$, 
as given by Lemma \ref{lemmaquantize}.
For sufficiently large $n$ we have
 $\frac12 \hat{\mu}\le \hat{\mu}^n\le 2\hat{\mu}$ and for $|x_3|\ge 1-\varepsilon_N$,
\begin{equation*}
 \hat{\mu}_{x_3}^n=\sum_l\phi_l^n\delta_{X_l},
\end{equation*}
 where $|\phi_l^n-1/N^2|\le C(\hat{\mu})/k_n$, $\phi_l^n\le 2/N^2$.
The fluxes $\phi_i^n$ obey $\frac12\phi_i\le \phi_i^n\le 2\phi_i$.
 
We then construct $B_n$ and $\rho_n$
using  Lemma \ref{lemmacurve3} and Lemma \ref{Lembranch3} as discussed above.
The geometry is the one determined by $\hat{\mu}$. 
In particular, the points $y_j$,  and the constants
$M$, $\phimin$, $\phimax$, $\delta_\ell$ and $t_\ell$ do not depend on $n$, and only $\delta_\ell$ and $t_\ell$ depend on $\ell$.
Adding terms gives for sufficiently large $n$ (as a geometry dependent function of $\ell$ and $R$)
\begin{align*}
 \tilde F_{\alpha_n,\beta_n}^{(-1,1)}(\rho_n,B_n)\le &
\sum_i   \left(1+ \frac{C}{(\phi_i^n \alpha_n^{4/3}\beta_n^{2/3})^{1/2}}
+ \frac{C}{\phi_i^n \alpha_n^{4/3}\beta_n^{2/3}}\right)
\int_{a_i}^{b_i} \bigl( K_R\sqrt{\phi_i^n}+\phi_i^n |\dot{X}_i|^2\bigr) dx_3
\\
&+C(\gamma) \sum_j \sqrt{\phimax} t_\ell R^2+ \frac{\ell^2}{t_\ell}\phimax R^2,
\end{align*}
where the first sum runs over all curves and the second over the cylinders and where $C(\gamma)>0$ is a constant depending on $\gamma$.
In the limit $n\to+\infty$ we have $\phi_i^n\to\phi_i$, $\alpha_n^2\beta_n\to+\infty$ and
therefore, inserting the definition of $t_\ell$,
\begin{align*}
\limsup_{n\to+\infty} \tilde F_{\alpha_n,\beta_n}^{(-1,1)}(\rho_n,B_n)\le &
\sum_i   \int_{a_i}^{b_i} \bigl( K_R\sqrt{\phi_i}+\phi_i |\dot{X}_i|^2\bigr) dx_3
+C(\gamma) R^ 2 \ell \sum_j  \left(\frac{\sqrt{\phimax}}{M}  +M\phimax\right)\,.
\end{align*}
The sum over $j$ depends only on $\hat{\mu}$.
Therefore if $\ell$ is chosen sufficiently small we have 
\begin{equation*}
\limsup_{n\to +\infty} \,  \tilde F_{\alpha,\beta}^{(-1,1)}(\rho_n,B_n)\le \frac{K_R}{K_*} I(\mu)+\frac1{N^{1/2}}\,.
\end{equation*}
Moreover, thanks to \eqref{distestim} and Lemma \ref{lemmaquantize}, we have 
\begin{align*}
 W_2^2(\beta^{-1}_n B^n_3,\hat{\mu})&\les W_2^2(\beta^{-1}_n B^n_3,\hat{\mu}^n)+W_2^2(\hat{\mu},\hat{\mu}^n)\\
 &\les C(\hat{\mu})\b_n  + t_\ell \, \sharp\{Y_j\}+ C(\hat{\mu})k_n^{-1}\\
 &\les C(\hat{\mu})\b_n  + \frac{\ell}{M} \, \sharp\{Y_j\}+ C(\hat{\mu})k_n^{-1}.
\end{align*}
Again, since $ \sharp\{Y_j\}/M$ only depends on $\hat\mu$, if we choose $\ell$ sufficiently small then 
\begin{equation}\label{estimW2inside}
\limsup_{n\to+\infty} W_2^2(\beta^{-1}_n B^n_3,\hat{\mu})\le N^{-3/2}.
\end{equation}
We finally address the boundary layer, focusing for definiteness on the side $x_3>0$.
We first apply once more  Lemma \ref{lemmacurveendpoints}
to each curve for $x_3\in (1-\eps_N, 1-\eps_N/2)$, so that  the resulting 
 fields $B_n$ and $\rho_n$  obey for all $x_3\in (1-\eps_N/2,1)$,
\begin{equation*}
 B_n(x)=\sum_l e_3 \chi_{Q^l}(x') \text{ and }
 \rho_n(x)= \min\{1,\eta_n^{-1} \dist(x',\cup_l Q^l)\},
\end{equation*}
where $Q^l$ are squares centered in the $N^2$ points $X_l$ with $|Q^l|= \b\phi_l^n $ , and $|\phi_l^n-1 /N^2|\le C(\hat{\mu})/k_n$.
Then we modify $\rho_n$ and $B_n$ in the set $x_3\in (1-\eps_N/2,1)$ using Proposition \ref{propboundraylayer}.
This results in new fields $\hat \rho_n$, $\hat B_n$ which obey
  \begin{equation*}
  \tilde F_{\alpha_n,\beta_n}^{(1-\eps_N/2,1)}(\hat\rho_n,\hat B_n)+
  \alpha_n^{1/3}\beta_n^{7/6}
  \|\beta_n^{-1}{(\hat{B}_n)_3}- 1\|_{H^{-1/2}(x_3=1)}^2\les
  \frac{1}{N^{1/2}}+
  \frac{\beta_n^{1/3}}{\alpha_n^{1/3}} + \frac{\beta_n^{1/2}}{N^{1/2}}
  +\frac{\alpha_n^{1/3}}{\beta_n^{5/6}} \frac{C(\hat\mu)N^4}{k_n^2}  \,.
  \end{equation*}
By   (\ref{eqlbassumptcoeff})  and (\ref{defK}) 
all terms up to the first one tend to zero as $n\to+\infty$. Therefore
\begin{equation*}
\limsup_{n\to +\infty} \,  \tilde F_{\alpha_n,\beta_n}(\hat\rho_n,\hat B_n)\le \frac{K_R}{K_*}
I(\hat{\mu})
+\frac{C}{N^{1/2}}\,.
\end{equation*}
Finally, we pass from $(\hat\rho_n,\hat B_n)$ to $(u_n,A_n)$ via Proposition \ref{thirdupperbound} and 
conclude the proof of \eqref{estimenerregular}.
We also obtain \eqref{estimW2regular}  since 
\[W^2_2(\beta^{-1}_n (\hat{B}_n)_3, \hat{\mu})\les W^2_2(\beta^{-1}_n B^n_3, \hat{\mu})+N^{-3/2}.
\]
\end{proof}

It only remains to combine the different steps. 
\begin{proof}[Proof of Proposition \ref{gammalimsup-v2}]
 By density (Theorem \ref{theodens}) there is a sequence $\mu^N\in \MNreg(Q_{1,1})$ of $N$-regular measures
 converging weakly to $\mu$, with $\limsup_{N\to+\infty} I(\mu^N)\le I(\mu)$. 
 Fix $R\ge 3$ and let $(u_n^N,A_n^N)$ be as in Proposition 
 \ref{propubregular}.  Taking a diagonal subsequence 
 (first with $N$, then with $R$) we obtain
\begin{equation*}
\limsup_{n\to +\infty} \,  \wE(u_n^{N(n)},A_n^{N(n)})\le I(\mu),
\end{equation*}
 and  $\b_n^{-1}(B_n^{N(n)})_3\weaklim \mu $. From the compactness statement of Proposition \ref{gammaliminf} and uniqueness of $m$ one obtains $\b_n^{-1} (B_n^{N(n)})'\weaklim m$ and $\b_n^{-1}(1-|u_n^{N(n)}|^2)\weaklim \mu $.
\end{proof}

\section*{Acknowledgment}
M. G. thanks E. Esselborn, F. Barret and P. Bella for stimulating discussions about optimal transportation, and the FMJH PGMO fundation for partial support through the  project COCA. 
The work of S.C. was partially supported 
by the Deutsche Forschungsgemeinschaft through the Sonderforschungsbereich 1060 
{\sl ``The mathematics of emergent effects''}. 

\bibliographystyle{alpha-noname}
\bibliography{CGOS}

\end{document}